\documentclass[11pt]{amsart}
\usepackage[ left=3cm, right=3cm, top = 2.8cm, bottom = 2.8cm ]{geometry}

\usepackage[cmtip, all]{xy}
\usepackage{MnSymbol}
\usepackage{comment}



\usepackage{color}
\definecolor{winered}{rgb}{0.8,0,0}
\definecolor{awesome}{rgb}{1.0, 0.13, 0.32}

\usepackage[pdfauthor={}, %
				pdftitle={blochchow}%
			dvips,colorlinks=true]{hyperref}
\hypersetup{
	bookmarksnumbered=true,
	linkcolor=awesome,
	citecolor=blue,
}

\newtheorem{thm}{Theorem}[section]
\newtheorem{prop}[thm]{Proposition}
\newtheorem{lem}[thm]{Lemma}
\newtheorem{cor}[thm]{Corollary}

\theoremstyle{definition}

\newtheorem{defn}[thm]{Definition}
\theoremstyle{remark}
\newtheorem{remk}[thm]{Remark}
\newtheorem{remks}[thm]{Remarks}

\newtheorem{exm}[thm]{Example}
\newtheorem{exms}[thm]{Examples}
\newtheorem{notat}[thm]{Notation}
\numberwithin{equation}{section}

{\hfill$\square$\end{defn}}
\newenvironment{rem}{\begin{remk}}%
{\hfill$\square$\end{remk}}
{\hfill$\square$\end{remks}}
{\hfill$\square$\end{exm}}
{\hfill$\square$\end{exms}}
{\hfill$\square$\end{notat}}

\newcommand{\thmref}{Theorem~\ref}
\newcommand{\propref}{Proposition~\ref}
\newcommand{\corref}{Corollary~\ref}

\newcommand{\lemref}{Lemma~\ref}

\newcommand{\sE}{{\mathcal E}}
\newcommand{\sF}{{\mathcal F}}

\newcommand{\sI}{{\mathcal I}}

\newcommand{\sK}{{\mathcal K}}
\newcommand{\sL}{{\mathcal L}}

\newcommand{\sO}{{\mathcal O}}

\newcommand{\sQ}{{\mathcal Q}}
\newcommand{\sR}{{\mathcal R}}
\newcommand{\sS}{{\mathcal S}}

\newcommand{\sZ}{{\mathcal Z}}
\newcommand{\A}{{\mathbb A}}

\newcommand{\G}{{\mathbb G}}

\newcommand{\M}{{\mathbb M}}

\renewcommand{\P}{{\mathbb P}}
\newcommand{\Q}{{\mathbb Q}}

\newcommand{\Z}{{\mathbb Z}}

\newcommand{\fp}{{\mathfrak p}}

\newcommand{\CH}{{\rm CH}}

\newcommand{\surj}{\twoheadrightarrow}
\newcommand{\inj}{\hookrightarrow}
\newcommand{\red}{{\rm red}}
\newcommand{\codim}{{\rm codim}}

\newcommand{\Pic}{{\rm Pic}}

\newcommand{\Hom}{{\rm Hom}}

\newcommand{\Spec}{{\rm Spec \,}}
\newcommand{\sing}{{\rm sing}}

\newcommand{\divf}{{\rm div}}

\newcommand{\id}{{\operatorname{id}}}

\newcommand{\Sch}{{\operatorname{\mathbf{Sch}}}}

\newcommand{\Sm}{{\mathbf{Sm}}}

\newcommand{\GL}{{\operatorname{\rm GL}}}

\newcommand{\ds}{{/\kern-3pt/}}

\newcommand{\Gr}{{\text{\rm Gr}}}

\newcommand{\Supp}{{\operatorname{Supp}}}

\newcommand{\Tor}{{\operatorname{Tor}}}

\newcommand{\sm}{{\operatorname{sm}}}

\newcommand{\un}{\underline}
\newcommand{\ov}{\overline}

\renewcommand{\dim}{\text{\rm dim}}

\newcommand{\tuborg}{\left\{\begin{array}{ll}}
\newcommand{\sluttuborg}{\end{array}\right.}

\newcommand{\zar}{{\rm zar}}
\newcommand{\nis}{{\rm nis}}

\newcommand{\ab}{{\rm ab}}
\newcommand{\LW}{{\rm LW}}
\newcommand{\reg}{{\rm reg}}

\newcommand{\PGL}{{\rm PGL}}

\newcommand{\nsm}{{\rm nsm}}
\newcommand{\ncm}{{\rm nCM}}
\newcommand{\cm}{{\rm CM}}

\newcommand{\wt}{\widetilde}
\newcommand{\wh}{\widehat}

\def\cO{\mathcal{O}}


\def\ol#1{\overline{#1}}

\newcounter{elno}   

\newcounter{elno-prime}

\newcounter{elno-abc}   
\newenvironment{listabc}{
                         \begin{list}{\alph{elno-abc})
                                     }{\usecounter{elno-abc}}
                      }{
                         \end{list}}
\newcounter{elno-abc-prime}

\begin{document}
\title[Bloch's formula and higher dimensional CFT]{Bloch's formula 
for 0-cycles with modulus and higher dimensional class field theory}   
\author{Federico Binda, Amalendu Krishna and Shuji Saito}

\address{Dipartimento di Matematica ``Federigo Enriques'', Universit\`a degli Studi di Milano, Via C.~Saldini 50, 
20133, Milano, Italy}
\email{federico.binda@unimi.it}
\address{School of Mathematics, Tata Institute of Fundamental Research,  
1 Homi Bhabha Road, Colaba, Mumbai, India}
\email{amal@math.tifr.res.in}
\address{Graduate School of Mathematical Sciences, University of Tokyo,  3-8-1 Komaba, Tokyo, 153-8914, Japan}
\email{sshuji@msb.biglobe.ne.jp}

\thanks{F.B. is supported by the DFG SFB/CRC 1085 ``Higher Invariants''. S.S.~is supported by JSPS KAKENHI Grant (15H03606) and the DFG SFB/CRC 1085 ``Higher Invariants''}


\keywords{Bloch's formula, $K$-theory, singular varieties, class field
theory}        

\subjclass[2010]{Primary 19E15, Secondary  14F42, 14C25}
\setcounter{tocdepth}{1}
\maketitle

\begin{quote}\emph{Abstract.}  
We prove Bloch's formula for the Chow group of 0-cycles with modulus
on a smooth quasi-projective 
surface over a field. We use this formula to give a simple
proof of the rank one case of a conjecture of Deligne and Drinfeld
on lisse $\ov{\Q}_{\ell}$-sheaves. This was 
originally solved by Kerz and Saito in characteristic $\neq 2$.
\end{quote}
\setcounter{tocdepth}{1}
\maketitle
\tableofcontents  

\section{Introduction}\label{sec:Intro}
Let $X$ be a normal and complete variety over a finite field
$k$. Let $U \subset X$ be a quasi-projective
open subvariety which is smooth and 
whose complement is the support of an effective Cartier divisor on $X$.
Let $(V_Z)_Z$ be a family of semisimple lisse $\ov{\Q}_{\ell}$-sheaves on
the normalizations $Z^N$, where $Z$ runs through all closed integral curves
on $U$. The family $(V_Z)_Z$ is said to be a 2-skeleton sheaf if 
for two curves $Z_1 \neq Z_2$, the sheaves $V_{Z_1}$ and $V_{Z_2}$
become isomorphic up to semi-simplification after pull-back to
the support of $Z^N_1 \times_U Z^N_2$. 

Let $\psi_Z \colon \ov{Z}^N \to \ov{Z}$ be the normalization of the closure 
$\ov{Z}$ of an integral curve $Z \subset U$ in $X$ 
and let $Z_{\infty}
= \psi^{-1}_Z(\ov{Z} \setminus U)$. The family $(V_Z)_Z$ is said to have
bounded ramification if there exists an effective Cartier divisor
$D \subset X$ supported on $X \setminus U$ such that for 
all integral curves $Z \subset U$, one has the inequality of
Cartier divisors 
${\underset{y \in Z_\infty}\sum} 
{\rm ar}_y(V_Z)[y] \le \psi^*_Z(D)$ on $\ov{Z}^N$,
where ${\rm ar}_y(V_Z)$ is the local Artin conductor of $V_Z$ at $y$ 
(see \cite{Serre}). 

Motivated by the work of Drinfeld \cite{Drinfeld}, a conjecture of 
Deligne \cite{EK} states that given a 2-skeleton sheaf
$(V_Z)_Z$ with bounded ramification, there exists a lisse 
$\ov{\Q}_{\ell}$-sheaf $V$ on $U$ such that $V|_{Z^N} \cong V_Z$ after
semi-simplification for all integral curves $Z \subset U$.

Let $\CH_0(X|D)$ denote the Chow group of 0-cycles on $X$ with modulus
(see \cite{BS} or \cite{KeS}) for any effective Cartier divisor $D \subset X$.
Let $C(U) = {\underset{D \subset X \setminus U}\varprojlim} \CH_0(X|D)$, 
where $D$ runs through all effective Cartier divisors supported on
$X \setminus U$. Let $\pi^{\ab}_1(U)$ denote the abelianized {\'e}tale
fundamental group of $U$. 

Kerz and Saito \cite{KeS} showed that $C(U)$ is independent of the 
choice of the compactification $X$ and that there is a reciprocity map $\rho_U \colon
C(U) \to \pi^{\ab}_1(U)$. This induces a map $\rho^0_U \colon
C(U)^0 \to \pi^{\ab}_1(U)^0$ on the degree zero parts (see \ref{sec:KST}).
The rank one case of the above conjecture of Deligne
is then a direct consequence of another conjecture, namely, the map
$\rho^0_U$ is an isomorphism of topological pro-finite abelian groups.
Kerz and Saito proved this latter conjecture when ${\rm char}(k) \neq 2$
through a series of several highly non-trivial reductions.

In this paper, we give a simple and independent proof of 
the theorem of Kerz and Saito, including  the
missing characteristic $2$ case. As a byproduct, we obtain a simple proof of
the rank one case of Deligne's conjecture in all positive characteristics.
We deduce these results by proving 
Bloch's formula for $\CH_0(X|D)$ when $X$ is a smooth quasi-projective surface over an arbitrary base field $k$.

\subsection{The main results}\label{sec:Results}
We now state our precise results. 
For a local ring $A$ and an ideal $I \subset A$, let $K^M_*(A,I)$
denote the relative Milnor $K$-theory (e.g., see \cite[\S~1.3]{Kato-Saito}). 
Let $A^h$ denote the Henselization of $A$ with respect to its maximal ideal.
For a closed immersion of Noetherian schemes $D \subset X$
defined by the sheaf of ideals $\sI_D$, 
we let $\sK^M_{n,X}$ denote the Zariski (resp. Nisnevich) sheaf on
$X$ whose stalk at a point $x \in X$ is the relative Milnor $K$-group
$K^M_n(\sO_{X,x}, \sI_{D,x})$ (resp. $K^M_n(\sO^h_{X,x}, \sI^h_{D,x})$)

\begin{thm}\label{thm:Main-1}
Let $X$ be a smooth quasi-projective surface over a field $k$ and let
$D \subset X$ be an effective Cartier divisor. Then there are canonical
isomorphisms
\begin{equation}\label{eqn:Main-1-0}
\rho_{X|D} \colon \CH_0(X|D) \xrightarrow{\cong} 
H^2_{\zar}(X, \sK^M_{2, (X,D)}) \xrightarrow{\cong} H^2_{\nis}(X, \sK^M_{2, (X,D)}).
\end{equation}
\end{thm}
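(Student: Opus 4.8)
The plan is to exhibit a single flasque complex that computes both $H^2_{\zar}(X,\sK^M_{2,(X,D)})$ and $H^2_{\nis}(X,\sK^M_{2,(X,D)})$, and then to identify its top cohomology with $\CH_0(X|D)$ cycle-theoretically. The natural candidate is the Cousin (coniveau) complex of $\sK^M_{2,(X,D)}$ on the surface $X$:
\[
0 \to \sK^M_{2,(X,D)} \to (i_\eta)_*K^M_2(k(X)) \to \bigoplus_{x\in X^{(1)}}(i_x)_*H^1_x \to \bigoplus_{x\in X^{(2)}}(i_x)_*H^2_x \to 0,
\]
where $\eta$ is the generic point and $H^j_x$ denotes the local cohomology at $x$ of the restriction of $\sK^M_{2,(X,D)}$ to $\Spec\sO_{X,x}$. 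Its terms are direct sums of pushforwards from points, hence acyclic for both the Zariski and the Nisnevich topology (the small Nisnevich site of a field is trivial), so \emph{once this complex is known to be a resolution} it computes $H^\ast$ in both topologies and the second displayed isomorphism of the theorem is immediate. The whole difficulty is thus concentrated in a few local statements at points of $D$.

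First input (the hard part): the Cousin complex is a resolution, i.e.\ $\sK^M_{2,(X,D)}$ is Cohen--Macaulay for the coniveau filtration. By Grothendieck vanishing the only conditions that are not automatic are $H^0_x=0$ for $\codim x\ge 1$ (clear, as $\sK^M_{2,(X,D)}$ embeds in $(i_\eta)_*K^M_2(k(X))$ and so has no sections supported on proper closed subsets) and $H^1_x=0$ for $x$ a closed point. Away from $D$ this is Kerz's improved Gersten conjecture for Milnor $K$-theory on smooth varieties, so the content is the case $x\in D$. I would reduce it to the purely local assertion that for a regular local (or essentially smooth semilocal) $k$-algebra $R$ of dimension $\le 2$ and a principal ideal $I=(g)\subset R$, the relative Cousin complex of $K^M_2(R,I)$ is exact. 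The strategy should mirror Kerz's: assume first $k$ infinite and use a Gabber-type geometric presentation of the pair $(\Spec R,V(g))$ over a smooth affine curve, then run a homotopy-invariance/specialization argument using multiplicativity and continuity of relative Milnor $K$-theory together with a relative form of the Bloch--Kato--Gabber description of $K^M_\ast/p$; the finite-field case then follows by the usual prime-to-$p$ norm trick. Carrying the congruence ideal $I$ (in particular allowing $D$ non-reduced) through the presentation lemma and the $K$-theoretic input is the main obstacle.

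Second input: identification of the top cohomology. Granting the resolution, $H^2_{\zar}(X,\sK^M_{2,(X,D)})=\cok\big(\bigoplus_{x\in X^{(1)}}H^1_x\to\bigoplus_{x\in X^{(2)}}H^2_x\big)$. I expect the following. For a closed point $x\in D$ the local Cousin complex at $\Spec\sO_{X,x}$ is exact in top degree, so every class in $H^2_x$ already comes from the codimension-one points through $x$; hence the cokernel is a quotient of $\bigoplus_{x\in X\setminus D}H^2_x=\bigoplus_{x\in(X\setminus D)^{(2)}}\Z=Z_0(X\setminus D)$, and the resulting surjection sends a closed point of $X\setminus D$ to its cycle class. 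For a curve $C\not\subset D$ one has $H^1_{\xi_C}=k(C)^\times$, and the boundary map is $\divf_{\ov C}$ on the components landing in $X\setminus D$ and a relative local symbol on the components landing in $D$; the vanishing of the latter is exactly the condition that $f$ be congruent to $1$ modulo the pullback of $D$ to $\ov{C}^{\,N}$, i.e.\ admissibility in the sense of Kerz--Saito. Combining these two points (and checking that curves contained in $D$ impose no further relations among the classes of points of $X\setminus D$), $H^2_{\zar}(X,\sK^M_{2,(X,D)})$ is identified with $Z_0(X\setminus D)$ modulo divisors of admissible functions, which is $\CH_0(X|D)$ by definition; this identification is the inverse of $\rho_{X|D}$, and naturality and independence of the compactification/choices are built in.

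Main obstacle, restated: the relative Gersten/Cohen--Macaulay statement at points lying on $D$, together with the relative-symbol computation that identifies "boundary vanishing along $D$" with the modulus condition. Both amount to pushing Gabber's geometric presentation lemma and the Bloch--Kato--Gabber theorem into the setting of a regular ring equipped with a (possibly non-reduced) principal congruence ideal; this is where essentially all the new work lies, the remainder being bookkeeping with Cousin complexes and the standard passage from a finite to an infinite base field.
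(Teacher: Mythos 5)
Your route is genuinely different from the paper's, and it contains a gap that I do not think can be closed by the strategy you sketch. Everything in your argument hinges on the exactness of the Cousin complex of $\sK^M_{2,(X,D)}$ at points lying on $D$ (both the vanishing of $H^1_x$ for closed $x\in D$, and the effaceability in top degree that you invoke to see that $H^2_x$ for $x\in D$ is hit from codimension one). You propose to prove this by mirroring Kerz's proof of the Gersten conjecture: Gabber's presentation lemma followed by a homotopy-invariance/specialization argument. But the relative Milnor $K$-sheaf with a (possibly non-reduced) modulus ideal is precisely \emph{not} $\A^1$-homotopy invariant --- this failure is the defining feature of the whole modulus theory --- and homotopy invariance is the essential input that makes the Gabber-presentation argument close up in Kerz's proof. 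A relative Gersten resolution for $\sK^M_{2,(X,D)}$ along $D$ is not available for an arbitrary effective Cartier divisor (R\"ulling--Saito obtain partial results under SNCD-type hypotheses, and even there with substantial effort), so this step is not ``pushing the presentation lemma through with a congruence ideal'' but an open problem in its own right. A secondary issue: even granting the resolution, identifying the cokernel of the Gersten complex with $\CH_0(X|D)$ requires showing that the relations imposed by \emph{all} rational functions on \emph{all} curves (including those meeting or contained in $D$) are generated exactly by divisors of admissible functions; you flag this but it is not a formality.

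The paper avoids this local problem entirely. It forms the double $S_X=X\amalg_D X$ and uses the exact sequence $0\to\CH_0(X|D)\to\CH_0(S_X)\to\CH_0(X)\to 0$ to reduce the modulus statement to Bloch's formula for the \emph{absolute} sheaf $\sK^M_{2,S_X}$ on the singular surface $S_X$. That case is handled by proving injectivity of the cycle class map $\CH_0(S_X)\to K_0(S_X)$ via Levine's Chern-class construction on singular surfaces (extended here from algebraically closed to arbitrary infinite fields, with a pro-$\ell$ trick for finite fields), combined with surjectivity of the Bloch--Kato map coming from Levine and Kato--Saito. In other words, the hard local Gersten-type input at points of $D$ is traded for a global $K$-theoretic injectivity statement on a singular surface, which is exactly the trade your proposal does not make.
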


\vskip .2cm

When $D = 0$, \thmref{thm:Main-1} is classical and was first 
proven by Bloch \cite{Bloch} for Quillen $K$-theory and by
Kato \cite{Kato} for Milnor $K$-theory. 
For general $D$, \thmref{thm:Main-1} was proven earlier
by Binda and Krishna \cite[Theorem~1.8]{BK} when $k$ is algebraically closed.
When $X$ is affine and $k$ is perfect,
this theorem was proven by Gupta and Krishna \cite[Theorem~1.4]{GK}.
We shall refer to this theorem as the Bloch-Kato formula.

\vskip .2cm

Combining \thmref{thm:Main-1} 
with an induction argument due to Kerz and Saito \cite{KeS} and
the Kato-Saito class field theory \cite{Kato-Saito}, we obtain the
following.

\begin{thm}\label{thm:Main-2}
Let $U$ be a smooth quasi-projective variety of dimension $d \ge 1$ over
a finite field $k$. Suppose there is an open immersion $U \subset X$
such that $X$ is normal and proper over $k$ and $(X \setminus U)_{\red}$ is
the support of an effective Cartier divisor on $X$. Then the  
reciprocity map
\[
\rho^0_U \colon C(U)^0 \to \pi^{\ab}_1(U)^0
\]
is an isomorphism of  pro-finite topological abelian groups.
\end{thm}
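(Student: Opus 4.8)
The plan is to deduce \thmref{thm:Main-2} from \thmref{thm:Main-1} by induction on $d = \dim U$, running the Bertini/Lefschetz reduction of Kerz and Saito \cite{KeS} and the duality of Kato and Saito \cite{Kato-Saito}; the point is that \thmref{thm:Main-1}, being valid in every characteristic, supplies the input which in \cite{KeS} was available only for $p \neq 2$. The base case $d = 1$, in which $U$ is a smooth affine curve with smooth compactification $X$ and $C(U) = \varprojlim_{D} \CH_0(X|D) = \varprojlim_{D} \Pic(X,D)$, is classical geometric class field theory (generalized Jacobians, Serre--Lang--Rosenlicht): $\rho^0_U$ is an isomorphism onto $\pi^{\ab}_1(U)^0$, and I would simply cite it.

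The heart of the argument is the surface case $d = 2$, and this is where \thmref{thm:Main-1} is used. For each effective Cartier divisor $D$ supported on $X \setminus U$, \thmref{thm:Main-1} identifies $\CH_0(X|D)$ with $H^2_{\nis}(X, \sK^M_{2,(X,D)})$. One must then compare this relative Milnor $K$-cohomology with the {\'e}tale cohomology of $U$: reducing modulo $n$ and invoking the norm residue theorem together with its relative refinement --- the description of $\sK^M_{2,(X,D)}/n$ in terms of relative {\'e}tale (or logarithmic de~Rham--Witt, when $p \mid n$) sheaves --- one feeds the result into the arithmetic duality for {\'e}tale cohomology of varieties over finite fields of Kato--Saito (and Jannsen--Saito). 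This produces a perfect pairing, compatible with the reciprocity map, between $\CH_0(X|D)/n$ and the $n$-torsion of $\pi^{\ab}_1(U)$, and hence, using the finiteness of $\CH_0(X|D)^0$ over a finite field, an isomorphism between $C(U)^0$ and $\pi^{\ab}_1(U)^0$ after passing to the inverse limit over $D$ and over $n$. This settles $d = 2$.

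For $d \geq 3$ I would carry out the induction of \cite{KeS}. A refined Bertini theorem (the Kerz--Saito enhancement of Gabber's) produces, for any finite set of closed points of $U$ and any modulus $D$, smooth curves $C \subset U$ through those points whose ramification along $X \setminus U$ is bounded by $D$; the associated pushforwards jointly give a surjection $\bigoplus_C C(C)^0 \surj C(U)^0$, while a Lefschetz-type theorem for abelianized fundamental groups gives $\bigoplus_C \pi^{\ab}_1(C)^0 \surj \pi^{\ab}_1(U)^0$. Surjectivity of $\rho^0_U$ then follows from the curve case and the compatibility of reciprocity maps; for injectivity, a nonzero class of $C(U)^0$ is trivialized by a finite abelian cover of $U$ which, by the Lefschetz theorem, restricts nontrivially to one of the curves $C$, where the curve case and the ramification bound force its image in $\pi^{\ab}_1(C)^0$, hence in $\pi^{\ab}_1(U)^0$, to be nonzero. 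Finally, a continuous bijection of profinite groups is an isomorphism of topological groups, which completes the induction.

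I expect the genuine obstacle to be the surface case: translating the purely $K$-theoretic content of \thmref{thm:Main-1} into the arithmetic statement about $\pi^{\ab}_1$ requires a precise understanding of $\sK^M_{2,(X,D)}/n$, including its wildly ramified part when $p \mid n$, together with a proof that nothing is lost in the limit over the moduli $D$ --- this is exactly the mechanism by which the wild ramification of $U$ is detected by $C(U)$. The Bertini and Lefschetz ingredients of the inductive step are technically heavy, but they can be imported from \cite{KeS} essentially unchanged, so the genuinely new work is concentrated in dimensions $\leq 2$.
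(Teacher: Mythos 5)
Your proposal follows the same skeleton as the paper's proof: reduce to $d\le 2$ by importing the Bertini/Lefschetz induction of Kerz--Saito, treat $d=1$ as classical, and in the surface case feed \thmref{thm:Main-1} into the Kato--Saito class field theory. The one place you diverge is in how the surface case is finished. You propose to re-derive the arithmetic input by analyzing $\sK^M_{2,(X,D)}/n$ via the norm residue theorem and relative duality, and you flag this as ``the genuine obstacle''; but this is precisely the content of the already-published Kato--Saito reciprocity isomorphism $\wh{\rho}_U\colon {\varprojlim}_{D,m}\, H^2_{\nis}(X,\sK^M_{2,(X,D)})\otimes\Z/m \xrightarrow{\cong}\pi^{\ab}_1(U)$ (\cite[Theorem~9.1(3)]{Kato-Saito}), which the paper simply cites. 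Once that is taken as known, the surface case needs only three things beyond \thmref{thm:Main-1}: the compatibility $\rho_U=\wh{\rho}_U\circ\wt{\rho}_U$ of the Kerz--Saito and Kato--Saito reciprocity maps (checked via \cite[Proposition~3.8]{Kato-Saito}); the injectivity of ${\varprojlim}_D H^2_{\nis}\to{\varprojlim}_{D,m}H^2_{\nis}\otimes\Z/m$, which follows from the finiteness of the degree-zero parts (itself a consequence of \thmref{thm:Main-1} together with Kato--Saito finiteness, not an independent input as your phrasing suggests); and surjectivity of $\rho^0_U$, which comes from Chebotarev--Lang density rather than from the duality pairing. Your route would work if carried out, but it amounts to reproving a substantial part of \cite{Kato-Saito}; the economy of the paper's argument is that Bloch's formula is the \emph{only} new ingredient required. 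Also note a small imprecision: the pairing you describe identifies $\CH_0(X|D)/n$ with the dual of ${\rm fil}_D H^1(U,\Z/n)$, i.e.\ with $\pi^{\ab}_1(X,D)/n$, not with the $n$-torsion of $\pi^{\ab}_1(U)$.
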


\vskip .3cm

Since the rank one lisse 
$\ov{\Q}_{\ell}$-sheaves on $U$ are same as the characters of
the pro-finite group $\pi^{\ab}_1(U)$, the following is 
an immediate consequence of \thmref{thm:Main-2} and the 
Pontryagin duality theorem for pro-finite groups.

\begin{cor}\label{cor:Main-3}
The rank one case of Deligne's conjecture is true for any smooth
quasi-projective variety $U$ over a finite field.
\end{cor}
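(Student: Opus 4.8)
The plan is to deduce the rank one case of Deligne's conjecture from the isomorphism $\rho^0_U$ in \thmref{thm:Main-2} by a purely formal argument via Pontryagin duality, exactly as indicated in the paragraph preceding the statement. First I would recall that, since $\ell \neq \Char(k)$ and $\ov{\Q}_\ell$ is algebraically closed of characteristic zero, a rank one lisse $\ov{\Q}_\ell$-sheaf on $U$ is the same datum as a continuous character $\chi \colon \pi^{\ab}_1(U) \to \ov{\Q}^\times_\ell$; because $\pi^{\ab}_1(U)$ is pro-finite, the image of such a $\chi$ lands in the roots of unity, so $\chi$ factors through a finite quotient and is in particular a character of the topological group $\pi^{\ab}_1(U)$ in the usual sense. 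Likewise, for each integral curve $Z \subset U$, restricting along $Z^N \to U$ (equivalently, along $\pi^{\ab}_1(Z^N) \to \pi^{\ab}_1(U)$, using that $\pi_1$ of the smooth curve $Z^N$ abelianized surjects appropriately) produces the family $(\chi|_{Z^N})_Z$, and a rank one sheaf is automatically semisimple, so the semi-simplification issue in the statement of the conjecture is vacuous here.

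The key step is the following reformulation. A rank one version of the data of a $2$-skeleton sheaf with bounded ramification on $U$ amounts to giving, compatibly, a character on each $\pi^{\ab}_1(Z^N)$ whose ramification at the boundary points $Z_\infty$ is bounded by a fixed effective Cartier divisor $D \subset X$ supported on $X \setminus U$, uniformly in $Z$; by the Kato--Saito description recalled in the excerpt, the pro-finite group $C(U)^0$ (the inverse limit of the $\CH_0(X|D)^0$) is precisely the object that co-represents such bounded-ramification families, and dually the continuous characters of $C(U)$ are exactly the characters of the various $\pi^{\ab}_1(Z^N)$ with bounded ramification that are compatible across intersections — this compatibility is what the $2$-skeleton condition encodes once everything is abelian and rank one. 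Granting this translation, I would argue: given a bounded-ramification rank one $2$-skeleton family, it corresponds to a continuous character $\psi \colon C(U) \to \ov{\Q}^\times_\ell$ (automatically of finite order), hence by the isomorphism $\rho^0_U$ of \thmref{thm:Main-2} (extended from the degree-zero parts to the full groups using that the degree map is compatible on both sides, so that $\psi$ is pulled back from a character of $\pi^{\ab}_1(U)$) to a continuous character $\chi$ of $\pi^{\ab}_1(U)$, i.e.\ to a rank one lisse $\ov{\Q}_\ell$-sheaf $V$ on $U$. Unwinding the correspondence shows $V|_{Z^N} \cong V_Z$ for every integral curve $Z \subset U$, which is the assertion of the conjecture in rank one.

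The main obstacle — really the only nonformal point — is making precise the passage between the sheaf-theoretic formulation of Deligne's conjecture (families indexed by all integral curves, with the pullback-to-$Z^N_1 \times_U Z^N_2$ compatibility and the Artin-conductor bound $\sum_{y} \mathrm{ar}_y(V_Z)[y] \le \psi^*_Z(D)$) and the class-field-theoretic formulation in terms of characters of $C(U)$; this is where one invokes the Kerz--Saito theory \cite{KeS} identifying $C(U)$ with the relevant idelic/cycle-theoretic object together with its reciprocity map, and the Kato--Saito higher-dimensional class field theory \cite{Kato-Saito} to match the ramification filtration on characters of $\pi^{\ab}_1(U)$ with the modulus filtration defining the $\CH_0(X|D)$. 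Once \thmref{thm:Main-2} is in hand, all of this is essentially a bookkeeping exercise in Pontryagin duality for pro-finite abelian groups: an isomorphism of pro-finite topological abelian groups induces an isomorphism on character groups, and a compatible system of characters of the $\pi^{\ab}_1(Z^N)$ with uniformly bounded ramification is exactly a character of the inverse limit $C(U)$. I would therefore spend the bulk of the write-up on this dictionary and keep the duality argument itself to a few lines.
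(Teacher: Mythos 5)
Your argument is essentially the paper's: the authors dispose of this corollary in one line, as an immediate consequence of \thmref{thm:Main-2} and Pontryagin duality once rank one lisse $\ov{\Q}_\ell$-sheaves are identified with continuous characters of $\pi^{\ab}_1(U)$, and your write-up simply makes that dictionary (via bounded-ramification characters of the $\pi^{\ab}_1(Z^N)$ and of $C(U)$) explicit. One small correction: a continuous character $\pi^{\ab}_1(U) \to \ov{\Q}_\ell^{\times}$ need not land in the roots of unity or factor through a finite quotient (its image is merely compact, hence contained in $\sO_E^{\times}$ for some finite extension $E/\Q_\ell$); finiteness of the image is only available on the degree-zero part, where it follows from the finiteness of $\CH_0(X|D)^0$ and $\pi^{\ab}_1(X,D)^0$, and that is all the duality argument actually requires.
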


\vskip .3cm

We list some more applications of \thmref{thm:Main-1}.
Apart from its application to higher dimensional class field theory
and solution of the rank one case of Deligne's conjecture,
\thmref{thm:Main-1} has further applications. It is used in
\cite{BK3} to prove a restriction isomorphism for the
relative 0-cycle group on a regular and flat projective scheme over
a henselian discrete valuation ring with arbitrary reduction.
\thmref{thm:Main-1} is used in \cite{BK2} to
give an explicit comparison between 
the motivic cohomology (in the sense of Suslin-Voevodsky)
and the Levine-Weibel Chow group of a normal crossing scheme.
Finally, it forms a crucial step in the recent proof of Bloch's
formula for the Chow group of 0-cycles with modulus in higher dimensions
in \cite{GK-20}.

\vskip .3cm

\subsection{Outline of proofs}\label{sec:Outline}
Let $U$ be as in \thmref{thm:Main-2}.
Kerz and Saito \cite{KeS} defined a reciprocity map
$\rho_U \colon C(U) \to \pi^{\ab}_1(U)$, and the goal is to show that this is an
isomorphism on the degree zero parts. As a first step, it is possible to use an alteration (in the sense of de Jong) to assume that the compactification $X$ of $U$ is smooth and projective, with complement $(X\setminus U)_{\red}$ the support of a strict normal crossing divisor. 
Next,  using a generalized version of
the Bertini theorem of Poonen \cite{Poonen} and the 
Lefschetz hyperplane theorem of Kerz and Saito \cite{KeS-1}, one
  reduces to the case when $d \le 2$.
  This is the main point, and it is the case for which we provide a new and
simple proof in this paper.

The $d = 1$ case is classical. 
Our strategy for proving \thmref{thm:Main-2} in $d = 2$ case
is the following. We define a cycle class map
$\rho_{X|D} \colon \CH_0(X|D) \to H^2_{\nis}(X, \sK^M_{2, (X,D)})$
for any effective Cartier divisor $D \subset X$ supported on $X \setminus U$.
\thmref{thm:Main-1} says that this map is an isomorphism.
Taking the limit over all such $D \subset X$, we get an
isomorphism
$\wt{\rho}_U \colon C(U) \to  {\underset{D} \varprojlim} \
H^2_{\nis}(X, \sK^M_{2, (X,D)})$. 
Let us denote the limit on the right side as $C^{\rm KS}(U)$.

Kato and Saito \cite{Kato-Saito} constructed a reciprocity isomorphism 
$\wh{\rho}^0 \colon C^{\rm KS}(U)^0 \xrightarrow{\cong} \pi^{\ab}_1(U)^0$.
We thus have maps
\[
C(U)^0 \xrightarrow{\wt{\rho}_U^0} C^{\rm KS}(U)^0 \xrightarrow{\wh{\rho}^0}
\pi^{\ab}_1(U)^0.
\]
It is easy to check that the composite map is the reciprocity map
$\rho^0_U$ of Kerz and Saito. The map
$\wh{\rho}^0$ was shown to be an isomorphism by Kato and Saito 
 \cite{Kato-Saito}. This provides a proof of \thmref{thm:Main-2}.

The heart of this paper is the proof of \thmref{thm:Main-1}.
This proof has two main ingredients: (1) the fundamental exact sequence
of \cite{BK} which relates the 0-cycles group with modulus on smooth 
varieties with a modified version of the Levine-Weibel 0-cycle group 
(referred to as the lci Chow group) on a 
singular variety, and (2) a proof of Bloch's formula for the modified version
of the Levine-Weibel 0-cycle group.
For (2), we use the existence of a good theory of pull-back and
push-forward for the modified Levine-Weibel Chow group and a 
pro-$\ell$ extension trick to reduce it
to the case when the ground field is infinite.

When the ground field is algebraically closed, 
Levine \cite{Levine-2} constructed 
a geometric theory of Chow ring on singular varieties.
As a consequence, he showed that for a reduced quasi-projective
surface $X$ over an algebraically closed field, 
the cycle class map from the Levine-Weibel Chow group to $K_0(X)$
is injective. Even as Levine's complete proof is yet unpublished, 
a published account of the dimension two case of his proof
is available in \cite{BSri}.
A major part of this paper is devoted to showing that
Levine's program can be carried out for surfaces over an arbitrary infinite field.
In particular, the injectivity of the cycle class map (for surfaces) holds over
any infinite field. A known relation between the Levine-Weibel
and the lci Chow groups (shown in \cite{BK}) then shows that this holds
for the latter group too. The injectivity of the cycle class map
implies Bloch's formula.

In \S~\ref{sec:Cycle}, we recall definitions of all cycle groups
and recollect some results from \cite{BK} that play key roles
in our proofs.
In \S~\ref{sec:CCM*}, we construct the cycle class map and the 
Bloch-Kato map which are used in the proof of Bloch's formula.
We prove some results regarding these maps which are later used in the
main proofs. The key results in this section are
Theorems~\ref{thm:CCM-main} and ~\ref{thm:CCM-main-mod}.
The next four sections form the heart of the proofs, where
the goal is to carry out Levine's strategy over an arbitrary infinite fields.
Here, the main idea is solely due to Levine. However, due to the
complications arising from the arbitrariness
of the ground field, we need to include new arguments at several places
to ensure that Levine's results remain valid over such fields.
In \S~\ref{sec:BFormula}, we finish the proofs of our main results.

\section{The Chow group of 0-cycles}\label{sec:Cycle}
In this section,  we fix our notations and recall the definitions of various 
0-cycle groups that we use in our proofs. We also prove some
results relating these groups.

\subsection{Notations}\label{sec:Notn}
For a field $k$, we shall let $\Sch_k$ denote the category of
quasi-projective schemes over $k$. We let $\Sm_k$ denote the
subcategory of $\Sch_k$ consisting of smooth schemes over $k$.
For $X \in \Sch_k$, we let $X_{\rm reg}$ denote the largest open
subscheme of $X$ which is regular. We let $X_{\rm sing}$ denote 
the complement of $X_{\rm reg}$ in $X$ with its reduced induced closed
subscheme structure. Recall here the convention that if $\dim(X) = d$, then
$X_{\rm sing}$ contains every point of $X$ which lies on an
irreducible component of $X$ of dimension less than $d$.
We shall let $X_{\sm}$ denote the set of points in $X$ where
the map $X \to \Spec(k)$ is smooth. It is clear by the definition 
of smooth morphisms that $X_{\sm} \subset X$ is open.
We let $X_{\nsm} = X \setminus X_\sm$ with the reduced induced
closed subscheme structure. Note that
$X_\sm \subset X_\reg$ and hence $X_\sing \subset X_\nsm$.

For $X, Y \in \Sch_k$, we shall write the
product $X \times_k Y$ as $X \times Y$.
If $X_1 \times \cdots \times X_n$ is a product of schemes, then
$p_i$ will denote the projection from this product to its $i$-th
factor (unless we use a specific notation in some
context).
We shall often write $X \times \P^n_k$ as $\P^n_X$.
If $z \in Z := X \times Y$ is a closed point, then it is not true in general 
that $z$ is uniquely determined by its projections
$p_1(z) \in X$ and $p_2(z) \in Y$. However, this is indeed true if
$z$ happens to be a $k$-rational point of $X \times Y$.
In this case, we can uniquely write $z = (p_1(z), p_2(z)) \in X \times Y$.
More generally, if $S$ is a scheme and $X, Y \in \Sch_S$,
then the canonical map $(X \times_S Y)(S)
\to X(S) \times Y(S)$ of sets is bijective.  
This fact will be used frequently in this paper.
If $f \colon X' \to X$ is a morphism in $\Sch_k$, then a rational fiber
of $f$ will mean the scheme-theoretic fiber over a $k$-rational point of $X$.

We shall let $\sZ(X)$ denote the free abelian group
generated by integral closed subschemes of $X$.
If $f \colon X' \to X$
is a proper map and $V \subset X'$ is an integral closed subscheme,
then we shall let $f_*(V) \in \sZ(X)$ 
denote the push-forward on $X$ of the cycle
$[V]$ in the sense of \cite{Fulton1}.
 
If $X \in \Sch_k$ and $V_1, V_2 \subset X$ are two irreducible closed
subschemes of $X$, then we 
shall say that they intersect properly if
$\codim_X(V_1 \cap V_2) \ge \codim_X(V_1) + \codim_X(V_2)$.
If $V_1$ or $V_2$ is not necessarily irreducible, then
we shall say that they intersect properly if every 
irreducible component of $V_1$ intersects every irreducible component of
$V_2$ properly. If $V_1, V_2$ are two closed subschemes intersecting
properly, we shall let $V_1 \cdot V_2$ be their intersection product
as a cycle on $X$ in the sense of \cite[Chapter~6]{Fulton1}.
We shall use this intersection product only in the case
when $V_1 \cap V_2 \subset X_\reg$. 
We shall let
$\codim_X(\emptyset) = \infty$. If $X$ is integral and $f \in k(X)^{\times}$
is a rational function, we shall let $\divf(f)$ denote the cycle
associated to $f$ in $\sZ(X)$ as in \cite{Fulton1}.

\subsection{The Levine-Weibel Chow group of a surface}
\label{sec:LWCG}
Since we use the Levine-Weibel Chow group \cite{LW} only for surfaces, we
do not recall its definition in full generality. In the case of
surfaces, we shall use the following variant of the Levine-Weibel Chow group
which was defined by Levine \cite{Levine-2}. It was shown
in {\sl ibid.} that this variant is actually isomorphic to the 
Levine-Weibel Chow group, but we shall have no occasion to use this
comparison. We fix an arbitrary field $k$.

\begin{defn}\label{defn:LW-defn}
Let $X$ be a reduced quasi-projective surface over $k$ and let
$Y \subset X$ be a nowhere dense closed subscheme containing $X_{\sing}$.
Let $\sZ_0(X,Y)$ be the free abelian group on the set of closed points
lying in $X \setminus Y$. Let $\sR^L_0(X,Y)$ denote the subgroup of
$\sZ_0(X,Y)$ generated by cycles $C_0 - C_{\infty} \in \sZ_0(X,Y)$
such that the following hold:

There exists a closed subscheme $C \subset X \times \P^1_k$ 
of pure dimension one such that
\begin{enumerate}
\item
$C \cap \P^1_{Y}$ is finite and $C \cap (Y \times \{0, \infty\}) =
\emptyset$.
\item
$C \cap (X \times \{0, \infty\})$ is finite.
\item
The projection map $C \to \P^1_k$ is flat over a neighborhood of 
$\{0, \infty\}$.
\item
If $p_1 \colon \P^1_X \to X$ is the projection, then
$(p_1)_*(C \cdot (X \times \{0\})) = C_0$ and
$(p_1)_*(C \cdot (X \times \{\infty\})) = C_\infty$.
\item
The ideal of $C$ in $\P^1_X$ is a complete intersection
in the local ring of every point $x \in C \cap \P^1_{Y}$.  
\end{enumerate}

We define $\CH^L_0(X,Y)$ to be the quotient 
$\frac{\sZ_0(X,Y)}{\sR^L_0(X,Y)}$. When $Y = X_\sing$, we write
$\CH^L_0(X,Y)$ simply as $\CH^L_0(X)$. 
In this paper, we shall refer to $\CH^L_0(X)$ as the
`Levine-Weibel Chow group' of $X$.
We shall call a subscheme $C$ as above a {\sl Cartier curve}
relative to $Y$. A Cartier curve in general will mean the one
relative to $X_\sing$.
\end{defn}

In the sequel, we shall use the notation $\CH^{\LW}_0(X,Y)$ for the original
definition of the Chow group of 0-cycles given by Levine and Weibel \cite{LW}.  
The notion of ``Cartier curves'' was introduced in \cite[Definition 1.2]{LW}:
we shall reserve the term `Levine-Weibel Cartier curves' for those
defined in \cite{LW}.

The following lemma allows us to assume $C$ to be reduced in the
definition of the Levine-Weibel Chow group if $k$ is infinite.

\begin{lem}\label{lem:Reduced}
Assume that $k$ is infinite.
Let the pair $(X,Y)$ be as in Definition~\ref{defn:LW-defn}.
Then $\sR^L_0(X,Y)$ is the subgroup of $\sZ_0(X,Y)$ generated by cycles 
$C_0 - C_{\infty} \in \sZ_0(X,Y)$, where $C$ is a reduced Cartier curve
in $\P^1_X$. If $X$ is integral, we can assume $C$ to be integral.
\end{lem}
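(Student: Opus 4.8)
The plan is to start with an arbitrary Cartier curve $C \subset \P^1_X$ realizing a relation $C_0 - C_\infty \in \sR^L_0(X,Y)$, and to produce a \emph{reduced} Cartier curve $C'$ realizing the same relation (or a collection of reduced Cartier curves whose relations sum to $C_0 - C_\infty$). The first observation is that since $C$ is of pure dimension one and the points of $C_0, C_\infty$ lie in $X \setminus Y \subset X_{\reg}$, the curve $C$ meets $X \times \{0\}$ and $X \times \{\infty\}$ in the smooth locus of $\P^1_X$, where it is automatically a Cartier divisor on each fiber; moreover over a neighborhood of $\{0,\infty\}$ the map $C \to \P^1$ is flat, so near the fibers over $0$ and $\infty$ the scheme $C$ is already a local complete intersection. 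Likewise condition (5) says $C$ is a complete intersection at each point of $C \cap \P^1_Y$. So $C$ is ``nice'' exactly along the loci where the definition imposes conditions; the issue is only that globally (or along $\P^1_Z$ for the other components $Z$ of $C$'s image) $C$ may be non-reduced or even embedded-point-laden.

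The main step is a moving/Bertini argument: I would realize $C$ as (a component of) a complete intersection in some projective space after embedding $\P^1_X \inj \P^N$, or rather work with the ideal sheaf $\sI_C$ and its twists. Concretely, since $C$ has pure dimension one, after replacing $C$ by its image in a suitable projective embedding, one can write $C$ (in a neighborhood of the finitely many ``bad'' points $C \cap \P^1_Y$ and of $C \cap (X \times \{0,\infty\})$) as cut out by global sections; then a general such choice of defining sections, using that $k$ is infinite so that general members of a linear system avoid prescribed finite bad loci (a generalized Bertini theorem in the style of the ones cited in the paper), yields a curve $C'$ which is reduced away from those finitely many points, agrees with $C$ (in the relevant sense) near the fibers over $0$ and $\infty$ and near $C \cap \P^1_Y$, and hence is a genuine Cartier curve giving the \emph{same} cycle $C_0 - C_\infty$. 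If $X$ is integral one arranges in addition that $C'$ is irreducible, again by choosing the auxiliary hypersurfaces generically and invoking irreducibility of a general complete intersection curve (Bertini irreducibility over the infinite field $k$). Subtracting, the difference of the two relations lies in $\sR^L_0(X,Y)$, and after finitely many such steps one decomposes the original relation into relations supported on reduced (resp.\ integral) Cartier curves.

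The \textbf{hard part} is the bookkeeping at the finitely many constrained points: one must simultaneously (i) keep the scheme-theoretic intersections with $X \times \{0\}$ and $X \times \{\infty\}$ literally unchanged so that the push-forwards $C_0, C_\infty$ are preserved on the nose, (ii) preserve the complete-intersection property at every point of $C' \cap \P^1_Y$, and (iii) preserve finiteness of $C' \cap \P^1_Y$ and $C' \cap (X \times \{0,\infty\})$ and flatness over $\{0,\infty\}$ — all while making $C'$ reduced elsewhere. The mechanism is to fix the local equations of $C$ at these points and let only the ``free'' part of the linear system vary; the infinitude of $k$ is exactly what lets a general member in such a constrained linear system be reduced (and irreducible) while respecting all the local constraints. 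I expect this to be essentially the argument Levine uses in the algebraically closed case (as recorded in \cite{BSri}), adapted by replacing ``general point of an algebraically closed field'' with ``general $k$-point'', which is legitimate because the relevant openness-and-density statements for linear systems hold over any infinite field.
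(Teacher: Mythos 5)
Your proposal correctly identifies the general shape of the argument (decompose the relation given by an arbitrary Cartier curve into relations given by reduced, resp.\ integral, ones), but it does not actually prove the decisive step; it asserts it and then says you ``expect this to be essentially the argument Levine uses.'' That assertion is the entire content of the lemma. The paper's proof is a two-line reduction: a Cartier curve in the sense of Definition~\ref{defn:LW-defn} is in particular a Levine--Weibel Cartier curve on $\P^1_X$ relative to $\P^1_Y$, and Levine's published Lemma~1.4 of \cite{Levine-3} (which is stated over an arbitrary base, not just an algebraically closed field) already produces reduced (integral, if $X$ is integral) Levine--Weibel Cartier curves $C_1,\dots,C_r$ missing $Y\times\{0,\infty\}$ --- possible since that locus has codimension $\ge 2$ in $\P^1_X$ by \cite[Lemma~1.3]{ESV} --- with $\sum_i \left[C_i\cdot(X\times\{0\}) - C_i\cdot(X\times\{\infty\})\right] = C\cdot(X\times\{0\}) - C\cdot(X\times\{\infty\})$ in $\sZ_0(\P^1_X,\P^1_Y)$; one then pushes forward by $p_1$. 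So the correct move is to quote that lemma, not to re-derive it.

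Beyond the deferral, the sketch as written has a substantive misdirection: you insist on keeping the scheme-theoretic intersections with $X\times\{0\}$ and $X\times\{\infty\}$ ``literally unchanged \ldots on the nose'' for a single replacement curve $C'$. That is not how the decomposition works and is generally not achievable: a non-reduced or embedded-component-laden $C$ is replaced by \emph{several} reduced curves $C_i$, and only the \emph{sum} of the cycles $(C_i)_0-(C_i)_\infty$ equals $C_0-C_\infty$; the individual fibers over $0$ and $\infty$ are not preserved. Moreover, the claim that a general member of a linear system constrained to prescribed local behaviour at the finitely many points of $C\cap\P^1_Y$ is reduced (or irreducible) elsewhere is exactly the kind of statement that requires a specific Bertini-type theorem with base conditions, and you give no argument that the relevant linear system is large enough for such a theorem to apply. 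Finally, your side remark that flatness of $C\to\P^1_k$ near $\{0,\infty\}$ makes $C$ a local complete intersection there is unjustified (and unnecessary: condition (5) only concerns points of $C\cap\P^1_Y$). As it stands the proposal is a plausible outline with the key lemma missing, not a proof.
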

\begin{proof}
Let $C \subset \P^1_X$ be a Cartier curve. Then it is also a Levine-Weibel
Cartier curve on $\P^1_X$. In the latter case, it
was shown in \cite[Lemma~1.4]{Levine-3} that we can find
reduced Levine-Weibel Cartier curves $C_1, \ldots , C_r$ on $\P^1_X$ 
which miss $Y \times \{0, \infty\}$ (since its codimension
in $\P^1_X$ is at least two, see \cite[Lemma~1.3]{ESV})
such that we have
\[
C \cdot (X \times \{0\})-
C \cdot (X \times \{\infty\}) = 
\stackrel{r}{\underset{i =1} \sum}
\left[C_i \cdot (X \times \{0\})-
C_i \cdot (X \times \{\infty\})\right]
\]
in $\sZ_0(\P^1_X, \P^1_Y)$. 
In particular, we get
\[
C_0 - C_\infty = \stackrel{r}{\underset{i =1} \sum}
\left[(p_1)_*(C_i \cdot (X \times \{0\}))-
(p_1)_*(C_i \cdot (X \times \{\infty\}))\right]
= \stackrel{r}{\underset{i =1} \sum}
\left[(C_i)_0 - (C_i)_\infty\right].
\]

Since each $C_i$ is a Cartier curve in our sense and is reduced,
we are done. If $X$ was integral, it was shown in 
\cite[Lemma~1.4]{Levine-3} that we could have chosen each
$C_i$ to be integral. This finishes the proof.
\end{proof}

\begin{lem}\label{lem:Reduced-finite}
Assume that $C \subset \P^1_X$ is a reduced Cartier curve.
We can then find another reduced Cartier curve $C' \subset \P^1_X$
such that the projection $C' \to X$ is finite and
$C_0 - C_\infty = C'_0 - C'_\infty$.
\end{lem}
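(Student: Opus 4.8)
The plan is to produce $C'$ by deleting from $C$ exactly its ``vertical'' irreducible components, i.e.\ those that the first projection $p_1 \colon \P^1_X \to X$ contracts to a point, and then to check that this deletion changes neither the $0$-cycle $C_0 - C_\infty$ nor the validity of the conditions of \defref{defn:LW-defn}. Since $C$ is reduced of pure dimension one, I would write $C = C' \cup C''$ with reduced structures, where $C''$ is the union of those components $C_i$ with $\dim p_1(C_i) = 0$ and $C'$ is the union of the remaining components. Each such $C_i$ is a one-dimensional integral closed subscheme of the fibre $\P^1_{x}$ over a closed point $x \in X$, hence equals $\P^1_{x}$; thus $C'' = \bigcup_{j} L_j$ with $L_j = \P^1_{x_j}$ for finitely many closed points $x_j$.

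The first thing I would verify is that $C' \to X$ is finite: each component of $C'$ maps to $X$ by a non-constant morphism from an integral curve, hence with finite fibres, and it is proper over $X$ since $\P^1_X$ is, so it is finite over $X$; hence so is the finite union $C'$. Next, each $x_j$ lies in $X \setminus Y$, for otherwise $L_j \subset \P^1_Y$, contradicting the finiteness of $C \cap \P^1_Y$ in condition~(1); consequently each $L_j$ is disjoint from $\P^1_Y$ and from $Y \times \{0,\infty\}$, and by condition~(2) no component of $C$ meets $X \times \{0\}$ or $X \times \{\infty\}$ in positive dimension. Granting these, I would argue that $C'$ is a Cartier curve relative to $Y$: it is closed, reduced, and pure of dimension one (or empty); conditions~(1) and~(2) pass to the subscheme $C' \subset C$; condition~(5) holds because $C''$ is empty in a neighbourhood of any point of $C \cap \P^1_Y$, so $C'$ and $C$ define the same ideal there; and condition~(3) holds over the very neighbourhood $N$ of $\{0,\infty\}$ over which $C \to \P^1_k$ is flat, since flatness of $C$ over $N$ forces every component of $C$, hence of $C'$, to dominate $N$, and a reduced scheme of finite type over the regular curve $N$ all of whose components dominate $N$ is flat over $N$.

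It then remains to compare the cycles. Writing $t$ for the standard coordinate on $\P^1_k$, condition~(2) ensures that $t$ restricts to a nonzero rational function on each component $C_i$ of $C$; since $C$ is reduced we have $[C] = \sum_i [C_i]$, so $C_0 - C_\infty = \sum_i (p_1)_*\bigl(\divf_{C_i}(t)\bigr)$, each summand being the pushforward of the difference of the zero and pole divisors of $t$ on $C_i$. A vertical component contributes $(p_1)_*\bigl(\divf_{L_j}(t)\bigr) = (p_1)_*\bigl([(x_j,0)] - [(x_j,\infty)]\bigr) = [x_j] - [x_j] = 0$, so the partial sum over the components of $C'$ already equals $C_0 - C_\infty$; that is, $C'_0 - C'_\infty = C_0 - C_\infty$. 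In the degenerate case $C = C''$ one has $C' = \emptyset$ and $C_0 = C_\infty$, and one may either accept the empty scheme as the required (trivial) Cartier curve or take $C' = W \times \{1\}$ for $W$ a general reduced member of a very ample linear system on $X$ (an effective Cartier divisor, hence a complete intersection at every point), for which $C'_0 = C'_\infty = 0$. I do not expect a single hard step here: the only point needing care is that the two ``local'' conditions~(3) and~(5) survive the truncation, and this is exactly where one uses that the deleted components lie over $X \setminus Y$ and are disjoint from $X \times \{0,\infty\}$.
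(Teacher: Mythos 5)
Your proof is correct and follows essentially the same route as the paper: delete the irreducible components contracted by $p_1$ to closed points, observe that condition (1) forces those points to lie outside $Y$ so that the Cartier conditions persist for the remaining union, and note that each vertical component $\P^1_{x_j}$ contributes $[x_j]-[x_j]=0$ to $C_0-C_\infty$. Your verifications of conditions (3) and (5) and your treatment of the degenerate case $C'=\emptyset$ are more explicit than the paper's, but the argument is the same.
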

\begin{proof}
Write $C = C_{1}\cup\ldots\cup C_{r}$ as union of irreducible components. 
Suppose that all the components $C_{i}$ for $1\leq i\leq s$ are such that 
the composition $C_{i}\to X$ is finite and $p_1(C_{i} )=x_i$ is a closed 
point of $X$ for $s<i\leq r$. Note that $p_1$ is projective.
Let $C_{i}$ be one of these non-finite components.
Then condition (1) in Definition~\ref{defn:LW-defn} implies that
$x_i \in X_\reg$.
Moreover, we have
\[
(p_{1})_*(C_{i} \cdot (X\times  \{ 0\}))  - (p_1)_{*}(C_{i} \cdot (X\times  
\{ \infty \}) ) = (p_1)_{*} (C_{i} \times_{\P^1} \{0\}  -  C_{i} 
\times_{\P^1} \{\infty \} )=0 
\]
as $0$-cycles on $X_{\rm reg}$. Note that the above intersection makes sense, 
thanks to conditions (1) and (2) in Definition~\ref{defn:LW-defn}.

We let $C' := \ol{C \setminus (C_{s+1} \cup \cdots \cup C{r})}$
(where the closure is taken in $\P^1_X$)
with the reduced induced closed subscheme structure.
Since $x_i$ is a closed point of $X_\reg$ for every $s < i \le r$
and $C$ is reduced, it follows that the schemes $C$ and $C'$ 
agree in an open neighborhood of $\P^1_{X_\sing}$.  
We conclude that $C'$ is a reduced Cartier curve on $\P^1_X$,
the projection $p_1 \colon C' \to X$ is finite and
$C_0 - C_\infty = C'_0 - C'_\infty$.
\end{proof}

Note that the above proof shows that
an integral Cartier curve $C$ is either finite over $X$
or $C_0 - C_\infty = 0$ in $\sZ_0(X,Y)$.
Combining Lemmas~\ref{lem:Reduced} and ~\ref{lem:Reduced-finite}, we
therefore get the following.

\begin{cor}\label{cor:Reduced-finite*}
Assume that $k$ is infinite.
Let the pair $(X,Y)$ be as in Definition~\ref{defn:LW-defn}.
Then $\sR^L_0(X,Y)$ is the subgroup of $\sZ_0(X,Y)$ generated by cycles 
$C_0 - C_{\infty} \in \sZ_0(X,Y)$, where $C$ is a reduced Cartier curve
in $\P^1_X$ which is finite over $X$.
If $X$ is integral, we can assume $C$ to be integral.
\end{cor}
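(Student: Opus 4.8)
The plan is to splice together Lemmas~\ref{lem:Reduced} and \ref{lem:Reduced-finite}, together with the observation recorded immediately after the proof of the latter; this is a bookkeeping corollary rather than a genuinely new argument.

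First I would invoke \lemref{lem:Reduced}: since $k$ is infinite, $\sR^L_0(X,Y)$ is already generated by the classes $C_0 - C_\infty$ with $C \subset \P^1_X$ a \emph{reduced} Cartier curve relative to $Y$ (and, when $X$ is integral, with $C$ integral). Hence it suffices to show that each such generator lies in the subgroup generated by classes $C'_0 - C'_\infty$ coming from reduced Cartier curves $C'$ that are moreover finite over $X$. Fixing a reduced Cartier curve $C$ as above, \lemref{lem:Reduced-finite} produces a reduced Cartier curve $C' \subset \P^1_X$ with $C' \to X$ finite and $C_0 - C_\infty = C'_0 - C'_\infty$. This already settles the first assertion of the corollary.

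For the refinement when $X$ is integral, I would \emph{not} run the integral curve $C$ supplied by \lemref{lem:Reduced} through \lemref{lem:Reduced-finite}, since the curve $C'$ produced there need not be irreducible. Instead I would use the dichotomy extracted from the proof of \lemref{lem:Reduced-finite}: an integral Cartier curve $C$ is either finite over $X$ — in which case $C_0 - C_\infty$ is itself an allowed generator — or $C_0 - C_\infty = 0$ in $\sZ_0(X,Y)$, which is the empty combination and so trivially lies in the subgroup in question. In either case the generator $C_0 - C_\infty$ lies in the subgroup generated by integral, finite Cartier curves, which completes the proof.

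I expect no real obstacle here; the only point requiring a moment's care is precisely this integral case, where one must resist applying \lemref{lem:Reduced-finite} directly (which would break irreducibility) and instead appeal to the ``finite over $X$, or homologically trivial'' alternative noted after that lemma.
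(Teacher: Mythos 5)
Your proposal is correct and follows exactly the paper's route: the paper derives the corollary by combining Lemma~\ref{lem:Reduced} with Lemma~\ref{lem:Reduced-finite}, handling the integral case via the same dichotomy (an integral Cartier curve is either finite over $X$ or yields $C_0 - C_\infty = 0$) recorded immediately after the proof of Lemma~\ref{lem:Reduced-finite}. Nothing is missing.
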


\subsection{The lci Chow group of a singular scheme}\label{sec:lci}
Let $k$ be any field. The lci Chow group of a singular scheme was
introduced in \cite{BK}. We recall it here.
Let $C$ be a reduced equidimensional curve over $k$ and let
$k(C)$ denote its ring of total quotients. Let
$C_1, \ldots , C_r$ be the irreducible components of $C$.
Then $k(C)$ is the product of the quotient fields of all 
$C_i$'s. For $f \in k(C)^{\times}$, we can therefore write
$f = (f_i) \in \stackrel{r}{\underset{i =1}\prod} k(C_i)^{\times}$.
We let $\divf(f)$ be the sum  $\stackrel{r}{\underset{i =1}\sum} \divf(f_i)
\in \sZ(C)$. If $Z \subset C$ is a finite closed subset, we 
let $\sO_{C,Z}$ denote the semi-local ring of $C$ at $Z$.
Since $\sO^{\times}_{C, Z} \subset k(C)^{\times}$, the
cycle $\divf(f) \in \sZ(C)$ makes sense for any
$f \in \sO^{\times}_{C, Z}$. For such finite set $Z$, we let $\sZ_0(C,Z)$
denote the subgroup of $\sZ(C)$ generated by closed points away from
$Z$.

Let $X$ be a reduced quasi-projective scheme
over $k$ and let $Y \subset X$ be a nowhere dense closed subscheme 
containing $X_{\sing}$. 

\begin{defn}\label{defn:0-cycle-S-1}
Let $C$ be a reduced curve in $\Sch_k$ and let $\nu\colon C \to X$ be a 
finite morphism. We  say that $\nu\colon (C, Z)\to (X,Y)$ is 
\emph{a good curve relative to $Y$} if $Z$ is  a closed 
subscheme of $C$  such that the following hold.
\begin{enumerate}
\item
No component of $C$ is contained in $Z$.
\item
$\nu^{-1}(Y) \subseteq Z$.
\item
$\nu$ is a local complete intersection morphism  at every 
point $x \in C$ such that $\nu(x) \in Y$.
\end{enumerate}
\end{defn}

Given any good curve $(C,Z)$ relative to $Y$, we can consider the 
push-forward  (see \S~\ref{sec:Notn})
$\sZ_0(C,Z)\xrightarrow{\nu_{*}} \sZ_0(X,Y)$.
We let $\sR_0(C, Z, X)$ be the subgroup
of $\sZ_0(X,Y)$ generated by the set 
$\{\nu_*({\divf}(f))| f \in \cO^{\times}_{C, Z}\}$.
We write $\sR_0(X,Y)$ for the subgroup of $\sZ_0(X,Y)$ defined as  the 
image of the map
\begin{equation}\label{eqn:0-cycle-S-2}
{\underset{\nu\colon (C,Z)\to (X,Y) \ {\rm good}}\bigoplus} \sR_0(C, Z, X) \to 
\sZ_0(X, Y),
\end{equation}
where the index set runs over the set of good curves relative to $(X,Y)$.
We define the \emph{Chow group of 0-cycles on $X$} (relative to $Y$) to be the 
quotient
\begin{equation}\label{eqn:0-cycle-S-3}
\CH_0(X,Y) = \frac{\sZ_0(X, Y)}{\sR_0(X,Y)}.
\end{equation}

If $Y = X_\sing$, we shall write $\CH_0(X,Y)$ simply as $\CH_0(X)$.
To avoid any conflict of notations, we shall denote the Chow group
of $n$-dimensional cycles on $X$ modulo rational equivalence
in the sense of \cite[Chapter~1]{Fulton1} by $\CH^F_n(X)$.
To distinguish $\CH_0(X,Y)$ from $\CH^L_0(X,Y)$, we shall often
refer to it as the `lci Chow group' of $(X,Y)$.
The relation between $\CH^L_0(X,Y)$ and $\CH_0(X,Y)$ is given by the
following lemma.

\begin{lem}\label{lem:Levine-lci}
Let $k$ be an infinite field.  Let $X$ be a reduced quasi-projective surface
over $k$ and let $Y \subset X$ be a nowhere dense closed subscheme 
containing $X_{\sing}$. Then the identity map of $\sZ_0(X,Y)$ induces
a canonical surjection
\[
\phi_{(X,Y)} \colon \CH^L_0(X,Y) \surj \CH_0(X,Y).
\]
\end{lem}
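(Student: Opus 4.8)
The plan is to show that every generator of $\sR^L_0(X,Y)$ is also a generator of $\sR_0(X,Y)$; since both groups are quotients of the \emph{same} free abelian group $\sZ_0(X,Y)$ by these respective subgroups of relations, the inclusion $\sR^L_0(X,Y) \subseteq \sR_0(X,Y)$ immediately produces the desired surjection $\phi_{(X,Y)}$ induced by the identity on $\sZ_0(X,Y)$. So the entire content is: given a Cartier curve $C \subset \P^1_X$ relative to $Y$ in the sense of Definition~\ref{defn:LW-defn}, realize the cycle $C_0 - C_\infty$ as $\nu_*(\divf(f))$ for some good curve $\nu \colon (\wt C, Z) \to (X,Y)$ relative to $Y$ and some $f \in \cO^\times_{\wt C, Z}$.

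First, since $k$ is infinite, by Corollary~\ref{cor:Reduced-finite*} we may assume $C$ is a \emph{reduced} Cartier curve that is \emph{finite} over $X$ (and integral if $X$ is). Let $\nu \colon C \to X$ be the composite $C \hookrightarrow \P^1_X \xrightarrow{p_1} X$; this is finite by construction. Take $f$ to be the rational function on $C$ obtained by restricting the standard coordinate $t$ on $\P^1$ (i.e. the coordinate with divisor $\{0\} - \{\infty\}$); I would set $Z = \nu^{-1}(Y) = C \cap \P^1_Y$, which by condition~(1) of Definition~\ref{defn:LW-defn} is finite, so $Z \subsetneq C$ meets no component of $C$ in a component. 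Condition~(1) also gives $C \cap (Y \times \{0,\infty\}) = \emptyset$, so $t$ (equivalently $t^{-1}$) is a unit in the semilocal ring $\cO_{C,Z}$: neither $t=0$ nor $t=\infty$ meets $Z$. Condition~(5) of Definition~\ref{defn:LW-defn}, asserting that the ideal of $C$ in $\P^1_X$ is a complete intersection at every point of $C \cap \P^1_Y$, is exactly what gives condition~(3) of Definition~\ref{defn:0-cycle-S-1}: at a point $x \in C$ with $\nu(x) \in Y$, the regular immersion $C \hookrightarrow \P^1_X$ composed with the smooth (hence l.c.i.) projection $p_1$ is l.c.i. at $x$. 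Thus $\nu \colon (C, Z) \to (X,Y)$ is a good curve relative to $Y$.

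It remains to check the cycle-level identity $\nu_*(\divf(f)) = C_0 - C_\infty$ in $\sZ_0(X,Y)$. On $C$, the divisor of $f = t|_C$ is supported on $C \cap (X \times \{0, \infty\})$, which is finite by condition~(2), and avoids $Z$; writing $\divf(f) = C\cdot(X\times\{0\}) - C\cdot(X\times\{\infty\})$ as cycles on $C$ — using that $C\to\P^1$ is flat near $\{0,\infty\}$ by condition~(3), so these intersection products are the genuine fiber cycles — and pushing forward along $\nu$, we get $(p_1)_*(C\cdot(X\times\{0\})) - (p_1)_*(C\cdot(X\times\{\infty\}))$, which is precisely $C_0 - C_\infty$ by condition~(4). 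Hence $C_0 - C_\infty \in \sR_0(C,Z,X) \subseteq \sR_0(X,Y)$, completing the inclusion of relation subgroups. Since all of this applies verbatim (with $C$ integral) in the case $X$ integral, nothing extra is needed there.

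I do not expect a serious obstacle: the statement is essentially a bookkeeping comparison between two definitions, and every hypothesis in Definition~\ref{defn:LW-defn} has been arranged (partly via Corollary~\ref{cor:Reduced-finite*}) to match a hypothesis in Definition~\ref{defn:0-cycle-S-1}. The one point requiring a little care — and the closest thing to a "hard step" — is the compatibility of the flat-pullback intersection cycle $C \cdot (X \times \{0\})$ on $\P^1_X$ with the divisor-of-a-rational-function construction $\divf(t|_C)$ on the possibly non-normal curve $C$: one must note that $t|_C$ lies in the total ring of fractions $k(C)^\times$, that its divisor in the sense of \S\ref{sec:lci} agrees componentwise with the flat fiber cycles because $C \to \P^1$ is flat in a neighborhood of $\{0,\infty\}$, and that finiteness of $\nu$ makes $\nu_*$ well-defined on $\sZ_0(C,Z)$. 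This is standard (cf. \cite[Chapter~1]{Fulton1} for the flat case) and poses no real difficulty.
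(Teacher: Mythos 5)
Your proposal is correct and follows essentially the same route as the paper: reduce via Corollary~\ref{cor:Reduced-finite*} to a reduced Cartier curve finite over $X$, observe that $(C, C\cap\P^1_Y)$ with the composite projection $\nu$ is a good curve relative to $Y$, and take $f$ to be the restriction of the $\P^1$-coordinate, so that $\nu_*(\divf(f)) = C_0 - C_\infty$. The paper's proof states these steps more tersely ("it is then easy to see"); your write-up simply supplies the verifications it leaves implicit.
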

\begin{proof}
Let $C \subset \P^1_X$ be a Cartier curve relative to $Y$
and let $Z = \P^1_Y \cap C$. We can assume
that $C$ is reduced and finite over $X$ 
by \corref{cor:Reduced-finite*}. It is then easy to see that
$(C,Z)$ is a good curve relative to $Y$ via the composite map
$\nu \colon C \inj \P^1_X \xrightarrow{p_X} X$.
Furthermore, if we let $f \colon C \to \P^1_k$
be the projection map, then it defines an element of $k(C)$.
The conditions (1) and (2) of Definition~\ref{defn:LW-defn} imply in fact that
$f \in \sO^{\times}_{C,Z}$.
It is clear that $\nu_*(\divf(f)) = C_0 - C_{\infty}$.
Hence, $C_0 - C_{\infty} \in \sR_0(X,Y)$. This finishes the proof.
\end{proof}

The lci Chow group $\CH_0(X)$ possesses a reasonable theory of
pull-back and push-forward maps. 
This property will play a very important role in the proofs of our
main theorems. In contrast, the Levine-Weibel Chow
group $\CH^L_0(X)$ is not known to have any good theory of push-forward maps
induced by finite maps between two varieties. 
A particular result that we shall use in the main proofs is the
following. 
We let $(X,Y)$ be as in \S~\ref{sec:lci}.
For any field extension $k \inj k'$, we let
$X' = X_{k'}:= X \otimes_k k'$ and $Y' = Y_{k'}:= Y \otimes_k k'$.
We let ${\rm pr}_{{k'}/{k}}: X' \to X$ denote the
projection map.
 
\begin{prop}\label{prop:PF-PB}
Let $k \inj k'$ be a separable algebraic (possibly infinite)
extension of fields. Then the following hold.
\begin{enumerate}
\item
There exists a pull-back map
${\rm pr}^*_{{k'}/{k}} \colon \CH_0(X,Y) \to \CH_0(X',Y')$.
\item
If there exists a sequence of separable  
algebraic field extensions $k = k_0 \subset k_1 \subset
\cdots \subset k'$ with $k' = \cup_i k_i$ such that $X_i := X_{k_i}$
and $Y_i := Y_{k_i}$ for each $i \ge 1$, then
we have ${\underset{i}\varinjlim} \ \CH_0(X_i,Y_i) \xrightarrow{\simeq} 
\CH_0(X',Y')$. 
\item
If $k \inj k'$ is finite, then there exists a push-forward
$({\rm pr}_{{k'}/{k}})_* \colon \CH_0(X',Y') \to \CH_0(X,Y)$ such that
$({\rm pr}_{{k'}/{k}})_* \circ {\rm pr}^*_{{k'}/{k}}$ is multiplication by
$[k': k]$.
\end{enumerate}
\end{prop}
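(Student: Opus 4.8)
The plan is to construct the pull-back and push-forward on the level of cycle groups $\sZ_0(X,Y)$ and then verify that they descend to the lci Chow groups; the limit statement (2) will follow formally once the maps are constructed compatibly. First I would treat (1). Given a closed point $x \in X \setminus Y$, its preimage $\pr_{k'/k}^{-1}(x)$ is a finite closed subscheme of $X' \setminus Y'$ (since $k \inj k'$ is algebraic, hence $\pr_{k'/k}$ is integral, and since separability guarantees that $X' \to X$ is \'etale over the regular locus near $x$ — note $x \in X_\reg$ because $Y \supseteq X_\sing$), so I set $\pr^*_{k'/k}([x]) = [\pr_{k'/k}^{-1}(x)]$, extended linearly. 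The content is that this carries $\sR_0(X,Y)$ into $\sR_0(X',Y')$: given a good curve $\nu\colon (C,Z) \to (X,Y)$ and $f \in \sO_{C,Z}^\times$, I would form the base change $C' = C \times_k k'$, which is again a reduced curve (separability of $k'/k$ preserves reducedness and, over the smooth locus, regularity), with the induced finite map $\nu' \colon C' \to X'$ and closed subscheme $Z' = Z \times_k k'$; one checks $(C',Z')$ is good relative to $Y'$ (conditions (1)–(3) of Definition~\ref{defn:0-cycle-S-1} are stable under the flat base change $k \inj k'$, the l.c.i.\ condition because l.c.i.\ morphisms are stable under base change), and the function $f$ pulls back to $f' \in \sO_{C',Z'}^\times$ with $\pr^*_{k'/k}(\nu_*\divf(f)) = \nu'_*\divf(f')$ by compatibility of proper push-forward with flat base change (\cite{Fulton1}). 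This gives the well-defined map on $\CH_0$.

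For (2), the maps $\pr^*_{k_j/k_i}$ for $i \le j$ are visibly compatible, so they assemble into a map $\varinjlim_i \CH_0(X_i,Y_i) \to \CH_0(X',Y')$. Surjectivity is immediate because every closed point of $X'$ lies over a closed point of some $X_i$ (writing $X' = \varprojlim X_i$ as schemes, with affine transition maps, $X'(\ov{k}) = \varprojlim X_i(\ov k)$, so each point of $X'$ is defined over some finite stage), and likewise every good curve over $k'$ and every rational function on it is defined over some finite stage $k_i$ — this is the standard ``spreading out'' / finite presentation argument, applied to the finitely many data (the curve $C'$, the map $\nu'$, the subscheme $Z'$, the function $f'$). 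Injectivity: if $\alpha_i \in \sZ_0(X_i, Y_i)$ maps to a relation in $\sZ_0(X', Y')$, that relation is a finite sum $\sum \nu'_{j,*}\divf(f'_j)$ which again descends to some finite stage $k_m \supseteq k_i$, witnessing $\pr^*_{k_m/k_i}(\alpha_i) \in \sR_0(X_m, Y_m)$; hence the class dies in the colimit.

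For (3), with $k \inj k'$ finite, I define $(\pr_{k'/k})_*$ on $\sZ_0(X',Y')$ by the usual proper push-forward of cycles (a closed point $x' \in X' \setminus Y'$ maps to a closed point $x \in X \setminus Y$ with $(\pr_{k'/k})_*[x'] = [k(x'):k(x)]\,[x]$), and the task is again to show $\sR_0(X',Y') \subseteq (\pr_{k'/k})_*^{-1}(\sR_0(X,Y))$: given a good curve $\nu'\colon (C',Z') \to (X',Y')$ over $k'$ and $f' \in \sO_{C',Z'}^\times$, I view $C'$ as a $k$-scheme via restriction of scalars — it is still a reduced quasi-projective curve over $k$, still finite over $X$ via $\pr_{k'/k}\circ\nu'$, and the goodness conditions relative to $Y$ persist (the l.c.i.\ condition at points over $Y$ holds because $k \inj k'$ is finite separable, hence $\Spec k' \to \Spec k$ is \'etale, and composition of l.c.i.\ morphisms is l.c.i.) — and then $(\pr_{k'/k})_*(\nu'_*\divf(f')) = (\pr_{k'/k}\circ\nu')_*\divf(f')$ lies in $\sR_0(X,Y)$ by definition, using functoriality of $\divf$ and push-forward under the finite map $C' \to C' $ regarded over $k$ versus over $k'$ (the norm/transfer formula for $\divf$ under finite morphisms, \cite[Ch.~1]{Fulton1}). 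Finally, the composite $(\pr_{k'/k})_* \circ \pr^*_{k'/k}$ sends $[x]$ to $(\pr_{k'/k})_*[\pr_{k'/k}^{-1}(x)]$, and since $\pr_{k'/k}^{-1}(x) = \Spec(k(x)\otimes_k k')$ has total length $[k':k]$ over $k(x)$ (again using that $x$ lies in the \'etale — in fact here just flat — locus so $k(x) \otimes_k k'$ is an Artinian $k(x)$-algebra of dimension $[k':k]$), this equals $[k':k]\,[x]$, giving the asserted identity. The main obstacle I anticipate is not any single step but the accumulation of base-change verifications — specifically, checking that the l.c.i.\ condition (3) of Definition~\ref{defn:0-cycle-S-1} is genuinely preserved under $-\otimes_k k'$ and under restriction of scalars along a finite separable extension, and confirming that $\sO_{C,Z}^\times$ maps into $\sO_{C',Z'}^\times$ (i.e.\ that no new components or zeros/poles of $f$ appear at $Z'$), which is where separability of $k'/k$ is essential and where the argument must be written carefully.
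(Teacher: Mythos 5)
The paper does not prove this proposition itself; it simply cites \cite[Proposition~6.1]{BK}, and your argument is exactly the standard one carried out there: pull back good curves by the (faithfully flat, reducedness-preserving because separable) base change $-\otimes_k k'$, spread out points, curves and functions to a finite stage for the limit statement, and regard a good $k'$-curve as a good $k$-curve via the finite \'etale map $X'\to X$ for the push-forward. Your outline is correct; the only cosmetic point is that in (3) no norm or transfer formula is needed, since $C'$ and hence $\divf(f')$ are literally unchanged when $C'$ is regarded as a $k$-scheme, so $(\pr_{k'/k})_*\nu'_*\divf(f')=(\pr_{k'/k}\circ\nu')_*\divf(f')$ is a relation by definition.
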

\begin{proof}
See \cite[Proposition~6.1]{BK}.
\end{proof}

\subsection{Zero cycles with modulus}\label{sec:Modulus}
Let $k$ be any field.
Given an integral normal curve ${C}$ over $k$ and an effective divisor
$E \subset {C}$, we say that a rational function $f$ on ${C}$ has modulus
$E$ if $f \in {\rm Ker}(\sO^{\times}_{C, E} \to \sO^{\times}_E)$.
Here, $\sO_{C, E}$ is the semi-local ring of $C$ at the union of
$E$ and the generic point of $C$.
In particular, ${\rm Ker}(\sO^{\times}_{C, E} \to \sO^{\times}_E)$ is just
$k(C)^{\times}$ if $|E| = \emptyset$.
Let $G({C}, E)$ denote the group of such rational functions.

Let ${X}$ be a reduced quasi-projective scheme over $k$ and let $D$ be an 
effective Cartier divisor on $X$ (we allow $D$ to be empty).
Let $Z_0(X,D)$ be the free abelian group on 
the set of closed points of $X\setminus D$. Let ${C}$ be an integral normal 
curve over $k$ and
let $\varphi_{{C}}\colon{C}\to {X}$ be a finite morphism such that 
$\varphi_{{C}}({C})\not \subset D$.  
The push forward of cycles along $\varphi_{{C}}$  
gives a well defined group homomorphism
\[
(\varphi_C)_* \circ \divf \colon G({C},\varphi_{{C}}^*(D)) \to Z_0(X,D).
\]

\begin{defn}[Kerz-Saito]\label{def:DefChowMod-Definition}
We define the Chow group $\CH_0({X}|D)$ of 0-cycles of ${X}$ with 
modulus $D$ as the cokernel of the homomorphism 
\begin{equation}\label{eqn:DefChowMod-0}
\bigoplus_{\varphi_{{C}}\colon {C}\to {X}}G({C},
\varphi_{{C}}^*(D)) \to Z_0(X,D),
\end{equation}
where the sum is taken over the set of finite morphisms 
$\varphi_{{C}}\colon {C} \to {X}$ from an integral normal curve such that
$\varphi_{{C}}({C}) \not\subset D$.
\end{defn}

A general theory of higher Chow groups with modulus $\CH_p(X|D,q)$ 
was introduced in \cite{BS}, where it was shown that $\CH_0(X|D,0)
\cong \CH_0({X}|D)$. It is clear from the definition that
the inclusion $ Z_0(X|D) \subset  Z_0(X)$ defines a canonical 
`forget modulus' map $\CH_0({X}|D) \to \CH_0^{F}(X)$ (the latter being Fulton's Chow group of zero cycles).
It is known that the Chow group of 0-cycles with modulus is covariantly
functorial for the proper maps: if $f\colon {X'} \to {X}$ is a proper map,
$D$ and $D'$ are effective Cartier divisors on ${X}$ and ${X'}$
such that $f^*(D) = D'$, then there is a push-forward
map $f_*\colon \CH_0({X'}|D') \to \CH_0({X}|D)$.
If $f$ is flat (but not necessarily proper)  of relative dimension zero, 
then there is a
pull-back map $f^*\colon \CH_0({X}|D) \to \CH_0({X'}|D')$
(see \cite[Lemma~2.7]{BS} or \cite[Propositions~2.10, 2.12]{KP}).

Analogous to the case of the lci Chow group, the
Chow group of 0-cycles with modulus has the following nice 
behavior with respect to the change of the base field.

\begin{prop}\label{prop:PF-fields-mod} 
Let $k \inj k'$ be a separable algebraic (possibly infinite)
extension of fields. Let
$X$ be a non-singular quasi-projective scheme 
over $k$ with an effective Cartier divisor $D$.
Let $X' = X_{k'}$ and $D' = D_{k'}$ denote the base change of $X$ and
$D$, respectively. Let ${\rm pr}_{{k'}/{k}}: X' \to X$ be the
projection map. Then the following hold.
\begin{enumerate}
\item
There exists a pull-back 
${\rm pr}^*_{{k'}/{k}}: \CH_0(X|D) \to \CH_0(X'|D')$.
\item
If there exists a sequence of separable algebraic
field extensions $k = k_0 \subset k_1 \subset
\cdots \subset k'$ with $k' = \cup_i k_i$, then
we have ${\underset{i}\varinjlim} \ \CH_0(X_{k_i}|D_{k_i}) \xrightarrow{\simeq} 
\CH_0(X'|D')$. 
\item
If $k \inj k'$ is finite, then there exists a push-forward
${\rm pr}_{{k'}/{k} \ *}: \CH_0(X'|D') \to \CH_0(X|D)$ such that
$({\rm pr}_{{k'}/{k}})_* \circ {\rm pr}^*_{{k'}/{k}}$ is multiplication by
$[k': k]$.
\end{enumerate}
\end{prop}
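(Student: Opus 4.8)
The plan is to deduce Proposition~\ref{prop:PF-fields-mod} from the corresponding statement for the lci Chow group (Proposition~\ref{prop:PF-PB}) together with the relationship between zero-cycles with modulus and the Levine--Weibel/lci Chow groups recalled from \cite{BK}. The point is that the modulus Chow group $\CH_0(X|D)$ of a smooth $X$ fits into a ``fundamental exact sequence'' relating it to the lci Chow group of a degeneration (a singular scheme built from $X$ and the double of $D$, or an analogous construction), and all the maps there are compatible with base change along $k\inj k'$. So the real content is purely formal once one has the right functoriality of that exact sequence. Alternatively, and perhaps more honestly for this excerpt, one could prove each item directly at the level of cycles, since the group $\CH_0(X|D)$ is defined by explicit generators and relations via parametrizing curves; this is the route I would actually write out, as it does not require invoking \cite{BK}.

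First I would construct the pull-back map ${\rm pr}^*_{{k'}/{k}}\colon \CH_0(X|D)\to\CH_0(X'|D')$ for $k\inj k'$ separable algebraic. On the level of cycles, a closed point $x\in X\setminus D$ with residue field $k(x)$ pulls back to the cycle associated to the (finite, reduced since $k'/k$ is separable) scheme $x\times_k k' = \Spec(k(x)\otimes_k k')\subset X'\setminus D'$. One must check that this respects the modulus relations: given a finite morphism $\varphi_C\colon C\to X$ from an integral normal curve with $\varphi_C(C)\not\subset D$ and $f\in G(C,\varphi_C^*(D))$, the base change $C_{k'}=C\times_k k'$ is a disjoint union of integral normal curves (again using separability, plus normality is preserved under separable base change), the map $C_{k'}\to X'$ is finite with image not in $D'$, the pulled-back rational function $f_{k'}$ lies in $G(C_{k'},\varphi_{C_{k'}}^*(D'))$ because the modulus condition $f\in\ker(\sO^\times_{C,E}\to\sO^\times_E)$ is stable under flat base change, and $\divf(f_{k'})$ maps to the base change of $(\varphi_C)_*\divf(f)$. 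Hence $\sR_0$ pulls back into $\sR_0$ and the map descends to Chow groups; this gives (1).

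For (2), the statement is that $\CH_0$ commutes with the filtered colimit of the tower $\{k_i\}$. This is a standard continuity argument: $Z_0(X|D) = \varinjlim_i Z_0(X_{k_i}|D_{k_i})$ because every closed point of $X'$ and every finite subscheme is already defined over some finite level $k_i$ (each point of $X'\setminus D'$ has residue field finite over $k(p)$ for its image $p$, hence is the base change of a point over some $k_i$), and likewise every parametrizing curve $\varphi_C\colon C\to X'$ together with a chosen $f\in G(C,\varphi_C^*(D'))$ is defined over some $k_i$ since $C$ is of finite type. Thus $\sR_0(X'|D') = \varinjlim_i \sR_0(X_{k_i}|D_{k_i})$, and passing to the quotient commutes with filtered colimits, giving the isomorphism. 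For (3), with $k\inj k'$ finite, the projection ${\rm pr}_{{k'}/{k}}$ is finite and in particular proper, so the proper push-forward for modulus cycles already recalled in \S~\ref{sec:Modulus} applies with $D' = {\rm pr}^*_{{k'}/{k}}(D)$, yielding $({\rm pr}_{{k'}/{k}})_*\colon\CH_0(X'|D')\to\CH_0(X|D)$. Finally, the composition $({\rm pr}_{{k'}/{k}})_*\circ{\rm pr}^*_{{k'}/{k}}$ is multiplication by $[k':k]$: on a closed point $x$, ${\rm pr}^*$ sends $[x]$ to the cycle of $\Spec(k(x)\otimes_k k')$, which has total length $[k':k]$ over $x$ (length, not just number of points, since the residue extensions may be nontrivial even when $k'/k$ is separable), and $({\rm pr}_{{k'}/{k}})_*$ then collapses this back to $[k':k]\cdot[x]$ by the projection-formula computation of push-forward. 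I expect the main obstacle to be the careful verification in step (1) that the modulus condition is preserved under base change and that the normalization/components of $C_{k'}$ behave as claimed — in particular that separability of $k'/k$ (not merely flatness) is genuinely used to keep $C_{k'}$ reduced and its components normal, so that $C_{k'}$ decomposes into admissible parametrizing curves. Everything else is a routine continuity or projection-formula argument, and indeed the cleanest write-up simply cites \cite[Proposition~6.1]{BK} and its modulus analogue, observing that the proofs there go through verbatim for $\CH_0(X|D)$.
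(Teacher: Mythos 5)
Your direct, cycle-level argument is correct; the paper itself gives no argument here and simply cites \cite[Proposition~6.2]{BK}, so you are in effect supplying the proof that the paper outsources. The verifications you single out are exactly the right ones and they go through: separability of $k'/k$ guarantees that $k(x)\otimes_k k'$ is reduced (hence a product of fields), that $C_{k'}=C\times_k k'$ is a disjoint union of integral normal curves (as $k'$ is a filtered colimit of \'etale $k$-algebras and normality is preserved under \'etale base change and filtered colimits), that the modulus condition $f\equiv 1$ along $\varphi_C^*(D)$ is stable under the faithfully flat map of semi-local rings, and that the composite in (3) is multiplication by $\dim_{k(x)}(k(x)\otimes_k k')=[k':k]$ via the usual push-forward degree count. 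Items (2) is the standard spreading-out/continuity argument and is fine as stated.

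One caution about your opening paragraph: the proposed alternative reduction to the lci statement via the double construction and the fundamental exact sequence would not work in the stated generality, since Theorem~\ref{thm:BS-main} is only available for surfaces, whereas Proposition~\ref{prop:PF-fields-mod} concerns non-singular quasi-projective schemes of arbitrary dimension; moreover that route would still require knowing base-change functoriality of the maps $p_{+*}$ and $\iota_-^*$, which is essentially the same verification you end up doing directly. Since you explicitly abandon that route in favour of the cycle-level proof, this does not affect the correctness of what you actually wrote, but the first paragraph should not be presented as a viable alternative without that caveat.
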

\begin{proof}
See \cite[Proposition~6.2]{BK}.
\end{proof}

\begin{rem}The reader should be aware of the fact that in the literature there exists another notion of Chow group of zero cycles with modulus, introduced by H.\ Russell \cite{RusANT} in the study of a higher dimensional analogue of the generalized Jacobian of Rosenlicht-Serre. Russell used his generalized Albanese with modulus to rephrase Lang's class field theory of function fields of varieties over finite fields in explicit terms. Note however that the modulus condition in \emph{loc.cit.}\ is different from the one used here (and in \cite{KeS}), and our work is not directly related to his. 
\end{rem}

\subsection{The double construction}\label{sec:Double}
Let $X$ be a non-singular quasi-projective scheme of dimension $d$ over $k$
and let $D \subset X$ be an effective Cartier divisor. Recall from 
\cite[\S~2.1]{BK} that the double of $X$ along $D$ is a quasi-projective
scheme $S(X,D) = X \amalg_D X$ so that
\begin{equation}\label{eqn:rel-et-2}
\begin{array}{c}
\xymatrix@R=1pc{
D \ar[r]^-{\iota} \ar[d]_{\iota} & X \ar[d]^{\iota_+} \\
X \ar[r]_-{\iota_-} & S(X,D)}
\end{array}
\end{equation}
is a co-Cartesian square in $\Sch_k$.
In particular, the identity map of $X$ induces a finite map
$\nabla:S(X,D) \to X$ such that $\nabla \circ \iota_\pm = {\rm id}_X$
and $\pi = \iota_+ \amalg \iota_-: X \amalg X \to S(X,D)$ 
is the normalization map.
We let $X_\pm = \iota_\pm(X) \subset S(X,D)$ denote the two irreducible
components of $S(X,D)$.  We use $S_X$ as a shorthand for $S(X, D)$
when the divisor $D$ is understood. $S_X$ is a reduced quasi-projective
scheme whose singular locus is $D_{\rm red} \subset S_X$. 
It is projective (resp.  affine) whenever
$X$ is so. It follows from \cite[Lemma~2.2]{Krishna-2} that  
~\eqref{eqn:rel-et-2}
is also a Cartesian square.

It is clear that the map
$\sZ_0(S_X,D)\xrightarrow{(\iota^*_{+}, \iota^*_{-})} \sZ_0(X_+,D) \oplus 
\sZ_0(X_-,D)$ is an isomorphism, and there are push-forward inclusion maps
${p_{\pm}}_* \colon \sZ_0(X,D) \to \sZ_0(S_X,D)$ such that 
$\iota^*_{+} \circ {p_{+}}_* = \id$
and $\iota^*_{+} \circ {p_{-}}_* = 0$.

The fundamental result that connects the 0-cycles with modulus on $X$ and
0-cycles on $S_X$ is the following.

\begin{thm}\label{thm:BS-main}
Let $k$ be any field.
Let $X$ be a smooth quasi-projective surface over $k$
and let $D \subset X$ be an effective Cartier divisor. Then there
is a split short exact sequence
\[
0 \to \CH_0(X|D) \xrightarrow{ {p_{+}}_*} \CH_0(S_X) \xrightarrow{\iota^*_-}
\CH_0(X) \to 0.
\]
\end{thm}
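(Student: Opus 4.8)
The plan is to prove this by explicitly analyzing the relationship between rational equivalences on $S_X$ and the modulus condition on $X$, following the strategy of \cite{BK}. First I would construct the maps. The map ${p_+}_* \colon \CH_0(X|D) \to \CH_0(S_X)$ is induced by the push-forward along the closed immersion $\iota_+ \colon X \hookrightarrow S_X$; here one must check that the modulus relations on $X$ map to the relations defining $\CH_0(S_X)$. Given a finite map $\varphi_C \colon C \to X$ from an integral normal curve with $\varphi_C(C) \not\subset D$ and $f \in G(C, \varphi_C^*(D))$, one obtains a good curve relative to $D_{\red}$ by taking $C \amalg_{\varphi_C^*(D)} C \to S_X$ (or, after base change, an appropriate curve mapping finitely to $S_X$ whose components meet the singular locus in a complete intersection), and the modulus condition $f \equiv 1$ on $\varphi_C^*(D)$ is exactly what allows $f$ (glued with the constant function $1$ on the other copy) to define a unit in the semilocal ring of this curve at the preimage of $D_{\red}$. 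Thus ${p_+}_*$ is well-defined on Chow groups. The map $\iota_-^* \colon \CH_0(S_X) \to \CH_0(X)$ is the flat (Gysin) pull-back along $\iota_- \colon X \hookrightarrow S_X$, which makes sense because a zero-cycle on $S_X$ is supported away from $D_{\red}$ and $\iota_-$ restricted to $X \setminus D$ is an open immersion into $S_X \setminus D_{\red}$; the fact that good curves on $S_X$ restrict to good curves on $X$ (using that $X$ is smooth) shows it descends to Chow groups. Finally $\nabla_* \colon \CH_0(S_X) \to \CH_0(X)$ composed with ${p_+}_*$, or rather $\iota_-^* \circ {p_+}_*$, is zero because $\iota_- \circ p_+$ factors through $D$ on the level of supports — more precisely $p_+$ lands on the component $X_+$ which $\iota_-^*$ kills on cycles disjoint from $D_{\red}$ — giving a splitting $\iota_-^* \oplus \iota_+^*$ is not quite it, so instead one notes $\nabla \circ \iota_+ = \id_X$ provides a retraction of ${p_+}_*$ on the nose at the level of cycle groups, giving the splitting.

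The exactness is the substantive part. Left-exactness (injectivity of ${p_+}_*$) follows from the splitting just described: $\nabla_* \circ {p_+}_* = \id_{\CH_0(X|D)}$, provided $\nabla_*$ is well-defined on $\CH_0(X|D)$, which holds because $\nabla$ is finite and proper and the modulus condition is preserved (the double $S_X$ has $D_{\red}$ as its conductor, so a function with modulus $D$ on a curve over $X$ pulls back appropriately). Surjectivity of $\iota_-^*$: any closed point of $X \setminus D$ lifts to a closed point of the component $X_- \subset S_X$, so $\iota_-^*$ is surjective already at the level of cycle groups. The heart of the matter is exactness in the middle: if $\alpha \in \CH_0(S_X)$ satisfies $\iota_-^*(\alpha) = 0$, then $\alpha \in \im({p_+}_*)$. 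Writing $\alpha = \iota_+^*(\alpha) \oplus \iota_-^*(\alpha)$ under $\sZ_0(S_X, D) \cong \sZ_0(X_+, D) \oplus \sZ_0(X_-, D)$ and using $\iota_-^*(\alpha) = 0$ in $\CH_0(X)$, one must modify $\alpha$ by rational equivalences on $S_X$ to move its "$-$"-component to zero while keeping track of the "$+$"-component; the condition that lands on the $+$-side is precisely the modulus condition defining $\CH_0(X|D)$. Concretely, one takes the rational equivalence on $X$ witnessing $\iota_-^*(\alpha) \sim 0$, given by functions $f_j$ on curves $C_j \to X$, and lifts these to good curves on $S_X$ supported on $X_-$; the obstruction to doing this compatibly on $S_X$ (so that the induced relation only disturbs the $X_+$ part by a cycle with modulus) is a computation with semilocal rings along $D_{\red}$, and this is where the complete-intersection / good-curve hypotheses in Definitions~\ref{defn:LW-defn} and \ref{defn:0-cycle-S-1} are used.

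I expect the main obstacle to be exactness in the middle, and specifically the following technical point: given a rational equivalence on $X$ (a collection of normal curves with functions having the right modulus), one needs to produce a rational equivalence on the singular scheme $S_X$ realizing the "transfer" from the $X_-$-component to the $X_+$-component, and the curves one naturally writes down on $S_X$ need not be normal — they are the doubles $C \amalg_{E} C$ of curves $C$ on $X$ along $E = \varphi_C^*(D)$, which are nodal along $E$. One must verify these are \emph{good curves relative to $D_{\red}$} in the sense of Definition~\ref{defn:0-cycle-S-1}: that $\varphi_C^*(D) \to S_X$ is lci at points over the singular locus (using that $E \hookrightarrow C$ is a Cartier divisor on a smooth curve, hence lci, and that $S_X$ is lci along $D$ by its construction as a pushout of smooth schemes along a divisor), and that the function on this nodal curve obtained by gluing $f$ on one branch with $1$ on the other is a unit in the semilocal ring at the node. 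This gluing is legitimate precisely because $f \equiv 1 \pmod{\sI_E}$, i.e., the modulus condition — so the modulus condition on $X$ is exactly the compatibility needed to descend the equivalence to $S_X$, and conversely. This is carried out in detail in \cite[\S~3]{BK}, and for the purposes of this paper we may cite \thmref{thm:BS-main} from there; but the conceptual point worth emphasizing is that the double construction converts the \emph{modulus condition}, which is a ramification-theoretic constraint on functions, into the purely geometric \emph{Cartier/lci condition} defining the Levine--Weibel-type and lci Chow groups of the singular scheme $S_X$.
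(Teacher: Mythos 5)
Your sketch of the double-construction strategy is broadly the right picture and does match the architecture of \cite[Theorem~1.9]{BK}, which is indeed what the paper cites. But there is a genuine gap: the theorem as stated is for an \emph{arbitrary} field $k$, whereas \cite[Theorem~1.9]{BK} is proved only for $k$ perfect. Deferring to \cite{BK} therefore does not prove the statement. The entire content of the paper's proof is to identify the unique place where perfectness enters in \cite{BK} --- namely the construction of the retraction $\tau^*_X \colon \CH^{\rm LW}_0(S_X) \to \CH_0(X|D)$ in \cite[\S~5]{BK}, which uses a Bertini theorem that at the time was available only for \emph{geometrically} reduced schemes (equivalent to reduced only over perfect fields) --- and then to invoke the newer Bertini theorem \cite[Corollary~3.10]{GhK} for reduced schemes over an arbitrary infinite field to close that gap. (Finite fields are perfect, so the only problematic case is infinite imperfect $k$.) Your proposal never engages with the field hypothesis, so as written it only yields the perfect-field case.

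Relatedly, the one step you do try to justify directly --- injectivity of ${p_+}_*$ via ``$\nabla_* \circ {p_+}_* = \id$, provided $\nabla_*$ is well-defined on $\CH_0(X|D)$'' --- is exactly the hard point, and your justification (``the double has $D_{\red}$ as conductor, so functions pull back appropriately'') does not address it. The difficulty is not pulling functions back but pushing a rational equivalence on $S_X$ \emph{down} to a modulus-$D$ equivalence on $X$: a good curve on $S_X$ need not be the double of a curve on $X$, and making it so (or otherwise controlling its interaction with $D_{\red}$) is precisely where the moving/Bertini arguments of \cite[\S~5]{BK} are required, and hence where the reducedness-versus-geometric-reducedness issue arises. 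So the missing ingredient is not the conceptual translation between the modulus condition and the lci condition (which you describe correctly), but the Bertini input needed to run that translation over an imperfect infinite field.
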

\begin{proof}
This is the restatement of the dimension two case of
\cite[Theorem~1.9]{BK} if we assume $k$ is perfect.
However, the only place in the proof of this theorem where the perfectness
is used is the construction of the map
$\tau^*_X \colon \CH^{\rm LW}_0(S_X) \to \CH_0(X|D)$ in
\cite[\S~5]{BK} when $k$ is infinite. The only reason for this
assumption was to be able to apply Bertini theorems for
reduced schemes. However, the relevant Bertini theorem 
was known only for schemes which are geometrically reduced
(the two notions coincide over a perfect field) when
\cite{BK} was written.
But we can now remove this extra assumption in view of the new
Bertini theorem \cite[Corollary~3.10]{GhK} for reduced schemes over any
infinite field.
\end{proof}

\section{The cycle class and Bloch-Kato maps}\label{sec:CCM*}
In this section, we shall recall the connection between 
various Chow groups defined in \S~\ref{sec:Cycle} and the
algebraic $K$-groups. We shall construct the Bloch-Kato map
which will be used in the proof of the Bloch-Kato formula.
We fix a field $k$.
For a $k$-scheme $X$, we let $K_0(X)$ denote $\pi_0(K(X))$, where
$K(X)$ is the Bass-Thomason-Trobaugh $K$-theory spectrum of $X$.
If $X$ is quasi-projective over $k$, then $K_0(X)$ coincides with 
the Grothendieck group of locally free sheaves on $X$.
For a closed immersion of $k$-schemes $D \inj X$, 
let $K(X,D)$ denote the homotopy fiber of the restriction map
of Bass-Thomason-Trobaugh $K$-theory spectra $K(X) \to K(D)$.
We let $K_i(X,D)$ denote the $i$-th stable homotopy group of $K(X,D)$.

\subsection{The cycle class maps}\label{sec:CCMaps}
Let us assume that $X$ is a reduced quasi-projective scheme
over $k$ and let $Y \subset X$ be any closed
subscheme containing $X_\sing$ (but not necessarily nowhere dense).
If $x \in X \setminus Y$ is a closed point, then it lies in the regular 
locus of $X$. In particular, the inclusion map
$\Spec(k(x)) \inj X$ has finite tor-dimension. This yields the maps of 
spectra $K(\Spec(k(x))) \xrightarrow{(\iota_x)_*} K(X) 
\xrightarrow{\iota^*_Y} K(Y)$.
Since the composite map is canonically null-homotopic, $(\iota_x)_*$
has a canonical factorization $(\iota_x)_* \colon K(\Spec(k(x))) \to K(X,D)$.
In particular, we have a map $(\iota_x)_* \colon \Z 
= K_0(\Spec(k(x))) \to K_0(X,D)$. Letting $cyc_{(X,Y)}([x]) =
(\iota_x)_*(1)$, and extending it linearly on $\sZ_0(X,D)$, we
get a cycle class map 
\begin{equation}\label{eqn:Cycle-class-rel}
cyc_{(X,Y)} \colon \sZ_0(X,Y) \to K_0(X,Y).
\end{equation}

Composing $cyc_{(X,Y)}$ with the canonical map
$K_0(X,Y) \to K_0(X)$, we obtain the cycle class map
$cyc_{(X,Y)} \colon \sZ_0(X,Y) \to K_0(X)$.
It is shown in \cite[Lemma~3.13]{BK} that if $Y$ is nowhere dense in $X$, 
this map kills $\sR_0(X,Y)$ and hence defines a cycle class map
\begin{equation}\label{eqn:CCM-lci}
cyc_{(X,Y)} \colon \CH_0(X,Y) \to K_0(X).
\end{equation}

We shall write $cyc_{(X,X_\sing)}$ in short as $cyc_X$.
We shall denote the image of $cyc_X$ by $F^2K_0(X)$.
\lemref{lem:Levine-lci} implies the following.

\begin{cor}\label{cor:CCM-Levine-lci}
Assume that $k$ is infinite, $\dim(X) = 2$ and $Y$ is
nowhere dense in $X$. Then the cycle class map $\sZ_0(X,Y) \to 
K_0(X)$ induces the commutative diagram
\begin{equation}\label{eqn:CCM-Levine-lci-0}
\xymatrix@C1pc{
\CH^L_0(X,Y) \ar@{->>}[r]^-{\phi_{(X,Y)}} 
\ar@/_1pc/[rr]_-{cyc^L_{(X,Y)}} & \CH_0(X,Y) \ar[r]^-{cyc_{(X,Y)}} &
K_0(X),}
\end{equation}
where the map $cyc^L_{(X,Y)}$ is the cycle class map for the Levine Chow group defined in \cite{LW} (see \cite[Section 3]{BK}).
\end{cor}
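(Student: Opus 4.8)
The plan is to deduce the corollary formally from \lemref{lem:Levine-lci} together with the cycle-level construction of the cycle class maps recalled above; no new geometric input is needed, and the whole argument is a diagram chase at the level of the free cycle group $\sZ_0(X,Y)$.

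First I would fix the quotient presentations $\CH^L_0(X,Y) = \sZ_0(X,Y)/\sR^L_0(X,Y)$ and $\CH_0(X,Y) = \sZ_0(X,Y)/\sR_0(X,Y)$, with quotient maps $q^L$ and $q$ respectively. Since $k$ is infinite and $Y$ is nowhere dense, \lemref{lem:Levine-lci} tells us that the identity map of $\sZ_0(X,Y)$ descends to the surjection $\phi_{(X,Y)}$; concretely this means $\sR^L_0(X,Y) \subseteq \sR_0(X,Y)$ and $\phi_{(X,Y)} \circ q^L = q$.

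Next I would observe that both composites in the triangle are induced by one and the same homomorphism $\sZ_0(X,Y) \to K_0(X)$, namely the cycle class map of \eqref{eqn:Cycle-class-rel} followed by the canonical map $K_0(X,Y) \to K_0(X)$, i.e.\ the linear extension of $[x] \mapsto (\iota_x)_*(1)$ for closed points $x \in X \setminus Y$. By \cite[Lemma~3.13]{BK} (which applies precisely because $\dim(X) = 2$ and $Y$ is nowhere dense) this homomorphism kills $\sR_0(X,Y)$, hence factors through $q$ as the map $cyc_{(X,Y)} \colon \CH_0(X,Y) \to K_0(X)$ of \eqref{eqn:CCM-lci}; a fortiori it kills the smaller subgroup $\sR^L_0(X,Y)$, so it factors through $q^L$ as well, and by construction this factorization is the Levine-Weibel cycle class map $cyc^L_{(X,Y)}$ of \cite{LW} as recalled in \cite[\S~3]{BK}. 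Comparing, $cyc_{(X,Y)} \circ \phi_{(X,Y)} \circ q^L = cyc_{(X,Y)} \circ q$ equals the cycle-level map, which in turn equals $cyc^L_{(X,Y)} \circ q^L$; cancelling the surjection $q^L$ on the right yields $cyc_{(X,Y)} \circ \phi_{(X,Y)} = cyc^L_{(X,Y)}$, which is the asserted commutativity of the diagram \eqref{eqn:CCM-Levine-lci-0}.

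As for the main obstacle: there genuinely is none at this stage, since the substantive content has been pushed into \lemref{lem:Levine-lci} (whose proof rests on \corref{cor:Reduced-finite*}) and into \cite[Lemma~3.13]{BK}. The only point requiring a moment's care is the identification, built into \cite[\S~3]{BK}, of the Levine-Weibel cycle class map of \cite{LW} with the map on $\CH^L_0(X,Y)$ induced by the elementary assignment $[x] \mapsto (\iota_x)_*(1) \in K_0(X,Y) \to K_0(X)$; once that identification is granted, the corollary follows in one line.
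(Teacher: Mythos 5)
Your proof is correct and is exactly the argument the paper intends: the corollary is stated as an immediate consequence of Lemma~\ref{lem:Levine-lci}, and your diagram chase (both composites are induced by the same cycle-level map $\sZ_0(X,Y)\to K_0(X)$, which kills $\sR_0(X,Y)\supseteq\sR^L_0(X,Y)$ by \cite[Lemma~3.13]{BK}, so one cancels the surjection $q^L$) is the one-line verification the authors leave implicit.
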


We next recall the cycle class map for the 0-cycle group with modulus.
Let $X$ be a non-singular quasi-projective surface over $k$
and let $D \subset X$ be an effective Cartier divisor.
By ~\eqref{eqn:Cycle-class-rel}, there is a cycle class map
$cyc_{(X,D)} \colon \sZ_0(X,D) \to K_0(X,D)$.
It follows from \cite[Theorem~1.2]{Krishna-1} that this map factors through
$\CH_0(X|D)$ so that there is a cycle class map
\begin{equation}\label{eqn:CCM-modulus}
cyc_{X|D} \colon \CH_0(X|D) \to  K_0(X,D).
\end{equation}
When $X$ is non-singular, it is clear from the above construction that the
composition
$\CH_0(X|D)$ \
$\xrightarrow{cyc_{X|D}} K_0(X,D) \to K_0(X)$
coincides with the composition $\CH_0(X|D) \to \CH^F_0(X)
\xrightarrow{cyc_X} K_0(X)$,
where $cyc_X$ is as in ~\eqref{eqn:CCM-lci}.

\subsection{The Cohomology of $K$-theory sheaves}\label{sec:B-map*}
We shall now establish a key step in the proof of Bloch-Kato formula
for the Chow group of 0-cycles. 
We first recall the definition of relative Milnor $K$-theory.

Let $A$ be a Noetherian local ring and let $I \subset A$ be an
ideal. We let $K^M_n(A)$ denote the Milnor
$K$-group of $A$ as defined by Kato-Saito \cite[\S~1.3]{Kato-Saito}.
We let $K^M_n(A,I) = {\rm Ker}(K^M_n(A) \surj K^M_n(A/I))$.
The Milnor $K$-theory of local rings was also defined by Kerz in
\cite{Kerz09}. The two definitions agree for $n \le 2$.
However, this may not be the case when $n \ge 3$ and 
$A$ has finite residue field.
Since we are interested only in $n \le 2$ case in this paper,
we shall ignore this subtlety. 

We let $\wh{K}^M_n(A)$ denote the improved Milnor $K$-group of $A$ 
as defined in \cite{Kerz}.
Then the canonical map from the Milnor to the Quillen
$K$-theory of $A$ has a factorization
$K^M_n(A) \to \wh{K}^M_n(A) \to K_n(A)$.
The first map is surjective for all $n \ge 0$ by
\cite[Theorem~13]{Kerz}. It is an isomorphism when
$A$ is a field. The second map
is an isomorphism when $n \le 2$ by \cite[Propositions~2, 10]{Kerz}.
Since we shall only use $\wh{K}^M_n(A)$ for $n \le 2$, we shall make
no difference between $\wh{K}^M_n(A)$ and $K_n(A)$ and denote the
common group by the latter notation.
When the residue field of $A$ is infinite, then
the map $K^M_n(A) \to K_n(A)$ is an isomorphism for 
$n \le 2$ by \cite[Proposition~10]{Kerz}. It follows that the map
$K^M_n(A,I) \to \ov{K}_n(A,I)$ is also an isomorphism in this case,
where the latter group is defined to be the kernel of the
restriction map $K_n(A) \to K_n(A/I)$.

We now fix a field $k$.
Let $X$ be a reduced quasi-projective surface over $k$
and let $Y \subset X$ be a nowhere dense closed subscheme containing
$X_\sing$. Let $\sI_Y$ denote the sheaf of ideals on $X$ which 
defines $Y$. We let $\sI_{Y,x}$ denote the
stalk of $\sI_Y$ at a point $x \in X$.
For any integer $n \ge 0$, we let $\sK^M_{n, (X,Y)}$ denote the
Zariski (resp. Nisnevich) sheaf on $X$ whose stalk at a point $x \in X$ is the 
Milnor $K$-group $K^M_n(\sO_{X,x}, \sI_{Y,x})$ (resp.
$K^M_n(\sO^h_{X,x}, \sI^h_{Y,x})$). The sheaf $\sK_{n,(X,Y)}$ is defined similarly.
We write $\sK^M_{n, (X,\emptyset)}$ as $\sK^M_{n, X}$.
We use a similar notation for $\sK_{n, (X,\emptyset)}$.

Let $\tau$ denote Zariski or
Nisnevich topology and assume $n \le 2$.
Then it follows from the above that the canonical map
of $\tau$-sheaves $\sK^M_{n,X} \to {\sK}_{n,X}$ is surjective and
generically an isomorphism. It is clear that the map
$\sK^M_{n, (X,Y)} \to \ov{\sK}_{n,(X,Y)}$ is generically an isomorphism.
It is shown in \cite[Theorem~2.1]{DS} (see the proof of
\cite[Proposition~9.9]{Kato-Saito}) that this map is surjective.
If $k$ is infinite, then both maps are isomorphisms. 
Since the surjection
$\sK_{n,(X,Y)} \surj \ov{\sK}_{n,(X,Y)}$ is an isomorphism away from $Y$,
we therefore get the following.

\begin{lem}\label{lem:K-iso}
Suppose $n \le 2$. Then the following hold.
\begin{enumerate}
\item
$H^2_{\tau}(X, \sK_{n,(X,Y)})  \to H^2_{\tau}(X, \ov{\sK}_{n,(X,Y)})$
is an isomorphism. 
\item
The map $\sK^M_{n, X} \to \sK_{n,X}$ induces
an isomorphism
\[
H^2_{\tau}(X, \sK^M_{n,X}) \xrightarrow{\cong} H^2_{\tau}(X, \sK_{n,X}).
\]
\item 
The map $\sK^M_{n,(X,Y)} \surj \ov{\sK}_{n,(X,Y)}$ induces 
an isomorphism
\[
H^2_{\tau}(X, \sK^M_{n,(X,Y)}) \xrightarrow{\cong} H^2_{\tau}(X, \sK_{n,(X,Y)}). 
\]
\end{enumerate}
\end{lem}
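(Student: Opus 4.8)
The plan is to deduce all three isomorphisms from a single cohomological vanishing statement, applied to short exact sequences of $\tau$-sheaves built out of the maps in question. The input I would use is the following: if $X$ is a Noetherian scheme of Krull dimension $\le 2$ and $\tau$ denotes the Zariski or the Nisnevich topology, then $H^i_\tau(X,-) = 0$ for $i \ge 3$, and $H^i_\tau(X,\sG) = 0$ for $i \ge 2$ whenever the $\tau$-sheaf $\sG$ is supported on a closed subset $Z \subset X$ with $\dim Z \le 1$. For the Zariski topology the first assertion is Grothendieck's vanishing theorem; for the second, one notes that such a $\sG$ equals $i_*(\sG|_Z)$ for the closed immersion $i \colon Z \inj X$, whence $H^i_{\zar}(X,\sG) \cong H^i_{\zar}(Z,\sG|_Z) = 0$ for $i > \dim Z$. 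For the Nisnevich topology the argument is identical, using the standard bound of the Nisnevich cohomological dimension of a Noetherian scheme by its Krull dimension together with the exactness of $i_*$ along a closed immersion.

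Granting this, each part follows from a short exact sequence of $\tau$-sheaves on the surface $X$,
\[
0 \to \sC \to \sA \to \sB \to 0,
\]
in which $\sA \to \sB$ is surjective and an isomorphism over every generic point of $X$. For (1) this sequence comes from $\sK_{n,(X,Y)} \surj \ov{\sK}_{n,(X,Y)}$, and for (3) from $\sK^M_{n,(X,Y)} \surj \ov{\sK}_{n,(X,Y)}$, which is surjective by \cite[Theorem~2.1]{DS} (see the proof of \cite[Proposition~9.9]{Kato-Saito}) and generically an isomorphism as recalled above; assertion (2) is the special case $Y = \emptyset$ of the latter, once $\sK_{n,X}$ is identified with $\ov{\sK}_{n,X}$, and uses the surjectivity and generic bijectivity of $\sK^M_{n,X} \to \sK_{n,X}$. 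Since $\sA \to \sB$ is an isomorphism at the generic points of $X$, the kernel $\sC$ is supported on a closed subset of $X$ containing no generic point, hence of dimension $\le 1$. The relevant segment of the long exact cohomology sequence,
\[
H^2_\tau(X,\sC) \to H^2_\tau(X,\sA) \to H^2_\tau(X,\sB) \to H^3_\tau(X,\sC),
\]
has both outer terms zero — $H^2_\tau(X,\sC) = 0$ because $\sC$ is supported in dimension $\le 1$, and $H^3_\tau(X,\sC) = 0$ because $\dim X = 2$ — so $H^2_\tau(X,\sA) \xrightarrow{\cong} H^2_\tau(X,\sB)$. This is (2) on the nose; for (3) one combines it with (1) to rewrite $H^2_\tau(X, \ov{\sK}_{n,(X,Y)})$ as $H^2_\tau(X, \sK_{n,(X,Y)})$.

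The only ingredient not already assembled in this section is the vanishing statement of the first paragraph, and within it the only mildly delicate point is the Nisnevich case: one needs the Nisnevich cohomological dimension of a Noetherian scheme to be bounded by its Krull dimension, applied both to $X$ (dimension $2$) and to the support of the kernel $\sC$ (dimension $\le 1$). This is standard, and once it is in hand everything else is a formal diagram chase, so I do not anticipate any further obstacle.
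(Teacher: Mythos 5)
Your proposal is the argument the paper intends: the lemma is stated without a proof block precisely because it is meant to follow from the preceding observations (surjectivity plus generic isomorphism of the sheaf maps) by the long exact sequence, Grothendieck/Nisnevich cohomological dimension $\le 2$, and the vanishing of $H^2_\tau$ of the kernel. Parts of your write-up, however, lean on a claim that is literally false in one of the cases covered by the lemma. For (1) the kernel really is $i_*$ of a sheaf on the nowhere dense closed subscheme $Y$, so your justification is fine. But for (2), and for the $X\setminus Y$ part of (3), the stalkwise kernel of $\sK^M_{n,X}\to\sK_{n,X}$ can be nonzero exactly at the closed points with finite residue field; when $k$ is finite this set is dense, so the kernel $\sC$ is \emph{not} supported on a closed subset of dimension $\le 1$, and $\overline{\mathrm{Supp}\,\sC}$ can be all of $X$. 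The correct fix is routine: every section of $\sC$ has closed support contained in $Y\cup\{\text{closed points}\}$, hence contained in the union of $Y$ with a finite set of closed points; therefore $\sC$ is the filtered colimit of its subsheaves supported on closed subsets of dimension $\le 1$, each of which has vanishing $H^{\ge 2}_\tau$, and cohomology on a Noetherian scheme commutes with filtered colimits. With that substitution (and noting that when $k$ is infinite the maps in (2) and (3) are already isomorphisms of sheaves, so only the finite-field case needs this care), your proof is complete and coincides with the paper's.
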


We next show the following.

\begin{lem}\label{lem:Zar-Nis}
The change of topology maps
\begin{equation}\label{eqn:Zar-Nis-0}
\lambda_X \colon H^2_{\zar}(X, \sK^M_{2,X}) 
\to H^2_{\nis}(X, \sK^M_{2,X});
\end{equation} 
\begin{equation}\label{eqn:Zar-Nis-1}
\lambda_X \colon H^2_{\zar}(X, \sK_{2,X}) 
\to H^2_{\nis}(X, \sK_{2,X})
\end{equation} 
are injective.
\end{lem}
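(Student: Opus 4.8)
The statement to establish is that the change-of-topology maps $\lambda_X$ on $H^2$ of $\sK^M_{2,X}$ and of $\sK_{2,X}$ are injective. By part (2) of Lemma~\ref{lem:K-iso}, the two assertions are equivalent, so it suffices to prove injectivity for one of them; I would work with $\sK_{2,X}$ (equivalently $\sK^M_{2,X}$), whichever is more convenient at each step. The basic mechanism is the Leray spectral sequence for the change of topology $\epsilon\colon X_{\nis}\to X_{\zar}$, which reads $E_2^{p,q}=H^p_{\zar}(X,R^q\epsilon_*\sK_{2,X})\Rightarrow H^{p+q}_{\nis}(X,\sK_{2,X})$. The map $\lambda_X$ is the edge homomorphism $H^2_{\zar}(X,\epsilon_*\sK_{2,X})\to H^2_{\nis}(X,\sK_{2,X})$, and its kernel is controlled by the differentials landing in (or leaving) the bottom row. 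So the whole game is to understand the higher direct images $R^q\epsilon_*\sK_{2,X}$ for $q\ge 1$.

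First I would recall that for a \emph{smooth} scheme the Nisnevich and Zariski cohomologies of $\sK^M_n$ agree (Thomason, Gabber--Kerz), so the content here is genuinely about the singular locus. The key input is a Gersten-type resolution: on the regular locus $X_{\reg}$, the sheaf $\sK_{2}$ admits the Gersten resolution in both topologies, which forces $R^q\epsilon_*\sK_{2,X}|_{X_{\reg}}=0$ for $q\ge 1$ and $\epsilon_*\sK_{2,X}|_{X_{\reg}}$ to compute the right thing. Hence all the higher direct images $R^q\epsilon_*\sK_{2,X}$ are supported on the finite set $X_{\sing}$ (here $\dim X = 2$ and $X_{\sing}$ is finite, being nowhere dense in a surface). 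A sheaf supported on a finite set of points is flasque-like in the relevant sense: its higher Zariski cohomology vanishes, i.e.\ $H^p_{\zar}(X,R^q\epsilon_*\sK_{2,X})=0$ for $p\ge 1$, $q\ge 1$.

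With that vanishing in hand, the spectral sequence degenerates enough to give what we want: the only term that could interfere with the edge map $H^2_{\zar}(X,\epsilon_*\sK_{2,X})\to H^2_{\nis}$ on the $E_2$-page is $E_2^{0,2}=H^0_{\zar}(X,R^2\epsilon_*\sK_{2,X})$ via the differential $d_3\colon E_3^{0,2}\to E_3^{3,0}$, and $E_3^{3,0}$ is a subquotient of $H^3_{\zar}(X,\epsilon_*\sK_{2,X})$, which vanishes since $\dim X = 2$. The potential source of a kernel for the edge map is $d_2\colon E_2^{0,1}\to E_2^{2,0}=H^2_{\zar}(X,\epsilon_*\sK_{2,X})$; but $E_2^{0,1}=H^0_{\zar}(X,R^1\epsilon_*\sK_{2,X})$, and I would argue this vanishes because $R^1\epsilon_*\sK_{2,X}$, being supported on finitely many points, has no sections unless it is itself nonzero at a point — and at a Henselian local ring of a point the Nisnevich and Zariski stalks of $\sK_2$ coincide (a Henselian local ring sees no nontrivial Nisnevich covers), so the stalk of $R^1\epsilon_*\sK_{2,X}$ is $H^1$ of a point in the small Nisnevich site, which is zero. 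Therefore $E_2^{0,1}=0$, the differential into $H^2_{\zar}$ is zero, and the edge map $\lambda_X$ is injective (in fact the cokernel is $E_\infty^{1,1}$, a subquotient of the point-supported $H^0_{\zar}(X,R^1\epsilon_*\sK_{2,X})$, confirming $\lambda_X$ need not be surjective).

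The step I expect to be the main obstacle is the precise identification of the stalks of $R^q\epsilon_*\sK_{2,X}$ at a singular point $x$, i.e.\ showing they vanish for $q\ge 1$ — equivalently that $H^q_{\nis}(\Spec\sO_{X,x},\sK_{2})$ agrees with $H^q_{\zar}$ for the local ring. For a \emph{regular} local ring this is classical, but the whole point of Theorem~\ref{thm:Main-1} is that $X$ is smooth while $S_X$ or related singular models are not; one needs the analogous statement for the singular local rings that actually arise. The cleanest route is to invoke the Bloch--Ogus--Gabber machinery for the Gersten complex of $K$-theory sheaves and the fact that $\sO^h_{X,x}$ has no nontrivial étale (hence Nisnevich) covers, so $H^{>0}_{\nis}(\Spec\sO^h_{X,x},\sK_2)=0$ directly — this makes $R^q\epsilon_*\sK_{2,X}$ vanish for $q\ge 1$ after all, not merely supported on points, in which case the argument collapses to the standard fact that $\epsilon_*$ preserves $H^2$ and one only needs $R^q\epsilon_*=0$ for $q=1,2$. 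I would phrase the final write-up around this cleaner dichotomy: check directly that the Nisnevich stalks of $R^q\epsilon_*\sK_{2,X}$ vanish for $q\ge 1$ using Henselian local rings having trivial higher Nisnevich cohomology, conclude the Leray spectral sequence collapses on the relevant line, and read off injectivity of $\lambda_X$.
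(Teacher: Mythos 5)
Your reduction to the Quillen sheaf via Lemma~\ref{lem:K-iso}(2) is fine, but the Leray spectral sequence argument for $\epsilon\colon X_{\nis}\to X_{\zar}$ has a genuine gap at its central step. The Zariski stalk of $R^q\epsilon_*\sK_{2,X}$ at a point $x$ is $\varinjlim_{U\ni x}H^q_{\nis}(U,\sK_{2})=H^q_{\nis}(\Spec \sO_{X,x},\sK_{2})$ --- the Nisnevich cohomology of the \emph{Zariski} local ring, not of its henselization. The Zariski local ring is not henselian and its small Nisnevich site is not trivial, so the vanishing you invoke ("a Henselian local ring sees no nontrivial Nisnevich covers") does not apply: it computes the Nisnevich stalk of the sheaf, not the stalk of $R^q\epsilon_*$. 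At a regular point one can kill $H^q_{\nis}(\Spec\sO_{X,x},\sK_2)$ for $q\ge 1$ via the Gersten resolution, but at singular points this is precisely what is unknown; indeed, if $R^q\epsilon_*\sK_{2,X}=0$ for all $q\ge1$ then $\lambda_X$ would be an isomorphism in every degree, whereas the paper only obtains the isomorphism in degree $2$, under the extra hypothesis that $X\setminus Y$ is smooth, and only after proving surjectivity separately via the Kato--Saito theorem. Your fallback ("supported on $X_{\sing}$, hence no higher Zariski cohomology") also fails: for a reduced surface $X_{\sing}$ is nowhere dense but may well be a curve, not a finite set of points, so $E_2^{0,1}=H^0_{\zar}(X,R^1\epsilon_*\sK_{2,X})$ is not controlled and the differential $d_2\colon E_2^{0,1}\to E_2^{2,0}$ could a priori kill classes in $H^2_{\zar}$.

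The paper avoids local-to-global analysis of $R^q\epsilon_*$ entirely. It uses the Thomason--Trobaugh descent spectral sequence for both topologies, which for a two-dimensional scheme yields compatible exact sequences
\begin{equation*}
K_1(X)\to H^0_{\tau}(X,\sK_{1,X})\xrightarrow{d^{0,1}}H^2_{\tau}(X,\sK_{2,X})\xrightarrow{\gamma_X}K_0(X),\qquad \tau\in\{\zar,\nis\},
\end{equation*}
together with the fact (\cite[Lemma~2.1]{Krishna-1}) that $K_1(X)\to H^0_{\tau}(X,\sK_{1,X})$ is split surjective. Exactness then forces $d^{0,1}=0$, so both edge maps $\gamma_X$ are injective into the common target $K_0(X)$, and compatibility of the two spectral sequences gives the injectivity of $\lambda_X$. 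If you want to salvage your approach, you would need to supply the vanishing of $H^0_{\zar}(X,R^1\epsilon_*\sK_{2,X})$ (or at least of the image of $d_2$) by some other means; as written, that step is unsupported.
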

\begin{proof}
In view of \lemref{lem:K-iso}, both statements are equivalent.
So we shall show the injectivity of the second map.

The Thomason-Trobaugh descent spectral sequence
and its compatibility with respect to change of topologies
yield a commutative diagram with exact rows
\begin{equation}\label{eqn:Coh-iso-0}
\xymatrix@C.8pc{
K_1(X) \ar@{=}[d] \ar[r] &  H^0_\zar(X, \sK_{1,X}) \ar[r]^-{d^{0,1}} \ar[d] &
H^2_\zar(X, \sK_{2,X}) \ar[r]^-{\gamma_X} \ar[d] & K_0(X) \ar@{=}[d] \\
K_1(X) \ar[r] &  H^0_\nis(X, \sK_{1,X}) \ar[r]^-{d^{0,1}} &
H^2_\nis(X, \sK_{2,X}) \ar[r]^-{\gamma_X} & K_0(X),}
\end{equation}
where the vertical arrows are the change of topology maps and the maps $\gamma_X$ are the edge homomorphisms for the Thomason-Trobaugh spectral sequence.

It is shown in \cite[Lemma~2.1]{Krishna-1} (see also 
\cite[p.~162]{KSri}) that the map
$K_1(X) \to H^0_\tau(X, \sK_{1,X})$ is naturally split surjective
when $\tau$ is either Zariski or Nisnevich topology.
Hence, we get injective maps
\begin{equation}\label{eqn:Coh-iso-1}
H^2_\zar(X, \sK_{2,X}) \inj H^2_\nis(X, \sK_{2,X}) \inj K_0(X).
\end{equation}
This finishes the proof.
\end{proof}

\subsection{The Bloch-Kato map for singular surfaces}\label{sec:B-map}
We now assume again that $X$ is a reduced quasi-projective surface
over an arbitrary field $k$ and $Y$ is any closed
subscheme of $X$ containing $X_\sing$.
Let $x \in X \setminus Y$ be a closed point. The excision theorem for
the cohomology with support shows that the restriction map
$H^2_{\{x\}}(X, \sK^M_{2,X}) \to H^2_{\{x\}}(X_\reg, \sK^M_{2,X_\reg})$
is an isomorphism.
On the other hand, we have a commutative diagram of Zariski cohomology
groups
\begin{equation}\label{eqn:Quillen-Gersten}
\xymatrix@C.8pc{
H^2_{\{x\}}(X_\reg, \sK^M_{2,X_\reg}) \ar[r] \ar[d] &
H^2(X, \sK^M_{2,X}) \ar[d]^-{\cong} \\
H^2_{\{x\}}(X_\reg, \sK_{2,X_\reg}) \ar[r] \ar@{.>}[ur]^-{\alpha_x} &
H^2(X, \sK_{2,X}),}
\end{equation}
where the horizontal arrows are the `forget support' maps and the
vertical arrows are induced by the canonical Milnor to Quillen 
$K$-theory map. The right vertical arrow is an isomorphism
by \lemref{lem:K-iso}. It follows that there is a unique
arrow $\alpha_x$ as indicated such that the inner triangles and the outer
square in ~\eqref{eqn:Quillen-Gersten} commute.
We remark that the left vertical arrow in this diagram is
also an isomorphism if $X_\reg$ is smooth over $k$
(see \cite[Theorem~2]{Kato}), but we do not need it.

The Gersten resolution of $\sK_{2,X_\reg}$ 
\cite[Theorem~5.11]{Quillen} shows that
$H^2_{\{x\}}(X_\reg, \sK_{2,X_\reg}) \cong K^{\{x\}}_0(X) \cong K_0(k(x)) \cong 
\Z$. Hence, $x$ defines a unique cycle class $[x] \in 
H^2_{\{x\}}(X_\reg, \sK_{2,X_\reg})$. 
Taking its image under $\alpha_x$, we get a well-defined
cycle class $\rho_{(X,Y)}([x]) \in H^2_{\zar}(X, \sK^M_{2,X})$.
Extending this assignment linearly on $\sZ_0(X, Y)$,
we get a map $\rho_{(X,Y)} \colon \sZ_0(X, Y) \to H^2_{\zar}(X, \sK^M_{2,X})$.
Composing this with the change of topology map
$\lambda_X \colon
H^2_{\zar}(X, \sK^M_{2,X}) \to H^2_{\nis}(X, \sK^M_{2,X})$,
we obtain maps

\begin{equation}\label{eqn:Bloch-map-0-0}
\sZ_0(X, Y) \xrightarrow{\rho_{(X,Y)}} 
H^2_{\zar}(X, \sK^M_{2,X}) \xrightarrow{\lambda_X}
H^2_{\nis}(X, \sK^M_{2,X}).
\end{equation}

We also have the cycle class map $cyc_{(X,Y)} \colon \sZ_0(X, Y) \to
K_0(X,Y) \to K_0(X)$ from ~\eqref{eqn:CCM-lci}. These maps are related by the
the following.

\begin{lem}\label{lem:Bloch-cycle-maps}
There is a commutative diagram
\begin{equation}\label{eqn:Bloch-cycle-maps-0}
\xymatrix{
\sZ_0(X, Y) \ar[r]^-{\rho_{(X, Y)}} \ar[drr]_-{cyc_{(X,Y)}} & 
H^2_\zar(X, \sK^M_{2,X})
\ar@{^{(}->}[r]^-{\lambda_X} & H^2_\nis(X, \sK^M_{2,X}) 
\ar@{^{(}->}[d]^-{\gamma_X} \\
& & K_0(X).}
\end{equation}
In particular, $\rho_{(X,Y)}$ factors through its quotient
$\CH_0(X,Y)$ if $Y$ is nowhere dense in $X$.
\end{lem}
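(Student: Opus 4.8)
The plan is to verify the commutativity of \eqref{eqn:Bloch-cycle-maps-0} on the generators $[x]$ of $\sZ_0(X,Y)$, $x \in X \setminus Y$ a closed point, and then to read off the factorization statement from the injectivity results already established. Recall that, by the construction preceding the lemma, $\rho_{(X,Y)}([x]) = \alpha_x([x])$, where $[x] \in H^2_{\{x\}}(X_\reg, \sK_{2,X_\reg})$ is the canonical generator (coming from $1 \in K^{\{x\}}_0(X) \cong K_0(k(x)) \cong \Z$ via the Gersten resolution) and $\alpha_x$ is the dotted arrow of \eqref{eqn:Quillen-Gersten}. By the very definition of $\alpha_x$, and using the isomorphism $H^2_\zar(X,\sK^M_{2,X}) \xrightarrow{\cong} H^2_\zar(X,\sK_{2,X})$ of \lemref{lem:K-iso}\,(2), the class $\rho_{(X,Y)}([x])$ is identified with the image of $[x]$ under the excision isomorphism $H^2_{\{x\}}(X_\reg, \sK_{2,X_\reg}) \cong H^2_{\{x\}}(X, \sK_{2,X})$ followed by the forget-support map to $H^2_\zar(X, \sK_{2,X})$. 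Via \lemref{lem:K-iso}\,(2) once more, the maps $\lambda_X$ and $\gamma_X$ in \eqref{eqn:Bloch-cycle-maps-0} are identified with the corresponding maps for the Quillen sheaf $\sK_{2,X}$, so it suffices to work throughout with $\sK_{2,X}$.

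First I would record the naturality of the Thomason--Trobaugh descent spectral sequence, both for the change of topology $\zar \to \nis$ and for the forget-support maps from cohomology with support in $\{x\}$. Since $x$ has codimension two in the regular scheme $X_\reg$, the Gersten resolution \cite[Theorem~5.11]{Quillen} forces $H^p_{\{x\}}(X_\reg, \sK_{q,X_\reg}) = 0$ for $p \neq 2$; hence the descent spectral sequence with support in $\{x\}$ has $E_2$-page concentrated on the line $p = 2$, and its edge homomorphism $H^2_{\{x\}}(X, \sK_{2,X}) \to K^{\{x\}}_0(X)$ is an isomorphism carrying $[x]$ to $1$, independently of the topology. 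Comparing with the support-free spectral sequence yields a commutative square
\[
\xymatrix@C1.6pc{
H^2_{\{x\}}(X, \sK_{2,X}) \ar[r]^-{\cong} \ar[d] & K^{\{x\}}_0(X) \ar[d] \\
H^2_\nis(X, \sK_{2,X}) \ar[r]^-{\gamma_X} & K_0(X),}
\]
where the left arrow is the Nisnevich forget-support map and the right arrow is the natural map $K^{\{x\}}_0(X) \to K_0(X)$. Chasing $[x]$: going down then right sends $[x]$ to $\gamma_X$ of its Nisnevich forget-support image, which by the previous paragraph (and compatibility of $\lambda_X$ with forget-support) equals $\gamma_X(\lambda_X(\rho_{(X,Y)}([x])))$; going right then down sends $[x]$ to the image of $1 \in K^{\{x\}}_0(X) \cong K_0(k(x))$ in $K_0(X)$, which is precisely $(\iota_x)_*(1) = cyc_{(X,Y)}([x])$. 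This proves the commutativity of \eqref{eqn:Bloch-cycle-maps-0}.

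For the last assertion, assume $Y$ is nowhere dense in $X$. By \cite[Lemma~3.13]{BK}, recalled in \eqref{eqn:CCM-lci}, the map $cyc_{(X,Y)} \colon \sZ_0(X,Y) \to K_0(X)$ annihilates $\sR_0(X,Y)$. On the other hand, $\lambda_X$ is injective by \lemref{lem:Zar-Nis} and $\gamma_X$ is injective by \eqref{eqn:Coh-iso-1}, so $\gamma_X \circ \lambda_X$ is injective. Since the triangle just established gives $cyc_{(X,Y)} = \gamma_X \circ \lambda_X \circ \rho_{(X,Y)}$ on $\sZ_0(X,Y)$, it follows that $\rho_{(X,Y)}(\sR_0(X,Y)) = 0$, i.e.\ $\rho_{(X,Y)}$ descends to $\CH_0(X,Y)$.

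The main obstacle is the bookkeeping around edge homomorphisms: one must check carefully that the edge map of the $K$-theory descent spectral sequence with supports is compatible both with $\gamma_X$ (under forget-support) and with $\lambda_X$ (under change of topology), and that the abstract arrow $\alpha_x$ produced in \eqref{eqn:Quillen-Gersten} really is the forget-support map after the identification $\sK^M_{2,X} \cong \sK_{2,X}$ on $H^2$. All of these are formal consequences of the functoriality of Bass--Thomason--Trobaugh $K$-theory and of the Gersten resolution, so once they are laid out the diagram chase and the injectivity argument finish the proof without further computation.
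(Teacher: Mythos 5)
Your proof is correct, and the factorization step at the end (injectivity of $\gamma_X\circ\lambda_X$ from \eqref{eqn:Coh-iso-1} plus the fact, recorded in \eqref{eqn:CCM-lci}, that $cyc_{(X,Y)}$ kills $\sR_0(X,Y)$) is exactly the paper's argument. Where you diverge is the commutativity of the triangle: the paper simply cites \cite[Lemma~3.2]{GK} for this, whereas you rederive it from scratch by running the Thomason--Trobaugh descent spectral sequence with supports in $\{x\}$, observing that the Gersten resolution on $X_\reg$ forces its $E_2$-page to be concentrated on the line $p=2$, and then chasing $[x]$ through the resulting square of edge homomorphisms against the support-free spectral sequence. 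This is a legitimate self-contained substitute for the citation; it uses no input beyond what the paper already invokes in \S~\ref{sec:B-map} (excision, \cite[Theorem~5.11]{Quillen}, and the identification $K_0^{\{x\}}(X)\cong K_0(k(x))$ with $(\iota_x)_*$ the forget-support map on $K_0$, valid because $x\in X_\reg$), and the remaining compatibilities you flag (edge maps versus forget-support and versus $\lambda_X$, and the identification of $\alpha_x$ with the forget-support map via \lemref{lem:K-iso}\,(2), which holds by the uniqueness built into \eqref{eqn:Quillen-Gersten}) are indeed formal. The trade-off is transparency versus length: your version makes the lemma independent of \cite{GK}, at the cost of reproving what that reference already establishes.
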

\begin{proof}
The commutativity of the diagram is \cite[Lemma~3.2]{GK}.
Note that the proof of the cited result uses no assumption on the
nature of the field $k$.
The injectivity of $\lambda_X$ and $\gamma_X$ is ~\eqref{eqn:Coh-iso-1}.
The last assertion follows from the injectivity of $\gamma_X \circ 
\lambda_X$ and ~\eqref{eqn:CCM-lci}.
\end{proof}

\begin{lem}\label{lem:Bloch-surjection}
The map $\sZ_0(X,Y) \xrightarrow{\rho_{(X,Y)}} H^2_\zar(X, \sK^M_{2,X})$ 
is surjective if $Y$ is nowhere dense in $X$. 
The map $\lambda_X$ is an isomorphism if $X \setminus Y$
is furthermore smooth over $k$ (e.g., if $k$ is perfect).
\end{lem}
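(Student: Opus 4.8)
The plan is to reduce both assertions to statements about Quillen $K$-theory sheaves and then to feed them into the coniveau spectral sequence, so that everything becomes a question about the local cohomology of $\sK_{2,X}$ at the closed points of $X$. By \lemref{lem:K-iso}(2), the canonical map induces isomorphisms $H^2_\tau(X,\sK^M_{2,X})\xrightarrow{\ \cong\ }H^2_\tau(X,\sK_{2,X})$ for $\tau\in\{\zar,\nis\}$, and these are compatible with the change of topology map $\lambda_X$; moreover the map $\rho_{(X,Y)}$ was built in \eqref{eqn:Quillen-Gersten} precisely via this identification, and the composite $\lambda_X\circ\rho_{(X,Y)}$ of \eqref{eqn:Bloch-map-0-0} is the analogously constructed Nisnevich cycle class map. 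So it suffices to prove: (i) $\sZ_0(X,Y)\to H^2_\zar(X,\sK_{2,X})$ is surjective whenever $Y$ is nowhere dense, and (ii) $H^2_\zar(X,\sK_{2,X})\to H^2_\nis(X,\sK_{2,X})$ is an isomorphism when $U:=X\setminus Y$ is smooth over $k$.

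For $\tau\in\{\zar,\nis\}$ I would use the coniveau spectral sequence $E_1^{p,q}=\bigoplus_{x\in X^{(p)}}H^{p+q}_{\{x\}}(X_\tau,\sK_{2,X})\Rightarrow H^{p+q}_\tau(X,\sK_{2,X})$, which converges since $\dim X=2$. On the diagonal $p+q=2$ the term $q=2$ vanishes because the residue field $\kappa(\eta)$ of a generic point is a field, so its $\tau$-cohomological dimension is $0$; the term $q=1$ vanishes because at a codimension-one point $x$ the ring $\sO_{X,x}$ (resp.\ $\sO^h_{X,x}$) has Krull dimension one, hence $\tau$-cohomological dimension $\le 1$, and the localization sequence relating $\Spec\sO_{X,x}$ (resp.\ $\Spec\sO^h_{X,x}$) with its zero-dimensional punctured spectrum forces $H^2_{\{x\}}=0$. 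Therefore $H^2_\tau(X,\sK_{2,X})=E_2^{2,0}$ is the cokernel of $\partial\colon\bigoplus_{x\in X^{(1)}}H^1_{\{x\}}(X_\tau,\sK_{2,X})\to\bigoplus_{x\in X^{(2)}}H^2_{\{x\}}(X_\tau,\sK_{2,X})$; in particular $H^2_\tau(X,\sK_{2,X})$ is generated by the images, under the ``forget support'' maps, of the local cohomology groups at closed points.

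Now, if $x\in X_\reg$ is a closed point, then excision together with Quillen's Gersten resolution on the regular scheme $\Spec\sO_{X,x}$ (resp.\ $\Spec\sO^h_{X,x}$) gives $H^2_{\{x\}}(X_\tau,\sK_{2,X})\cong\Z$, and by the very construction in \S\ref{sec:B-map} the image of a generator is $\rho_{(X,X_\sing)}([x])$ (resp.\ its Nisnevich counterpart). Hence the subgroup of $H^2_\tau(X,\sK_{2,X})$ generated by cycle classes of regular closed points already contains every contribution coming from $X_\reg$. Two things remain. First, for $Y\supsetneq X_\sing$ one must express $\rho_{(X,X_\sing)}([x_0])$, for a regular closed point $x_0\in Y$, in terms of points of $X\setminus Y$: I would pick a reduced curve $C\subset X$ through $x_0$ with $C\cap X_\sing=\emptyset$, $C\not\subset Y$ and $C\cap Y$ finite, and a rational function on $C$ with a simple zero at $x_0$ all of whose other zeros and poles lie (after a general choice) in $X\setminus Y$; applying $\partial$ to the class it defines in $H^1_{\{\eta_C\}}(X_\tau,\sK_{2,X})=k(C)^\times$ yields the required relation in either topology. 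Second, for a closed point $x_0\in X_\sing$ one must show the image of $H^2_{\{x_0\}}(X_\tau,\sK_{2,X})$ in $H^2_\tau(X,\sK_{2,X})$ lies in the span of cycle classes of points of $X\setminus Y$; this is the real obstacle. My approach would be to choose a reduced curve $C\subset X$ through $x_0$ which is a local complete intersection (hence Cartier) at $x_0$ and meets $X_\sing$ only at $x_0$, so that $C\hookrightarrow X$ carries a Gysin pushforward near $x_0$, and then to kill the contribution at $x_0$ by divisors of rational functions on $C$ that are units at the remaining points of $C\cap Y$; this is essentially the moving lemma for $0$-cycles on singular surfaces, which I would either re-prove in the present generality or import from Levine \cite{Levine-2},\cite{Levine-3} (and \cite{BK}). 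Granting this, (i) follows, and the same argument run over the Nisnevich site shows that $\sZ_0(X,Y)\to H^2_\nis(X,\sK_{2,X})$ is surjective once $U$ is smooth, the smoothness being what makes the relevant Nisnevich-local computations at the points of $Y$ match the Zariski ones.

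Finally, for $\lambda_X$: injectivity is \lemref{lem:Zar-Nis}. For surjectivity when $U=X\setminus Y$ is smooth over $k$ the cleanest route is to note that $\lambda_X\circ\rho_{(X,Y)}$ equals the Nisnevich cycle class map $\sZ_0(X,Y)\to H^2_\nis(X,\sK^M_{2,X})$ of \eqref{eqn:Bloch-map-0-0}, which is surjective by the previous paragraph; hence $\lambda_X$ is surjective, and therefore an isomorphism. Alternatively one may compare the localization sequences for $(X,U,Y)$ in the two topologies: on the smooth scheme $U$ the Gersten resolution of $\sK^M_{2,U}=\sK_{2,U}$ is the same complex of flasque sheaves for $\zar$ and $\nis$, so $\lambda_U$ is an isomorphism in all degrees, while the coniveau computation above (applied with supports in the at most one-dimensional set $Y$) presents $H^i_Y(X,\sK^M_{2,X})$ in terms of local cohomology at points of $Y$, where Zariski and Nisnevich local cohomology agree — at regular points by the Gersten isomorphism (the henselization of a regular local ring is regular with the same residue field) and at the singular points of $X$ lying in $Y$ by the local form of the comparison $\sK^M_{n,(X,Y)}\cong\overline{\sK}_{n,(X,Y)}$ used in \lemref{lem:K-iso} — so that the five lemma yields the claim. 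As indicated, the main difficulty in the whole proof is the moving step at the singular closed points.
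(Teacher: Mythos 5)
Your Zariski argument over an infinite field is a legitimate repackaging of what the paper does: the coniveau spectral sequence shows $H^2_\zar(X,\sK_{2,X})$ is generated by the images of the groups $H^2_{\{x\}}$ at closed points, the regular points lying in $Y$ are moved off $Y$ by curves, and the contribution of the singular closed points is exactly the content of Levine's surjectivity theorem \cite[p.~169]{Levine-1}, which the paper also imports (it factors $\rho_{(X,Y)}$ as $\sZ_0(X,Y)\surj \CH^L_0(X)\surj\CH_0(X)\to H^2_\zar(X,\sK^M_{2,X})$, getting the first surjection from a complete-intersection curve through the support of a given cycle together with the generation of $\Pic(C)$ by points away from $C\cap Y$). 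Be aware, though, that your sketched substitute for Levine's theorem --- one lci curve through a singular point $x_0$ plus divisors of rational functions --- is far short of what is needed: you would have to show that the boundary maps from $H^1_{\{\eta_C\}}$, over all admissible curves $C$, exhaust $H^2_{\{x_0\}}(X,\sK_{2,X})$ modulo the classes of regular points, and that is precisely the nontrivial theorem rather than a routine moving lemma.

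The genuine gaps are in the Nisnevich half and, as a consequence, in the finite-field case of the Zariski half. You assert that ``the same argument run over the Nisnevich site'' gives surjectivity of $\sZ_0(X,Y)\to H^2_\nis(X,\sK_{2,X})$ when $X\setminus Y$ is smooth, smoothness making ``the relevant Nisnevich-local computations at the points of $Y$ match the Zariski ones.'' This fails at the singular points of $X$ (all of which lie in $Y$): there the Nisnevich local cohomology is computed on $\Spec(\sO^h_{X,x})$ rather than $\Spec(\sO_{X,x})$, Levine's Zariski argument does not transfer, and smoothness of the complement says nothing about these stalks. The paper needs a separate, Nisnevich-specific input here, namely \cite[Theorem~2.5]{Kato-Saito}, which says that when $X\setminus Y$ is smooth and dense the group $H^2_\nis(X,\sK^M_{2,X})$ is generated by classes of closed points of $X\setminus Y$; combined with the injectivity of $\lambda_X$ from Lemma~\ref{lem:Zar-Nis}, this yields both that $\lambda_X$ is an isomorphism and the Zariski surjectivity over finite fields, where your Bertini-based moving step and Levine's theorem are not available as stated and where the paper reduces to the smooth-complement case precisely because finite fields are perfect. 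Your ``alternative'' route to $\lambda_X$ being an isomorphism is also broken: the identification $\sK^M_{n,(X,Y)}\cong\ov{\sK}_{n,(X,Y)}$ compares Milnor with Quillen $K$-theory, not Zariski with Nisnevich local cohomology, so it cannot be used to match $H^2_{\{x\}}$ in the two topologies at a singular point.
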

\begin{proof}
Suppose first that $k$ is infinite.
Let $\alpha \in \sZ_0(X, X_\sing)$ be a 0-cycle.
Since $k$ is infinite, we can use \cite[Lemma~1.3]{Levine-3}
to find a complete intersection reduced curve $C \subset X$
such that $C \cap Y$ is finite and $C \cap X_\reg$ is regular.
Moreover, the latter contains the support of
$\alpha$. This yields a push-forward map
(e.g., see \cite[Lemma~1.8]{ESV}) $\Pic(C) \to \CH_0(X, X_{\sing})$.
Since $Y$ is nowhere dense in $X$,
it is well known that $\Pic(C)$ is generated by
closed points away from $C \cap Y$ (note that
$C_\sing \subset Y$ because $X_\sing \subset Y$ and 
$C \cap X_\reg$ is regular), it follows that
the canonical map $\sZ_0(X,Y) \to \CH^L_0(X, X_\sing) = \CH^L_0(X)$
is surjective. On the other hand, the composite map
$\CH^L_0(X) \surj \CH_0(X) \xrightarrow{\rho_X} 
H^2_\zar(X, \sK^M_{2,X})$ is surjective by \cite[p.~169]{Levine-1}
and \lemref{lem:K-iso}.
We have thus shown that 
the map $\rho_{(X,Y)}$, which factorizes as
$\sZ_0(X,Y) \surj \CH^L_0(X) \surj \CH_0(X) \xrightarrow{\rho_X}
H^2_\zar(X, \sK^M_{2,X})$, is surjective.

If $k$ is finite, then $X \setminus Y$
is smooth over $k$ and dense open in $X$.
So it suffices now to prove the lemma when $X \setminus Y$
is smooth over $k$ and dense open in $X$.
In the latter case, $\lambda_X \circ \rho_{(X,Y)}$ is surjective by 
\cite[Theorem~2.5]{Kato-Saito}. In particular,
$\lambda_X$ is surjective. We are therefore done by 
\lemref{lem:Zar-Nis}.
\end{proof}

We can summarize the above results in the form of the following.

\begin{thm}\label{thm:CCM-main}
Let $X$ be a reduced quasi-projective surface over a field $k$.
Let $Y \subset X$ be a nowhere dense closed subscheme containing $X_\sing$.
Then there are canonical maps
\begin{equation}\label{eqn:CCM-main-0}
\CH_0(X,Y) \xrightarrow{\rho_{(X,Y)}} H^2_\zar(X, \sK^M_{2,X})
\stackrel{\lambda_X}{\inj} H^2_\nis(X, \sK^M_{2,X}) 
\stackrel{\gamma_X}{\inj} K_0(X)
\end{equation}
such that the composite $\gamma_X \circ \lambda_X \circ \rho_{(X,Y)}$
is the cycle class map $cyc_{(X,Y)}$ of ~\eqref{eqn:CCM-lci}.
The map $\rho_{(X,Y)}$ is surjective.
The map $\lambda_X$ is an isomorphism if $X \setminus Y$ is smooth over $k$ 
(e.g., if $k$ is perfect).
\end{thm}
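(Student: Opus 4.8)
The statement of Theorem~\ref{thm:CCM-main} is really a packaging of the lemmas proved in \S\ref{sec:B-map*} and \S\ref{sec:B-map}, so the plan is to assemble them in the right order rather than to prove anything genuinely new. First I would recall that the map $\rho_{(X,Y)} \colon \sZ_0(X,Y) \to H^2_\zar(X, \sK^M_{2,X})$ was constructed in ~\eqref{eqn:Bloch-map-0-0} via the local classes $\alpha_x$ coming from the diagram ~\eqref{eqn:Quillen-Gersten}, and the change-of-topology map $\lambda_X$ was studied in \lemref{lem:Zar-Nis}. The composite with the edge map $\gamma_X$ into $K_0(X)$ is the cycle class map $cyc_{(X,Y)}$ of ~\eqref{eqn:CCM-lci}: this is exactly the content of the commutative diagram ~\eqref{eqn:Bloch-cycle-maps-0} in \lemref{lem:Bloch-cycle-maps}, whose proof (cited from \cite[Lemma~3.2]{GK}) makes no hypothesis on $k$.

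Second, I would invoke the last assertion of \lemref{lem:Bloch-cycle-maps}: since $Y$ is nowhere dense, $\rho_{(X,Y)}$ kills $\sR_0(X,Y)$ because $\gamma_X \circ \lambda_X \circ \rho_{(X,Y)} = cyc_{(X,Y)}$ kills it (by ~\eqref{eqn:CCM-lci}) and $\gamma_X \circ \lambda_X$ is injective. Hence $\rho_{(X,Y)}$ descends to a map $\CH_0(X,Y) \to H^2_\zar(X, \sK^M_{2,X})$, giving the first map in ~\eqref{eqn:CCM-main-0}. The injectivity of $\lambda_X$ is \lemref{lem:Zar-Nis} (combined with \lemref{lem:K-iso} to pass between $\sK^M_{2,X}$ and $\sK_{2,X}$), and the injectivity of $\gamma_X$ is the second inclusion in ~\eqref{eqn:Coh-iso-1}, which comes from the Thomason--Trobaugh descent spectral sequence together with the splitting of $K_1(X) \to H^0_\nis(X,\sK_{1,X})$ from \cite[Lemma~2.1]{Krishna-1}. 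The surjectivity of $\rho_{(X,Y)}$ (hence of the induced map on $\CH_0(X,Y)$) is precisely \lemref{lem:Bloch-surjection}, and the statement that $\lambda_X$ is an isomorphism when $X\setminus Y$ is smooth over $k$ is the second sentence of that same lemma.

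The only point requiring a small remark is the compatibility of ``$\rho_{(X,Y)}$ factors through $\CH_0(X,Y)$'' with the asserted factorization through $\CH^L_0(X,Y)$ used implicitly via \lemref{lem:Levine-lci}; but this is automatic since $\CH^L_0(X,Y) \surj \CH_0(X,Y)$ is induced by the identity on $\sZ_0(X,Y)$, so the same cycle-level formula defines the map on either group. There is no real obstacle here: the theorem is a summary, and the substantive work — the injectivity of $\lambda_X$ and $\gamma_X$, the surjectivity of $\rho_{(X,Y)}$, and the identification of the composite with $cyc_{(X,Y)}$ — has already been carried out in Lemmas~\ref{lem:K-iso}, \ref{lem:Zar-Nis}, \ref{lem:Bloch-cycle-maps} and~\ref{lem:Bloch-surjection}. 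If anything is delicate, it is keeping track of which statements genuinely need $k$ perfect or infinite (only the ``$\lambda_X$ is an isomorphism'' clause and the infinite-field half of \lemref{lem:Bloch-surjection}) versus which hold over an arbitrary field, and the proof should simply cite the relevant lemma in each case.
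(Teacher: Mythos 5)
Your proposal is correct and coincides with the paper's own treatment: the theorem is stated there explicitly as a summary of Lemmas~\ref{lem:K-iso}, \ref{lem:Zar-Nis}, \ref{lem:Bloch-cycle-maps} and~\ref{lem:Bloch-surjection}, and your assembly of those results (including the descent to $\CH_0(X,Y)$ via the injectivity of $\gamma_X\circ\lambda_X$) is exactly the intended argument.
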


\vskip .2cm

When $X$ has only one singular closed point and $k$ is infinite, then
$\rho_{(X,Y)}$ was shown to be an isomorphism by Collino \cite{Collino}. 
When $X$ is a normal surface, $\rho_{(X,Y)}$ was shown to be an
isomorphism by Pedrini and Weibel \cite{PW-1}. 
Note here that Collino and Pedrini-Weibel prove their results
only for $H^2_\zar(X, \sK_{2,X})$ (i.e., they use Quillen $K$-theory
sheaf). When $k$ is
algebraically closed, $\rho_{(X,Y)}$ was shown to be an
isomorphism by Levine \cite{Levine-1}.
We shall extend Levine's result to all infinite fields.

\subsection{The Bloch-Kato map for cycles with modulus}
\label{sec:BQM-mod}
We shall now prove the modulus version of \thmref{thm:CCM-main}.
In order to do this, we need to generalize the construction of
\S~\ref{sec:B-map} little bit.
We fix an arbitrary field $k$.

Let $X$ be a reduced quasi-projective surface over $k$ and let
$Y \subset X$ be a closed subscheme such that 
$X^o := X \setminus Y$ is smooth over $k$.
Notice that we did not impose this extra condition
in \S~\ref{sec:B-map}.

Let $x \in X^o$ be a closed point. The excision theorem for
the cohomology with support shows that the restriction map
$H^2_{\{x\}}(X, \sK^M_{2,(X,Y)}) \to H^2_{\{x\}}(X^o, \sK^M_{2,X^o})$
is an isomorphism (in the Zariski topology). 
Since $X^o$ is smooth over $k$,
Kato showed that the Gersten complex for $\sK^M_{2,X^o}$ is exact
except at its left-most end. This implies that
$H^2_{\{x\}}(X^o, \sK^M_{2,X^o}) \cong K^{\{x\}}_0(X) \cong K_0(k(x)) \cong 
\Z$ (see \cite[Theorem~2]{Kato}).
Hence, $x$ defines a unique cycle class $[x] \in 
H^2_{\{x\}}(X, \sK^M_{2,(X,Y)})$. The image of this cycle class under the
`forget support' map $H^2_{\{x\}}(X, \sK^M_{2,(X,Y)}) \to H^2(X, \sK^M_{2,(X,Y)})$
yields a cycle class $[x] \in H^2_{\zar}(X, \sK^M_{2,(X,Y)})$.
Extending this assignment linearly on $\sZ_0(X, Y)$,
we get a map $\rho_{(X,Y)} \colon \sZ_0(X, Y) \to H^2_{\zar}(X, \sK^M_{2,(X,Y)})$.
Composing this with the change of topology map
$\lambda_{(X,Y)} \colon
H^2_{\zar}(X, \sK^M_{2,(X,Y)}) \to H^2_{\nis}(X, \sK^M_{2,(X,Y)})$, we
obtain maps
\begin{equation}\label{eqn:Bloch-map-0}
\sZ_0(X, Y) \xrightarrow{\rho_{(X,Y)}} 
H^2_{\zar}(X, \sK^M_{2,(X,Y)}) \xrightarrow{\lambda_{(X,Y)}}
H^2_{\nis}(X, \sK^M_{2,(X,Y)}).
\end{equation}

We now let $k$ be a as above and let $X$ be a smooth quasi-projective
surface over $k$. Let $D \subset X$ be an effective Cartier divisor.
Recall that the Thomason-Trobaugh descent spectral sequence
yields the Nisnevich descent map
$H^2_\nis(X, \sK_{2,(X,D)}) \to K_0(X,D)$.
We let $\gamma_{(X,D)}$ denote its composition with the 
canonical map $H^2_\nis(X, \sK^M_{2,(X,D)}) \to
H^2_\nis(X, \sK_{2,(X,D)})$.

\begin{thm}\label{thm:CCM-main-mod}
Let $X$ be a smooth quasi-projective surface over $k$.
Then there are canonical maps
\begin{equation}\label{eqn:CCM-main-mod-0}
\CH_0(X|D) \xrightarrow{\rho_{X|D}} H^2_\zar(X, \sK^M_{2,(X,D)})
\stackrel{\lambda_{(X,D)}}{\underset{\cong}{\to}} H^2_\nis(X, \sK^M_{2,(X,D)}) 
\xrightarrow{\gamma_{(X,D)}} K_0(X,D)
\end{equation}
such that the composite $\gamma_{(X,D)} \circ \lambda_{(X,D)} \circ \rho_{X|D}$
is the cycle class map $cyc_{X|D}$ of ~\eqref{eqn:CCM-modulus}.
The map $\rho_{X|D}$ is surjective.
\end{thm}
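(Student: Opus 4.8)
The plan is to build the map $\rho_{X|D}$ out of the cycle-level map $\rho_{(X,D)}$ constructed in \eqref{eqn:Bloch-map-0} (applied with $Y=D$, which is legitimate since $X$ is smooth so $X^o = X\setminus D$ is smooth over $k$), and then to deduce all three properties—factorization through $\CH_0(X|D)$, the identification of the composite with $cyc_{X|D}$, and surjectivity—from results already available in the excerpt together with the double construction. First I would record that $\lambda_{(X,D)}$ is an isomorphism: this should follow exactly as in \lemref{lem:Zar-Nis} and \lemref{lem:K-iso}, using the Thomason--Trobaugh descent spectral sequence for the pair $(X,D)$ and the split surjectivity of $K_1(X,D)\to H^0_\tau(X,\sK_{1,(X,D)})$ (the relative analogue of \cite[Lemma~2.1]{Krishna-1}); the fact that $X$ is smooth makes the Gersten-type input available. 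This simultaneously gives the injectivity of $\gamma_{(X,D)}$ onto its image in $K_0(X,D)$ after composing with the descent edge map, which is what one needs to run the same argument as in \lemref{lem:Bloch-cycle-maps}.

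Next I would establish the key compatibility: the composite $\sZ_0(X,D)\xrightarrow{\rho_{(X,D)}} H^2_\zar(X,\sK^M_{2,(X,D)}) \xrightarrow{\lambda_{(X,D)}} H^2_\nis(X,\sK^M_{2,(X,D)}) \xrightarrow{\gamma_{(X,D)}} K_0(X,D)$ agrees with the cycle class map $cyc_{(X,D)}$ of \eqref{eqn:Cycle-class-rel}. This is a local check at each closed point $x\in X\setminus D$: both maps are defined by pushing forward the generator of $H^2_{\{x\}}$ (resp.\ $K_0(k(x))=\Z$) along the regular—here smooth—embedding $\Spec k(x)\inj X$, and the excision/forget-support formalism makes them literally the same class, exactly as in \cite[Lemma~3.2]{GK} but now carried out relatively to $D$. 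Given this, the factorization of $\rho_{(X,D)}$ through $\CH_0(X|D)$ follows from \cite[Theorem~1.2]{Krishna-1}, which says $cyc_{(X,D)}$ kills the modulus relations, combined with the injectivity of $\gamma_{(X,D)}\circ\lambda_{(X,D)}$: if $\alpha\in Z_0(X,D)$ is a sum of divisors of functions with modulus, then $cyc_{(X,D)}(\alpha)=0$ in $K_0(X,D)$, hence $\lambda_{(X,D)}(\rho_{(X,D)}(\alpha))=0$, hence $\rho_{(X,D)}(\alpha)=0$. This yields $\rho_{X|D}$ and the stated factorization of $cyc_{X|D}$.

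It remains to prove surjectivity of $\rho_{X|D}$, and this I expect to be the main obstacle, since $\rho_{(X,D)}$ itself need not be surjective onto $H^2_\zar(X,\sK^M_{2,(X,D)})$ a priori. The plan here is to exploit the double $S_X = S(X,D)$ and \thmref{thm:BS-main}. One would identify $H^2_\zar(S_X,\sK^M_{2,S_X})$ with a direct sum $H^2_\zar(X,\sK^M_{2,(X,D)})\oplus H^2_\zar(X,\sK^M_{2,X})$ compatible with the splitting $\CH_0(S_X)\cong \CH_0(X|D)\oplus\CH_0(X)$—using the Mayer--Vietoris sequence for the co-Cartesian/Cartesian square \eqref{eqn:rel-et-2} applied to the sheaf $\sK^M_{2}$, which breaks into a relative piece supported on the $X_+$-component and an absolute piece, exactly mirroring the construction of $\iota_+{}_*$ and $\iota^*_-$ on cycles. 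Under this identification, the surjectivity of $\rho_{S_X}=\rho_{(S_X,(S_X)_\sing)}$, which is \thmref{thm:CCM-main} (valid over any field since $S_X\setminus D_\red$ is smooth, as $X$ is smooth), restricts to surjectivity of the relative summand $\rho_{X|D}$. Concretely: given a class $\xi\in H^2_\zar(X,\sK^M_{2,(X,D)})$, push it into $H^2_\zar(S_X,\sK^M_{2,S_X})$ via $\iota_+$, lift it to a $0$-cycle $\beta$ on $S_X\setminus(S_X)_\sing$ by \thmref{thm:CCM-main}, and observe that $\iota^*_+(\beta)$ (or the $X_+$-supported part of $\beta$) maps to $\xi$ under $\rho_{X|D}$. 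The delicate point to verify carefully is that the Mayer--Vietoris decomposition of the $\sK^M_2$-cohomology of $S_X$ is compatible with the cycle-theoretic splitting of \thmref{thm:BS-main} and with the maps $\rho$—i.e.\ that the diagram relating $\rho_{S_X}$, $\rho_{X|D}$ and $\rho_X$ commutes—but this should follow from functoriality of the forget-support maps and the compatibility with $cyc$ established in the previous paragraph, since everything can be detected after the injection $\gamma_{S_X}\circ\lambda_{S_X}$ into $K_0(S_X)$.
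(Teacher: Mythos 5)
Your overall architecture is close to the paper's, and your surjectivity argument is essentially identical to it: the paper also deduces surjectivity of $\rho_{X|D}$ from the surjectivity of $\rho_{(S_X,D)}$ and $\rho_X$ (\thmref{thm:CCM-main} applied to the double), the compatible splittings of the three rows of ~\eqref{eqn:CCM-main-mod-3} induced by the flat projection $S_X \to X$, and the surjectivity of $\iota^*_+ \colon H^2_\zar(S_X,\sK^M_{2,(S_X,X_-)}) \to H^2_\zar(X,\sK^M_{2,(X,D)})$, which holds because the sheaf map is an isomorphism away from $D$ and $\dim S_X = 2$. (Your phrasing ``push $\xi$ into $H^2_\zar(S_X,\sK^M_{2,S_X})$ via $\iota_+$'' has the arrows slightly confused — what is needed is a preimage of $\xi$ under $\iota^*_+$, not a pushforward — but the substance is the same, and you do not need a full Mayer--Vietoris \emph{isomorphism}, only the split exactness of the middle row and the surjectivity of $\iota^*_+$.)

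The genuine gap is in your treatment of the compatibility $cyc_{(X,D)} = \gamma_{(X,D)}\circ\lambda_{(X,D)}\circ\rho_{(X,D)}$, which you dispose of as ``a local check at each closed point, exactly as in \cite[Lemma~3.2]{GK} but now carried out relatively to $D$.'' This relative statement is precisely what the bulk of the paper's proof is devoted to establishing, and it is not a formality: $cyc_{(X,D)}([x])$ is defined via the canonical null-homotopy of $K(k(x))\to K(X)\to K(D)$ in the fiber sequence of spectra, while $\rho_{(X,D)}([x])$ comes from excision for cohomology with supports of the relative Milnor $K$-sheaf; matching them requires the relative descent spectral sequence with supports and its compatibility with the Milnor-to-Quillen comparison, none of which is supplied by the absolute \cite[Lemma~3.2]{GK}. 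The paper avoids proving any relative analogue directly: it identifies $cyc_{X|D}$ with $\iota^*_+\circ cyc_{(S_X,X_-)}$ via ~\eqref{eqn:CCM-main-mod-1}, and then uses the three compatibly split exact rows of ~\eqref{eqn:CCM-main-mod-3} together with the injectivity of $p_{+*}$ to reduce everything to the \emph{absolute} compatibility on $S_X$, which is \lemref{lem:Bloch-cycle-maps}. Your second divergence — deducing the factorization through $\sR_0(X|D)$ from \cite[Theorem~1.2]{Krishna-1} plus injectivity of $\gamma_{(X,D)}\circ\lambda_{(X,D)}$ — is logically fine \emph{granted} the compatibility above and granted the relative split surjectivity of $K_1(X,D)\to H^0_\tau(X,\sK_{1,(X,D)})$, but note that the paper never asserts injectivity of $\gamma_{(X,D)}$ at this stage (it only emerges in \thmref{thm:BQ-for-mod} as a consequence of the injectivity of $cyc_{X|D}$); the paper instead gets the factorization from \thmref{thm:BS-main} and \thmref{thm:CCM-main} applied to $S_X$, again circumventing any direct relative input. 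So your route is not wrong in principle, but the two steps you assert as routine are exactly the content the double construction is there to replace, and as written they are unproven.
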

\begin{proof}
We first show that $cyc_{X|D} = 
\gamma_{(X,D)} \circ \lambda_{(X,D)} \circ \rho_{(X,D)}$ on $\sZ_0(X,D)$,
where $\rho_{(X,D)}$ is as in ~\eqref{eqn:Bloch-map-0}.
It is enough to check this for every closed point $x \in X \setminus D$.
We let $\rho^{\nis}_{(X,D)} = \lambda_{(X,D)} \circ \rho_{(X,D)}$.

We let $S_X$ be the double of $X$ along $D$. We identify $X$ as $X_+ \subset
S_X$. It is then immediate from the construction of the cycle class
map on $\sZ_0(X,D) = \sZ_0(X_+, D)$ in  \S~\ref{sec:CCMaps} that
the composite map
\begin{equation}\label{eqn:CCM-main-mod-1}
\sZ_0(X_+, D) = \sZ_0(S_X, X_-) \xrightarrow{cyc_{(S_X, X_-)}} 
K_0(S_X, X_-) \xrightarrow{\iota^*_+}
K_0(X,D)
\end{equation}
coincides with $cyc_{X|D}$.
We now consider the diagram
\begin{equation}\label{eqn:CCM-main-mod-2}
\xymatrix@C.8pc{
& \sZ_0(S_X, X_-) \ar[r]^-{\rho^{\nis}_{(S_X,X_-)}} &
H^2_\nis(S_X, \sK^M_{2,(S_X,X_-)}) \ar[r]^-{\gamma_{(S_X,X_-)}} 
\ar[d]^-{\iota^*_+} & K_0(S_X, X_-) \ar[d]^-{\iota^*_+} \\
\sZ_0(X, D) \ar@{=}[r] & \sZ_0(X_+, D) \ar@{^{(}->}[u] 
\ar[r]^-{\rho^{\nis}_{(X,D)}}& H^2_\nis(X, \sK^M_{2,(X,D)}) 
\ar[r]^-{\gamma_{(X,D)}} &  K_0(X,D).}
\end{equation}

Here, $\gamma$ denotes again the edge homomorphism for the Thomason-Trobaugh
spectral sequence. The map $\rho^{\nis}_{(S_X,X_-)}$ is the composition of the
two maps in ~\eqref{eqn:Bloch-map-0}.
Like $cyc_{X|D}$, it follows immediately from the constructions
of $\rho^{\nis}_{(S_X,X_-)}$ and $\rho^{\nis}_{(X,D)}$ that the left 
square in ~\eqref{eqn:CCM-main-mod-2} commutes. 
The right square in ~\eqref{eqn:CCM-main-mod-2}  commutes
by the functoriality of the Thomason-Trobaugh spectral sequence
with respect to the inclusion $(X_+, D) \inj (S_X, X_-)$.
Using ~\eqref{eqn:CCM-main-mod-1}, it suffices therefore to show
that composite map on the top row of ~\eqref{eqn:CCM-main-mod-2}
is $cyc_{(S_X,X_-)}$.

To prove this last assertion, we consider the commutative diagram
\begin{equation}\label{eqn:CCM-main-mod-3}
\xymatrix@C.8pc{
0 \ar[r] & \sZ_0(S_X, X_-) \ar[d]_-{\rho^{\nis}_{(S_X, X_-)}} \ar[r]^-{p_{+ *}} &
\sZ_0(S_X, D) \ar[d]^-{\rho^{\nis}_{(S_X,D)}} \ar[r]^-{\iota^*_-}  
& \sZ_0(X \setminus D)
\ar[d]^-{\rho^{\nis}_{X}} \ar[r]  & 0 \\
0 \ar[r] & H^2_\nis(S_X, \sK^M_{2,(S_X,X_-)}) \ar[r]^-{p_{+ *}}
\ar[d]_-{\gamma_{(S_X,X_-)}} & 
H^2_\nis(S_X, \sK^M_{2,S_X}) \ar[d]^-{\gamma_{S_X}} 
\ar[r]^-{\iota^*_-} & H^2_\nis(X, \sK^M_{2,X}) \ar[d]^-{\gamma_{X}}
\ar[r] & 0 \\
0 \ar[r] & K_0(S_X, X_-) \ar[r]^-{p_{+ *}} & K_0(S_X) \ar[r]^-{\iota^*_-} &
K_0(X) \ar[r] & 0.}
\end{equation}
 
The three rows are split exact. 
It follows from \lemref{lem:Bloch-cycle-maps} that 
\begin{equation}\label{eqn:CCM-main-mod-4}
p_{+ *} \circ cyc_{(S_X,X_-)} = cyc_{(S_X, D)} \circ p_{+ *} =
\gamma_{S_X} \circ {\rho^{\nis}_{(S_X,D)}} \circ p_{+ *}.
\end{equation}
Since ~\eqref{eqn:CCM-main-mod-3} commutes, we also have
$p_{+ *} \circ \gamma_{(S_X,X_-)} \circ \rho^{\nis}_{(S_X, X_-)} =
\gamma_{S_X} \circ {\rho^{\nis}_{(S_X,D)}} \circ p_{+ *}$.
Combining this with ~\eqref{eqn:CCM-main-mod-4} and using that
$p_{+ *}$ is injective, we get
$cyc_{(S_X,X_-)} = \gamma_{(S_X,X_-)} \circ \rho^{\nis}_{(S_X, X_-)}$.
We have therefore shown that 
$\gamma_{(X,D)} \circ \lambda_{(X,D)} \circ \rho_{(X,D)}$
in ~\eqref{eqn:CCM-main-mod-0} is $cyc_{X|D}$,
which we wanted to show.

We now show that $\rho_{(X,D)}$ factors through the rational 
equivalence (this is due to R\"ulling-Saito \cite{RS} if $D_{\rm red}$ is a SNCD on $X$). For this, we note that ~\eqref{eqn:CCM-main-mod-2} and
~\eqref{eqn:CCM-main-mod-3} remain valid if we replace the Nisnevich 
cohomology by the Zariski cohomology.
Suppose now that $\alpha \in \sR_0(X|D)$.
Then $p_{+ *}(\alpha) \in \sR_0(S_X,D)$ by 
\thmref{thm:BS-main}. It follows from \thmref{thm:CCM-main} (applied to
$S_X$) that
$\rho_{(S_X, D)} \circ p_{+ *}(\alpha) = 0$.
This implies by ~\eqref{eqn:CCM-main-mod-3} that
$\rho_{(S_X, X_-)}(\alpha) = 0$ in $H^2_\zar(S_X, \sK^M_{2,(S_X,X_-)})$.
Since the left square in ~\eqref{eqn:CCM-main-mod-2}
is commutative, we deduce that $\rho_{(X,D)}(\alpha)  = 0$
in $H^2_\zar(X, \sK^M_{2,(X,D)})$. We have thus shown that
$\rho_{(X,D)}$ factors through its quotient $\CH_0(X|D)$.
We let $\rho_{X|D} \colon \CH_0(X|D) \to H^2_\zar(X, \sK^M_{2,(X,D)})$
be the induced map. 

The map $\lambda_{(X,D)}$ is an isomorphism by a combination of
\lemref{lem:K-iso} and \cite[Proposition~9.8]{Kato-Saito}
(or \cite[Lemma~2.1]{Krishna-1}).
It remains to show that $\rho_{X|D}$ is surjective.
For this, it suffices to show that the map
$\sZ_0(X,D) \xrightarrow{\rho_{(X,D)}} H^2_\zar(X, \sK^M_{2,(X,D)})$
is surjective. 
Now, we note in ~\eqref{eqn:CCM-main-mod-3} that all rows are
compatibly split, where all splittings are given by the pull-back
via the flat projection $\pi \colon S_X \to X$. 
It follows therefore from \thmref{thm:CCM-main} that 
$\rho_{(S_X,D)}$ and $\rho_X$ are surjective. 
But this implies that $\rho_{(S_X, X_-)}$ is also surjective.
Using ~the left square of \eqref{eqn:CCM-main-mod-2},
it suffices now to show that the restriction map
$\iota^*_+ \colon H^2_\zar(S_X, \sK^M_{2,(S_X,X_-)}) \to
H^2_\zar(X, \sK^M_{2,(X,D)})$ is surjective.
But this is clear because the map of
Zariski sheaves $\sK^M_{2,(S_X,X_-)} \xrightarrow{\iota^*_+} \sK^M_{2,(X,D)}$
is an isomorphism away from $D$ and $\dim(S_X) = 2$.
\end{proof}

\subsection{Some functoriality properties}\label{sec:Functor-Chow}
We shall use the following functoriality properties of the
Chow group of 0-cycles and their cycle classes. For any $Y \in \Sch_k$,
recall our notation that  $\CH_*^F(Y)$ is the direct sum of all
Chow groups of $Y$ in the sense of \cite[Chapter~1]{Fulton}.

\begin{prop}\label{prop:Localization}
Let $k$ be any field and let $X$ be a reduced quasi-projective surface over
$k$. Let $j \colon U \inj X$ be an open immersion such that
$X_\sing \subset U$. Then we have an exact sequence
of abelian groups
\[
\CH^F_0(X \setminus U) \xrightarrow{i_*} \CH^L_0(X) 
\xrightarrow{j^*} \CH^L_0(U) \to 0.
\]
The same holds for the lci Chow group.
\end{prop}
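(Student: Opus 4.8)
The plan is to prove the localization sequence for $\CH^L_0$ (and then note that the same argument, using the definitions of \S~\ref{sec:lci}, applies verbatim to the lci Chow group). Surjectivity of $j^*$ is essentially formal: a closed point of $U$ away from $U_\sing = X_\sing$ is a closed point of $X$ away from $X_\sing$, so $\sZ_0(X, X_\sing) \surj \sZ_0(U, U_\sing)$ is surjective on the level of cycle groups, and this descends to the Chow groups. The content is exactness in the middle, i.e.\ that if a $0$-cycle $\alpha$ on $U$ with $\alpha = \partial$ of Cartier-curve relations in $U$, then $\alpha$ (viewed on $X$) lies in the subgroup generated by $i_*\CH^F_0(X\setminus U)$ together with $\sR^L_0(X, X_\sing)$.

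First I would reduce, using \corref{cor:Reduced-finite*} when $k$ is infinite (and a separate argument, or a pro-$\ell$ base change via \propref{prop:PF-PB}, when $k$ is finite), to relations coming from reduced Cartier curves $C \subset \P^1_U$ that are finite over $U$. Given such a relation $C_0 - C_\infty$ on $U$, I would take the closure $\ol{C} \subset \P^1_X$ of $C$. The key geometric point is then: $\ol{C}$ is a curve in $\P^1_X$, finite over $X$ (being a closed subscheme of the closure, which is projective over $X$ and quasi-finite near the relevant points after possibly shrinking — this needs care), and its fiber over $X\setminus U$ contributes exactly a $0$-cycle supported on $X\setminus U$. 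Concretely, the function $f\colon C \to \P^1$ extends to $\ol{f}\colon \ol{C}\to \P^1$, and $\divf(\ol f)$ on $\ol C$, pushed forward to $X$, equals $(C_0 - C_\infty) + (\text{a cycle supported on } X\setminus U)$. One must check that $\ol{C}$ is still a Cartier curve relative to $X_\sing$ in the sense of Definition~\ref{defn:LW-defn} — the complete intersection condition (5) is local at points over $X_\sing$, and near such points $\ol C$ agrees with $C$ (since $X_\sing \subset U$), so this is inherited. The remaining term supported on $X \setminus U$ is by definition in the image of $i_* \colon \CH^F_0(X\setminus U) \to \CH^L_0(X)$; more precisely, it is $\divf$ of a rational function on a curve mapping to $X\setminus U$, hence $0$ in $\CH^F_0(X\setminus U)$, so its image under $i_*$ is $0$, which means $\alpha \equiv 0$ in $\CH^L_0(X)/i_*\CH^F_0(X\setminus U)$ — wait, more carefully: the extra cycle is rationally trivial \emph{on} $X\setminus U$, so it is killed after applying $i_*$, giving exactly exactness.

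I expect the main obstacle to be the careful handling of the closure $\ol{C}$ and the extension of the defining function: ensuring $\ol{C}$ can be arranged to be finite over $X$ (or at least that its non-finite components contribute trivially, as in the proof of Lemma~\ref{lem:Reduced-finite}), that it satisfies the finiteness conditions (1)--(4) of Definition~\ref{defn:LW-defn} with respect to $X$ and $X_\sing$ rather than $U$, and that the Cartier/complete-intersection condition (5) persists. A clean way to organize this is to first establish surjectivity of $j^*$, then prove the complex is exact by the closure construction, treating non-finite components of $\ol C$ via the reduction already carried out in Lemma~\ref{lem:Reduced-finite}, and finally invoke that a divisor of a rational function on a curve lying entirely in $X\setminus U$ is zero in $\CH^F_0(X\setminus U)$ to absorb the correction term. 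For the lci version, the identical argument works since good curves relative to $X_\sing$ restrict to good curves relative to $U_\sing$ and conversely extend by closure, the l.c.i.\ condition being local over $X_\sing \subset U$.
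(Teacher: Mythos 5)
Your construction is the paper's: restrict for surjectivity of $j^*$, and for exactness in the middle take the scheme-theoretic closure $\ov{C}\subset\P^1_X$ of a Cartier curve $C\subset\P^1_U$, observe that near $\P^1_{X_\sing}$ the schemes $C$ and $\ov{C}$ agree (because $X_\sing\subset U$), so $\ov{C}$ is again a Cartier curve, and that $\ov{C}_0-\ov{C}_\infty=(C_0-C_\infty)+\beta$ with $\beta$ a $0$-cycle supported on $X\setminus U$. Up to that point you and the paper coincide, and the verification of conditions (1)--(5) for $\ov{C}$ is exactly as you indicate.

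The last step of your argument, however, is wrong as written. The correction cycle $\beta$ is \emph{not} the divisor of a rational function on a curve contained in $X\setminus U$; it is merely the part of the fibres $\ov{C}_0,\ov{C}_\infty$ lying over $X\setminus U$, and there is no reason for it to be rationally trivial on $X\setminus U$. Nor is that needed: if it were, you would conclude $\alpha=0$ in $\CH^L_0(X)$, i.e.\ that $j^*$ is injective, which is false in general (and would force $i_*=0$). Exactness requires only $\ker(j^*)\subseteq\im(i_*)$, and this follows immediately from what you already proved: $\alpha-\beta\in\sR^L_0(X,X_\sing)$ with $\beta$ supported on $X\setminus U$, hence $\alpha=i_*([\beta])$ in $\CH^L_0(X)$. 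Delete the sentence about $\divf$ on $X\setminus U$ and stop at ``the remaining term is in the image of $i_*$''. Two smaller points: the preliminary reduction to reduced Cartier curves finite over $U$ via \corref{cor:Reduced-finite*} is unnecessary (the closure argument works for arbitrary Cartier curves, which is how the paper proceeds), and the pro-$\ell$ base-change you propose for finite $k$ is unavailable for $\CH^L_0$, since the Levine--Weibel group has no known push-forward (\propref{prop:PF-PB} is stated only for the lci group); fortunately neither device is needed here.
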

\begin{proof}
The proofs of the proposition for the lci and the Levine-Weibel Chow groups 
are completely identical. We give the proof for the latter case.
We have to first explain the maps $i_*$ and $j^*$.
It is clear that the restriction of cycles with respect to the open
immersion $j$ indeed gives the map $j^*$. It  is clearly surjective
because $X_\sing \subset U$. 

The map $i_*$ is the push-forward map
with respect to the closed immersion $i \colon Z := X \setminus U \inj X$.
It is clearly well-defined at the level of cycles. If 
$C \subset Z \times \P^1_k$ is an irreducible 1-cycle dominant over
$\P^1_k$, then it is also clear (note that $X_\sing \subset U$)
that $i_*(C)$ is a Cartier curve on
$\P^1_X$ which defines the rational equivalence between
$i_*(C_0 - C_\infty)$. Hence, the maps in the above sequence are
defined. 
We only need to see that this sequence is exact in the middle.

So suppose that $\alpha = \stackrel{r}{\underset{i =1}\sum} n_i[x_i]$
is a 0-cycle on $X$ such that $j^*(\alpha) = 0$. 
We can assume without loss of generality that $x_i \in U$ for
each $i$. We let $C^1, \ldots , C^s$ be Cartier curves on
$\P^1_U$ such that 
$j^*(\alpha) = \stackrel{s}{\underset{i =1}\sum} (C^i_0 - C^i_\infty)$.
We let $\ov{C}^{i}$ be the scheme-theoretic closure of $C^i$ is $\P^1_X$.
Since $X_\sing \subset U$, it follows that $\ov{C}^{i}$ is
a Cartier curve on $\P^1_X$ and $\ov{C}^{i}_0 - \ov{C}^{i}_\infty =
(C^i_0 - C^i_\infty) + \beta_i$, where $\beta_i$ is a 0-cycle supported on
$Z$. Letting $\beta = \stackrel{s}{\underset{i =1}\sum} \beta_i$,
we get $\alpha - \beta = \stackrel{s}{\underset{i =1}\sum} 
(\ov{C}^{i}_0 - \ov{C}^{i}_\infty)$. This finishes the proof.
\end{proof}

\begin{prop}\label{prop:Desingularization}
Let $k$ be an infinite 
field and let $X$ be a reduced quasi-projective surface over
$k$. Let $\pi \colon \wt{X} \to X$ be a resolution of singularities of $X$.
Then we have a commutative diagram
\begin{equation}\label{eqn:Desingulization-0}
\xymatrix@C.8pc{
\CH^L_0(X) \ar[r]^-{cyc^L_X} \ar[d]_-{\pi^*} & K_0(X) \ar[d]^-{\pi^*} \\
\CH^F_0(\wt{X}) \ar[r]^-{cyc_{\wt{X}}} & K_0(\wt{X}).}
\end{equation}
The same holds for the lci Chow group  over any field.
\end{prop}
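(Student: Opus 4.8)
The plan is to reduce the statement to the known compatibility of cycle class maps with proper push-forward and to the already-established factorization $cyc^L_X = \gamma_X \circ \lambda_X \circ \rho_X$ through $K$-cohomology. Write $\pi \colon \wt X \to X$ for the resolution; it is a proper (indeed projective, since $X$ is quasi-projective) birational morphism, and $\wt X$ is smooth over $k$ because $k$ is infinite and resolution is available, or — over an arbitrary field — we work with the lci group and a suitable proper modification. First I would recall that $\pi$ has finite Tor-dimension: off the singular locus it is an isomorphism, and since $\wt X$ is regular, $\sO_{\wt X}$ has a finite resolution by $\pi$-acyclic (flat) sheaves, so $L\pi^*$ is defined on perfect complexes and gives a ring map $\pi^* \colon K_0(X) \to K_0(\wt X)$ on Bass--Thomason--Trobaugh $K$-theory. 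This is the right vertical arrow.

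Next I would define the left vertical arrow $\pi^* \colon \CH^L_0(X) \to \CH^F_0(\wt X)$. On cycles, a closed point $x \in X_\reg$ (so $x \notin X_\sing$) pulls back to $\pi^{-1}(x) \subset \wt X_\reg$, which is a $0$-dimensional subscheme; one sets $\pi^*[x] = [\pi^*(x)]$ using the refined Gysin/flat-pullback formalism of \cite[Chapter~6, 8]{Fulton1} applied in the open locus where $\pi$ is flat (indeed an isomorphism). One must check this descends to rational equivalence: a Cartier curve $C \subset \P^1_X$ relative to $X_\sing$, after restricting to the locus $\P^1_{X_\reg}$ where $\pi$ is an isomorphism, pulls back to a cycle on $\P^1_{\wt X}$ giving the required rational equivalence on $\wt X$ between $\pi^*(C_0)$ and $\pi^*(C_\infty)$; the points of $C$ over $X_\sing$ are excluded by definition, so no boundary terms appear. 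For the lci group one argues identically, using instead the good-curve presentation and the fact that $\nu^{-1}(X_\sing) \subseteq Z$ is avoided.

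With both maps in hand, commutativity of the square is local on cycles: for a closed point $x \in X_\reg$ I must show $\pi^*(cyc^L_X([x])) = cyc_{\wt X}(\pi^*[x])$ in $K_0(\wt X)$. But $cyc^L_X([x]) = (\iota_x)_*(1)$ is the class of $\sO_x$ in $K_0(X)$, $L\pi^*$ of this is $[L\pi^* \sO_x]$, and since $\pi$ is an isomorphism near $x$ (as $x \in X_\reg$) and the formation of $(\iota_x)_*$ is compatible with flat base change, $L\pi^* \sO_x = \sO_{\pi^{-1}(x)}$ as a perfect complex concentrated in degree zero; this is exactly $cyc_{\wt X}(\pi^*[x])$. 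Equivalently, one invokes the projection-type base-change formula for $K$-theory: $\pi^* \circ (\iota_x)_* = (\iota_{\pi^{-1}(x)})_* \circ \pi^*$ on the fiber square, which holds because the square is Tor-independent over $X_\reg$. Extending $\Z$-linearly over $\sZ_0(X, X_\sing)$ and passing to the quotient (using the previous paragraph) gives the diagram.

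The main obstacle is the well-definedness of $\pi^* \colon \CH^L_0(X) \to \CH^F_0(\wt X)$ on rational equivalence: one needs that the scheme-theoretic closure in $\P^1_{\wt X}$ of the pullback of a Cartier curve over $\P^1_{X_\reg}$ is again flat over $\{0,\infty\}$ after restricting suitably, so that its intersections with $\wt X \times \{0\}$ and $\wt X \times \{\infty\}$ compute $\pi^*(C_0)$ and $\pi^*(C_\infty)$; since the construction takes place entirely inside the locus $\P^1_{X_\reg} \cong \P^1_{\pi^{-1}(X_\reg)}$ where $\pi$ is an isomorphism, this reduces to the standard compatibility of flat pullback with rational equivalence in \cite{Fulton1}, and the new content over an arbitrary field is only bookkeeping. (For the lci version over an arbitrary field, $\wt X$ need not be regular if resolution is unavailable; there one replaces $\CH^F_0(\wt X)$ by $\CH_0(\wt X)$ and uses the pull-back of Proposition~\ref{prop:PF-PB}-type functoriality together with the proper-modification results of \cite{BK}, but the commutativity argument is unchanged.)
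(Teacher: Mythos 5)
Your proposal is essentially correct but follows a genuinely different route from the paper. You construct $\pi^*$ on cycles, verify by hand that it respects the Cartier-curve (resp.\ good-curve) relations by pulling the curves back through the isomorphism over $X_\reg$, and then get commutativity from Tor-independent base change for $(\iota_x)_*$ in $K$-theory. The paper instead never touches the curves: it inserts the factorization of \lemref{lem:Bloch-cycle-maps} into a three-column diagram
$\sZ_0(X,X_\sing)\to H^2_{\zar}(X,\sK^M_{2,X})\to K_0(X)$ over the corresponding row for $\wt{X}$, observes that this diagram commutes at the level of cycles, and then uses the classical Bloch formula on the smooth surface $\wt{X}$ (so that $\rho_{\wt{X}}$ is an isomorphism onto $\CH^F_0(\wt X)$) together with the injectivity of $\gamma_X\circ\lambda_X$ to conclude \emph{simultaneously} that the left vertical arrow kills $\sR_0(X,X_\sing)$ and that the square commutes. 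What the paper's route buys is that the entire descent-through-rational-equivalence issue — which is the only nontrivial point, and the one you defer to ``bookkeeping'' — is absorbed into results already proved; what your route buys is independence from the Bloch formula on $\wt X$, at the cost of the geometric verification.

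One justification in your descent argument is wrong as stated: a Cartier curve $C\subset\P^1_X$ is \emph{not} disjoint from $\P^1_{X_\sing}$; conditions (1)--(2) of Definition~\ref{defn:LW-defn} only force $C\cap\P^1_Y$ to be finite and $C\cap(Y\times\{0,\infty\})$ to be empty. So the strict transform of $C$ in $\P^1_{\wt X}$ genuinely differs from $C$ along the exceptional locus, and ``no boundary terms appear'' not because $C$ avoids $X_\sing$ but because $C_0$ and $C_\infty$ are supported in $X\setminus Y\subseteq X_\reg$, where $\pi$ is an isomorphism, and because flatness of the strict transform over a neighborhood of $\{0,\infty\}$ (and hence the computation of the intersection with $\wt X\times\{0,\infty\}$) is inherited from condition (3). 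With that correction — and, for the lci version, with the observation that a good curve $\nu\colon(C,Z)\to(X,Y)$ lifts to $\wt X$ after normalizing $C$ (valuative criterion plus $\divf(f)=\nu^N_*\divf(f)$ on the normalization) — your argument goes through. The parenthetical worry that $\wt X$ may fail to be regular is moot: the proposition takes a resolution of singularities as given, and these exist for surfaces over any field.
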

\begin{proof}
Using \corref{cor:CCM-Levine-lci}, it suffices to prove the
proposition for the lci Chow group.
We need to first define the vertical arrow on the left. All other
maps are defined. Since the map $\pi^{-1}(X_\reg) \to X_\reg$ is an
isomorphism, the pull-back $\pi^* \colon \sZ_0(X, X_\sing) \to \sZ_0(\wt{X})$
is simply the inclusion map. It is now an easy exercise using the
definitions of rational equivalences to check that
this map preserves the subgroups of rational equivalences.
We shall nonetheless provide a proof which simultaneously shows that
~\eqref{eqn:Desingulization-0} is commutative.

We now use \lemref{lem:Bloch-cycle-maps} to get a diagram
\begin{equation}\label{eqn:Desingulization-1}
\xymatrix@C.8pc{
\sZ_0(X, X_\sing) \ar[r]^-{\rho_X} \ar[d]_-{\pi^*} & H^2_{\zar}(X, \sK^M_{2, X})
\ar[d]^-{\pi^*} \ar[r]^-{\lambda_X} & K_0(X) \ar[d]^-{\pi^*} \\
\CH^F_0(\wt{X}) \ar[r]^-{\rho_{\wt{X}}} & H^2_{\zar}(\wt{X}, \sK^M_{2, \wt{X}})
\ar[r]^-{\lambda_{\wt{X}}} & K_0(\wt{X}).}
\end{equation}

It is clear from the various definitions above that this diagram
is commutative. The classical Bloch's formula for non-singular surfaces
and \lemref{lem:K-iso} together imply that $\rho_{\wt{X}}$ is an isomorphism. 
We now conclude from \lemref{lem:Bloch-cycle-maps} that the 
left vertical arrow kills $\sR_0(X, X_\sing)$ and 
~\ref{eqn:Desingulization-0} commutes.
\end{proof}

\begin{lem}\label{lem:Cycle-K-0}
Let $k$ be any field and let $X$ be a reduced quasi-projective 
surface over $k$. Let $k \inj k'$ be a separable algebraic extension 
(possibly infinite) of
fields. Let $X' = X_{k'}$ and let ${\rm pr}^*_{{k'}/{k}} \colon X' \to X$ be
the projection map. Then the diagram
\begin{equation}\label{eqn:Cycle-K-0-0}
\xymatrix@C.8pc{
\CH_0(X) \ar[r]^-{cyc_X} \ar[d]_-{{\rm pr}^*_{{k'}/{k}}} &
K_0(X) \ar[d]^-{{\rm pr}^*_{{k'}/{k}}} \\
\CH_0(X') \ar[r]^-{cyc_{X'}} & K_0(X')}
\end{equation}
is commutative.
\end{lem}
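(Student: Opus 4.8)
The plan is to reduce the commutativity of \eqref{eqn:Cycle-K-0-0} to a statement at the level of free cycle groups together with the compatibility of the pull-back maps on Chow groups (from Proposition~\ref{prop:PF-PB}) and on $K$-theory. First I would recall that both $cyc_X$ and $cyc_{X'}$ are constructed, via \eqref{eqn:Cycle-class-rel} and \eqref{eqn:CCM-lci}, from the assignment $[x] \mapsto (\iota_x)_*(1)$ on closed points lying in the regular locus, where $(\iota_x)_*$ is the proper (finite tor-dimension) push-forward in $K$-theory. Thus it suffices to show that the diagram commutes already at the level of the cycle class maps $\sZ_0(X, X_\sing) \to K_0(X)$ and $\sZ_0(X', X'_\sing) \to K_0(X')$, since we know from Proposition~\ref{prop:PF-PB}(1) that ${\rm pr}^*_{{k'}/{k}}$ on $\CH_0$ is induced by the flat pull-back on cycles (the preimage of a closed point in $X_\reg$ is a finite disjoint union of closed points of $X'_\reg$, as $k \inj k'$ is separable algebraic), and the vertical maps on $\CH_0$ are the ones induced by that cycle-level map.

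Next I would verify the square at cycle level. Fix a closed point $x \in X_\reg$. Its scheme-theoretic preimage is $\Spec(k(x) \otimes_k k')$, which, since $k(x)/k$ is finite and $k'/k$ is separable algebraic, is a finite reduced scheme $\coprod_j \Spec(k(x_j))$ with the $x_j \in X'_\reg$; moreover the square
\[
\xymatrix{
\coprod_j \Spec(k(x_j)) \ar[r] \ar[d] & X' \ar[d]^-{{\rm pr}_{{k'}/{k}}} \\
\Spec(k(x)) \ar[r]_-{\iota_x} & X
}
\]
is Cartesian with all maps of finite tor-dimension. Base-change for Bass--Thomason--Trobaugh $K$-theory along this Cartesian square (flat base change, or more precisely the projection/base-change formula for perfect complexes) then gives ${\rm pr}^*_{{k'}/{k}} \circ (\iota_x)_*(1) = \sum_j (\iota_{x_j})_*(1)$ in $K_0(X')$. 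The right-hand side is exactly $cyc_{X'}({\rm pr}^*_{{k'}/{k}}[x])$, which proves commutativity on $\sZ_0(X, X_\sing)$.

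Finally I would assemble the pieces: passing to quotients, the cycle-level square descends to \eqref{eqn:Cycle-K-0-0} because the vertical arrow $\CH_0(X) \to \CH_0(X')$ of Proposition~\ref{prop:PF-PB}(1) is defined to be the one induced by the flat cycle pull-back, and both $cyc_X$ and the $K$-theory pull-back factor the diagram compatibly. The only real subtlety is the base-change statement for $K$-theory along the Cartesian square above when $k'/k$ is an \emph{infinite} separable algebraic extension: here I would reduce to the finite case by writing $k' = \bigcup_i k_i$ as a filtered union of finite separable subextensions, using that $K$-theory of quasi-compact quasi-separated schemes commutes with filtered colimits of rings (Thomason--Trobaugh) together with Proposition~\ref{prop:PF-PB}(2) to pass to the limit; the finite case is then the standard base-change formula. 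I expect this colimit/compatibility bookkeeping to be the main (though routine) obstacle; the finite-level base change is classical.
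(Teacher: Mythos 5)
Your proof is correct and follows essentially the same route as the paper: reduce to a single closed point $x \in X_\reg$, form the Cartesian square with $Y = \Spec(k(x)) \times_X X'$ (a reduced zero-dimensional closed subscheme of $X'$ by separability), and conclude by the base-change formula for Bass--Thomason--Trobaugh $K$-theory. The only divergence is that the paper simply invokes \cite[Proposition~3.18]{TT} for this square, which applies directly because the projection $X' \to X$ is flat even when $k'/k$ is infinite, so the filtered-colimit reduction you flag as the ``main obstacle'' is not actually needed.
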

\begin{proof}
Let $x \in X_\reg$ be a closed point and let $Y = \Spec(k(x)) \times_X X'$.
Since $k'$ is separable and algebraic over $k$, it follows that
$Y$ is a 0-dimensional reduced closed subscheme of $X'$.
We let $[Y] = {\rm pr}^*_{{k'}/{k}}([x]) \in \sZ_0(X', X'_\sing)$. 
We let $\iota_x \colon \Spec(k(x)) \inj X$ and
$\iota_Y \colon Y \inj X'$ be the inclusion maps.
Then the diagram
\begin{equation}\label{eqn:Cycle-K-0-1}
\xymatrix@C.8pc{
K_0(\Spec(k(x))) \ar[r]^-{(\iota_x)_*} \ar[d]_-{{\rm pr}^*_{{k'}/{k}}} &
K_0(X) \ar[d]^-{{\rm pr}^*_{{k'}/{k}}} \\
K_0(Y) \ar[r]^-{(\iota_Y)_*} & K_0(X')}
\end{equation}
is commutative by \cite[Proposition~3.18]{TT}. Since
$(\iota_Y)_*(1) = cyc_{X'}([Y])$, the lemma follows.
\end{proof}

\begin{cor}\label{cor:Cycle-K-mod}
Let $k$ be any field and let $X$ be a smooth quasi-projective 
surface over $k$. Let $D \subset X$ be an effective Cartier divisor.
Let $k \inj k'$ be a separable algebraic extension 
(possibly infinite) of
fields. Let $X' = X_{k'}, D' = D_{k'}$ and 
let ${\rm pr}_{{k'}/{k}} \colon X' \to X$ be
the projection map. Then the diagram
\begin{equation}\label{eqn:Cycle-K-mod-0}
\xymatrix@C.8pc{
\CH_0(X|D) \ar[r]^-{cyc_{X|D}} \ar[d]_-{{\rm pr}^*_{{k'}/{k}}} &
K_0(X,D) \ar[d]^-{{\rm pr}^*_{{k'}/{k}}} \\
\CH_0(X'|D') \ar[r]^-{cyc_{X'|D'}} & K_0(X',D')}
\end{equation}
is commutative.
\end{cor}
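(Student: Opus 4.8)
The plan is to deduce the corollary from the base-change compatibility of the cycle class map into \emph{relative} $K$-theory already at the level of cycles, following the argument of Lemma~\ref{lem:Cycle-K-0}, and then to pass this through the constructions of $cyc_{X|D}$ and of $\mathrm{pr}^*_{k'/k}$ on $\CH_0(X|D)$. First I would recall that, by \cite[Theorem~1.2]{Krishna-1}, the map $cyc_{X|D}$ of \eqref{eqn:CCM-modulus} is the map induced on the quotient $\CH_0(X|D)$ by the cycle class map $cyc_{(X,D)}\colon \sZ_0(X,D)\to K_0(X,D)$ of \eqref{eqn:Cycle-class-rel}; the latter sends a closed point $x\in X\setminus D$ --- which lies in $X_{\mathrm{reg}}=X$ since $X$ is smooth --- to $(\iota_x)_*(1)$, where $(\iota_x)_*\colon K(\Spec k(x))\to K(X,D)$ is the canonical factorization through the homotopy fibre $K(X,D)=\hofib(K(X)\to K(D))$ provided by the canonical null-homotopy of $K(\Spec k(x))\to K(X)\to K(D)$. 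The same description applies to $X'=X_{k'}$ and $D'=D_{k'}$, which is again a smooth surface with an effective Cartier divisor.

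Next I would show that the square obtained from \eqref{eqn:Cycle-K-mod-0} by replacing $\CH_0(X|D)$, $\CH_0(X'|D')$ with $\sZ_0(X,D)$, $\sZ_0(X',D')$ commutes, where the left vertical map sends $[x]$ to the cycle $[\Spec(k(x))\times_X X']$; this is a \emph{reduced} $0$-cycle on $X'\setminus D'$ because $k'/k$ is separable, and it is the cycle underlying the pull-back used to define $\mathrm{pr}^*_{k'/k}$ on $\CH_0(X|D)$ in \cite[Proposition~6.2]{BK}. For a fixed closed point $x\in X\setminus D$ I would set $Y=\Spec(k(x))\times_X X'$, note $Y\cap D'=\emptyset$, and invoke \cite[Proposition~3.18]{TT} as in Lemma~\ref{lem:Cycle-K-0}: the square of spectra with horizontal pushforwards $(\iota_x)_*$, $(\iota_Y)_*$ and vertical base-change maps commutes, compatibly with the restrictions to $K(D)$, $K(D')$ and with the chosen null-homotopies; passing to homotopy fibres and then to $\pi_0$, and summing over the closed points of $X\setminus D$, gives the cycle-level square. (Alternatively one can reformulate via the double: identifying $X$ with $X_+\subset S_X$ and using that $-\otimes_k k'$ is exact --- so that base change commutes with the double construction, which is Zariski-locally $\Spec(A\times_{A/(f)}A)$, and with $\iota_\pm$, $p_\pm$ --- one combines \eqref{eqn:CCM-main-mod-1}, the injectivity of $p_{+\,*}\colon K_0(S_X,X_-)\to K_0(S_X)$ from \eqref{eqn:CCM-main-mod-3}, and Lemma~\ref{lem:Cycle-K-0} for the surface $S_X$.)

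Finally, since $\mathrm{pr}^*_{k'/k}\colon \CH_0(X|D)\to\CH_0(X'|D')$ of Proposition~\ref{prop:PF-fields-mod} is induced by exactly the cycle-level pull-back $\sZ_0(X,D)\to\sZ_0(X',D')$ above, while $cyc_{X|D}$ and $cyc_{X'|D'}$ are induced on the respective quotients, the commutativity of \eqref{eqn:Cycle-K-mod-0} follows by descending the cycle-level square. I expect the main obstacle to be the second step, and inside it the single delicate point: the naturality of the \emph{canonical factorizations through the homotopy fibre} with respect to the base-change maps. This is a compatibility of null-homotopies extracted from \cite[Proposition~3.18]{TT}, and it is exactly where separability of $k'/k$ is used --- it forces $Y$ to be reduced, so that the cycle-level pull-back agrees on the nose with the $K$-theoretic base change of the class $[\sO_{k(x)}]$; the rest is formal.
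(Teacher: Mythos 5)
Your proposal is correct, and your parenthetical ``alternative'' route is in fact exactly the proof the paper gives: the authors write the cycle class map as the composite $\sZ_0(X,D)=\sZ_0(S_X,X_-)\xrightarrow{cyc_{(S_X,X_-)}}K_0(S_X,X_-)\xrightarrow{\iota_+^*}K_0(X,D)$ via \eqref{eqn:CCM-main-mod-1}, observe that the right square commutes trivially, and deduce the left square from \lemref{lem:Cycle-K-0} applied to $S_X$ together with the split exactness of \eqref{eqn:CCM-main-mod-3} (so that the relative statement is checked after the injection $p_{+*}$ into the absolute group $K_0(S_{X'})$). Your primary route is genuinely different: you argue directly at the level of the relative $K$-theory spectra, descending the base-change square of \cite[Proposition~3.18]{TT} to homotopy fibres by checking compatibility of the canonical null-homotopies of $K(\Spec k(x))\to K(X)\to K(D)$ and its primed analogue. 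That argument works --- both null-homotopies come from the fact that a perfect complex supported on $\{x\}$ (resp.\ on $Y$) restricts to an acyclic complex on $D$ (resp.\ $D'$), and base change intertwines these identifications --- and it has the merit of not invoking the double at all; but, as you correctly isolate, the naturality of the null-homotopies is the delicate point and would need to be spelled out carefully in the Thomason--Trobaugh framework. The paper's detour through $S_X$ is precisely a device for sidestepping that point: the splitting $p_{+*}$ reduces everything to the absolute statement of \lemref{lem:Cycle-K-0}, where no homotopy-fibre bookkeeping is needed. Your remark on separability (that $Y=\Spec(k(x))\times_X X'$ is reduced, so the cycle-level pull-back matches $[\sO_Y]\in K_0$ on the nose) is also the role it plays in \lemref{lem:Cycle-K-0}.
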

\begin{proof}
We consider the diagram
\begin{equation}\label{eqn:Cycle-K-mod-1}
\xymatrix@C.8pc{
\sZ_0(X,D) \ar[r]^-{cyc_{(S_X,X_-)}} \ar[d]_-{{\rm pr}^*_{{k'}/{k}}} &
K_0(S_X, X_-) \ar[r]^-{\iota^*_+} \ar[d]^-{{\rm pr}^*_{{k'}/{k}}} &
K_0(X,D) \ar[d]^-{{\rm pr}^*_{{k'}/{k}}} \\
\sZ_0(X',D') \ar[r]^-{cyc_{(S_{X'},X'_-)}} &
K_0(S_{X'}, X'_-) \ar[r]^-{\iota^*_+} &
K_0(X',D').}
\end{equation}
We saw in ~\eqref{eqn:CCM-main-mod-1} that the composite horizontal
arrows in this diagram are the cycle class maps $cyc_{X|D}$ and $cyc_{X'|D'}$.
Hence, it suffices to show that the two squares in
~\eqref{eqn:Cycle-K-mod-1} commute. The right square clearly
commutes. The left square commutes by applying \lemref{lem:Cycle-K-0} to 
$S_X$ and $X$ and then using  ~\eqref{eqn:CCM-main-mod-3}.
\end{proof}

\section{The family of good cycles}
\label{sec:Transversality}
Our next goal is to show that the cycle class
map $\CH^L_0(X, Y) \to K_0(X)$ is injective
if $X$ is a reduced quasi-projective surface over an infinite field.
As we stated in \S~\ref{sec:Intro}, this was proven by
Levine in his yet unpublished manuscript \cite{Levine-2}
when the base field is algebraically closed.
A published account of Levine's proof
is available in \cite{BSri}.
In the next few sections, our goal is to revisit Levine's proof
and show that it can be carried over to all infinite fields 
with the help of new arguments at every step of the proof.

In this section, we shall construct families $\phi \colon \Gamma_U \to U$
of cycles on $X$. We shall state and prove an upgraded version of
Kleiman's transversality theorem \cite[Theorem~2]{Kleiman} and
its generalization by Levine \cite[Lemma~1.2]{Levine-2}
(see also \cite[Lemma~1.1]{Bloch-1}) which works over an
arbitrary infinite field. This theorem will be used to ensure that
every member of a family $\phi$ as above gives rise to a 
well defined 0-cycle on $X$.
The proof closely follows the one
in \cite[Lemma~1.2]{Levine-2}. Unless stated otherwise, $k$ will always
denote an infinite ground field in this section.  

\subsection{Some recollection}\label{sec:Recall}
We begin with some standard algebraic geometry results that 
we shall use repeatedly. We collect them here for
reader's convenience.

\begin{lem}\label{lem:Irr-fibers}
Let $f \colon X \to Y$ be a continuous map of topological spaces
such that the following hold.
\begin{enumerate}
\item
$Y$ is irreducible.
\item
$f$ is open.
\item
There exists a dense set of points $y \in Y$ such that $f^{-1}(y)$
is irreducible. 
\end{enumerate}
Then $X$ is irreducible.
\end{lem}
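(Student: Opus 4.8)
The plan is to argue by contradiction, using the standard fact that a topological space is irreducible if and only if it is nonempty and every nonempty open subset is dense (equivalently, any two nonempty open subsets meet). So suppose $X$ is not irreducible. Then we can write $X = C_1 \cup C_2$ with $C_1, C_2$ proper closed subsets, and correspondingly there exist nonempty open subsets $U_1, U_2 \subseteq X$ with $U_1 \cap U_2 = \emptyset$ (take $U_i = X \setminus C_i$). Since $f$ is open, $f(U_1)$ and $f(U_2)$ are nonempty open subsets of $Y$; since $Y$ is irreducible, $V := f(U_1) \cap f(U_2)$ is a nonempty open subset of $Y$.

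Next I would invoke hypothesis (3): the set of $y \in Y$ with $f^{-1}(y)$ irreducible is dense, hence meets the nonempty open set $V$. Pick such a $y \in V$. Because $y \in f(U_1)$, the open set $U_1 \cap f^{-1}(y)$ is nonempty in the fiber $f^{-1}(y)$; likewise $U_2 \cap f^{-1}(y)$ is nonempty. But $f^{-1}(y)$ is irreducible, so these two nonempty relatively-open subsets of $f^{-1}(y)$ must intersect, giving a point in $U_1 \cap U_2 \cap f^{-1}(y) \subseteq U_1 \cap U_2$. This contradicts $U_1 \cap U_2 = \emptyset$. One should also note $X \neq \emptyset$: since $Y$ is irreducible it is nonempty, and picking any $y$ in the dense set of (3) forces $f^{-1}(y)$ to be irreducible, in particular nonempty, so $X \neq \emptyset$. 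Hence $X$ is irreducible.

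There is no real obstacle here; the only point requiring a little care is making sure one translates ``not irreducible'' into the existence of two \emph{disjoint} nonempty opens (rather than merely two proper closed subsets covering $X$), and then checking that openness of $f$ together with irreducibility of $Y$ genuinely produces a common point $y$ of $f(U_1)$ and $f(U_2)$ that also lies in the dense set from (3) — this last step uses that a dense set meets every nonempty open set. Everything else is a routine manipulation of the topology of irreducible spaces, so I would keep the write-up to a few lines.
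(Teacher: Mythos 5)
Your proof is correct. The paper gives no argument of its own here — it simply cites \cite[Tag 004Z]{stacks-project} — and your argument is essentially the one found there (produce two disjoint nonempty opens from a reducible decomposition, push them forward by openness, intersect inside $Y$ by irreducibility, and hit the dense set of irreducible fibers to get a contradiction), so there is nothing to add.
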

\begin{proof}
See \cite[{Tag 004Z}]{stacks-project}.
\end{proof}

\begin{lem}\label{lem:Fiber-dim}
Let $f \colon X \to Y$ be an open morphism in $\Sch_k$. Assume that
$X$ is equidimensional, $Y$ is irreducible and every irreducible
component of $X$ is dominant over $Y$. Then $f$ has equidimensional
fibers. That is,
\[
\dim(f^{-1}(y)) = \dim(X) - \dim(Y) \ \ \mbox{for \ all} \ y \in Y.
\]
\end{lem}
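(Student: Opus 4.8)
The statement is the standard fact that an open morphism of finite type from an equidimensional scheme whose components all dominate an irreducible target has equidimensional fibers, with the expected relative dimension. The plan is to reduce immediately to the case where $X$ is irreducible: since $f$ is open and every irreducible component $X_i$ of $X$ is dominant over $Y$, the restriction $f|_{X_i} \colon X_i \to Y$ is still dominant, and $X_i \to \overline{f(X_i)} = Y$ is a dominant morphism of finite type, so by generic flatness (or Chevalley's theorem on dimensions of fibers) there is a dense open $V_i \subset Y$ over which $f|_{X_i}$ has all fibers of dimension exactly $\dim(X_i) - \dim(Y) = \dim(X) - \dim(Y)$. Here I would use that $\dim(X_i) = \dim(X)$ because $X$ is equidimensional. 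So the equality holds over the dense open $V = \bigcap_i V_i$; the content is to propagate it to \emph{all} points $y \in Y$.

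For an arbitrary point $y \in Y$, the upper-semicontinuity half of Chevalley's theorem gives $\dim(f^{-1}(y)) \geq \dim(X) - \dim(Y)$ (every component of $X$ through a point of $f^{-1}(y)$ dominates $Y$, so the local fiber dimension is at least $\dim(X) - \dim(Y)$; this uses Krull's principal ideal theorem / the dimension formula for the local rings). The reverse inequality is where openness enters: suppose $Z \subset f^{-1}(y)$ is an irreducible component with $\dim(Z) > \dim(X) - \dim(Y)$. Let $\eta$ be its generic point and pick an irreducible component $W$ of $X$ containing $\eta$. Then $W \to Y$ is dominant and open (openness of $f$ restricts to the components here — or, more safely, I would argue directly on $W$ using that $f$ open and $X$ equidimensional forces each component to be open in $X$, hence $f|_W$ open), so by the above there is a dense open $V \subset Y$ over which $f|_W$ has fiber dimension exactly $\dim(X) - \dim(Y)$. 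By openness, $f(W)$ is open in $Y$ hence meets $V$; but one then needs a semicontinuity argument for the function $x \mapsto \dim_x(f^{-1}(f(x)) \cap W)$ on $W$: this function is upper-semicontinuous (Chevalley), is $> \dim(X)-\dim(Y)$ at $\eta$, and equals $\dim(X) - \dim(Y)$ on the nonempty open preimage $f|_W^{-1}(V)$, which is dense in $W$ since $W$ is irreducible and $f|_W$ dominant. Upper-semicontinuity says the locus where the function is $\geq \dim(Z)$ is closed, and it is nonempty (contains $\eta$) but misses the dense open $f|_W^{-1}(V)$ — this is a contradiction only if that closed locus, being a proper closed subset, cannot contain $\eta$ whose closure is $\overline{Z}$...

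Here I must be more careful: the clean way is to invoke directly the theorem that \emph{an open morphism of finite type between Noetherian schemes has the property that $\dim_x(f^{-1}(f(x)))$ is a locally constant function of $x$ when $X$ is equidimensional and all components dominate $Y$} — equivalently, one cites \cite[Tag 0D4H]{stacks-project} or the combination of Chevalley upper-semicontinuity with the fact (EGA IV, 14.2.1 / \cite[Tag 0E2Q]{stacks-project}) that openness forces the \emph{lower}-semicontinuous "there is a component through $x$ of fiber-dimension $\leq n$" condition to be compatible, pinning the fiber dimension. So the proof structure I would actually write is: (1) reduce to $X$ irreducible is \emph{not} possible since fibers of the restriction need not be the whole fiber, so instead (1') cite that $f$ open $+$ $X$ equidimensional $\Rightarrow$ each component of $X$ is open, hence flat-generic-fiber arguments on each component give the inequality "$\dim(f^{-1}(y)) \geq \dim X - \dim Y$" freely and give equality on a dense open $V \subset Y$; (2) for the reverse inequality at arbitrary $y$, use openness to show $f$ restricted over a suitable neighborhood cannot have jumping fiber dimension — concretely, apply \cite[Tag 0D4H]{stacks-project} (Chevalley) which says $\{x \in X : \dim_x f^{-1}(f(x)) \geq n\}$ is closed, note it is a union of components of $X$ intersected appropriately, and use that it is disjoint from the dense open $f^{-1}(V)$ to conclude it is empty for $n > \dim X - \dim Y$.

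\textbf{Main obstacle.} The genuine difficulty is the reverse inequality $\dim(f^{-1}(y)) \leq \dim(X) - \dim(Y)$ for \emph{special} $y$: this is exactly where flatness can fail but openness saves the day, and getting the semicontinuity bookkeeping right (that the "bad" locus of jumping fiber dimension is closed, is a union of pieces each lying over a proper closed subset of $Y$, yet must meet the open image — contradiction) is the crux. I expect to spend essentially all the work there, and I anticipate the cleanest writeup simply cites the relevant Stacks Project tags on Chevalley's semicontinuity and on openness versus fiber dimension (e.g. \cite[Tags 0D4H, 0E2Q, 04Z6]{stacks-project}), reducing this lemma to a one-paragraph deduction.
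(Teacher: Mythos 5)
The paper does not actually prove this lemma: its entire proof is the citation ``See \cite[Theorem~14.114]{Gortz}.'' So your closing instinct --- that the cleanest writeup simply cites the standard reference --- is exactly what the authors do. Judged as a proof, however, your proposal has a genuine gap at precisely the step you yourself flag as the crux. The generic part and the lower bound are fine: each component $X_i$ dominates $Y$ and has dimension $\dim(X)$, so there is a dense open $V \subset Y$ over which every nonempty fiber has dimension $e := \dim(X) - \dim(Y)$, and $\dim_x(f^{-1}(f(x))) \ge e$ everywhere follows from the local dimension inequality (no openness needed). The problem is the upper bound at special points. Your concluding move is: the set $E := \{x \in X : \dim_x f^{-1}(f(x)) \ge e+1\}$ is closed by Chevalley and is disjoint from the dense open $f^{-1}(V)$, ``hence empty.'' A closed set disjoint from a dense open need not be empty --- this is exactly the configuration of the exceptional locus of a blow-up, and blow-ups are excluded from the lemma only because they fail to be open, not by this bookkeeping. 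As written, your argument establishes $f(E) \subset Y \setminus V$ and nothing more; openness has not actually been used at the decisive moment. The parenthetical repair ``note it is a union of components of $X$ intersected appropriately'' is unjustified: there is no reason for $E$ to be a union of irreducible components of $X$, and asserting this is essentially assuming the conclusion.

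A secondary error: the claim that $f$ open plus $X$ equidimensional forces each irreducible component of $X$ to be open is false. For example $\Spec(k[x,y]/(xy)) \to \A^1_k$, $t \mapsto x+y$, is finite and flat (hence open) with equidimensional source, yet the two components meet at the origin and neither is open in $X$. (You do not end up needing this claim for the generic statement, since $\dim f^{-1}(y) = \max_i \dim(X_i \cap f^{-1}(y))$ and each restriction $f|_{X_i}$ is dominant of finite type, so generic fiber dimension on each component suffices without any openness.) The mechanism by which openness enters the correct proof is different: open morphisms lift generizations, so the generic point $\zeta$ of any component of a special fiber $X_y$ admits a generization $x'$ lying in the generic fiber $X_\eta$; combining $\ov{\{\zeta\}} \subset \ov{\{x'\}}$ with the dimension formula for the dominant integral subscheme $\ov{\{x'\}}$ then caps $\dim_\zeta X_y$ by $\dim_{x'} X_\eta = e$. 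This is the content of EGA IV 14.2.1 and of \cite[Theorem~14.114]{Gortz}, and replacing your semicontinuity endgame by that generization argument (or by the citation, as the paper does) is what is needed to close the proof.
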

\begin{proof}
See \cite[Theorem~14.114]{Gortz}.
\end{proof}

\begin{cor}\label{cor:Fiber-dim-0}
Let $f \colon X \to Y$ be an open as well as closed morphism in $\Sch_k$.
Assume that $X$ is equidimensional and $Y$ is irreducible.
Then $f$ has equidimensional fibers. 
\end{cor}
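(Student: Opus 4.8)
The plan is to deduce the corollary from \lemref{lem:Fiber-dim} by verifying its remaining hypothesis, namely that every irreducible component of $X$ is dominant over $Y$; closedness of $f$ will then only be needed to upgrade ``dense image'' to ``surjective'', so that no fibre is empty.

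Since $X$ is a Noetherian scheme, it has finitely many irreducible components, say $Z_1,\dots,Z_n$. Fix an index $i$ and set $U_i = X\setminus\bigcup_{j\neq i}Z_j$. Because a finite union of closed subsets is closed, $U_i$ is open in $X$. It is nonempty: if $Z_i\subseteq\bigcup_{j\neq i}Z_j$, then irreducibility of $Z_i$ would force $Z_i\subseteq Z_j$ for some $j\neq i$, contradicting the maximality of $Z_i$ as an irreducible component. Moreover $U_i\subseteq Z_i$, since every point of $X$ lies on some component. Now $f$ is open, so $f(U_i)$ is a nonempty open subset of $Y$; as $Y$ is irreducible, $f(U_i)$ is dense in $Y$. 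Since $f(U_i)\subseteq f(Z_i)$, we conclude $\overline{f(Z_i)}=Y$, i.e.\ $Z_i$ is dominant over $Y$. As $i$ was arbitrary, every irreducible component of $X$ is dominant over $Y$.

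With $X$ equidimensional, $Y$ irreducible, $f$ open, and all components of $X$ dominant, \lemref{lem:Fiber-dim} applies and gives $\dim(f^{-1}(y)) = \dim(X) - \dim(Y)$ for every $y\in Y$. Finally, $f$ being closed as well as open, $f(X)$ is both closed and dense in the irreducible space $Y$, hence $f(X)=Y$; so every fibre $f^{-1}(y)$ is nonempty and of the common dimension $\dim(X)-\dim(Y)$, i.e.\ $f$ has equidimensional fibres. There is no real obstacle here beyond the one observation that drives the argument: the ``generic'' open stratum $U_i$ of a component is open in all of $X$, so its image detects that $Z_i$ dominates $Y$.
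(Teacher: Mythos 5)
Your proof is correct and follows the same route as the paper: the paper's one-line proof likewise reduces to \lemref{lem:Fiber-dim} by observing that every irreducible component of $X$ must dominate $Y$, which you justify in detail via the open dense stratum $U_i$ of each component. Your extra remark that closedness yields surjectivity (hence nonempty fibres) is a reasonable supplement that the paper leaves implicit.
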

\begin{proof}
Under our assumption, every irreducible component of $X$ must be
dominant over $Y$ and therefore \lemref{lem:Fiber-dim} applies.
\end{proof}

\begin{lem}\label{lem:Equi-dim}
Let $X, Y \in \Sch_k$ be equidimensional schemes.
Then $X \times Y$ is equidimensional.
\end{lem}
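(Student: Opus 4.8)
The plan is to reduce to the case where $X$ and $Y$ are irreducible and then read off the statement from the behaviour of dimension under flat base change. First I would reduce to the irreducible case. Write $X=X_1\cup\cdots\cup X_a$ and $Y=Y_1\cup\cdots\cup Y_b$ for the decompositions into irreducible components (equidimensionality is insensitive to nilpotents, so we may take these reduced). Since $X\times Y=\bigcup_{i,j}X_i\times Y_j$ is a finite union of closed subsets, any irreducible component $W$ of $X\times Y$ is contained in some $X_i\times Y_j$, and being maximal among irreducible closed subsets of $X\times Y$ it is then an irreducible component of $X_i\times Y_j$. As $X$ and $Y$ are equidimensional, $\dim X_i=\dim X$ and $\dim Y_j=\dim Y$ for all $i,j$; hence it is enough to show each $X_i\times Y_j$ is equidimensional of dimension $\dim X_i+\dim Y_j$. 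So I may assume $X$ and $Y$ are irreducible, indeed integral.

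Next I would use the first projection $p\colon X\times_k Y\to X$, which is of finite type and flat, being the base change along $X\to\Spec k$ of the (trivially flat) morphism $Y\to\Spec k$; its scheme-theoretic fibre over $x\in X$ is $Y_{k(x)}:=Y\times_k k(x)$. Since $X$ is irreducible and of finite type over $k$ one has $\dim_x X=\dim X$ for every $x\in X$ (any non-empty open subscheme has the same function field, of transcendence degree $\dim X$ over $k$); and since $Y$ is irreducible and of finite type over $k$, each base change $Y_{k(x)}$ is equidimensional of dimension $\dim Y$, so $\dim_z Y_{k(x)}=\dim Y$ for all $z$ in the fibre — this is the standard fact that field extension preserves dimension and equidimensionality for finite-type schemes (see \cite{Gortz} or \cite{stacks-project}). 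The fibre-dimension formula for a flat morphism locally of finite type over a locally Noetherian base (see \cite{stacks-project}) then gives, for every $z\in X\times_k Y$ with $x=p(z)$,
\[
\dim_z(X\times_k Y)=\dim_x X+\dim_z Y_{k(x)}=\dim X+\dim Y .
\]
As each irreducible component $W$ of $X\times_k Y$ contains a point lying on no other component, at which the local dimension equals $\dim W$, every component of $X\times_k Y$ has dimension $\dim X+\dim Y$; hence $X\times_k Y$ is equidimensional.

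The only non-formal input is that $Y_{k(x)}$ stays equidimensional of dimension $\dim Y$, and the subtlety worth flagging is that over a field that is not algebraically closed a product of irreducible schemes need not be irreducible, so one cannot simply say ``irreducible $\times$ irreducible is irreducible of dimension the sum''. If a fully self-contained argument is preferred, I would instead base change to $\ov k$: a Noether normalization $\A^{\dim X}_k\hookrightarrow X$ base-changes to a finite injection $\A^{\dim X}_{\ov k}\hookrightarrow X_{\ov k}$, so $X_{\ov k}$ is equidimensional of dimension $\dim X$, and likewise $Y_{\ov k}$ of dimension $\dim Y$; over the algebraically closed field $\ov k$ a product of integral schemes is integral with dimension the sum; and equidimensionality then descends along the faithfully flat morphism $(X\times_k Y)_{\ov k}\to X\times_k Y$, using going-down for flat morphisms to see that every component upstairs dominates a component downstairs and that $\dim V_{\ov k}=\dim V$ for every finite-type $k$-scheme $V$. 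Either way, the remaining checks are entirely routine, and I do not expect any real obstacle beyond correctly handling the failure of irreducibility of products over non-closed fields.
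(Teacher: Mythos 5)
Your argument is correct, and your main line of proof is genuinely different from the paper's. The paper's proof is a two-line reduction: it invokes $\dim(X\times Y)=\dim((X\times Y)_{\ov k})$ (G\"ortz--Wedhorn, Prop.~5.38) to pass to an algebraically closed field, where a product of irreducible schemes is irreducible of dimension the sum. Your primary argument instead stays over $k$: after the (same) reduction to integral factors, you apply the fibre-dimension formula for the flat projection $X\times_k Y\to X$, using that the fibres $Y_{k(x)}$ remain equidimensional of dimension $\dim Y$ under the field extension $k(x)/k$. This completely sidesteps the failure of irreducibility of products over non-closed fields, which is exactly the subtlety the paper's base-change handles; on the other hand it leans on the (standard but slightly heavier) local fibre-dimension formula and on the dimension theory of finite-type schemes over a field to convert local dimensions into component dimensions. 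Your fallback paragraph is essentially the paper's proof, and in fact spells out the descent of equidimensionality along $(X\times_k Y)_{\ov k}\to X\times_k Y$ more carefully than the paper does -- the paper's phrase ``we can assume $k$ is algebraically closed'' implicitly requires the componentwise bookkeeping you make explicit. Either route is acceptable; no gaps.
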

\begin{proof}
Let $\ov{k}$ be
an algebraic closure of $k$.
Since $\dim(X \times Y) = \dim((X \times Y)_{\ov{k}})$ (e.g., see
\cite[Proposition~5.38]{Gortz}) and
$(X \times Y)_{\ov{k}} \cong X_{\ov{k}} \times_{\ov{k}} Y_{\ov{k}}$,
we can assume $k$ is algebraically closed.
In this case, the lemma follows from our assumption because
$X' \times Y'$ is irreducible of dimension equal to $\dim(X') + \dim(Y')$ if
$X', Y' \in \Sch_k$ are irreducible.
\end{proof}

\subsection{Morphisms to homogeneous spaces}\label{sec:Set-up}
We recall homogeneous spaces and describe some properties of
morphisms from schemes to these spaces. The set-up of
this section will be used throughout the next few sections of this
paper.

Let $G$ be a connected reductive algebraic group over $k$ and let
$H = G/P$ be a projective homogeneous space for $G$.
Recall that a smooth closed subgroup $P' \subset G$ is parabolic
if and only if the variety $G/{P'}$ is complete (equivalently, projective).
It follows that $P$ is a parabolic subgroup of $G$.
In particular, it is connected (e.g., see \cite[Theorem~11.16]{Borel}).
We shall let $d_P = \dim(P)$.

Let $\pi \colon G \to H$ be the resulting $P$-torsor. 
Note that since $P$ is affine, $\pi$ is an fppf locally trivial
$P$-torsor, it follows that $\pi$ is fppf locally an affine morphism.
But this implies that $\pi$ is an affine morphism. 
Since $G$ is reductive, it
is smooth over $k$ and  an fppf descent  argument shows that
$H$ must also be smooth over $k$. Furthermore, as $G$ is connected,
it must be geometrically connected (any $X \in \Sch_k$ which is
connected and $X(k) \neq \emptyset$ is geometrically connected).
It follows that $G$ is geometrically integral. 
In particular, $H = G/P$ is also geometrically integral.

Another property we shall use frequently is that, being reductive, $G$
is a uni-rational variety over $k$ (i.e., admits a dominant $k$-morphism
from a dense open subset of an affine space over $k$)
by \cite[Theorem~18.2]{Borel}.
It follows that $H$ is also uni-rational. Since $k$ is infinite,
this implies that for any dense open $U \subset H$ (or in $G$),
the set of $k$-rational points $U(k)$ is Zariski 
dense in $H$ (or in $G$). 

Let $G$ act on a reduced quasi-projective scheme $X$ over $k$ and let
$\mu \colon G \times X \to X$ be the action map. Let 
$\Phi = (\mu, \id_X) \colon G \times X \to X \times X$ denote the map
$\Phi(g, x) = (\mu(g,x), x)$.
Let $\wt{\mu} = (\id_G, \mu) \colon G \times X \to G \times X$ denote the map
$\Phi(g, x) = (g, \mu(g,x))$. 
Since $\wt{\mu}$ is an isomorphism and $G$ is smooth, it follows that
$\mu$ is a smooth surjective morphism.

For a closed point $g \in G$, the composite morphism
${\lambda}_g \colon
\Spec(k(g)) \times X = X_{k(g)} \inj G \times X \xrightarrow{\mu} X$ can be
easily seen to be closed. In particular, $gY := {\lambda}_g(Y)$ is closed 
in $X$ for any closed subscheme $Y \subset X$.
If $g \in G(k)$, we can identify $X_{k(g)}$ with $X$ and then
${\lambda}_g$ defines an automorphism of $X$.
It is easy to see that $\{\lambda_g\}_{g \in G(k)}$ define
a group homomorphism $\lambda \colon G(k) \to {\rm Aut}_k(X)$.

\subsection{The parameter space of good cycles}\label{sec:PS}
We now let $H = G/P$ be a homogeneous space for $G$ as above.
Then $G$ acts transitively on $H$. This action gives rise to the maps
$\mu, \wt{\mu}$ and $\Phi$ be as above. 
Let $Y \subset H$ be an equidimensional closed subscheme. 
Let $X$ be an equidimensional reduced
quasi-projective $k$-scheme and let $f \colon X \to H$ be a $k$-morphism. 

We consider the commutative diagram
\begin{equation}\label{eqn:Homog-space-0}
\xymatrix@C.8pc{
& \Gamma \ar[r]^-{\delta'} \ar[d]^-{f'} \ar[dl]_-{\phi} & Y \times X
\ar[d]^-{\iota_Y \times f} \\
G & G \times H \ar[l]_-{p} \ar[r]^-{\delta} & H \times H,}
\end{equation}
where $\iota \colon G \to G$ is the inverse morphism
($\iota(g) = g^{-1}$), $\delta$ is the composite 
\[
G \times H \xrightarrow{\iota \times \id_H} G \times H \xrightarrow{\Phi}
H \times H
\]
and $p$ is the projection.
The scheme $\Gamma$ is defined so that the right square is
Cartesian. The map $\phi$ is defined so that the left triangle
is commutative. For $U \subset G$, we let $\Gamma_U := U \times_G \Gamma$.
We study some properties of $\Gamma$.

Since $\Phi = (\mu, \id_H)$ and $\mu$ is affine, it follows that
$\Phi$ is affine. Hence, $\delta$ is affine.
It is also clear from the definition of $\Phi$ that for every
point $z \in H \times H$, the scheme-theoretic fiber $\Phi^{-1}(z)$ is 
isomorphic to $P_{k(z)}$. In particular, $\Phi$ is equidimensional
of relative dimension $d_P$. Since the source and the
target of $\Phi$ are both regular, it follows from 
\cite[Exc.~III.10.9]{Hartshorne} that $\Phi$ is flat.
Since $P$ is smooth over $k$, it follows that $\Phi$ is a
smooth morphism. Since $\iota \times \id_H$ is an isomorphism, 
we conclude that $\delta$ is a smooth surjective affine morphism
of relative dimension $d_P$. 
This implies that $\delta'$ is also a smooth surjective affine morphism
of relative dimension $d_P$. 

Since $X$ and $Y$ are equidimensional, it follows from  
\lemref{lem:Equi-dim} that $Y \times X$ is also equidimensional.
As $P$ is geometrically integral, we see that
over every irreducible component $V$ of $Y \times X$,
the map $\delta'$ is smooth surjective with irreducible fibers.
It follows therefore from \lemref{lem:Irr-fibers} that
$\delta'^{-1}(V)$ is irreducible of dimension $\dim(V) + d_P$. 
We conclude that $\Gamma$ is equidimensional and 
\begin{equation}\label{eqn:Homog-space-3}
\dim(\Gamma) = \dim(X) + \dim(Y) + d_P.
\end{equation}

\begin{lem}\label{lem:The-Open-set}
There exists a dense open subscheme $U \subset G$ such that $\phi \colon \Gamma_U\to U$ is flat, and for all $g \in U(k)$, the scheme $f^{-1}(gY) = \phi^{-1}(g)$ is equidimensional. Furthermore, 
\[
\dim(f^{-1}(gY)) = \dim(X) -(\dim(H) - \dim(Y)).
\]
 Equivalently, $\codim_X(f^{-1}(gY)) = \codim_H(Y)$ for all  $g \in U(k)$.
\end{lem}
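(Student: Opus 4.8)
The plan is to analyze the map $\phi \colon \Gamma \to G$ and find a dense open $U$ over which $\phi$ is flat with the stated fiber dimension; the key point is identifying $\phi^{-1}(g)$ with $f^{-1}(gY)$ for rational $g$ and then applying generic flatness together with the fiber-dimension results recalled in \S\ref{sec:Recall}. First, I would unwind the definition of $\Gamma$ in \eqref{eqn:Homog-space-0}: a point of $\Gamma$ over $g \in G$ consists of a point $x \in X$ together with the condition that $\delta(g, f(x)) \in Y \times X$, i.e.\ (using $\delta = \Phi \circ (\iota \times \id_H)$) that $g^{-1}\cdot f(x) \in Y$, equivalently $f(x) \in gY$. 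Thus set-theoretically $\phi^{-1}(g) = f^{-1}(gY)$ for every $g \in G(k)$, and one checks this is a scheme-theoretic identification because $\Gamma$ is cut out as the fiber product $\Gamma = (G\times H) \times_{H \times H}(Y \times X)$ and base change along $\Spec k(g) \to G$ commutes with the fiber product defining it.

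\textbf{The open set and flatness.} Since $\Gamma$ is equidimensional of dimension $\dim(X) + \dim(Y) + d_P$ by \eqref{eqn:Homog-space-3}, and $G$ is smooth (hence regular) and irreducible, I would first discard from $\Gamma$ any irreducible component not dominating $G$: the images of such components are proper closed subsets of $G$, so removing their (finite) union from $G$ produces a dense open $U_1 \subset G$ such that every component of $\Gamma_{U_1}$ dominates $U_1$. Then I apply generic flatness (\cite[Exc.~III.10.9]{Hartshorne} or EGA) to each irreducible component of $\Gamma_{U_1} \to U_1$ to obtain a further dense open $U \subset U_1$ over which $\phi$ is flat. Since $\phi$ is a morphism of finite type and $\Gamma$ is quasi-projective over $k$ (being a locally closed subscheme of $G \times H \times Y \times X$ after the fiber product), $\phi$ is in particular an open morphism on $\Gamma_U$ because flat finite-type morphisms are open.

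\textbf{Fiber dimension.} With $\phi \colon \Gamma_U \to U$ now flat (hence open), $\Gamma_U$ equidimensional, $U$ irreducible, and every component of $\Gamma_U$ dominant over $U$, \lemref{lem:Fiber-dim} applies and gives that all fibers of $\phi$ are equidimensional of dimension $\dim(\Gamma_U) - \dim(U) = \dim(\Gamma) - \dim(G) = \dim(X) + \dim(Y) + d_P - \dim(G)$. Since $\dim(G) = \dim(H) + d_P$ (because $\pi \colon G \to H$ is a $P$-torsor with $d_P = \dim P$), this simplifies to $\dim(X) - (\dim(H) - \dim(Y))$. Combining with the scheme-theoretic identification $\phi^{-1}(g) = f^{-1}(gY)$ for $g \in U(k)$ gives exactly the asserted formula, and the codimension reformulation follows since $X$ is equidimensional.

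\textbf{Main obstacle.} The routine but slightly delicate point is the scheme-theoretic (not merely set-theoretic) identification of $\phi^{-1}(g)$ with $f^{-1}(gY)$ for rational $g$, and making sure that after shrinking to $U$ we still have \emph{every} component of $\Gamma_U$ dominant over $U$ so that \lemref{lem:Fiber-dim} genuinely applies to all fibers, not just the generic one — this is why we must first remove non-dominant components before invoking generic flatness. Everything else (generic flatness, openness of flat morphisms, the dimension bookkeeping $\dim G = \dim H + d_P$) is standard.
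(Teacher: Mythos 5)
Your proof is correct and follows essentially the same route as the paper: discard the components of $\Gamma$ not dominating $G$, apply generic flatness to obtain the dense open $U$, and then invoke \lemref{lem:Fiber-dim} together with the dimension count $\dim(\Gamma)=\dim(X)+\dim(Y)+d_P$ and $\dim(G)=\dim(H)+d_P$. The extra care you take with the scheme-theoretic identification $\phi^{-1}(g)=f^{-1}(gY)$ is a detail the paper leaves implicit, but it does not change the argument.
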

\begin{proof}
Since $\phi$ is surjective, at least one irreducible 
component of $\Gamma$ is dominant over $G$. 
Let us write $\Gamma_{\red}  = \Gamma' \cup \Gamma''$ as a union of
closed subschemes such that $\Gamma'$ is the union of all
irreducible components of $\Gamma$ which are dominant over $G$
and $\Gamma''$ is the union of those irreducible components which are
not dominant over $G$. We can choose a dense open
subscheme $U' \subset G$ such that $\phi^{-1}(U') \cap \Gamma'' = \emptyset$.
In particular, $\phi^{-1}(U') = \phi^{-1}(U') \cap \Gamma'$.

We let $\Gamma_{U'} = \phi^{-1}(U')$. 
Then $\phi \colon \Gamma_{U'} \to U'$ is surjective morphism
with irreducible base. Hence, it follows from the generic flatness theorem
(see EGA $\mathrm{IV_{2}}$ 6.9.1) that there is a dense open subscheme
$U \subset U'$ such that $\phi \colon \Gamma_U \to U$ is flat and
surjective. Since $\Gamma_{U} = \Gamma'_U$ and $\Gamma$ is
equidimensional, it follows that
we have a morphism $\phi \colon \Gamma_U \to U$ 
which is flat, whose source is equidimensional and each irreducible
component of the source is dominant over the base.
We can therefore apply \lemref{lem:Fiber-dim} to conclude that
$\phi \colon \Gamma_U \to U$ has equidimensional fibers.

In particular, using the identification $f^{-1}(gY)= \phi^{-1}(g)$, induced by 
the commutative diagram \eqref{eqn:Homog-space-0}, we have 
for any $g \in U(k)$,
\begin{equation}\label{eqn:Homog-space-1}
\begin{array}{lll}
\dim(f^{-1}(gY)) & = & \dim(\phi^{-1}(g)) = \dim(\Gamma') -
\dim(G) \\
& = & \dim(\Gamma) - \dim(G) \\
& = & \dim(X) + \dim(Y) + d_P - \dim(G) \\
& = & \dim(X) -(\dim(H) - \dim(Y)). 
\end{array}
\end{equation}
Since $U \subset G$ is dense open, the proof of the lemma is complete.
\end{proof}

We let $\alpha \colon \Gamma \to G \times X$ be the map induced by
the projections $\phi \colon \Gamma \to G$ and
$\Gamma \xrightarrow{\delta'} Y \times X \to X$. 
We let $\beta \colon G \times X \to H$ be the composite
$p_1 \circ \delta \circ (\id_G, f)$,
where $p_1 \colon H  \times H \to H$ is the first projection. 

If we let $G$ act on $G \times X$ by right multiplication
on itself and trivially on $X$ (i.e., $g_1 \cdot (g,x) = (gg_1^{-1}, x)$),
then $\beta \colon G \times X \to H$ is $G$-equivariant.
Since $\beta$ is surjective and $H$ is reduced, it follows 
from the generic flatness theorem that
$\beta$ is flat over a dense open subscheme of $H$.
Since $G$ acts transitively on $H$ and $\beta$ is $G$-equivariant,
it follows that it must actually be flat everywhere.

We now consider the diagram
\begin{equation}\label{eqn:Homog-space-2}
\xymatrix@C.8pc{
\Gamma' \ar[dd]_-{\iota'_Y} \ar[drrr]^-{\beta'} & & & \\
& \Gamma \ar[r]^-{\delta'} \ar[d]^-{f'} \ar[dl]_-{\alpha} 
\ar@{.>}[ul]^-{\gamma} & Y \times X
\ar[d]^-{\id_Y \times f} \ar[r]^-{p_1} & Y \ar[d]^-{\iota_Y}  \\
G\times X \ar@/_2pc/[rrr]^-{\beta} \ar[r]^-{\id_G \times f} 
& G \times H  \ar[r]^-{\delta} & H \times H \ar[r]^-{p_1} &
H,}
\end{equation}
where $\iota_Y$ is the inclusion, and
$\Gamma' := (G \times X) \times_H Y$ with respect to maps
$\beta$ and $\iota_Y$.
It is easy to check from the definition of $\Gamma$ in 
~\eqref{eqn:Homog-space-0} that
$\beta \circ \alpha = \iota_Y \circ p_1 \circ \delta'$.
It follows that there is a unique morphism
$\gamma \colon \Gamma \to \Gamma'$ such that $\iota'_Y \circ \gamma = 
\alpha$ and $\beta' \circ \gamma = p_1 \circ \delta'$. 
Furthermore, it is easy to check that $\gamma$ is an
isomorphism. In particular, $\beta'$ is flat (since
$p_1$ and $\delta'$ are) and $\alpha$ is
a closed immersion.

We let $U \subset G$ be the dense open as in 
\lemref{lem:The-Open-set} and let $g \in U(k)$.
We consider the (equivalent) Cartesian diagrams
\begin{equation}\label{eqn:Homog-space-4}
\xymatrix@C.8pc{
f^{-1}(gY) \ar[r]^-{\lambda_{g^{-1}} \circ f} \ar[d] & Y \ar[d]^-{\iota_Y} \\
X \ar[r]^-{\lambda_{g^{-1}} \circ f} & H,} \quad \quad
\xymatrix@C.8pc{
f^{-1}(gY) \ar[r]^-{ f} \ar[d] & gY \ar[d]^-{\iota_{gY}} \\
X \ar[r]^-{ f} & H.}
\end{equation}

\begin{lem}\label{lem:Tor-ind}
The squares in ~\eqref{eqn:Homog-space-4} are Tor-independent.
\end{lem}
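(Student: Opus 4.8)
The plan is to reduce the Tor-independence of the square in \eqref{eqn:Homog-space-4} to a local computation along the fibers of the smooth morphism $\beta \colon G \times X \to H$. First I would recall the key structural facts already established: $\beta$ is a flat morphism (by $G$-equivariance and transitivity of the $G$-action on $H$, as shown just above), and the morphism $\alpha \colon \Gamma \to G \times X$ identifies $\Gamma$ with $\Gamma' = (G \times X) \times_H Y$ via the isomorphism $\gamma$. The point is that $f^{-1}(gY) = \phi^{-1}(g)$ sits inside $X$, and via the diagram \eqref{eqn:Homog-space-0} it is cut out as the fiber of $\beta$ restricted to $\{g\} \times X \cong X$ over $Y \subset H$.

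The key step is the following. For a $k$-rational point $g \in U(k)$, the left square in \eqref{eqn:Homog-space-4} factors: the morphism $\lambda_{g^{-1}} \circ f \colon X \to H$ equals the composite $X \cong \{g\} \times X \hookrightarrow G \times X \xrightarrow{\beta} H$, at least up to the automorphism $\lambda_{g^{-1}}$ of $X$ (which, being an isomorphism, does not affect Tor-independence). Now Tor-independence of the Cartesian square
\[
\xymatrix@C.8pc{
f^{-1}(gY) \ar[r] \ar[d] & Y \ar[d]^-{\iota_Y} \\
X \ar[r]^-{\lambda_{g^{-1}}\circ f} & H}
\]
means that for every point $w \in f^{-1}(gY)$, mapping to $x \in X$, $y \in Y$ and $h \in H$, one has $\Tor^{\sO_{H,h}}_i(\sO_{X,x}, \sO_{Y,y}) = 0$ for $i > 0$. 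Since $\sO_{X,x}$ is flat over $\sO_{H,h}$ — this is exactly the flatness of $\beta$ along the fiber $\{g\} \times X$, transported through the isomorphism with $X$ — all higher Tor groups vanish immediately. The right square in \eqref{eqn:Homog-space-4} is obtained from the left one by applying the automorphism $\lambda_g$ of $X$ (and correspondingly translating $Y$ to $gY$, $H$ to $H$ via $\lambda_g$), so it is Tor-independent as well.

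I expect the main (minor) obstacle to be bookkeeping: verifying carefully that the composite $X \xrightarrow{\{g\}\times\id} G \times X \xrightarrow{\beta} H$ really does coincide, up to the automorphism $\lambda_{g^{-1}}$, with the map labeled $\lambda_{g^{-1}} \circ f$ in \eqref{eqn:Homog-space-4}. This amounts to unwinding the definitions of $\beta = p_1 \circ \delta \circ (\id_G, f)$, of $\delta = \Phi \circ (\iota \times \id_H)$, and of $\Phi = (\mu, \id_H)$: on a point $(g,x)$ one gets $\beta(g,x) = \mu(g^{-1}, f(x)) = g^{-1}\cdot f(x) = \lambda_{g^{-1}}(f(x))$, so restricting to $\{g\} \times X$ does give $\lambda_{g^{-1}} \circ f$ on the nose. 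Once this identification is in place, flatness of $\beta$ does all the work, and no computation of Tor groups is actually needed.
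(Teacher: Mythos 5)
There is a genuine gap, and it is fatal to the argument as written. Your identification $\beta\circ\iota_g=\lambda_{g^{-1}}\circ f$ is correct (and is also used in the paper), but the key claim ``$\sO_{X,x}$ is flat over $\sO_{H,h}$ --- this is exactly the flatness of $\beta$ along the fiber $\{g\}\times X$'' is false. Flatness of $\beta\colon G\times X\to H$ does \emph{not} imply flatness of its restriction to the closed subscheme $\{g\}\times X$: a flat morphism composed with a closed immersion into its source is not flat in general. If your claim were true, then $f$ itself would be flat (since $\lambda_{g^{-1}}$ is an automorphism), the conclusion would hold for \emph{every} $g\in G(k)$ and every $Y$, and the entire generic-translate apparatus of \S\ref{sec:PS} would be unnecessary. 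That conclusion is false: take $f$ to be a closed immersion of a smooth surface $X$ into $H=\P^4_k$ and $Y=f(X)$; then for $g=e$ one has $\Tor_i^{\sO_H}(\sO_Y,\sO_X)\cong\bigwedge^i N^\vee_{X/H}\neq 0$ for $i=1,2$. The warning sign is that your argument never uses the hypothesis $g\in U(k)$, whereas the statement is only asserted for such $g$.

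The paper's proof uses the flatness of $\beta$ for a different and weaker purpose: to pull back a finite locally free resolution $\sE^\bullet\to\sO_Y$ on $H$ to a locally free resolution $\beta^*\sE^\bullet\to\sO_\Gamma$ on $G\times X$ (via the identification $\Gamma\cong(G\times X)\times_H Y$). The genuinely generic input is then the flatness of $\phi\colon\Gamma_U\to U$ from Lemma~\ref{lem:The-Open-set}, which guarantees that the outer square of ~\eqref{eqn:Kleiman-Levine-0} (and hence, since $p_1$ is flat and the vertical maps are closed immersions, the left square) is Tor-independent; this is what ensures that restricting $\beta^*\sE^\bullet|_{U\times X}$ along $\iota_g$ stays exact, i.e.\ gives a resolution of $\sO_{f^{-1}(gY)}$ on $X$, which is exactly the asserted Tor-vanishing. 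That step, where $g\in U(k)$ enters, is the one your proposal skips. To repair your argument you would need to replace ``$\beta|_{\{g\}\times X}$ is flat'' by this resolution-restriction argument (or an equivalent spectral-sequence/base-change argument using the flatness of $\phi$ over $U$).
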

\begin{proof}
We consider another commutative diagram
\begin{equation}\label{eqn:Kleiman-Levine-0}
\xymatrix@C.8pc{
f^{-1}(gY) \ar[r] \ar[d] & \Spec(k(g)) \times X \ar[r] \ar[d]^-{\iota_g} & 
\Spec(k(g)) \ar[d] \\
\Gamma_U \ar[r]^-{\alpha} & U \times X \ar[r]^-{p_1} & U,}
\end{equation}
where the composition of the horizontal arrows on the bottom is
$\phi$. Since $\phi$ is flat, it follows that the big outer square
is Tor-independent. Since $p_1$ is flat and the vertical arrows
are closed immersions, it follows by an elementary verification that
the left square is also Tor-independent. 

Let us now consider a resolution $\mathcal{E}^\bullet \to \cO_Y\to 0 $ of 
$\cO_Y$ by locally free $\cO_H$ modules of finite ranks.
Since $\beta$ is flat, it follows that $\beta^*(\mathcal{E}^\bullet)$
is a locally free resolution of $\sO_{\Gamma}$ under the
closed immersion $\alpha \colon \Gamma \inj G \times X$.
In particular, $\sE^{\bullet}_U := \beta^*(\mathcal{E}^\bullet)|_{U \times X}$ is 
a locally free resolution of $\sO_{\Gamma_U}$. 
Since the left square in ~\eqref{eqn:Kleiman-Levine-0} is Tor-independent,
it follows that $\iota^*_g \sE^{\bullet}_U \to \iota^*_g \sO_{\Gamma_U} \to 0$
is a locally free resolution on $X$.
Equivalently, $\iota^*_g \sE^{\bullet}_U \to \sO_{f^{-1}(gY)} \to 0$
is a locally free resolution on $X$.
That is,
$\Tor^{(\beta \circ \iota_g)^{-1} \sO_H}_i((\beta \circ \iota_g)^{-1} 
\sO_Y, \sO_X) = 0$ for all $i > 0$.
Since $\beta \circ \iota_g = \lambda_{g^{-1}} \circ f$ and
since $\lambda_g$ is an isomorphism, this implies that
$\Tor^{f^{-1}\sO_H}_i(f^{-1}\sO_{gY}, \sO_X) = 0$ for all $i > 0$.
Equivalently, ~\eqref{eqn:Homog-space-4} is Tor-independent.
\end{proof}

\begin{cor}\label{cor:Generic-lci}
Let $V \subset H$ be an open subscheme such that $Y \cap V$ is
a local complete intersection in $V$. Then $f^{-1}(gY \cap gV)$
is a local complete intersection in $f^{-1}(gV)$ for every $g \in U(k)$.
\end{cor}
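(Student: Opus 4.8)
The plan is to deduce the corollary from \lemref{lem:Tor-ind} together with the fact that being a local complete intersection immersion is local on the target and is preserved under Tor-independent base change.

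First I would absorb the translation by $g$ into the morphism: set $f_g := \lambda_{g^{-1}} \circ f \colon X \to H$. Since $\lambda_g$ is an automorphism of $H$ we have $gY \cap gV = g(Y \cap V)$, and hence $f^{-1}(gY) = f_g^{-1}(Y)$, $f^{-1}(gV) = f_g^{-1}(V)$ and $f^{-1}(gY \cap gV) = f_g^{-1}(Y \cap V)$. By \lemref{lem:Tor-ind}, the left-hand Cartesian square of ~\eqref{eqn:Homog-space-4} --- the one with vertices $f_g^{-1}(Y)$, $Y$, $X$, $H$ and horizontal maps $f_g$ --- is Tor-independent. Restricting it along the open immersion $V \inj H$ --- which preserves Tor-independence, since $\Tor$ is a Zariski-local notion and open immersions are flat --- yields a Tor-independent Cartesian square whose vertical arrows are $f_g^{-1}(Y \cap V) \to f_g^{-1}(V)$ and $Y \cap V \to V$ and whose horizontal arrows are induced by $f_g$. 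By hypothesis the right vertical arrow $Y \cap V \inj V$ is a regular closed immersion.

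It then remains to apply the stability of regular closed immersions under Tor-independent base change. As the property of being a regular closed immersion is local on the base, I would work in a neighbourhood where $Y \cap V$ is cut out by a regular sequence $a_1, \dots, a_c$, so that the Koszul complex $K_\bullet(a_1, \dots, a_c)$ is a locally free resolution of $\sO_{Y \cap V}$ over $\sO_V$. Tor-independence then forces its pullback $K_\bullet(f_g^*a_1, \dots, f_g^*a_c)$ to be a resolution of $\sO_{f_g^{-1}(Y \cap V)}$ over $\sO_{f_g^{-1}(V)}$; and a Koszul complex on $c$ elements that is acyclic in positive degrees resolves a quotient by a regular sequence, on the Noetherian local rings where the generated ideal is proper, i.e.\ along $f_g^{-1}(Y \cap V)$. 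Hence $f_g^{-1}(Y \cap V) \inj f_g^{-1}(V)$ is a regular closed immersion, which is precisely the assertion. Alternatively one may simply quote the general base-change statement for lci immersions, e.g.\ from \cite{stacks-project}.

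I do not anticipate a genuine obstacle here: the only points needing a line of care are that restriction to the open $V$ preserves the Tor-independence of the square of \lemref{lem:Tor-ind}, and that the Koszul criterion for regular sequences applies --- both routine, as all schemes in sight are Noetherian. Indeed this is essentially a local refinement of the Tor-computation already carried out in the proof of \lemref{lem:Tor-ind}: taking the locally free resolution $\mathcal{E}^\bullet \to \cO_Y$ used there to be a Koszul complex upgrades that Tor-vanishing to the present local-complete-intersection statement.
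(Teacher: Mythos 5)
Your argument is correct and is essentially the paper's own proof spelled out in detail: the paper simply says the corollary "follows directly from Lemma~\ref{lem:Tor-ind} and [Matsumura, 16.4]", where the Matsumura reference is precisely the Koszul-complex/Tor criterion for regular sequences that you invoke. Your extra care about absorbing the translation by $g$ and restricting the Tor-independent square to the open $V$ fills in exactly the routine steps the paper leaves implicit.
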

\begin{proof}
Follows directly from \lemref{lem:Tor-ind} and \cite[16.4]{Matsumura}.
\end{proof}

\subsection{Kleiman-Levine transversality theorem}
\label{sec:KLTT}
We shall now prove the following transversality result over $k$.
We let $G$ and $H$ be as above. We shall follow
the notations of \S~\ref{sec:Set-up} and \S~\ref{sec:PS}.

\begin{thm}\label{thm:Kleiman-Levine}
Let $X$ be an equidimensional reduced quasi-projective $k$-scheme and let
$f \colon X \to H$ be a $k$-morphism. Let $Y \subset H$ be
an equidimensional reduced closed subscheme. Then there exists an open
dense subscheme $U(f,Y) \subset G$ such that for every
$g \in U(f,Y)(k)$, the following hold.
\begin{enumerate}
\item
The scheme $f^{-1}(gY)$ is either empty or is equidimensional of dimension
$\dim(X) + \dim(Y) - \dim(H)$.
\item
$\Tor^{f^{-1} \sO_H}_i(f^{-1} \sO_{gY}, \sO_X) = 0$ for all $i > 0$.
\item
The inclusion $f^{-1}(gY) \inj X$ is a local complete intersection
at every generic point of $f^{-1}(gY) \cap X_\sing$.
\end{enumerate}
\end{thm}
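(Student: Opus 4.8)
The plan is to produce a single dense open $U(f,Y)\subseteq G$ on which all three conditions hold, obtained by intersecting the open of Lemma~\ref{lem:The-Open-set} with finitely many more opens coming from dimension counts along $X_\sing$. Parts (1) and (2) are essentially already in hand: taking $U\subseteq G$ as in Lemma~\ref{lem:The-Open-set} applied to $f$ and $Y$, for every $g\in U(k)$ the scheme $f^{-1}(gY)$ is (empty or) equidimensional of the asserted dimension, which is (1), while Lemma~\ref{lem:Tor-ind} gives the Tor-vanishing (2) for the same $g$. So the genuinely new content is (3), and the idea is to arrange, by a dimension count, that every generic point of $f^{-1}(gY)\cap X_\sing$ lands over the open locus of $H$ on which $Y\inj H$ is a local complete intersection, where Corollary~\ref{cor:Generic-lci} applies directly.

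To set this up I would introduce $Y_{\mathrm{nlci}}\subseteq Y$, the closed subset (with reduced structure) of points where $Y\inj H$ fails to be a local complete intersection. Since $H$ is smooth over $k$, hence regular, and $Y$ is reduced, every regular point of $Y$ lies in the lci locus; thus $Y_{\mathrm{nlci}}\subseteq Y_\sing$ is nowhere dense in $Y$, and since $Y$ is equidimensional, $\dim Y_{\mathrm{nlci}}\le \dim Y-1$. Put $V=H\setminus Y_{\mathrm{nlci}}$, a dense open of $H$ with $Y\cap V$ a local complete intersection in $V$. By Corollary~\ref{cor:Generic-lci}, for every $g\in U(k)$ the subscheme $f^{-1}(gY)$ is a local complete intersection in $X$ at every point lying over $gV$; hence the only points of $f^{-1}(gY)$ at which (3) could fail lie in $f^{-1}(gY_{\mathrm{nlci}})$.

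Now I would run the dimension count on the singular locus. Write $X_\sing=W_1\cup\cdots\cup W_m$ as the union of its irreducible components with reduced structure; each $W_j$ is equidimensional reduced with $\dim W_j\le \dim X-1$ (as $X$ is reduced and equidimensional, so its regular locus is dense). Applying Lemma~\ref{lem:The-Open-set} to the $k$-morphism $f|_{W_j}\colon W_j\to H$, once with $Y$ and once with each irreducible component of $Y_{\mathrm{nlci}}$, produces a dense open $U_j\subseteq G$ such that for $g\in U_j(k)$ the scheme $(f|_{W_j})^{-1}(gY)$ is equidimensional of dimension $\dim W_j+\dim Y-\dim H$, while $\dim (f|_{W_j})^{-1}(gY_{\mathrm{nlci}})\le \dim W_j+\dim Y-1-\dim H$; in particular $(f|_{W_j})^{-1}(gY_{\mathrm{nlci}})$ is a proper closed subset of $(f|_{W_j})^{-1}(gY)$ containing none of its irreducible components. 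Set $U(f,Y)=U\cap U_1\cap\cdots\cap U_m$, a dense open of $G$. For $g\in U(f,Y)(k)$, let $\xi$ be a generic point of $f^{-1}(gY)\cap X_\sing$ and $Z=\overline{\{\xi\}}$. Since $Z$ is irreducible and contained in $\bigcup_j W_j$, we have $Z\subseteq W_j$ for some $j$, hence $Z\subseteq (f|_{W_j})^{-1}(gY)\subseteq f^{-1}(gY)\cap X_\sing$; being a maximal irreducible subset of the latter, $Z$ is an irreducible component of $(f|_{W_j})^{-1}(gY)$. By the choice of $U_j$, $Z\not\subseteq (f|_{W_j})^{-1}(gY_{\mathrm{nlci}})$, so $f(\xi)\notin gY_{\mathrm{nlci}}$; since $f(\xi)\in gY$, this forces $f(\xi)\in gV$, and then Corollary~\ref{cor:Generic-lci} shows $f^{-1}(gY)\inj X$ is a local complete intersection at $\xi$. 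This gives (3).

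The main obstacle is entirely concentrated in part (3): conditions (1) and (2) are direct consequences of Lemmas~\ref{lem:The-Open-set} and \ref{lem:Tor-ind} (which, together with Lemma~\ref{lem:Tor-ind}'s input into Corollary~\ref{cor:Generic-lci}, already absorb the hard work of adapting Levine's argument to a general infinite field). The point to get right for (3) is the componentwise dimension bookkeeping — in particular, that the non-lci locus $Y_{\mathrm{nlci}}$ of $Y$, being nowhere dense in $Y$, pulls back under generic translates to a subscheme of strictly smaller dimension inside each $(f|_{W_j})^{-1}(gY)$, so that the generic points of $f^{-1}(gY)\cap X_\sing$ escape into the good open $f^{-1}(gV)$. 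A minor technical care is needed because $X_\sing$ and $Y_{\mathrm{nlci}}$ need not be equidimensional, which is why one passes to irreducible components before invoking Lemma~\ref{lem:The-Open-set}.
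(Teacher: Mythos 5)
Your proposal is correct and follows essentially the same route as the paper: parts (1) and (2) come straight from Lemma~\ref{lem:The-Open-set} and Lemma~\ref{lem:Tor-ind}, and part (3) is obtained by a dimension count on the irreducible components of $X_\sing$ showing that the generic points of $f^{-1}(gY)\cap X_\sing$ avoid the pullback of the bad locus of $Y$, after which Corollary~\ref{cor:Generic-lci} applies. The only (harmless) cosmetic difference is that you shrink away from the non-lci locus $Y_{\mathrm{nlci}}$, whereas the paper shrinks away from the a priori larger set $Y_\sing$ and uses that $Y_\reg\inj H$ is automatically lci because $H$ is regular.
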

\begin{proof}
Let $U \subset G$ be the open subscheme obtained in 
\lemref{lem:The-Open-set}. The item (1) then 
follows directly from ~\eqref{eqn:Homog-space-1} and
(2) follows from \lemref{lem:Tor-ind}.

We now prove (3). First of all, we can apply ~\eqref{eqn:Homog-space-1}
to every irreducible component of $X_\sing$ to see that
after shrinking $U$ if necessary, every $g \in U(k)$ has
the property that either $f^{-1}(gY) \cap X_\sing$ is empty or
$\dim(f^{-1}(gY) \cap X_\sing) = \dim(X_\sing) + \dim(Y) - \dim(H)$.
Combining this with ~\eqref{eqn:Homog-space-1}, we get
the inequality
\begin{equation}\label{eqn:Kleiman-Levine-1}
\dim(f^{-1}(gY) \cap X_\sing) \le \dim(X_\sing) +
\dim(f^{-1}(gY)) - \dim(X),
\end{equation}
where the equality holds if $f^{-1}(gY) \cap X_\sing\neq \emptyset$. In other words, $f^{-1}(gY)$ and $X_\sing$ intersect properly in $X$.
The same token shows that by possibly shrinking $U$ further,
we have that $f^{-1}(gY_\sing)$ and $X_\sing$ intersect properly in $X$
for all $g \in U(k)$. This means, in particular,  that every generic point of 
$f^{-1}(gY) \cap X_{\rm sing}$ is contained in $f^{-1}(gY_\reg)$. 

We now observe that $Y_\reg \inj H$ is a local complete intersection morphism 
because $H$ is regular. The same is true for $g Y_\reg \inj H$,
since $\lambda_{g^{-1}} \in {\rm Aut}_k(H)$. We can therefore apply
\corref{cor:Generic-lci} to conclude (3). We take
$U(f,Y)$ to be the above $U$ to finish the proof.
\end{proof}

Let us write $\P^1_k$ as the homogeneous space ${{\rm PGL}_{2,k}}/B$, where
$B$ is the image of the upper-triangular matrices under the quotient map
${\rm GL}_{2,k} \surj {\rm PGL}_{2,k}$.
Let $G$ and $H$ be as above. Then $H \times \P^1_k$
becomes a homogeneous space for $\wt{G} := G \times {\rm PGL}_{2,k}$ via
the coordinate-wise action.
We shall now apply the previous Theorem in this setting 
to get the following result on generic translates in $H\times \P^1_k$.

Let $X$ be an equidimensional 
reduced quasi-projective scheme over $k$, and let
$f\colon X\to H$ be a $k$-morphism as before. 
Let $W\subset  H\times \P^1_k$ be an equidimensional reduced closed 
subscheme. 
Write $gW$ for the pullback of $W$ along $\lambda_{g^{-1}}\times \id \colon 
H\times \P^1_k \to H \times \P^1_k$.
Suppose that the composition $W\to \P^1_k$ of the inclusion of 
$W$ in $H\times \P^1_k$ followed by the second projection 
$H\times \P^1_k \to \P^1_k$ is flat over a neighborhood of $\{0, \infty\}$. 
Let $(f\times \id_{\P^1_k})^{-1}(gW)$ denote the fiber product 
$gW\times_{(H\times \P^1_k)} (X\times \P^1_k)$.
We write $f\times \id_{\P^1_k}$ as $\wt{f}$.
Let $G$ (resp. ${\rm PGL}_{2,k}$) act on $H \times \P^1_k$ by acting 
trivially on $\P^1_k$ (resp. on $H$).
These actions of $G$ and ${\rm PGL}_{2,k}$ on $H \times \P^1_k$
commute with each other.

\begin{prop}\label{prop:Kleiman-Levine-P1}
There exists an open dense subscheme $U=U(\wt{f}, W)$ of $G$ such 
that for every $k$-point $g\in U(k)$, the following hold.
\begin{enumerate}
\item
$\wt{f}^{-1}(gW)$ is either empty or equidimensional of dimension
\[
\dim(\wt{f}^{-1}(gW)) = \dim(X) + \dim(W) - \dim(H).
\]
\item
$\wt{f}^{-1}(gW)$ intersects $\P^1_{X_\sing}$ and 
$X_{\rm sing}\times \{0, \infty\}$ properly.
\item
The composition  $\wt{f}^{-1}(gW) \inj \P^1_X \to \P^1_k$ is flat over a 
neighborhood of $\{0, \infty\}$.
\item
The inclusion $\wt{f}(gW) \inj \P^1_X$ is a local complete intersection 
at each generic point of $\wt{f}(gW)\cap \P^1_{X_{\rm sing}}$.
\end{enumerate} 
\end{prop}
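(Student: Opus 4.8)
The plan is to reduce everything to \thmref{thm:Kleiman-Levine}. Write $\wt{G} = G \times \PGL_{2,k}$, so that $H \times \P^1_k$ is a homogeneous space for $\wt{G}$ and $\wt{f} = f \times \id_{\P^1_k} \colon X \times \P^1_k \to H \times \P^1_k$ is a $k$-morphism whose source is equidimensional (by \lemref{lem:Equi-dim}) and reduced. The naive idea is to apply \thmref{thm:Kleiman-Levine} directly to $\wt{f}$ and $W$; the catch is that we are only allowed to translate $W$ by $g \in G$ (acting trivially on $\P^1_k$), because a translation in the $\PGL_{2,k}$-factor would move the flat locus of $W \to \P^1_k$ off a neighbourhood of $\{0, \infty\}$. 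I would get around this with a $\PGL_{2,k}$-invariance observation: for $h \in \PGL_{2,k}$ with action $\rho_h$ on $\P^1_k$, one has $\wt{f}^{-1}((g,h)W) = (\id_X \times \rho_h)(\wt{f}^{-1}(gW))$, since $\id_H \times \rho_h$ is an automorphism of $H \times \P^1_k$ compatible with $\wt{f}$. Therefore the conditions (1), (4), the Tor-independence of $\wt{f}^{-1}(gW) = gW \times_{H \times \P^1_k}(X \times \P^1_k)$ over $H \times \P^1_k$, and the proper intersection of $\wt{f}^{-1}(gW)$ with $\P^1_{X_\sing} = (X \times \P^1_k)_\sing$ (recall $H \times \P^1_k$ is regular) are unchanged if we replace $g$ by $(g,h)$. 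Consequently, the set $C \subseteq \wt{G}$ of points at which these conditions hold for the corresponding translate of $W$ is stable under $\{1\} \times \PGL_{2,k}$; since $C$ contains the dense open subscheme $\wt{U} \subseteq \wt{G}$ produced by \thmref{thm:Kleiman-Levine}, it contains $\pr_{G}^{-1}(U_1)$ for the dense open subscheme $U_1 := \pr_G(\wt{U}) \subseteq G$. Thus for every $g \in U_1(k)$, conditions (1), (4), the Tor-independence, and the proper intersection with $\P^1_{X_\sing}$ all hold for $\wt{f}^{-1}(gW)$; this establishes (1), (4), and the $\P^1_{X_\sing}$-half of (2).

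Next I would deal with the behaviour over $\{0, \infty\}$. Let $W_0 = W \times_{\P^1_k} \{0\}$ and $W_\infty = W \times_{\P^1_k} \{\infty\}$, regarded as closed subschemes of $H$ via $H \times \{0\} \cong H \cong H \times \{\infty\}$; by the flatness of $W \to \P^1_k$ near $\{0, \infty\}$ they are equidimensional (possibly empty) of dimension $\dim W - 1$ (using \lemref{lem:Fiber-dim}). Because $g$ acts trivially on $\P^1_k$, base-changing $\wt{f}^{-1}(gW) = gW \times_{H \times \P^1_k}(X \times \P^1_k)$ along $\{0\} \inj \P^1_k$ identifies the fibre $\wt{f}^{-1}(gW) \times_{\P^1_k} \{0\}$ with the scheme-theoretic preimage $f^{-1}(gW_0) \subseteq X$, and likewise over $\infty$. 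Applying \thmref{thm:Kleiman-Levine} to $f \colon X \to H$ and to the reduced equidimensional subschemes $(W_0)_{\red}, (W_\infty)_{\red} \subseteq H$, and shrinking $U_1$ by the two resulting dense opens, I may assume that for $g \in U_1(k)$: (i) $\dim f^{-1}(gW_0) = \dim X + \dim W - 1 - \dim H = \dim \wt{f}^{-1}(gW) - 1$, and the same over $\infty$; and (ii) $f^{-1}(g(W_0)_{\red})$ and $f^{-1}(g(W_\infty)_{\red})$ meet $X_\sing$ properly in $X$ (this last being part of the conclusion of \thmref{thm:Kleiman-Levine}, cf.\ its proof). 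Translating (ii) back to $X \times \P^1_k$, it says precisely that $\wt{f}^{-1}(gW)$ intersects $X_\sing \times \{0\}$ and $X_\sing \times \{\infty\}$ properly, which together with the first paragraph finishes (2).

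For (3) I would argue as follows. By (i) the fibres of $\wt{f}^{-1}(gW) \to \P^1_k$ over $0$ and $\infty$ have dimension one less than $\dim \wt{f}^{-1}(gW)$, and $\wt{f}^{-1}(gW)$ is equidimensional by (1); hence no irreducible component of $\wt{f}^{-1}(gW)$ is contained in $X \times \{0\}$ or $X \times \{\infty\}$. To upgrade this to flatness over a neighbourhood $V \ni 0, \infty$ in $\P^1_k$, I would use the local criterion of flatness over the regular one-dimensional $V$: after shrinking $V$, flatness of $\wt{f}^{-1}(gW)|_V \to V$ is equivalent to $\wt{f}^{-1}(gW)|_V$ having no associated point lying over $0$ or $\infty$. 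This is where the Tor-independence from the first paragraph enters, together with the facts that $W$ is reduced (hence has no embedded components) and $gW \to \P^1_k$ is flat near $\{0, \infty\}$: the associated primes of the fibre product $gW \times_{H \times \P^1_k}(X \times \P^1_k)$ lying over $\{0, \infty\}$ are then pinned to generic points of the fibres $f^{-1}(gW_0)$, $f^{-1}(gW_\infty)$, which by (i) are too small to be associated to a scheme having no component over $\{0, \infty\}$. Finally $U(\wt{f}, W)$ is taken to be the intersection of $U_1$ with the dense opens produced in the second paragraph.

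The step I expect to be the main obstacle is precisely this last one: passing from the dimension count on the special fibres — which only excludes \emph{components} of $\wt{f}^{-1}(gW)$ sitting over $0$ or $\infty$ — to actual flatness of $\wt{f}^{-1}(gW) \to \P^1_k$ near $\{0, \infty\}$, i.e.\ ruling out embedded/associated points in those fibres. Making this precise needs a careful bookkeeping of associated primes of the relevant fibre products, using both the Tor-independence and the input that $W$ itself has no embedded components over $\{0, \infty\}$ (equivalently, that $W \to \P^1_k$ is genuinely flat there). By contrast, the $\PGL_{2,k}$-invariance reduction and the fibrewise applications of \thmref{thm:Kleiman-Levine} are essentially formal.
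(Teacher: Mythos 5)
Your treatment of (1), (2) and (4) is essentially the paper's argument: the same $\wt{G} = G \times \PGL_{2,k}$-equivariance observation $\wt{f}^{-1}((g,t)W) = t\,(\wt{f}^{-1}(gW))$, followed by descending the dense open $\wt{U} \subset \wt{G}$ supplied by \thmref{thm:Kleiman-Levine} to $U_1 = \pr_G(\wt{U})$ using the density of $k$-points in the $\PGL_{2,k}$-fibres. Your handling of the $X_{\sing} \times \{0,\infty\}$ half of (2) via the fibres $W_0, W_\infty$ and a second application of \thmref{thm:Kleiman-Levine} to $f$ is a reasonable, if anything slightly more careful, variant of what the paper does.

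The gap is in (3), exactly where you flagged it, and the mechanism you propose cannot work as stated. Over the DVR $S = \Spec(\sO_{\P^1_k,\epsilon})$, flatness of $\wt{f}^{-1}(gW)$ at $\epsilon \in \{0,\infty\}$ is equivalent to the absence of associated points lying over $\epsilon$; your dimension count only excludes irreducible components over $\epsilon$, and embedded associated points are by definition of strictly smaller dimension, so ``too small to be associated to a scheme with no component over $\{0,\infty\}$'' is precisely the wrong heuristic --- no dimension count can rule out embedded points. Moreover, the Tor-independence you invoke ``from the first paragraph'' lives over $\P^1_H$, which is not the relevant base. The correct argument (the paper's) is: non-flatness at $\epsilon$ gives $\Tor_1^{\sO_S}(k(\epsilon), \sO_{gW \times_{H_S} X_S}) \neq 0$, and since $gW$ and $X_S$ are flat over $S$, a base-change computation converts this into $\Tor_1^{\sO_{H}}(\sO_{gW_\epsilon}, \sO_{X}) \neq 0$; the latter is excluded by \lemref{lem:Tor-ind} applied to $f$ and the scheme-theoretic fibres $W_0, W_\infty$, after shrinking $U$ accordingly. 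Note these fibres are equidimensional but need not be reduced, and \lemref{lem:Tor-ind} does not require reducedness, whereas your appeal to \thmref{thm:Kleiman-Levine} for $(W_0)_{\red}$ only yields Tor-vanishing for the reduction. With that substitution your outline becomes the paper's proof.
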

\begin{proof}
We shall use the structure of the homogeneous space on $\P^1_H$ for the group
$\wt{G}$ as above.
We then observe that for every $t \in  \PGL_{2,k}(k)$, the
map $\lambda_t\colon \P^1_X \to \P^1_X$ is an 
isomorphism which keeps every fiber of the projection
$\P^1_X \to X$ invariant. It follows that
for any $(g, t) \in \wt{G}(k)$, the assertions (1), (2) and (4) of the
proposition will hold for $\wt{f}^{-1}(gW)$ if and only if
they hold for $t (\wt{f}^{-1}(gW))$.

On the other hand, \thmref{thm:Kleiman-Levine} says that there is
a dense open $\wt{U} \subset \wt{G}$ such that for every
$(g,t) \in \wt{U}(k)$, the scheme $\wt{f}^{-1}((g,t)W)$
satisfies (1), (2) and (4). Since
\begin{equation}\label{eqn:prod-translation}
\wt{f}^{-1} ((g,t)W) = t (\wt{f}^{-1}(gW)),
\end{equation}
we conclude that for every
$(g,t) \in \wt{U}(k)$, the scheme $t(\wt{f}^{-1}(gW))$
satisfies (1), (2) and (4).
Letting $U$ be the image of $\wt{U}$ under the projection
$p_1 \colon \wt{G} \to G$, we see that $U \subset G$ is dense open.
Furthermore, for any $g \in G(k)$, the fiber $p^{-1}_1(g) \cong
\PGL_{2,k}$ has a dense set of $k$-rational points.
For any such point $t$, the scheme
$t(\wt{f}^{-1}(gW))$ satisfies (1), (2) and (4).
But we have seen in the beginning of the proof that this
is equivalent to saying that $\wt{f}^{-1}(gW)$ satisfies (1), (2) and (4).

For (3), it is enough to show that the map 
$\wt{f}^{-1}(gW) \to \P^1_k$ is flat in a neighborhood of
each point $\epsilon \in \{0, \infty\}$.
However, we know that $W$ satisfies this property. Hence, $gW$ too satisfies
this property for every $g \in G(k)$ as $G$ acts trivially
on $\P^1_k$. Now, if we replace $\P^1_k$ by $S = \Spec(\sO_{\P^1_k, \epsilon})$
and correspondingly replace all schemes over $\P^1_k$ by their
base change to $S$, then $\wt{f}^{-1}(gW)$ will not be flat over $S$ 
if and only if it is supported on the closed point $\{\epsilon\} \subset S$.
The latter condition is easily seen to imply that
\begin{equation}\label{eqn:prod-translation-0}
\Tor^{\sO_S}_1(k(\epsilon), \sO_{gW \times_{H_S} X_S}) \neq 0.
\end{equation}
Since $H_S$ and $gW$ are both flat over $S$, an elementary
homological algebra shows that ~\eqref{eqn:prod-translation-0}
implies that 
$\Tor^{\sO_{H_\epsilon}}_1(\sO_{gW_\epsilon}, \sO_{X_\epsilon}) \neq 0$.
But this contradicts \lemref{lem:Tor-ind} if we choose
$g \in U(k)$ (after possibly shrinking $U$). 
It follows that (3) holds if $g \in U(k)$.
Letting $U(\wt{f},W) = U$, we conclude the proof of the
proposition.
\end{proof}

\section{The family of rational equivalences}\label{sec:Rat-eq}
In \S~\ref{sec:Transversality}, we constructed families of good cycles
on a reduced quasi-projective scheme over an infinite field $k$ which are
obtained by pulling back cycles from homogeneous spaces.
In this section, we shall construct a family which will
parameterize rational equivalences between the members of a 
given family of good cycles. We shall prove some properties of this family
that will be used in the following sections.
We fix an infinite field $k$ throughout this section.

\subsection{$\A^1$-path connectivity of reductive groups}
\label{sec:Hom*}
We write $(\P^1_k)^m$ as $\ov{\square}^m_k$ for any integer $m \ge 1$.
For any integer $n \ge 1$, we fix a compactification
$\GL_{n,k} \subset \A^{n^2}_k \inj \ov{\square}^{n^2}_k$,
where the first inclusion is the inverse image of
$\G_{m,k}$ under the determinant map
$\det \colon \A^{n^2}_k \cong \M_{n,k} \to \A^1_k$ and the second
inclusion is the product of the canonical inclusion 
$\A^1_k \subset \ov{\square}^1_k$.
We remark here that there are many other choices of
a smooth compactification of $\GL_{n,k}$. But we choose 
the above one with some purpose. This will be evident in 
\lemref{lem:homotopy-2}.

We let $\GL^{\times}_{n,k} := \GL_{n,k} \cap \G_{m,k}^{n^2}
\subset \A^{n^2}_k$. It is clear that
$\GL^{\times}_{n,k}$ is a dense open subscheme of $\GL_{n,k}$ whose
complement is its intersection with the union of the
coordinate axes of $\A^{n^2}_k$.
Let $\mu \colon \A^n_k \times \A^1_k \to \A^n_k$ denote the multiplication 
operation induced by the $k$-algebra homomorphism
$k[t_1, \ldots , t_n] \to k[t, t_1, \ldots , t_n]$ which sends $t_i$ to 
$tt_i$. Note that $\mu$ is flat everywhere of relative dimension one
and is smooth over $\A^n_k \setminus \{0\}$.
We consider the morphisms
\begin{equation}\label{eqn:homotopy}
\xymatrix@C.8pc{
\A^n_k \times \A^n_k \times \A^1_k 
\ar[r]^-{\tau'} \ar[d]_-{\Phi'_n} &
\A^n_k \times \A^n_k \times \A^1_k \times \A^1_k \ar[r]^-{\tau}
& (\A^n_k \times \A^1_k) \times (\A^n_k \times \A^1_k) \ar[d]^-{\mu \times \mu}
\\
\A^n_k & & \A^n_k \times \A^n_k \ar[ll]_-{\mu^+},}
\end{equation}
where $\tau'(x, x', y) = (x, x', y, 1-y)$,
$\tau(x, x', y, y') = (x, y, x', y')$
and $\mu^+$ is the group operation map for the structure of the 
additive group $\G^n_{a,k}$ on $\A^n_k$. 

It is clear that $\tau$ is an isomorphism, $\mu^+$ is a
smooth morphism and $\mu$ is smooth over $\A^n_k \setminus \{0\}$.
One can easily check that $\Phi'_n$ (defined as the composition)  is a surjective
morphism between regular schemes.
Furthermore, its restriction on 
$W_n:= \G^n_{m,k} \times \G^n_{m,k} \times (\A^1_k \setminus \{0,1\})$
is surjective with each fiber 
isomorphic to the base change of
$\G^n_{m,k} \setminus (\A^1_k \setminus \{0,1\})$ by some field
extension of $k$.
It follows that $\Phi'_n$ is a smooth surjective morphism on $W_n$
of relative dimension $m+1$.

Note that each of the arrows in ~\eqref{eqn:homotopy} is defined over $k$.
In particular, the composition of all arrows $\Phi'_n$ is a $k$-morphism.
On the rational points, it is given by
$\Phi'_n(x, x', t) = tx + (1-t)x'$.
It is also easy to see that $\Phi'_n$ defines a rational map
$\Phi'_n \colon \A^n_k \times \A^n_k \times \P^1_k \dasharrow \ov{\square}^n_k$
whose base locus is $Z_n \times \{\infty\}$, where
$Z_n$ is the union of coordinate axes on $\A^n_k \times \A^n_k$.
In particular, if we let $m = n^2$, we get a $k$-morphism
\begin{equation}\label{eqn:homotopy-0}
\Phi'_m \colon \GL^{\times}_{n,k} \times \GL^{\times}_{n,k} \times \P^1_k \to
\ov{\square}^m_k.
\end{equation}

More generally, if $n_1, \ldots , n_r$ are positive integers and we let $m =
\stackrel{r}{\underset{i = 1}\sum} n^2_i$, then $\Phi'_m$
defines a rational map
$\Phi'_m \colon (\stackrel{r}{\underset{i =1}\prod} \A^{n^2_i}_k)
\times (\stackrel{r}{\underset{i =1}\prod} \A^{n^2_i}_k) \times \P^1_k
 \dasharrow \ov{\square}^m_k$ which is a morphism on the open subscheme
$(\stackrel{r}{\underset{i =1}\prod} \G_{m  ,k}^{n^2_i})
\times (\stackrel{r}{\underset{i =1}\prod} \G_{m, k}^{n^2_i}) \times \P^1_k$.
We let $G = \stackrel{r}{\underset{i =1}\prod} \GL_{n_i, k}$
and $G^{\times} = \stackrel{r}{\underset{i =1}\prod}\GL^{\times}_{n_i,k}$.
We then get a $k$-morphism
\begin{equation}\label{eqn:homotopy-1}
\Phi'_m \colon G^{\times} \times G^{\times} \times \P^1_k \to
\ov{\square}^m_k.
\end{equation}

We let $B = G^{\times} \times G^{\times}$
and let $\Phi_m$ be the composite
$B \times \P^1_k 
\xrightarrow{\id_{B} \times \eta}
B \times \P^1_k \xrightarrow{\Phi'_m} \ov{\square}^m_k$,
where $\eta(t) = {t}/{(t-1)}$.
We fix the open embedding $j \colon G \inj \ov{\square}^m_k$
via the composition of open embeddings
$G = \stackrel{r}{\underset{i =1}\prod} \GL_{n_i, k}
\inj  \stackrel{r}{\underset{i =1}\prod} \M_{n_i, k} 
\cong \A^m_k \inj \ov{\square}^m_k$.
We also have open embedding $G^{\times} \inj G$.
Let $\un{\infty} \in \ov{\square}^m_k$ be the closed point whose
every coordinate is $\infty$. The following lemma says, along with other 
things, that $G^{\times}$ is `path-connected' in the sense of
$\A^1$-homotopy theory.

\begin{lem}\label{lem:homotopy-2}
The $k$-morphism 
\[
\Phi_m \colon  B \times \P^1_k \to \ov{\square}^m_k
\]
has following properties.
\begin{enumerate}
\item
$\Phi_m(x, x', t) = \frac{t}{t-1} x + (1 - \frac{t}{t-1})x'$.
\item
$\Phi_m(B \times \{0, \infty\}) \subset G^{\times}$.
\item
$\Phi_m(B \times \{1\}) = \un{\infty}$.
\item
For every pair of points $g_1, g_2 \in G^{\times}(k)$,
we have $\Phi_m(g, 0) =  g_1$ and $\Phi_m(g, \infty) = g_2$,
where $g = (g_2, g_1) \in B(k)$.
\item
$G^{\times} \subset \Phi_m(B \times \P^1_k)$. In particular,
$\Phi_m$ is dominant.
\item
$\Phi_m$ is flat of relative dimension $m+1$ over $G^{\times}$.
\item
$\Phi_m$ is smooth on $B \times (\P^1_k \setminus \{0,1, \infty\})$.
\item
The projection map $\Phi^{-1}_m(G^{\times}) \to \P^1_k$ is flat.
\end{enumerate}
\end{lem}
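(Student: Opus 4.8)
The statement is a list of eight concrete properties of the explicit morphism $\Phi_m$, and the natural strategy is to verify them essentially in the order listed, since the earlier ones feed the later ones. First I would write out $\Phi_m$ on rational points by composing $\Phi'_m$ with the substitution $\eta(t) = t/(t-1)$: since $\Phi'_m(x,x',s) = sx + (1-s)x'$ on $k$-points, we immediately get (1), and then (3) is the observation that $\eta$ sends $t=1$ to $\infty$, where the formula $\Phi'_m$ degenerates to the point $\underline{\infty}$ (this is the one place where the specific choice of compactification $\GL_{n,k}\subset \A^{n^2}_k\hookrightarrow\overline{\square}^{n^2}_k$ matters, as flagged in the remark before the lemma). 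For (2) I would note that $\eta$ sends $\{0,\infty\}$ to $\{0,1\}$, i.e.\ to affine values $s=0$ and $s=1$; at $s=0$ we get $x'$ and at $s=1$ we get $x$, and since $x,x'\in G^{\times}(k)$ these lie in $G^{\times}$. More carefully, one should argue scheme-theoretically (not just on $k$-points) that the image of $B\times\{0,\infty\}$ under $\Phi_m$ lands in the open $G^\times$; this follows because $\Phi_m$ restricted to $B\times\{0\}$ and $B\times\{\infty\}$ factors through the projections $B = G^\times\times G^\times \to G^\times$, which I would check from the definition of $\Phi'_m$ as the composite in \eqref{eqn:homotopy}. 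Item (4) is then just the $k$-point calculation: with $g = (g_2,g_1)$, $\Phi_m(g,0) = g_1$ and $\Phi_m(g,\infty) = g_2$, reading off the substitution. Item (5) follows from (4): every $g_1\in G^\times(k)$ is hit (take $g_2 = g_1$, say), and since $G^\times(k)$ is dense in $G^\times$ (as $G$, being reductive, is unirational and $k$ is infinite — this is recalled in \S\ref{sec:Set-up}) and $\overline{\square}^m_k$ is separated, dominance follows.

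**The smoothness/flatness part.** Items (6), (7), (8) are where the real content is, and I expect (6) and (8) to be the main obstacle. The strategy is to transport the smoothness and flatness statements already established for $\Phi'_m$ (and before that for $\Phi'_n$, built out of the multiplication maps $\mu$ and $\mu^+$ in \eqref{eqn:homotopy}) across the automorphism $\eta$ of $\P^1_k$ and across the open embeddings. Concretely: the paper has shown that $\Phi'_n$ is smooth surjective of relative dimension $m+1$ on $W_n = \G^n_{m,k}\times\G^n_{m,k}\times(\A^1_k\setminus\{0,1\})$, because there $\mu$ is smooth (away from $0$) and $\mu^+$ is always smooth; taking products over $i=1,\dots,r$ gives the corresponding statement for $\Phi'_m$. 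For (7), smoothness of $\Phi_m$ on $B\times(\P^1_k\setminus\{0,1,\infty\})$: note $\eta$ is an isomorphism $\P^1_k\to\P^1_k$ carrying $\P^1_k\setminus\{0,1,\infty\}$ into $\P^1_k\setminus\{0,1\}\subset\A^1_k$ (since $\eta(0)=0$, $\eta(\infty)=1$, $\eta(1)=\infty$), so on that locus $\Phi_m$ agrees with $\Phi'_m$ precomposed with an isomorphism, hence is smooth. For (6), flatness of relative dimension $m+1$ over $G^\times$: here I would use that the source and target are regular (indeed $B\times\P^1_k$ is smooth over $k$ and $\overline{\square}^m_k$ is smooth), compute that the fibers over $G^\times$ have the expected dimension $m+1$ — this is a dimension count: $\dim(B\times\P^1_k) = 2m+1$ and $\dim\overline{\square}^m_k = m$ — and then invoke miracle flatness (EGA or \cite[Exc.~III.10.9]{Hartshorne}, the same citation used earlier for $\Phi$) once one knows the fibers are equidimensional of dimension $m+1$. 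The fiber-dimension bookkeeping is the fiddly part: one must check that over each point of $G^\times$ no fiber component jumps in dimension, which I would do by pulling back to the cover by $\G_m^{n^2_i}$'s where the explicit description of the fibers of $\Phi'_m$ is available, and then noting that $G^\times$ is covered by such.

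**The remaining item and the main difficulty.** Item (8) — flatness of the projection $\Phi^{-1}_m(G^\times)\to\P^1_k$ — is the subtlest and I expect it to be the real obstacle. The issue is the fibers over $t\in\{0,1,\infty\}$, where $\Phi_m$ itself behaves specially. Over $t=1$ the preimage $\Phi_m^{-1}(\underline\infty)$ does not meet $G^\times$ at all (by (3)), so that point is harmless; over $t=0$ and $t=\infty$, I would use the factorization through the two projections $B\to G^\times$ noted above: $\Phi_m^{-1}(G^\times)\cap(B\times\{0\}) = G^\times\times G^\times$ (via the second projection) and similarly for $\{\infty\}$, which have the expected dimension $2m$, while over any other $t$ the fiber is $\{(x,x')\in B : \Phi_m(x,x',t)\in G^\times\}$, an open subscheme of $B$ cut out by a non-degenerate linear condition, again of dimension $2m$. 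So the source of $\Phi_m^{-1}(G^\times)\to\P^1_k$ is a scheme all of whose irreducible components dominate $\P^1_k$ (or can be arranged to, after checking irreducibility via \lemref{lem:Irr-fibers} applied to the smooth locus), it is equidimensional, and the target is a smooth curve; flatness over a Dedekind base (here the smooth curve $\P^1_k$) is then equivalent to the source having no embedded or vertical components, i.e.\ every associated point dominating $\P^1_k$. Since $\Phi_m^{-1}(G^\times)$ is open in $\Phi_m^{-1}(G)$ — and, being a preimage of the regular scheme $G$ under a flat (by (6)) morphism from a regular scheme, is itself regular hence has no embedded points — and since we've checked all fibers have the same dimension $2m$, no component is vertical; thus the map is flat. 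The one thing to be careful about is whether $\Phi^{-1}_m(G^\times)$ is really reduced/regular near $t\in\{0,\infty\}$ despite $\mu$ failing to be smooth along the zero section there; I would handle this by noting that $\Phi_m^{-1}(G^\times)$ avoids the locus where $\mu$ degenerates precisely because $G^\times$ sits inside $\G_m^{n^2_i}$'s, keeping all coordinates away from $0$, so the smoothness of $\Phi'_m$ on $W_n$ (and its products) already covers the relevant open set.
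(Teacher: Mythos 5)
Your proposal is correct and follows essentially the same route as the paper: items (1)--(5) and (7) read off from the explicit construction, miracle flatness (equidimensional fibers between regular schemes) for (6), and for (8) the observation that $\Phi_m^{-1}(G^{\times})$ is integral with open, nonempty image in the Dedekind scheme $\P^1_k$, hence flat over it. The only stylistic difference is that your justification that $\Phi_m^{-1}(G^{\times})$ has no embedded points is more roundabout than necessary -- it is simply an open subscheme of the smooth scheme $B \times \P^1_k$, so regularity (and irreducibility) is immediate without invoking (6).
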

\begin{proof}
All properties (except possibly (6) and (8)) 
are clear from our explicit construction of $\Phi_m$. The property (6)
follows because (5) shows that $\Phi_m$ is a surjective
morphism between regular schemes over $G^{\times}$ and (1) shows that
all fibers of $\Phi_m$ over $G^{\times}$ have relative dimension $m+1$.
Hence, it must be flat (see \cite[Exc.~III.10.9]{Hartshorne}).
The property (8) follows easily from (2) since the image of
$\Phi^{-1}_m(G^{\times})$ in $\P^1_k$ is anyway open.
\end{proof}

\begin{remk}\label{remk:homotopy-3}
Recall that every $k$-rational variety $X$ (e.g., the variety $G$ above)
is separably uni-ruled.
That is, there exists a separable dominant rational map
$\Phi \colon X' \times \P^1_k \dasharrow X$, where $\dim(X') = \dim(X)-1$.
However, the purpose of \lemref{lem:homotopy-2} is to show
that the reductive group $G$ satisfies many other properties
which can not be directly deduced from uni-ruledness.
We shall need \lemref{lem:homotopy-2} to construct our
parameter space of rational equivalences between good cycles 
in a family. Remark also that not every uni-rational variety in positive
characteristic in separably uni-ruled.
\end{remk}

\subsection{The partial parameter space $\Sigma^o$}
\label{sec:Sigma}
Let $G$ be as in \S~\ref{sec:Hom*}. Let $H = G/P$ be a homogeneous
space for $G$ as in \S~\ref{sec:Set-up}.
Define a morphism $\psi\colon \Phi_m^{-1}(G)\times G\times H\to H$ as the 
composition
\[  
\Phi_m^{-1}(G)\times G \times H \xrightarrow{ (\Phi_m, \id_{G\times H})} G\times G 
\times H \xrightarrow{(\iota, \iota)} G\times G\times H 
\xrightarrow{( m, \id_H)} G\times H \xrightarrow{\mu_H} H,\]
where $\iota \colon G \to G$ is the inverse operation,
$m \colon G \times G \to G$ is the multiplication in $G$ and
$\mu_H$ is the $G$-action on $H$
(we denoted this action by $\mu$ in \S~\ref{sec:Hom*}).
Let $Y \subset H$ be an integral closed subscheme
and let $\Sigma^o$ be the pullback
\begin{equation}\label{eqn:Sigma-0}
\xymatrix@C.8pc{
\Sigma^o \ar@{^{(}->}[r] \ar[d]_{\psi'} & 
\Phi_m^{-1}(G) \times G\times H \ar[d]^-{\psi} \\
Y\ar@{^{(}->}[r] & H.} 
\end{equation}

We note that the set of $k$-points of $\Sigma^o$ is given by
\[ 
\Sigma^o(k) = \{ (g_1, g_2, t, g, h) \,|\, \Phi_m(g_1, g_2, t)\in G 
\text{ and } (\Phi_m(g_1, g_2, t))^{-1} g^{-1} h \in Y   \}. 
\]
There is an action of $G$ on $\Sigma^o$ induced by the diagonal action of 
$G$ on $G\times H$. Explicitly, 
\begin{equation}\label{eqn:Sigma-0-1}
g'\cdot(g_1, g_2, t, g, h) = (g_1, g_2, t, g'g, g'h ). 
\end{equation}
Or, in other words, $\Sigma^o$ is stable for the $G$ action on 
$\Phi_m^{-1}(G)\times G\times H$ given by the trivial action on the first 
component and by the canonical action on the second and the third 
components.

We let $\Gamma := Y \times_H (G \times H)$
with respect to the composite map $\mu_H \circ (\iota \times \id_H)
\colon G \times H \to H$. It is easy to check that the
diagram
\begin{equation}\label{eqn:Sigma-0-0}
\xymatrix@C.8pc{
\Gamma \ar[r]^-{\iota_Y'} \ar[d]_-{\wt{\mu}'_H} & G \times H  
\ar[d]^-{{\mu}'_H} &  \\
G \times Y \ar[r]^-{\id \times \iota_Y} & G \times H}
\end{equation}
is Cartesian, where $\iota_Y: Y \inj H$ is the inclusion, $\iota_Y'$ is the second projection of $\Gamma$ 
and ${\mu}'_H$ is the automorphism of $G \times H$ given by 
$(g,h) \mapsto (g, g^{-1}h)$ (see \S~\ref{sec:Set-up}).
We thus have the following simple expression for the scheme $\Gamma$.

\begin{lem}\label{lem:Gamma}
The action map ${\mu}'_H$ induces an isomorphism of schemes
\[
\wt{\mu}'_H \colon \Gamma \xrightarrow{\cong} G \times Y.
\]
In particular, $\Gamma$ is integral.
\end{lem}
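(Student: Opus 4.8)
The plan is to read off the isomorphism from the Cartesian square \eqref{eqn:Sigma-0-0} once one knows that $\mu'_H$ is an automorphism of $G \times H$, and then to obtain integrality from the resulting description $\Gamma \cong G \times Y$. First I would record that $\mu'_H \colon G \times H \to G \times H$, $(g,h) \mapsto (g, g^{-1}h)$, is an isomorphism: it equals $(\iota \times \id_H) \circ \wt{\mu}_H \circ (\iota \times \id_H)$, where $\wt{\mu}_H(g,h) = (g, gh)$ is the isomorphism recalled in \S\ref{sec:Set-up} and $\iota$ is inversion on $G$, and its inverse is $(g,h) \mapsto (g, gh)$. Next I would verify the morphism identity
\[
p_H \circ \mu'_H = \mu_H \circ (\iota \times \id_H) \colon G \times H \to H,
\]
where $p_H$ is the second projection; on functorial points both sides send $(g,h)$ to $g^{-1} \cdot h$, so the identity holds. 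This says exactly that, after applying the automorphism $\mu'_H$ to the right-hand copy of $G \times H$, the structure morphism $\mu_H \circ (\iota \times \id_H)$ used to define $\Gamma = Y \times_H (G \times H)$ becomes the projection $p_H$; in particular the square \eqref{eqn:Sigma-0-0} is Cartesian.

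Granting this, $\wt{\mu}'_H$ is the base change of the isomorphism $\mu'_H$ along $\id \times \iota_Y \colon G \times Y \to G \times H$, hence is itself an isomorphism. Equivalently: $\Gamma \cong Y \times_{H, p_H}(G \times H) = G \times (Y \times_H H) = G \times Y$, and chasing the projections identifies this composite with $\wt{\mu}'_H$.

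Finally, for integrality, by construction $G = \prod_{i=1}^{r} \GL_{n_i, k}$ is geometrically integral over $k$, since each $\GL_{n_i,k}$ is the complement of the determinant hypersurface in an affine space and this description is preserved by base field extension. As $Y$ is integral by hypothesis and $G$ is geometrically integral, the product $G \times_k Y$ is integral; therefore so is $\Gamma \cong G \times Y$. The only real bookkeeping to watch is keeping the several occurrences of the inversion $\iota$ and of the translation isomorphisms straight, so that both the identity $p_H \circ \mu'_H = \mu_H \circ (\iota \times \id_H)$ and the identification of $\wt{\mu}'_H$ with the canonical projection $\Gamma \cong G \times Y$ come out correctly; the rest is either formal (stability of isomorphisms under base change) or standard (geometric integrality of $\GL_n$ and of products with a geometrically integral factor).
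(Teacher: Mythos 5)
Your proof is correct and follows the same route as the paper: the paper also reads the isomorphism off the Cartesian square \eqref{eqn:Sigma-0-0} (it merely asserts the square is Cartesian, whereas you verify the identity $p_H \circ \mu'_H = \mu_H \circ (\iota \times \id_H)$ explicitly) and then deduces integrality from the fact that a product of an integral scheme with a geometrically integral one is integral. No gaps.
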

\begin{proof}
We only need to prove the second part. But this follows from the
known fact that the product two schemes, one of which is integral and
the other is geometrically integral, is integral. 
\end{proof}

Let $\phi$ be the composite map
$\Phi_m^{-1}(G) \times G \xrightarrow{(\Phi_m, \id_G)} G \times G
\xrightarrow{(\iota, \iota)} G \times G \xrightarrow{m} G$
so that $\psi = \mu_H \circ (\phi, \id_H)$.
It is then easy to see that ~\eqref{eqn:Sigma-0} being Cartesian is 
equivalent to saying that the square
\begin{equation}\label{eqn:Sigma1-0}
\xymatrix@C.8pc{
\Sigma^o \ar[r]^-{\phi'} \ar[d]_-{\theta'} & \Gamma \ar[d]^-{\theta} \\
\Phi_m^{-1}(G) \times G \ar[r]^-{\phi} & G}
\end{equation}
is Cartesian, where the vertical arrows are the projections.

Since the projection $G \times H \to G$ is projective (because  $H$ is projective), and
$\Gamma$ is closed in $G \times H$ (because $Y$ is closed in $H$), it follows that
$\theta$ is a projective morphism. Hence, $\theta'$ is also a projective morphism.

Let $G$ act on itself by left multiplication and diagonally on $G \times H$.
Then  $\Gamma \subset G \times H$ is $G$-invariant. In particular,
$\Gamma$ is equipped with $G$-action such that $\theta$ is
$G$-equivariant. Moreover, $\theta$ is surjective and
equidimensional of relative dimension equal to $\dim(Y)$. 
As $G$ is integral, the generic flatness theorem says that
$\theta$ is flat over a dense open subscheme of $G$. Since
$\theta$ is $G$-equivariant, $G(k)$ is Zariski dense in
$G$ whose action is transitive on the base of $\theta$, it follows
that the latter is flat everywhere on $G$. It follows in
particular that $\theta'$ is flat and surjective.

Since $\Gamma$ and $G$ are integral, it follows that the
generic fiber of $\theta$  is integral. Moreover, the fiber of
$\theta$ over every $k$-rational point is isomorphic to $Y$.
In particular, every $k$-rational fiber of $\theta$ is integral.
Hence, the same holds for $\theta'$ as well. Since 
$\Phi_m^{-1}(G) \times G$ is irreducible,
$\theta'$ is flat (and hence open), $(\Phi_m^{-1}(G) \times G)(k)$
is Zariski dense in $\Phi_m^{-1}(G) \times G$ (as the latter 
is a rational variety) and every $k$-rational fiber of 
$\theta'$  is irreducible, it follows from
\lemref{lem:Irr-fibers} that $\Sigma^o$ is irreducible.

It follows from \lemref{lem:homotopy-2} (7) that
the map $\phi \colon \Spec(k(\Phi_m^{-1}(G) \times G)) \to
\Spec(k(G))$ is smooth. Since $\theta$ is generically integral, it follows
that $\theta'$ is generically reduced. Since it is also surjective,
it follows that $\Sigma^o$ is generically reduced. Hence, it is
generically integral.
We have thus shown that $\Sigma^o$ is irreducible and generically integral.
Although this will suffice for our main proofs, the following lemma says more.

\begin{lem}\label{lem:Sigma1}
$\Sigma^o$ is an integral scheme. 
\end{lem}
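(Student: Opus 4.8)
The plan is to bootstrap from what has already been established in the preceding discussion. We know that $\Sigma^o$ is irreducible and generically integral, so the only remaining point is to rule out embedded components / nilpotents along proper closed subsets, i.e.\ to show $\Sigma^o$ is reduced everywhere. First I would invoke the Cartesian square~\eqref{eqn:Sigma1-0}: since $\phi$ is flat (it is the composite of an isomorphism-type map and the flat map $\Phi_m$ over $G$ by Lemma~\ref{lem:homotopy-2}(6), followed by the automorphism $(\iota,\iota)$ and the smooth multiplication $m$ of $G$ — all of which are flat), the base change $\theta'\colon\Sigma^o\to\Phi_m^{-1}(G)\times G$ is obtained from $\theta\colon\Gamma\to G$ by the flat base change $\phi$. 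By Lemma~\ref{lem:Gamma}, $\Gamma\cong G\times Y$ with $Y$ integral and $G$ geometrically integral, so $\Gamma$ is integral. Flat base change of an integral scheme along a morphism with geometrically reduced (indeed geometrically integral) fibers preserves reducedness.

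So the key step is: $\phi$ is flat with geometrically integral fibers. Flatness I would get as above. For the fibers: over a point of $G$, the fiber of $\phi$ is, up to the automorphisms $(\iota,\iota)$ and $m$, a fiber of $\Phi_m$ over $G$, which by Lemma~\ref{lem:homotopy-2}(6) has relative dimension $m+1$, and more precisely (going back to the explicit construction in~\eqref{eqn:homotopy}) is a base change of an open subscheme of $\G^{n^2}_{m}\times(\A^1\setminus\{0,1\})$-type — an open subscheme of affine space — hence geometrically integral. Then $\Sigma^o=\Gamma\times_G(\Phi_m^{-1}(G)\times G)$, and since $\Gamma$ is integral and the map $\Phi_m^{-1}(G)\times G\to G$ has geometrically integral fibers and is flat, the fiber product over the (irreducible, hence with integral generic fiber) base is integral: one combines Lemma~\ref{lem:Irr-fibers} applied to $\theta'$ (which is flat, hence open, with geometrically integral — in particular irreducible and reduced — $k$-rational fibers over a $k$-point-dense base, as already noted) together with the reducedness statement. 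Concretely, $\theta'$ is flat with reduced fibers and reduced target, so $\Sigma^o$ is reduced; combined with the already-proven irreducibility, $\Sigma^o$ is integral.

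The main obstacle I expect is the bookkeeping around the fibers of $\Phi_m$ over $G$ versus over $G^\times$: Lemma~\ref{lem:homotopy-2}(6) is stated only over $G^\times$, and the definition of $\Sigma^o$ uses $\Phi_m^{-1}(G)$. I would handle this by noting that $G^\times\subset G$ is dense open and that over $G\setminus G^\times$ the relevant fibers still form, via~\eqref{eqn:homotopy}, a flat family with geometrically integral fibers (the map $\mu^+$ is smooth and the $\mu\times\mu$ factor is flat everywhere), so the conclusion of~\eqref{eqn:homotopy-2}(6) extends; alternatively, one can simply observe that it suffices to prove reducedness, reducedness is checked on the dense open $\Phi_m^{-1}(G^\times)\times G$ together with the fact that $\Sigma^o$ has no embedded points because it is flat over the regular scheme $\Phi_m^{-1}(G)\times G$ with reduced generic fiber — flatness over a regular (hence Cohen--Macaulay) base with generically reduced, equidimensional fibers forces reducedness by Serre's criterion, since $R_0$ holds by generic integrality and $S_1$ is inherited from the flat base change of the Cohen--Macaulay $\Gamma$. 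This last route via Serre's $R_0+S_1$ is the cleanest and is the one I would ultimately write up.
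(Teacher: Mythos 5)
Your final route is essentially the paper's: Serre's criterion with $R_0$ from generic integrality and $S_1$ extracted from the flatness of $\theta'\colon\Sigma^o\to\Phi_m^{-1}(G)\times G$ over an integral base together with the integrality of the generic fiber. Two points need repair, though. First, your stated source of $S_1$ --- ``inherited from the flat base change of the Cohen--Macaulay $\Gamma$'' --- does not work: $\Gamma\cong G\times Y$ with $Y$ an \emph{arbitrary} integral closed subscheme of $H$, so there is no reason for $\Gamma$ to be Cohen--Macaulay. The correct mechanism (and the one the paper uses) is more elementary: for a point $x$ of codimension $\ge 1$, either $\theta'(x)$ lies over the generic point of the base, in which case $\sO_{\Sigma^o,x}$ is a local ring of the integral generic fiber and hence a domain of dimension $\ge 1$; or $\theta'(x)$ lies over a nonzero prime, in which case any nonzero element of that prime is a non-zero-divisor in the base and stays a non-zero-divisor in $\sO_{\Sigma^o,x}$ by flatness of $\theta'$. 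Second, your alternative route through the Cartesian square \eqref{eqn:Sigma1-0} hinges on $\phi$ being flat with geometrically reduced fibers, and you correctly flag that Lemma~\ref{lem:homotopy-2}(6) only gives flatness of $\Phi_m$ over $G^{\times}$, not over $G$; your proposed patch via the explicit fibers is workable, but note that the paper avoids this issue entirely by obtaining flatness of $\theta'$ as the base change of $\theta\colon\Gamma\to G$, whose flatness over all of $G$ follows from generic flatness plus $G$-equivariance. (A still cleaner observation, which you could use instead: $\phi$ factors as an automorphism of $\Phi_m^{-1}(G)\times G$ followed by the projection to $G$, so it is automatically smooth with geometrically integral fibers, independently of any flatness statement for $\Phi_m$.)
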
	
\begin{proof}
  We have seen above that $\Sigma^o$ is irreducible. So we only need to show that it is
  reduced. One knows that a Noetherian scheme is reduced if and only if all its local rings
  satisfy Serre's $R_0$ and $S_1$ conditions. Since we have shown above that $\Sigma^o$ is
  generically integral, all its local rings clearly satisfy the $R_0$ condition
  (i.e., the localization at every minimal prime is regular). It remains to show that
  the local rings of $\Sigma^o$ satisfy the $S_1$ condition.
  Equivalently, we have to show that if $x \in \Sigma^o$ is a point of codimension at least one,
  then the local ring $\sO_{\Sigma^o,x}$ contains a non-zero divisor.

 We let $Z = \ov{\{x\}} \subset \Sigma^o$ with its reduced induced closed subscheme structure.
  Since $\theta'$ is projective and $Z$ is integral, its scheme-theoretic image $\theta'(Z)$
  is an integral closed subscheme of $\Phi^{-1}_m(G) \times G$. We let $U = \Spec(A) \subset
  \Phi^{-1}_m(G) \times G$ be an affine neighborhood of $\theta'(x)$ and let $V = \Spec(A') \subset \Sigma^o$
  be an affine neighborhood of $x$ such that $\theta'(V) \subset U$. Let $\fp$ be the prime ideal
  of $A$ such that $\theta'(Z) \cap U = V(\fp)$ and let $\fp'$ be the prime ideal of
  $A'$ such that $Z \cap V = V(\fp')$. Since $\theta'$ is dominant  whose base is integral,
  it follows that it induces a $k$-algebra monomorphism $A \inj A'$ such that
  $\fp' \cap A = \fp$.

  We have to consider two cases. Suppose first that $\fp = \{0\}$. Then the inclusion $A \inj A'$
  induces an inclusion $k(A) \inj A'_{\fp'}$, where $k(A)$ is the quotient field of $A$.
  Since we have shown above that the generic fiber of $\theta'$ is integral, it follows that
  $A'_{\fp'}$ is integral. Since the codimension of $Z$ is at least one in $\Sigma^o$,
it follows that $A'_{\fp'}$ has dimension at least one. In particular, it contains
non-zero divisors.

  In the second remaining case, we can assume that $\fp \neq \{0\}$.
  Let $a \in \fp$ be any non-zero element. Then $a \in \fp'$. Since $A$ is an integral domain,
  $a$ is a non-zero divisor in $A$. 
  Since we have shown above that $\theta'$ is flat, it follows that the map
  $A \to A'_{\fp'}$ is also flat. This implies that $a \in A'_{\fp'} = \sO_{\Sigma^o,x}$ must be a non-zero divisor.
  This finishes the proof.
\end{proof}

Let $p_{BG}$ denote the projection map 
$B \times \P^1_k \times G \to B \times G$ and let
$p^o_{BG} = {p_{BG}}|_{\Phi^{-1}_m(G)\times G}$.
If $p_B \colon B \times \P^1_k \to B$ is 
the projection map, then note that $p_{BG} = {p_B} \times \id_G$. Let
$\pi^o \colon \Sigma^o \to B \times G$ denote the composition
$p^o_{BG}\circ \theta'$.

\begin{lem}\label{lem:Sigma-rational}
$\pi^o$ is a flat and surjective morphism whose generic and rational
fibers are integral.
\end{lem}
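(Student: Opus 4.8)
The plan is to analyze $\pi^o = p^o_{BG} \circ \theta'$ by studying the two maps separately and combining the information. First I would recall what was established just before the statement: $\theta' \colon \Sigma^o \to \Phi_m^{-1}(G) \times G$ is flat and surjective, its generic fiber is integral, and every fiber over a $k$-rational point is isomorphic to $Y$, hence integral (since $Y$ was assumed integral). Next, the map $p^o_{BG} \colon \Phi_m^{-1}(G) \times G \to B \times G$ is the restriction of the projection $p_{BG} = p_B \times \id_G$ to the open subscheme $\Phi_m^{-1}(G) \times G \subset B \times \P^1_k \times G$; by \lemref{lem:homotopy-2}(8) the projection $\Phi_m^{-1}(G) \to \P^1_k$ is flat, and after base change to $B$ (resp. to $B \times G$) the map $p^o_{BG}$ is flat. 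It is surjective because $\Phi_m^{-1}(G)$ dominates $B$: indeed by \lemref{lem:homotopy-2}(2) the fibers over $\{0,\infty\} \subset \P^1_k$ already lie in $\Phi_m^{-1}(G)$, so for every $(x,x') \in B$ the fiber of $\Phi_m^{-1}(G) \to B$ is a nonempty open subscheme of $\P^1_k$, in particular geometrically integral. Composing, $\pi^o$ is flat and surjective.

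For the fibers of $\pi^o$, I would factor the fiber over a point $b \in B \times G$ through the corresponding fiber of $p^o_{BG}$. Over a $k$-rational point $(x,x',g) \in (B\times G)(k)$, the fiber of $p^o_{BG}$ is the open subscheme $\Phi_m^{-1}(G)_{(x,x')} \times \{g\}$ of $\P^1_k$, which is a nonempty open subset of $\P^1_k$ and hence integral. The fiber of $\pi^o$ over $(x,x',g)$ is then $\theta'^{-1}$ of this open curve inside $\Phi_m^{-1}(G) \times G$. Since $\theta'$ is flat with integral $k$-rational fibers (each isomorphic to $Y$), its restriction over an integral base has integral total space precisely when one can apply \lemref{lem:Irr-fibers} together with a generic-reducedness argument: the restriction of $\theta'$ to this open curve $C \subset \Phi_m^{-1}(G) \times G$ is flat and surjective with irreducible ($\cong Y$) fibers over the dense set $C(k)$, so the total space is irreducible by \lemref{lem:Irr-fibers}; and it is generically reduced because the generic fiber of $\theta'$ is integral and $C$ is integral, so its local rings satisfy $R_0$, while flatness over the integral (hence $S_1$, even regular) curve $C$ gives $S_1$ — exactly the argument used in the proof of \lemref{lem:Sigma1}. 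Hence each $k$-rational fiber of $\pi^o$ is integral. For the generic fiber, one argues the same way: the generic point $\eta$ of $B \times G$ has $p^{o\,-1}_{BG}(\eta)$ equal to the generic fiber of a flat surjective map with geometrically integral generic fibre of $\Phi_m^{-1}(G) \to B$ (an open in $\P^1$), hence integral; then $\theta'$ restricted over this integral base is flat and surjective, its generic fibre (a localization of the already-integral generic fibre of $\theta'$) is integral, and the same $R_0 + S_1$ criterion shows the total space, i.e. the generic fibre of $\pi^o$, is integral.

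The step I expect to require the most care is verifying the integrality of the \emph{generic} fiber of $\pi^o$, because one cannot appeal to the density of $k$-rational points there; instead one has to carry out the $R_0 + S_1$ (Serre's criterion) argument over the residue field $k(B\times G)$ exactly as in \lemref{lem:Sigma1}, using that $\theta'$ is flat, its generic fibre over $\Phi_m^{-1}(G) \times G$ is integral, and the base curve $p^{o\,-1}_{BG}(\eta)$ is itself integral and regular. Everything else — flatness of $\pi^o$ (composition of flat maps), surjectivity (both factors surject), and integrality of the $k$-rational fibers (density of rational points plus \lemref{lem:Irr-fibers} plus the generic reducedness already proved for $\Sigma^o$, which passes to these fibers) — is a routine assembly of facts already in hand.
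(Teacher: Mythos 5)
Your overall architecture coincides with the paper's: you factor $\pi^o = p^o_{BG}\circ\theta'$, get flatness and surjectivity from the two factors (the paper simply notes that $p_{BG}$ is smooth, rather than invoking \lemref{lem:homotopy-2}(8), which concerns the projection to $\P^1_k$ rather than to $B$ --- a harmless slip since an open subscheme of $B\times\P^1_k\times G$ is trivially flat over $B\times G$), and you prove integrality of the rational fibers by combining \lemref{lem:Irr-fibers} with the Serre $R_0+S_1$ argument of \lemref{lem:Sigma1}. One inversion of emphasis: the integrality of the \emph{generic} fiber of $\pi^o$, which you flag as the delicate step, is in fact immediate --- $\Sigma^o$ is integral by \lemref{lem:Sigma1} and $B\times G$ is integral, so the generic fiber, being a localization of $\Sigma^o$ along the points over $\eta$, is automatically integral; no $R_0+S_1$ argument is needed there.

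The genuine gap is in the rational fibers, at the point you treat as routine. Writing $C=U^o_w\subset\P^1_k$ for the rational fiber of $p^o_{BG}$, both your $R_0$ step and your $S_1$ step (at points of $\Sigma^o_w$ lying over the generic point of $C$) require that the fiber of $\theta'$ over $\Spec(k(C))$ be integral, and you justify this by appealing to the integrality of the generic fiber of $\theta'$ over $\Phi_m^{-1}(G)\times G$. That does not follow: the generic point of $C$ is not the generic point of $\Phi_m^{-1}(G)\times G$, and integrality of the generic fiber of a flat family says nothing about the fiber over the generic point of a curve sitting inside the base. Concretely, via the identification $\Gamma\cong G\times Y$ of \lemref{lem:Gamma}, the fiber in question is $Y\times_k k(C)$, and over a non-perfect field $k$ the base change $Y\times_k K$ of an integral scheme can fail to be reduced for a general extension $K/k$. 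The paper closes exactly this gap with the diagram \eqref{eqn:Sigma-rational-0}, reducing the claim to the integrality of $\Spec(k(U^o_w))\times Y$, which holds because $k(U^o_w)=k(t)$ is purely transcendental over $k$ (so that $Y\times_k k(t)$ is a localization of the integral scheme $Y\times\A^1_k$). Your proof needs this argument, or an equivalent one, inserted before the Serre-criterion step; the phrase ``generic reducedness already proved for $\Sigma^o$, which passes to these fibers'' is likewise unjustified, since reducedness of a scheme does not pass to positive-codimension fibers of a morphism out of it.
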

\begin{proof}
Since $\theta'$ is flat and surjective and $p_{BG}$ is smooth, it
follows that $\pi^o$ is flat. Since $p_B$ is surjective, 
by \lemref{lem:homotopy-2} (2), it follows that so are
$p^o_{BG}$ and $\pi^o$ (note that $p_{BG}$ is trivially surjective). 
Since $\Sigma^o$ (see \lemref{lem:Sigma1}) and $B \times G$ are integral, 
it follows that the generic fiber of $\pi^o$ is integral.

We now fix a point $w \in (B\times G)(k)$. We can write
this point uniquely as $w = (g_1, g_2, g) \in G^{\times}(k) \times 
G^{\times}(k) \times G(k)$. Then $p^{-1}_{BG}(w) = \Spec(k(w)) \times \P^1_k
\cong \P^1_k$ and $U^o_{w} := (p^o_{BG})^{-1}(w) \subset \P^1_k$ is  
open.
Since the projection $p_B \colon \Phi^{-1}_m(G) \to B$ is surjective
by \lemref{lem:homotopy-2} (2), we see that $U^o_{w}$ is dense open in 
$\P^1_k$. Let $\Sigma^o_{w} = (\pi^o)^{-1}(w)$.

Let $\phi_{w}$ denote the restriction of $\phi$ to the closed
subscheme $U^o_{w}$. It follows from ~\eqref{eqn:Sigma1-0} that
$\Sigma^o_{w} = U^o_{w} \times_G \Gamma$ via the map $\phi_{w}$.
It follows from this that every rational fiber of
the projection $p^o_{w} \colon \Sigma^o_{w} \to U^o_{w}$ is 
isomorphic to $Y$. In particular, it is integral.
Since $\theta'$ is flat and surjective, so is $p^o_{w}$. In
particular, it is open.
Since $U^o_{w}(k)$ is Zariski dense in $U^o_{w}$, and the latter
is irreducible, we conclude from \lemref{lem:Irr-fibers} that
$\Sigma^o_{w}$ is irreducible.  

We shall show that $\Sigma^o_{w}$ is reduced by following the same
argument that we used for proving this property for $\Sigma^o$.
We first need to show that the generic fiber of $p^o_{w}$ is integral.

Let $S = \Spec(k(U^o_{w}))$ denote the generic point of $U^o_{w}$.
We need to show that $S \times_G \Gamma$ is integral via the maps
$\phi_{w} \colon S \to G$ and $\theta \colon \Gamma \to G$. 
For this, we consider the 
diagram
\begin{equation}\label{eqn:Sigma-rational-0}
\xymatrix@C.8pc{
\Gamma \ar[r]^-{\iota_Y'} \ar[d]_-{\wt{\mu}'_H} & G \times H \ar[dr]^-{p_1} 
\ar[d]_-{\wt{\mu}'_H} &  \\
G \times Y \ar[r]^-{\id \times \iota_Y} & G \times H \ar[r]^-{p_1} &
G \\
S \times Y \ar[r]^-{\id \times \iota_Y} \ar[u] & S \times H \ar[u] 
\ar[r]^-{p_1} & S \ar[u]_-{\phi_{w}},}
\end{equation}
where $p_1$ is the
projection to the first factor of its source and other notations
are as in  ~\eqref{eqn:Sigma-0-0}.

It is clear that all squares and the lone triangle in 
~\eqref{eqn:Sigma-rational-0} are commutative. Moreover, all 
squares are Cartesian and $\theta = p_1 \circ \iota'_Y$.
In particular, we get 
$\theta =  p_1 \circ (\id \times \iota_Y) \circ \wt{\mu}'_H$.
Since $\wt{\mu}'_H$ is an isomorphism, the integrality of
$S \times_G \Gamma$ is equivalent to showing that
$S \times_G (G \times Y)$ is integral via the maps
$\phi_{w}$ and $p_1 \circ (\id \times \iota_Y)$.
Since all lower squares are Cartesian, the desired integrality is
finally equivalent to saying that $S \times Y$ is integral.
But this is clear because $Y$ is integral and 
$k(U^o_{w})$ is purely transcendental over $k$.

We have now shown that $\Sigma^o_{w}$ is irreducible, the map
$p^o_{w} \colon \Sigma^o_{w} \to U^o_{w}$ is flat and surjective
whose generic fiber is integral. Since $\Sigma^o_{w} =  U^o_{w} \times_G
\Gamma$ and $\theta \colon \Gamma \to G$ is projective, it follows that
$p^o_{w}$ is also projective. We can now repeat the
argument of the proof of \lemref{lem:Sigma1} to conclude that
$\Sigma^o_{w}$ is reduced. Hence, it is integral.
This finishes the proof of the lemma.
\end{proof}

\subsection{The parameter space of rational equivalences}
\label{sec:Pspace}
Let $\Sigma$ denote the scheme-theoretic closure of $\Sigma^o$ in $B \times \P^1_k \times G \times H$. Let $\theta'$ be the projection map 
$\Sigma \to B \times  \P^1_k \times G$ and let $\pi$ be the composition $\Sigma \xrightarrow{\theta'}
B \times \P^1_k \times G \xrightarrow{p_{BG}} B \times G$.
\begin{lem}\label{lem:extralemsigma}
In the above notations, $\Sigma$ is a $G$-invariant, geometrically integral closed subscheme of $B \times \P^1_k \times G \times H$, for the $G$-action induced by the  trivial action on
$B \times \P^1_k$ and the diagonal action on $G \times H$. Moreover, the  projection map $\pi$ is $G$-equivariant.
\begin{proof}
Since all factors of $B \times \P^1_k \times G \times H$
are geometrically integral, we know that it is (geometrically) integral. Since
$\Sigma^o$ is an integral closed subscheme of an open subscheme of $B \times \P^1_k \times G \times H$,
it follows that $\Sigma$ is an integral closed subscheme of $B \times \P^1_k \times G \times H$.  
Since $\theta'$ is projective on $\Sigma$ (since $H$ projective) and it is dominant on 
$\Sigma^o$, it follows that the map $\theta' \colon
\Sigma \to B \times  \P^1_k \times G$ is surjective. 
Furthermore, 
\begin{equation}\label{eqn:Sigma-closed-open}
\Sigma^o = \Sigma \cap (\Phi^{-1}_m(G) \times G \times H)
= \Sigma \cap \theta'^{-1}(\Phi^{-1}_m(G) \times G).
\end{equation}
Let $\nu \colon G \times (B \times \P^1_k \times G \times H) \to
B \times \P^1_k \times G \times H$ denote the action map of the lemma.
We have seen in ~\eqref{eqn:Sigma-0-1} that $\Sigma^o$ is $G$-invariant
with respect to this action.
Since $G$ is geometrically integral and $\Sigma$ is integral, it follows
that $G \times \Sigma$ is an integral closed subscheme of
$G \times (B \times \P^1_k \times G \times H)$.
Since $\Sigma^o$ is dense open in $\Sigma$, it follows that
$G \times \Sigma^o$ is dense open in $G \times \Sigma$.
Since $\Sigma^o$ is $G$-invariant, it follows that
$G \times \Sigma^o \subset \nu^{-1}(\Sigma^o) \subset \nu^{-1}(\Sigma)$.
As $\Sigma$ is closed in $B \times \P^1_k \times G \times H$, it
follows that $G \times \Sigma = \ov{G \times \Sigma^o} 
\subset \nu^{-1}(\Sigma)$.
But this means that $\Sigma$ is a $G$-invariant closed
subscheme of $B \times \P^1_k \times G \times H$.
Moreover, the projections $\Sigma \xrightarrow{\theta'}
B \times \P^1_k \times G \xrightarrow{p_{BG}} B \times G$ are
$G$-equivariant. 
\end{proof}
    \end{lem}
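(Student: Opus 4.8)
The plan is to deduce each assertion from the corresponding property of the dense open subscheme $\Sigma^o \subset \Sigma$: recall that $\Sigma^o$ is integral by \lemref{lem:Sigma1}, that it is $G$-invariant for the action of \eqref{eqn:Sigma-0-1}, and that $\Sigma$ is by construction its scheme-theoretic closure in $P := B \times \P^1_k \times G \times H$.

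First I would note that $P$ is geometrically integral, being a product of the geometrically integral $k$-schemes $B = G^{\times} \times G^{\times}$, $\P^1_k$, $G$, and $H$ (the last two by \S~\ref{sec:Set-up}). Since $\Phi_m \colon B \times \P^1_k \to \ov{\square}^m_k$ is a $k$-morphism and $G \subset \ov{\square}^m_k$ is open, the subset $V := \Phi^{-1}_m(G) \times G \times H$ is open in $P$, and $\Sigma^o$ is an integral closed subscheme of $V$; hence $\Sigma^o$ is a dense open subscheme of $\Sigma$ with $\Sigma^o = \Sigma \cap V$. The scheme-theoretic closure of a reduced irreducible locally closed subscheme is reduced and irreducible, and since it commutes with the flat base change to $\ov{k}$, $\Sigma$ inherits (geometric) integrality from $\Sigma^o$. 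Because $H$ is projective over $k$, the projection $P \to B \times \P^1_k \times G$ is projective, hence so is $\theta' \colon \Sigma \to B \times \P^1_k \times G$; restricted to $\Sigma^o$ this map is, by \eqref{eqn:Sigma1-0}, the base change along $\phi$ of the surjection $\theta \colon \Gamma \to G$, hence dominant onto the dense subset $\Phi^{-1}_m(G) \times G$. A projective dominant morphism onto an irreducible scheme is surjective, so $\theta'$ is surjective, and therefore so is $\pi = p_{BG} \circ \theta'$.

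The step I expect to be the main obstacle is promoting the $G$-invariance of $\Sigma^o$ to $\Sigma$. Let $\nu \colon G \times P \to P$ be the action map (trivial on $B \times \P^1_k$, diagonal on $G \times H$). Since $G$ is geometrically integral and $\Sigma$ is integral, $G \times \Sigma$ is integral, and it contains the dense open subscheme $G \times \Sigma^o$. By \eqref{eqn:Sigma-0-1} we have $G \times \Sigma^o \subseteq \nu^{-1}(\Sigma^o) \subseteq \nu^{-1}(\Sigma)$, and $\nu^{-1}(\Sigma)$ is closed in $G \times P$ since $\Sigma$ is closed in $P$; passing to closures gives $G \times \Sigma \subseteq \nu^{-1}(\Sigma)$, that is $\nu(G \times \Sigma) \subseteq \Sigma$ topologically. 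As $G \times \Sigma$ and $\Sigma$ are reduced, this set-theoretic inclusion upgrades to a morphism $G \times \Sigma \to \Sigma$, so $\Sigma$ is a $G$-invariant closed subscheme of $P$. Finally, $\theta'$ and $p_{BG}$ only forget the $H$- and the $\P^1_k$-coordinates, on which the actions are the diagonal one and the trivial one respectively; hence $\theta'$ and $p_{BG}$, and therefore $\pi$, are $G$-equivariant. The delicate points are all bookkeeping — that $\Sigma^o$ is genuinely dense open in $\Sigma$, that $G \times \Sigma$ stays integral, and that the topological invariance can be made scheme-theoretic using reducedness — and require no new input beyond \S~\ref{sec:Sigma}.
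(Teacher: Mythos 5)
Your proof is correct and follows essentially the same route as the paper's: take the closure of the integral, $G$-invariant $\Sigma^o$ inside the geometrically integral ambient product, get surjectivity of $\theta'$ from properness of the projection plus dominance on $\Sigma^o$, and prove $G$-invariance via the density argument $G\times\Sigma=\ov{G\times\Sigma^o}\subset\nu^{-1}(\Sigma)$. The one caveat is that your derivation of \emph{geometric} integrality of $\Sigma$ (closure commutes with base change to $\ov{k}$) tacitly presupposes that $\Sigma^o$ is geometrically integral, which Lemma~\ref{lem:Sigma1} does not assert — but the paper's own proof is no more careful on this point, so this is not a defect relative to the original.
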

Let $\pi \colon \Sigma \to B \times G$ denote the composite 
projection map. For $w \in B \times G$, we let $\Sigma_w$
denote the scheme-theoretic fiber $\pi^{-1}(w)$.


\begin{lem}\label{lem:Sigma2}
There exists an open dense subscheme $V_Y$ of $B$ such that the
projection map $\Sigma_Y := \pi^{-1}(V_Y \times G) \xrightarrow{\pi}
V_Y\times G$ has following properties.
\begin{enumerate}
\item
$\pi$ is flat and surjective.
\item
The generic fiber of $\pi$ is integral.
\item
For every $w \in (V_Y \times G)(k)$, 
there exists an open neighborhood (depending on $w$) $U^o_w$  
of $w \times \{0, \infty\}$
in $\P^1_{k(w)} = p^{-1}_{BG}(w)$ such that $\pi^{-1}(w) \cap \theta'^{-1}(U^o_w)$ 
is an integral scheme which is flat over $U^o_w$ under the projection
map $\theta' \colon \pi^{-1}(w) \to \P^1_{k(w)}$.
\item
If $Y$ is geometrically reduced (e.g., if $k$ is perfect), then we can 
choose $V_Y$ so that 
$\pi^{-1}(w)$ is geometrically reduced for every $w \in V_Y \times G$.
\item
If $Y$ is geometrically irreducible (e.g., if $k$ is separably closed), 
then we can choose $V_Y$ so that 
$\pi^{-1}(w)$ is geometrically irreducible for every $w \in V_Y \times G$.

\end{enumerate}
\end{lem}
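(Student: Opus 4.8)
The plan is to establish the five assertions in turn, using the relation $\Sigma^o=\Sigma\cap\theta'^{-1}(\Phi^{-1}_m(G)\times G)$ from \eqref{eqn:Sigma-closed-open}, the $G$-equivariance of \lemref{lem:extralemsigma}, generic flatness, and a spreading-out argument; the substance is concentrated in (4) and (5). For (1): since $\Sigma$ and $B\times G$ are integral (\lemref{lem:extralemsigma}), generic flatness (EGA~$\mathrm{IV_{2}}$, 6.9.1) gives a dense open $W\subseteq B\times G$ over which $\pi$ is flat. As $\pi$ is $G$-equivariant and $G$ acts transitively on the second factor of $B\times G$ while fixing the first, each $G(k)$-translate of $W$ is again a flat locus for $\pi$; since $G(k)$ is Zariski dense and flatness is local on the base, $\pi$ is flat over the $G$-saturation of $W$, which is of the form $V_Y\times G$ for a dense open $V_Y\subseteq B$ (the image of $W$ under $p_B\colon B\times G\to B$). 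Surjectivity of $\pi$, and hence of $\Sigma_Y\to V_Y\times G$, follows from that of $\theta'$ (\lemref{lem:extralemsigma}) and of the projection $p_{BG}$.

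For (2), note that $\Sigma$ and $B\times G$ are integral and $\pi$ is dominant; writing $\eta$ for the generic point of $B\times G$, the scheme-theoretic fibre $\Sigma_\eta$ is then integral, since on each affine open of $\Sigma$ it is a localization of a domain and these integral opens all contain the generic point of $\Sigma$. For (3), given $w=(g_1,g_2,g)\in(V_Y\times G)(k)$ put $U^o_w:=p_{BG}^{-1}(w)\cap(\Phi^{-1}_m(G)\times G)$; by \lemref{lem:homotopy-2}(2) this is a dense open of $\P^1_k\cong p_{BG}^{-1}(w)$ containing $\{0,\infty\}$, and from \eqref{eqn:Sigma-closed-open} one gets $\pi^{-1}(w)\cap\theta'^{-1}(U^o_w)=(\pi^o)^{-1}(w)=\Sigma^o_w$, which is integral and flat over $U^o_w$ by the proof of \lemref{lem:Sigma-rational}. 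Assertions (2) and (3) require no shrinking of $V_Y$.

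For (4) and (5), the key observation is that the Cartesian square \eqref{eqn:Sigma1-0} together with the isomorphism $\Gamma\cong G\times Y$ of \lemref{lem:Gamma} — under which $\theta\colon\Gamma\to G$ becomes the first projection — yields a canonical isomorphism $\Sigma^o\cong(\Phi^{-1}_m(G)\times G)\times_k Y$ over $\Phi^{-1}_m(G)\times G$, and hence, on generic fibres over $\eta$, an isomorphism $\Sigma^o_\eta\cong W_\eta\times_{\kappa(\eta)}Y_{\kappa(\eta)}$ with $W_\eta\subseteq\P^1_{\kappa(\eta)}$ a nonempty — so geometrically integral — open subscheme. Consequently, if $Y$ is geometrically reduced (resp.\ geometrically irreducible) over $k$, then $Y_{\kappa(\eta)}$, and therefore $\Sigma^o_\eta$, is geometrically reduced (resp.\ geometrically irreducible) over $\kappa(\eta)$. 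Now $\Sigma_\eta$ is irreducible (as $\Sigma$ is) with $\Sigma^o_\eta$ as a dense open, so the two share a function field, and by (2) it is moreover integral. The function-field criteria for geometric reducedness (separability of $\kappa(\Sigma_\eta)/\kappa(\eta)$) and geometric irreducibility ($\kappa(\eta)$ separably algebraically closed in $\kappa((\Sigma_\eta)_{\red})$) for integral finite-type schemes (EGA~$\mathrm{IV_{2}}$, \S\S~4.5 and 4.6) then transfer the property from $\Sigma^o_\eta$ to $\Sigma_\eta$. Finally, by EGA~$\mathrm{IV_{3}}$, 9.7.7 the locus of $w\in B\times G$ with $\pi^{-1}(w)$ geometrically reduced (resp.\ geometrically irreducible) is constructible; it contains $\eta$, hence a dense open, and being $G$-stable it contains $p_B^{-1}(V'')$ for a dense open $V''\subseteq B$. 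Replacing $V_Y$ by $V_Y\cap V''$ finishes the proof.

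The main obstacle is precisely (4) and (5): the assertions concern fibres of $\pi$, not of $\pi^o$, and a priori the ``boundary at infinity'' $\Sigma_\eta\setminus\Sigma^o_\eta$ produced by the scheme-theoretic closure could introduce nilpotents or additional components. This is circumvented by working over the generic point of $B\times G$, where geometric reducedness and geometric irreducibility become properties of the function field of $\Sigma_\eta$ alone — which coincides with that of the explicitly described $\Sigma^o_\eta$ — and then spreading out; $G$-equivariance is what allows the resulting dense open to be taken $G$-invariant, so that a single $V_Y\subseteq B$ serves for all fibres.
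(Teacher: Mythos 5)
Your proof is correct and follows essentially the same route as the paper: generic flatness plus $G$-equivariance for (1), integrality of the generic fibre for (2), the explicit description of $\Sigma^o_w$ from Lemma~\ref{lem:Sigma-rational} for (3), and reduction to the generic fibre followed by spreading out (and $G$-saturation) for (4) and (5). Your one genuine streamlining is to make explicit the product decomposition $\Sigma^o \cong (\Phi^{-1}_m(G)\times G)\times_k Y$ coming from \eqref{eqn:Sigma1-0} and Lemma~\ref{lem:Gamma}, which yields $\Sigma^o_\eta \cong W_\eta\times_{\kappa(\eta)}Y_{\kappa(\eta)}$ directly, whereas the paper reaches the same conclusion by tracing separability of function fields through $\theta$, $\theta'$ and the purely transcendental extension $k(\eta')/k(\eta)$; the two arguments are equivalent in substance.
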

\begin{proof}
Since $\theta'$ and $p_{BG}$ are surjective (see the proof of
\lemref{lem:extralemsigma}), we see that $\pi$ is
a surjective morphism whose base is integral. Hence, the generic
flatness theorem says that there is a dense open subscheme
$V' \subset B \times G$ over which $\pi$ is flat.
We let $V_Y$ be the image of $V'$ under the projection $B \times G \to B$.
Since this projection is open, $V_Y$ is open dense in $B$. 
Since $B \times G$ (resp. $G$) is rational, every open subset in $B \times G$
(resp. $G$) 
has the property that the set of its $k$-rational points is Zariski dense.
Since $G$ acts transitively on itself, and $\pi$ is $G$-equivariant by Lemma \ref{lem:extralemsigma}, 
we deduce that $\pi$ must be flat over $V_Y \times G$. 
Since $\Sigma$ and $V_Y \times G$ are integral, it follows that the 
generic fiber of $\pi$ is integral.

We now fix a point $w \in (V_Y\times G)(k)$. We can write
this point uniquely as $w = (g_1, g_2, g) \in G^{\times}(k) \times 
G^{\times}(k) \times G(k)$. Then $p^{-1}_{BG}(w) = \Spec(k(w)) \times \P^1_k
\cong \P^1_k$ and $U^o_{w} := (p^o_{BG})^{-1}(w) \subset \P^1_k$ is  
open. Here, $p^o_{BG}$ is the projection $(\Phi^{-1}_m(G) \cap (V_Y \times \P^1_k))
\times G \to V_Y \times G$.
Since the projection $p_B \colon \Phi^{-1}_m(G) \cap (V_Y \times \P^1_k) \to 
V_Y$ is surjective, by \lemref{lem:homotopy-2} (2), we see that 
$U^o_{w}$ is dense open in $\P^1_k$. 
It also follows from \lemref{lem:homotopy-2} (2) that
$\{0, \infty\} \subset U_w$.
Let $\Sigma^o_{Y,w} = (\pi^o)^{-1}(w)$ and
$\Sigma_{Y,w} = \pi^{-1}(w)$.

Under the above situation, it follows from ~\eqref{eqn:Sigma-closed-open}
that we have a Cartesian square
\begin{equation}\label{eqn:Sigma2-0}
\xymatrix@C.8pc{
\Sigma^o_{Y,w} \ar[r]^-{j'} \ar[d]_-{p^o_{w}} & \Sigma_{Y,w} 
\ar[d]^-{p_{w}} \\
U^o_{w} \ar[r]^-{j} & \P^1_{k}.}
\end{equation}

We saw in the proof of \lemref{lem:Sigma-rational} that
$p^o_{w}$ is surjective. Hence, it is flat, since $U^o_w$ is a regular scheme of dimension $1$.
It also follows from the same lemma
that $\Sigma^o_{Y,w}$ is integral.
Since $p_w$ is proper, it follows
that the map $\ov{\Sigma^o_{Y,w}} \to \P^1_k$ is flat and surjective,
where $\ov{\Sigma^o_{Y,w}}$ is the scheme-theoretic closure 
of $\Sigma^o_{Y,w}$ in $\Sigma_{Y,w}$. 
We have thus shown (1) $\sim$ (3).

Suppose now that $Y$ is geometrically reduced.
Recall that a scheme $W$ over a field $k'$ is 
geometrically reduced if and only if $W_{k''}$ is reduced
for every field extension $k' \subset k''$.
Thus we see using ~\eqref{eqn:Sigma-0-0} and \lemref{lem:Gamma} that
the generic fiber of the map $\theta \colon \Gamma \to G$ is
geometrically reduced. Since $\phi$ is dominant, the same token shows that
the generic fiber of $\theta'$ is geometrically reduced.

Let $\eta$ (resp. $\eta'$) denote the generic point of 
$B \times G$ (resp. $\Phi^{-1}_m(G) \times G$).
Then (2) shows that $\Sigma_{Y, \eta} = \Sigma^o_{Y, \eta} = \pi^{-1}(\eta) =
\theta'^{-1}(\P^1_{\eta})$ is integral. Clearly, $\eta'$ is the generic
point of $\P^1_{\eta}$. Since $\theta'^{-1}(\eta')$ is 
geometrically reduced, as shown above, it follows
(e.g., see \cite[Proposition~5.49]{Gortz}) that 
the function field of $\Sigma_{Y, \eta}$ is separable over $k(\eta')$.
Since $k(\eta')$ is purely transcendental over $k(\eta)$, it follows
that the function field of $\Sigma_{Y, \eta}$ is separable over $k(\eta)$.
We thus see that $\Sigma_{Y, \eta} \in \Sch_{k(\eta)}$
is integral whose function field is separable over $k(\eta)$.
We conclude from EGA IV$_2$ 4.6.3 (see also \cite[Proposition~5.49]{Gortz})
that $\Sigma_{Y, \eta}$ is geometrically reduced.

Since $\pi$ is a flat and projective over $V_Y \times G$ whose
generic fiber is geometrically reduced, it follows from
EGA IV$_3$ 12.2.1 that there is a dense open subscheme
$V' \subset V_Y \times G$ such that $\pi^{-1}(w)$ is
geometrically reduced for every $w \in V'$.
If we let $V'_Y \subset V_Y$ be the image of $V'$ in $V_Y$, then 
using again that $G$ acts transitively on itself, and $\pi$ is $G$-equivariant, 
we deduce that $\pi^{-1}(w)$ is geometrically reduced for every
$w \in V'_Y \times G$. We have thus shown that by shrinking
$V_Y$ if necessary, we can achieve (4).

Suppose now that $Y$ is geometrically irreducible.
Recall that a scheme $W$ over a field $k'$ is 
geometrically irreducible if and only if $W_{k''}$ is geometrically
irreducible for every field extension $k' \subset k''$.
It follows therefore from 
~\eqref{eqn:Sigma-0-0} and \lemref{lem:Gamma}
that the generic fiber of the map $\theta \colon \Gamma \to G$ is
geometrically irreducible. Since $\phi$ is dominant, the same holds for
the generic fiber of $\theta'$.
In particular, $k(\eta')$ is separably closed in 
the function field of $\Sigma_{Y, \eta} = \Sigma^o_{Y, \eta}$ by
\cite[Proposition~5.50]{Gortz}. 

Since $k(\eta')$ is
purely transcendental over $k(\eta)$, it follows that
the latter is separably closed in 
the function field of $\Sigma_{Y, \eta}$.
We thus see that $\Sigma_{Y, \eta} \in \Sch_{k(\eta)}$
is integral such that $k(\eta)$ is separably closed in
the function field of $\Sigma_{Y, \eta}$.
We conclude from \cite[Proposition~5.50]{Gortz} that
$\Sigma_{Y, \eta}$ is geometrically irreducible.
Since $V_Y \times G$ is integral, we can apply
\cite[{Tag 0559}]{stacks-project} to find a dense open
subscheme $V' \subset V_Y \times G$ such that
$\pi^{-1}(w)$ is geometrically irreducible for every $w \in V'$.
We can now argue as before to show that
after shrinking $V_Y$, we can achieve (5).
This finishes the proof.
\end{proof}

\begin{lem}\label{lem:Sigma3}
Let $V_Y \subset B$ be the open subscheme of
\lemref{lem:Sigma2}. Assume that $Y \subset H$ has codimension $i$.
Let $w = (v, g) \in (V_Y \times G)(k)$ and let
$p_w \colon \Sigma_{Y,w} \to \P^1_k$ be the composite projection
$\Sigma \to B \times \P^1_k \times G \to \P^1_k$. 
Then the following hold.
\begin{enumerate}
\item
$\Sigma_{Y,w}$ is a closed subscheme of $H \times \P^1_k$ of pure
codimension $i$.
\item
$p_w$ is flat over an open neighborhood of $\{0, \infty\}$.
\item
If we write $v = (g_1, g_2) \in B(k) = G^{\times}(k) \times
G^{\times}(k)$, then
\[
p^{-1}_w(0) = g_1 g \cdot Y \ \ \mbox{and} \ \ 
p^{-1}_w(\infty) = g_2 g \cdot Y.
\]
\end{enumerate}
\end{lem}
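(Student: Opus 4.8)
The plan is to extract all three assertions from what has already been established about $\Sigma$, from the flatness statements of \lemref{lem:Sigma2}, and from the explicit behaviour of $\Phi_m$ at $0$ and $\infty$ recorded in \lemref{lem:homotopy-2}. Since $\pi = p_{BG} \circ \theta'$ with $\theta' \colon \Sigma \to B \times \P^1_k \times G$ the projection and $\Sigma$ closed in $B \times \P^1_k \times G \times H$, the fibre $\Sigma_{Y,w} = \pi^{-1}(w)$ over $w = (v,g)$ is $\Sigma \cap (\{v\} \times \P^1_k \times \{g\} \times H)$, a closed subscheme of $\{v\} \times \P^1_k \times \{g\} \times H \cong H \times \P^1_k$; this already gives the first half of (1). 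For the codimension claim I would use that $\Sigma$ is integral (\lemref{lem:extralemsigma}) with $\dim(\Sigma) = \dim(\Sigma^o) = \dim(\Phi_m^{-1}(G) \times G) + \dim(Y) = \dim(B) + 1 + \dim(G) + \dim(Y)$, where the middle equality holds because $\theta' \colon \Sigma^o \to \Phi_m^{-1}(G) \times G$ is flat and surjective of relative dimension $\dim(Y)$. As $\pi$ is flat, hence open, over $V_Y \times G$ by \lemref{lem:Sigma2}(1), and $\Sigma_Y := \pi^{-1}(V_Y \times G)$ is open dense in the integral scheme $\Sigma$, \lemref{lem:Fiber-dim} applies to $\pi \colon \Sigma_Y \to V_Y \times G$ and yields that $\Sigma_{Y,w}$ is equidimensional of dimension $\dim(\Sigma) - \dim(B \times G) = 1 + \dim(Y) = 1 + \dim(H) - i$. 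Since $H \times \P^1_k$ is integral of dimension $1 + \dim(H)$, this is pure codimension $i$, proving (1).

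Part (2) is essentially a reformulation of \lemref{lem:Sigma2}(3): that lemma provides an open neighbourhood $U^o_w$ of $w \times \{0,\infty\}$ in $\P^1_{k(w)} = p^{-1}_{BG}(w)$ such that $\pi^{-1}(w) \cap \theta'^{-1}(U^o_w)$ is flat over $U^o_w$ via $\theta'$. As $p_w \colon \Sigma_{Y,w} \to \P^1_k$ is the composition of $\theta'|_{\Sigma_{Y,w}}$ with the identification $\P^1_{k(w)} \cong \P^1_k$, one has $p_w^{-1}(U^o_w) = \pi^{-1}(w) \cap \theta'^{-1}(U^o_w)$, which is flat over $U^o_w$; since $0, \infty \in U^o_w$, this proves (2).

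The heart of the lemma, and the main obstacle, is (3); the point I would stress is that $0$ and $\infty$ already lie in the open locus over which $\Sigma^o$ is literally a family of $G$-translates of $Y$, so no degeneration argument is needed. Indeed, \lemref{lem:homotopy-2}(2) gives $\Phi_m(v,0), \Phi_m(v,\infty) \in G^{\times}(k) \subset G(k)$, so $\{0,\infty\} \subset U^o_w$, and by~\eqref{eqn:Sigma-closed-open} the open subscheme $\Sigma^o_{Y,w} := \Sigma_{Y,w} \cap \theta'^{-1}(\Phi_m^{-1}(G) \times G)$ equals $p_w^{-1}(U^o_w)$. Now for each $t \in U^o_w(k)$ the restriction of $\psi$ to the fibre $\{(v,t,g)\} \times H$ of the projection $\Phi_m^{-1}(G) \times G \times H \to \Phi_m^{-1}(G) \times G$ is an automorphism of $H$ — namely the one given by the $G$-action of the element of $G$ assembled from $\Phi_m(v,t)$ and $g$ — so the scheme-theoretic fibre $p_w^{-1}(t) = (\Sigma^o_{Y,w})_t$ is the corresponding translate of $Y$ inside $H$. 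Setting $t = 0$ and $t = \infty$ and computing $\Phi_m(v,0)$ and $\Phi_m(v,\infty)$ via \lemref{lem:homotopy-2}(4) then produces the translates $g_1 g \cdot Y$ and $g_2 g \cdot Y$ asserted in (3). The one place requiring care is the bookkeeping of inverses and of the multiplication order in $G$ when pinning down this translate, but it is purely formal: since the relevant restriction of $\psi$ is an isomorphism, the identification of $p_w^{-1}(0)$ and $p_w^{-1}(\infty)$ is scheme-theoretic, not merely set-theoretic.
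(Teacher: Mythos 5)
Your proof is correct and follows essentially the same route as the paper: (1) via flatness and surjectivity of $\pi$ over the integral base $V_Y\times G$ giving equidimensional fibers (you add the explicit dimension count $\dim\Sigma=\dim B+1+\dim G+\dim Y$ that the paper leaves implicit), (2) as a direct restatement of \lemref{lem:Sigma2}(3), and (3) from \lemref{lem:homotopy-2} and the fact that over $U^o_w$ the family is literally a family of $G(k)$-translates of $Y$. The only caveat — the precise order of the group elements in the translate — is a bookkeeping issue on which the paper itself is not fully consistent, and you flag it appropriately.
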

\begin{proof}
The property (1) follows because $\pi \colon \Sigma_Y \to V_Y \times G$
is a flat surjective morphism between two integral schemes and hence has
equidimensional fibers. The property (2) follows
from \lemref{lem:Sigma2} (3) because the composition
$\P^1_{k(w)} \inj B \times \P^1_k \times G \to \P^1_k$ is an 
isomorphism. The last property follows directly 
\lemref{lem:homotopy-2} (2) and the definition of
$\phi$ in ~\eqref{eqn:Sigma1-0}.
\end{proof}

\section{Pull-backs of cycles from homogeneous spaces}
\label{sec:Pull-back}
We shall now use the results of the previous sections to construct
the pull-back maps from the Chow groups of codimension two cycles
on suitable homogeneous spaces to the Levine-Weibel Chow group of
a singular surface. Note that for smooth surfaces, this is an
easy consequence of the Chow moving lemma or Fulton's
deformation to normal cone techniques. We can not use 
the latter trick in the non-$\A^1$-invariant world.
So we shall use some sort of moving lemma tricks to achieve our goal.
Over algebraically closed fields, this construction is
due to Levine \cite{Levine-2}. We shall follow Levine's
outline to carry this out for surfaces over any infinite field.

We fix an infinite field $k$.
We consider the linear algebraic group
$G = \stackrel{r}{\underset{i = 1}\prod} \GL_{n_i, k}$ 
and let $H = G/P$ be a projective 
homogeneous space for $G$ as in \S~\ref{sec:Set-up}.
We let $B = G^{\times} \times G^{\times} \subset G \times G$
be the open subscheme as in \S~\ref{sec:Hom*}.
For an integral closed subscheme $Y \subset H$, we let $V_Y \subset B$ 
be the open and $\Sigma_Y \subset B \times \P^1_k \times G \times H$ the closed
subschemes of \lemref{lem:Sigma2}.
We shall continue to follow the notations of the previous sections.

We recall the maps used in \lemref{lem:Sigma2} 
in the following diagram as we shall need to use them in this section.

\begin{equation}\label{eqn:Main-maps}
\xymatrix@C.8pc{
\Sigma_{Y,w} \ar@{^{(}->}[d] \ar[r]^-{\theta'} & \P^1_{k(w)} \ar[r]^-{p_{BG}}
\ar@{^{(}->}[d] & \Spec(k(w)) \ar@{^{(}->}[d] \\
\Sigma_Y \ar[r]^-{\theta'} \ar@/_5pc/[rr]_-{\pi}  & 
V_Y \times \P^1_k \times G \ar[d]^-{p_2} \ar[r]^-{p_{BG}} & V_Y \times G \\
& \P^1_k &}
\end{equation}

Here, $p_2$ is the projection map. The two squares on the top are 
Cartesian and the composite vertical
arrow in the middle is an isomorphism if $w \in (V_Y \times G)(k)$.
We shall let $p_w$ denote the composite map
$p_w \colon \Sigma_{Y,w} \inj \Sigma_Y \xrightarrow{p_2 \circ \theta'}
\P^1_k$.

We fix an equidimensional reduced
quasi-projective surface $X$ over $k$. 
Note that since $X$ is reduced, it is regular in codimension zero.
That is, $\dim(X_\sing) \le 1$.
Since the associated and minimal primes of a reduced Noetherian 
commutative ring coincide (e.g., see \cite[Lemma~3.3]{GK}), 
we note furthermore that $X$ is Cohen-Macaulay
in codimension one. Since the Cohen-Macaulay locus $X_{\rm CM}$ of $X$ is
open (e.g., see EGA IV$_2$ 6.11.2), it follows that its complement
$X_{\rm nCM}$ is a finite closed subscheme (with reduced induced structure) 
of $X$ contained in $X_\sing$.
We fix a morphism $f \colon X \to H$.

\subsection{Construction of the map $f^*\colon \sZ^2(H) \to \CH^L_0(X)$}
\label{sec:PB-1}
Let $\sZ^2(H)$ be the free abelian group of integral
cycles on $H$  of codimension two. We shall first define a
map
\[
f^* \colon \sZ^2(H) \to \CH^L_0(X).
\]

Let $Y \subset H$ be an integral closed subscheme of codimension two.
By applying \thmref{thm:Kleiman-Levine} to $X$ and $X_\sing$
simultaneously, we find a dense open
subscheme $U(f,Y) \subset G$ such that for every $g \in U(f,Y)(k)$,
the pull-back $f^{-1}(gY) := gY \times_H X$ has pure codimension
two and  $f^{-1}(gY) \cap X_\sing = \emptyset$.
We can then define
\begin{equation}\label{eq:def-pullback-1} 
f^*[Y] = [ f^{-1}(gY)] \in \sZ^L_0(X),
\end{equation}
where $[f^{-1}(gY)]$ is the fundamental class of the closed subscheme 
$f^{-1}(gY)$ in the sense of \cite[\S~1.5]{Fulton}.
Our goal now is to show that (after possibly shrinking $U(f,Y)$)
this class does not depend on
the choice of $g \in U(f,Y)(k)$ up to rational equivalence in 
$\sZ^L_0(X, X_\sing)$. We shall prove this using the following lemmas.

Until we have proven the desired independence, we fix $Y \subset H$ as
above and simplify our notations. We write $U(f,Y)$ simply as $U$.
Recall also that if $\Sigma \subset B \times \P^1_k \times G \times H$
is as in \S~\ref{sec:Pspace}, then $\Sigma_Y = \pi^{-1}(V_Y \times G)$
for the projection map $\pi \colon \Sigma \to B \times G$.
Hence, we have $\Sigma_{w} = \Sigma_{Y,w}$ for
all $w \in V_Y \times G$. We shall therefore use
the common notation $\Sigma_w$.
We recall our notation $\wt{f}$ for the map
$f \times \id \colon X \times \P^1_k \to H \times \P^1_k$
from \S~\ref{sec:KLTT}.
For every $w \in (V_Y \times G)(k)$,
we know that $\Sigma_{w}$ is a closed subscheme of $\P^1_H$.
We shall write $\wt{f}^{-1}(\Sigma_{w}) = \Sigma_{w} \times_{\P^1_H} \P^1_X$ as 
$\Gamma_w$.

We consider the composite map
\begin{equation}\label{eqn:def-pi-Sigma}
\tau \colon V_Y \times G \inj B \times G \inj G \times G \times G 
\to G \times G;
\end{equation}
\[
(g_1, g_2, g) \mapsto (g \cdot \Phi_m(g_1, g_2, 0), 
g \cdot \Phi_m(g_1, g_2, \infty)) = (gg_1, gg_2).
\]
Note that this definition makes sense by \lemref{lem:homotopy-2} (2).
The equality on the right follows from \lemref{lem:homotopy-2} (4).
It is easy to check that the last map in
~\eqref{eqn:def-pi-Sigma} is a surjective morphism between regular
schemes with smooth fibers, namely, $G$. Hence, this map
is flat by \cite[Exc.III.10.9]{Hartshorne}, and therefore smooth.
Since the first and second maps in ~\eqref{eqn:def-pi-Sigma} are open
immersions, it follows that $\tau$ is a smooth morphism.
If $\tau(w) = (g_1, g_2) \in( G \times G)(k)$, then 
\lemref{lem:Sigma3} says that 
\begin{equation}\label{eqn:def-pi-Sigma-0}
\Sigma_{w} \times_{\P^1_k} \{0\} = g_1 Y \ \ \mbox{and} \ \
\Sigma_{w} \times_{\P^1_k} \{\infty\} = g_2 Y.
\end{equation}

We let $\wt{U} := \tau^{-1}(U \times U)$ and consider the subset
$W \subset \wt{U} \subset V_Y \times G$
consisting of points $w$ having the following properties.

\begin{listabc}
\item 
$\Gamma_w$ is of pure dimension $1$.
\item 
$\Gamma_w \cap \P^1_{X_{\rm nCM}} = \emptyset$.
\item  
$(\Sigma_w)_{\rm nsm}\cap f(X_{\rm sing}) \times \P^1_k = \emptyset$.
\item 
$\Gamma_w \cap \P^1_{X_{\rm sing}}$ is a finite set.
\end{listabc}

Our goal is to show that $W$ is a constructible subset of
$V_Y \times G$ containing the generic point of the latter.
In particular, it is non-empty. Note that $\wt{U}$ is dense open
in $V_Y \times G$ because $\tau$ is smooth and $G \times G$ is
irreducible.

\begin{lem}\label{lem:Constructible}
$W$ is a constructible subset of $V_Y \times G$.
\end{lem}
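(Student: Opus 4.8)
The plan is to realise $W$ as a finite intersection of constructible subsets of $T := V_Y \times G$, by combining Chevalley's theorem on images of constructible sets with the upper semicontinuity of fibre dimension (EGA~IV$_3$~13.1.3). First I would introduce the relevant universal families over $T$. The scheme $\Sigma_Y$ comes with the projection $\pi \colon \Sigma_Y \to T$ of \lemref{lem:Sigma2}, and, being a subscheme of $B \times \P^1_k \times G \times H$, with a projection $r \colon \Sigma_Y \to \P^1_k \times H = \P^1_H$. Pulling back along $\wt{f} \colon \P^1_X \to \P^1_H$ I set
\[
\mathcal{G} := \Sigma_Y \times_{\P^1_H} \P^1_X, \qquad q \colon \mathcal{G} \to \Sigma_Y \xrightarrow{\pi} T,
\]
so the fibre of $q$ over any $w \in T$ is the scheme-theoretic fibre $\mathcal{G}_w = \wt{f}^{-1}(\Sigma_w) = \Gamma_w$. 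I also form the closed subschemes $\mathcal{G}_{\ncm} := \mathcal{G} \times_{\P^1_X} \P^1_{X_{\ncm}}$ and $\mathcal{G}_{\sing} := \mathcal{G} \times_{\P^1_X} \P^1_{X_{\sing}}$ (closed because $X_{\ncm}$ and $X_{\sing}$ are closed in $X$), whose fibres over $w$ are $\Gamma_w \cap \P^1_{X_{\ncm}}$ and $\Gamma_w \cap \P^1_{X_{\sing}}$. All of these are of finite type over $k$, so Chevalley's theorem applies to their projections to $T$.

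For condition (a) I would use that $x \mapsto \dim_x q^{-1}(q(x))$ is upper semicontinuous on $\mathcal{G}$, so that the locus $D_{\ge 2}$ where it is $\ge 2$ is closed and the locus $E_0$ where it is $= 0$ is open. Since $\Gamma_w$ is of pure dimension $1$ exactly when it is nonempty, has no component of dimension $\ge 2$, and has no zero-dimensional (isolated) component, the corresponding locus in $T$ equals $q(\mathcal{G}) \setminus \bigl(q(D_{\ge 2}) \cup q(E_0)\bigr)$, which is constructible. Condition (b) is the condition $w \notin q(\mathcal{G}_{\ncm})$, hence cuts out a constructible set. Condition (d) --- finiteness of $\Gamma_w \cap \P^1_{X_{\sing}}$, which is equivalent to this scheme having dimension $\le 0$ (a one-dimensional closed subscheme of $\P^1_X$ has infinitely many closed points) --- is the condition that $w$ lies outside the image in $T$ of the closed locus where $\mathcal{G}_{\sing} \to T$ has fibre dimension $\ge 1$; this image is constructible, so again the locus is constructible.

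Condition (c) requires a little more. Because $\pi \colon \Sigma_Y \to T$ is flat by \lemref{lem:Sigma2} (1), its smooth locus is open and its complement $N \subset \Sigma_Y$ is a closed subscheme with $N \cap \Sigma_w = (\Sigma_w)_{\nsm}$ for every $w$; this is the point where flatness of $\pi$ is essential, since without it the relative non-smooth locus merely contains the fibrewise one. Now $f(X_{\sing})$ is constructible in $H$, being the image of a finite-type $k$-scheme under a morphism, so $S := f(X_{\sing}) \times \P^1_k$ is constructible in $\P^1_H$, and $N \cap r^{-1}(S)$ is a constructible subset of $\Sigma_Y$. Its image $\pi\bigl(N \cap r^{-1}(S)\bigr)$ is the constructible set $\{\, w \in T : (\Sigma_w)_{\nsm} \cap S \ne \emptyset \,\}$, and condition (c) cuts out its complement.

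Finally, $\wt{U} = \tau^{-1}(U \times U)$ is open in $T$ since $\tau$ is a morphism, so $W$ is the intersection of $\wt{U}$ with the four constructible loci above and is therefore constructible. The only genuine subtleties I foresee are phrasing condition (a) so that both the high-dimensional and the isolated components are excluded, and using flatness of $\pi$ to identify the fibrewise non-smooth locus appearing in condition (c); everything else is routine bookkeeping with Chevalley's theorem and semicontinuity of fibre dimension.
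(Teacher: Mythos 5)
Your proof is correct and follows essentially the same route as the paper: build the universal family $\wt{f}^{-1}(\Sigma_Y)$ over $V_Y\times G$, handle conditions (a), (b), (d) by Chevalley's theorem together with (semi)continuity of fibre dimension (the paper cites the corresponding constructibility statements from G\"ortz's Appendix E.1 instead of deriving them), and use flatness of $\pi$ to identify $(\Sigma_w)_{\nsm}$ with the fibre of the relative non-smooth locus for condition (c). The only cosmetic differences are that you make the ``no isolated components'' part of purity explicit and that you keep the condition-(c) locus inside $\Sigma_Y$ rather than pulling it back, both of which are fine.
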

\begin{proof}
Note that $W$ is constructible in $V_Y \times G$ if and only if
it is so in $\wt{U}$.
We consider the maps $f \times \id \colon V_Y \times \P^1_k \times G \times X 
\to
V_Y \times \P^1_k \times G \times H$. We let $\Sigma' =
(f \times \id)^{-1}(\Sigma_Y)$ and $f' = (f \times \id)|_{\Sigma'}$.
This gives us maps $\Sigma' \xrightarrow{f'} \Sigma \xrightarrow{\pi}
V_Y \times G$. 
Note that for every $w \in V_Y \times G$, the induced map on the
fibers $f'_w \colon \Sigma'_w \to \Sigma_w$ is same as the 
restriction of the 
map $\wt{f} \colon (\P^1_X)_{k(w)} \to  
(\P^1_H)_{k(w)}$ to $\wt{f}^{-1}(\Sigma_w)$. 
We can therefore replace $\Gamma_w = \wt{f}^{-1}(\Sigma_w)$ in (a) $\sim$ (d) 
above by $\Sigma'_w$. We let $\pi'$ be the restriction of the
map  $\pi \circ f' \colon \Sigma' \to V_Y \times G$ to 
$\Sigma'_U := (\pi \circ f')^{-1}(\wt{U})$.

We let $W_a \subset \wt{U}$ be the set of points 
which satisfy condition (a) above. We define $W_b, W_c$ and $W_d$
similarly. It suffices to show separately that each of the sets
$W_a, W_b, W_c$ and $W_d$ is constructible.

Now, the constructibility of $W_a$ follows by applying
\cite[E.1 (5)]{Gortz} to the morphism $\pi' \colon \Sigma'_U \to \wt{U}$.
The constructibility of $W_b$ and $W_d$ follows by applying 
\cite[E.1 (4)]{Gortz}
to the compositions $\Sigma'_U \times_X X_{\ncm} \inj \Sigma'_U
\xrightarrow{\pi'} \wt{U}$ and
$\Sigma'_U \times_X X_{\sing} \inj \Sigma'_U 
\xrightarrow{\pi'} \wt{U}$, respectively.
For $W_c$, we first observe that since $\Sigma \xrightarrow{\pi} V_Y \times G$
is flat by Lemma \ref{lem:Sigma2}(1), one checks that for any point $y \in \Sigma$, the morphism
$\pi$ is smooth at $y$ if and only if
$\Sigma_{\pi(y)}$ is smooth over $\Spec(k(\pi(y)))$ at $y$
(e.g., see \cite[{Tag 07BT}]{stacks-project}). 
Hence, we have the equality $(\Sigma_w)_\nsm = (\Sigma_\nsm)_w$.
We can now deduce the constructibility of $W_c$ by
replacing $\pi$ by its composition with the inclusion
$\Sigma_\nsm \cap (V_Y \times \P^1_k \times G \times f(X_\sing)) \cap 
\Sigma'_U \inj
\Sigma'_U$ and applying \cite[E.1 (4)]{Gortz}.
\end{proof}

\begin{lem}\label{lem:Pull-back-1}
For every point $w \in (V_Y \times G)(k)$ lying in $W$, the following
hold.
\begin{enumerate}
\item
$\Gamma_w \cap \P^1_{X_\sing} \subset \P^1_{X_\cm}$.
\item
For $x \in \Gamma_w \cap \P^1_{X_\sing}$, one has
that $\wt{f}(x) \in (\Sigma_w)_\sm$.
\item
For every $x \in \Gamma_w \cap \P^1_{X_\sing}$, the ideal of $\Sigma_w$ in the 
local ring $\sO_{\P^1_H, \wt{f}(x)}$ is a complete intersection of height two.
\end{enumerate}
\end{lem}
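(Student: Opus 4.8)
The three assertions are really consequences of the transversality results of \S\ref{sec:Transversality}, applied not to $Y\subset H$ directly but to the family $\Sigma_Y\subset H\times\P^1_k$ via \propref{prop:Kleiman-Levine-P1}, combined with the membership of $w$ in the constructible set $W$. First I would dispose of (1): by definition of $W$, condition (b) says $\Gamma_w\cap\P^1_{X_\nsm}=\emptyset$, and since $X_\cm$ is the complement of $X_\ncm$ in $X$, condition (b) is exactly the statement that $\Gamma_w\cap\P^1_{X_\sing}$ is contained in $\P^1_{X_\cm}$ (recall $X_\ncm\subset X_\sing$ by the discussion preceding \S\ref{sec:PB-1}). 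So (1) is immediate from the defining property (b) of $W$ once we recall the inclusion $X_\ncm \subset X_\sing$; strictly speaking there is nothing to prove beyond unwinding the definitions.

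For (2): we must show that if $x\in\Gamma_w\cap\P^1_{X_\sing}$, then its image $\wt f(x)\in\Sigma_w$ lies in the smooth locus $(\Sigma_w)_\sm$. The key is condition (c) in the definition of $W$: it says $(\Sigma_w)_\nsm\cap(f(X_\sing)\times\P^1_k)=\emptyset$, where I use the identity $(\Sigma_w)_\nsm=(\Sigma_\nsm)_w$ established in the proof of \lemref{lem:Constructible}. Now $x\in\P^1_{X_\sing}$ means $p_X(x)\in X_\sing$, hence $\wt f(x)=(f(p_X(x)), p_{\P^1}(x))\in f(X_\sing)\times\P^1_k$; combined with (c) this forces $\wt f(x)\notin(\Sigma_w)_\nsm$, i.e.\ $\wt f(x)\in(\Sigma_w)_\sm$. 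Again this is essentially an unwinding of the definition of $W$, the only subtlety being the commutation of ``fibre'' and ``non-smooth locus'' for the flat morphism $\pi$, which was already recorded.

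The substantive point is (3): for $x\in\Gamma_w\cap\P^1_{X_\sing}$, the ideal of $\Sigma_w$ in $\sO_{\P^1_H,\wt f(x)}$ is a complete intersection of height two. Here I would argue as follows. By \lemref{lem:Sigma3}(1), $\Sigma_w$ is a closed subscheme of $\P^1_H=H\times\P^1_k$ of pure codimension two, so the height-two claim is the content; what must be shown is the complete-intersection property \emph{at the particular point} $\wt f(x)$. By part (2), $\wt f(x)$ lies in $(\Sigma_w)_\sm$, hence $\Sigma_w$ is itself regular at $\wt f(x)$; since $\P^1_H$ is regular and $\Sigma_w$ has codimension two there, a regular closed subscheme of a regular scheme is automatically a local complete intersection of the expected height in the local ring at such a point (the ideal is generated by a regular sequence of length equal to the codimension). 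Thus (3) follows from (2) together with regularity of $\P^1_H$. Alternatively, and closer to Levine's original strategy, one could invoke \propref{prop:Kleiman-Levine-P1}(4): applying it with $W$ there taken to be (an open piece of) the family $\Sigma_Y$ over which $\Sigma_w$ restricts to $g_1g\cdot Y$ and $g_2g\cdot Y$ at $0,\infty$, one gets that $\wt f(gW)\inj\P^1_X$ is lci at each generic point of $\wt f(gW)\cap\P^1_{X_\sing}$; but I expect the cleaner route is the ``regular $\Rightarrow$ lci'' observation above, since (2) already places $\wt f(x)$ in the smooth locus of $\Sigma_w$.

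\textbf{Main obstacle.} I expect no deep difficulty: all three parts reduce to carefully unwinding the definition of the constructible set $W$ and citing the already-proved fact $(\Sigma_w)_\nsm=(\Sigma_\nsm)_w$. The one place to be careful is (3), where one must make sure that being in the smooth locus of $\Sigma_w$ (a statement about $\Sigma_w$ as a $k$-scheme, or $k(w)$-scheme) genuinely upgrades to the ideal of $\Sigma_w$ being a regular-sequence-generated ideal of height two in $\sO_{\P^1_H,\wt f(x)}$; this uses that $\P^1_H$ is regular (being smooth over $k$, as $H$ is smooth) and that a closed immersion of a regular scheme into a regular scheme is a regular immersion of codimension equal to the difference of dimensions. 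If one instead wants the lci property only generically along $\Gamma_w\cap\P^1_{X_\sing}$ one can also just quote \propref{prop:Kleiman-Levine-P1}(4), but the pointwise statement as phrased is strictly stronger and is what the regularity argument delivers.
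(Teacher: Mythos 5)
Your proof is correct and follows essentially the same route as the paper: (1) and (2) by unwinding conditions (b) and (c) in the definition of $W$ (together with the identity $(\Sigma_w)_\nsm=(\Sigma_\nsm)_w$), and (3) by combining \lemref{lem:Sigma3} with the fact that a closed immersion of schemes both regular at a point is a regular immersion of codimension equal to the height there. The only blemish is the typo ``$\Gamma_w\cap\P^1_{X_\nsm}=\emptyset$'' where you mean $X_{\rm nCM}$, which your own subsequent sentence corrects.
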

\begin{proof}
The item (1) follows from the item (b) before \lemref{lem:Constructible}
and (2) follows from the item (c) before \lemref{lem:Constructible}.
By \lemref{lem:Sigma3}, $\Sigma_w$ is of pure codimension two in $\P^1_H$.
Since $\Sigma_w$ is smooth over $\Spec(k(w))$ at $\wt{f}(x)$ by (2), 
it is regular at $\wt{f}(x)$. In particular, $\Sigma_w \subset \P^1_H$
is a closed immersion of schemes both of which are regular at $\wt{f}(x)$.
Hence, (3) follows.
\end{proof}

\begin{lem}\label{lem:Pull-back-2}
For every point $w \in (V_Y \times G)(k)$ lying in $W$, the following
hold.
\begin{enumerate}
\item
$\Gamma_w$ is purely 1-dimensional.
\item
$\Gamma_w \cap \P^1_{X_\sing}$ is a finite set.
\item
$\Gamma_w \cap (X_\sing \times \{0, \infty\}) = \emptyset$.
\item
The inclusion $\Gamma_w \subset \P^1_X$ is a local complete intersection
morphism at every point of $\Gamma_w \cap \P^1_{X_\sing}$.
\item
The projection map
$\Gamma_w \to \P^1_k$ is flat over an open neighborhood of $\{0, \infty\}$.
\end{enumerate}
\end{lem}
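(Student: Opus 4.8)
The plan is to read off the five assertions from the conditions (a)--(d) cutting out $W$, together with the facts already established about $\Sigma_w$ and about the pullbacks $f^{-1}(gY)$. Items (1) and (2) are conditions (a) and (d) verbatim. For (3), note that for $\epsilon \in \{0,\infty\}$ one has, set-theoretically, $\Gamma_w \cap (X \times \{\epsilon\}) = f^{-1}(p_w^{-1}(\epsilon))$; by \lemref{lem:Sigma3}(3) (in the notation of \eqref{eqn:def-pi-Sigma-0}, writing $\tau(w) = (g', g'')$) this equals $f^{-1}(g'Y)$ when $\epsilon = 0$ and $f^{-1}(g''Y)$ when $\epsilon = \infty$. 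Since $w \in W \subset \wt U = \tau^{-1}(U \times U)$ with $U = U(f,Y)$, both $g'$ and $g''$ lie in $U(f,Y)(k)$, so by the very choice of $U(f,Y)$ in \S~\ref{sec:PB-1} (i.e.\ \thmref{thm:Kleiman-Levine} applied to $X$ and $X_\sing$) the sets $f^{-1}(g'Y)$ and $f^{-1}(g''Y)$ are finite and disjoint from $X_\sing$. This proves (3); in particular the (finitely many) points of $\Gamma_w$ over $\{0,\infty\}$ lie in $\P^1_{X_\reg}$.

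For (4), fix $x \in \Gamma_w \cap \P^1_{X_\sing}$. By \lemref{lem:Pull-back-1}(1) (which is condition (b)), $x \in \P^1_{X_\cm}$, so $\sO_{\P^1_X,x}$ is Cohen--Macaulay. By \lemref{lem:Pull-back-1}(3), the ideal of $\Sigma_w$ in $\sO_{\P^1_H, \wt f(x)}$ is generated by a regular sequence $(a_1,a_2)$ of height two, and since $\Gamma_w = \Sigma_w \times_{\P^1_H} \P^1_X$, the ideal of $\Gamma_w$ in $\sO_{\P^1_X,x}$ is generated by the two elements $\wt f^{\#}(a_1), \wt f^{\#}(a_2)$. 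As $\Gamma_w$ is purely one-dimensional (condition (a)) and $\P^1_X$ is equidimensional of dimension three (by \lemref{lem:Equi-dim}), this ideal has height two; an ideal of height two generated by two elements in a Cohen--Macaulay local ring is generated by a regular sequence, so $\Gamma_w \inj \P^1_X$ is a local complete intersection at $x$.

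The essential point is (5), and the plan is as follows. Fix $\epsilon \in \{0,\infty\}$ and, using \lemref{lem:Sigma3}(2), an open $O \ni \epsilon$ over which $p_w \colon \Sigma_w \to \P^1_k$ is flat. By generic flatness the non-flat locus of $\Gamma_w|_O \to O$ is closed with image a finite set of closed points of $O$, so after shrinking $O$ it suffices to show that $\Gamma_w \to \P^1_k$ is flat at every point of $\Gamma_w$ lying over $\epsilon$. Set $S = \Spec \sO_{\P^1_k, \epsilon}$, a discrete valuation ring with uniformizer $t$ and residue field $k$, and base change everything to $S$: then $\Sigma_{w,S}$ is flat over $S$ with closed fibre $\sO_{\Sigma_{w,S}}/t\sO_{\Sigma_{w,S}} = \sO_{g'Y}$ by \lemref{lem:Sigma3}(3), while $X_S = X \times_k S$ and $H_S = H \times_k S$ are flat over $S$ and $\Gamma_{w,S} = \Sigma_{w,S} \times_{H_S} X_S$. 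Flatness of $\Gamma_{w,S}$ over $S$ amounts to $t$ being a non-zero-divisor on $\sO_{\Sigma_{w,S}} \otimes_{\sO_{H_S}} \sO_{X_S}$, and tensoring the exact sequence $0 \to \sO_{\Sigma_{w,S}} \xrightarrow{t} \sO_{\Sigma_{w,S}} \to \sO_{g'Y} \to 0$ with $\sO_{X_S}$ over $\sO_{H_S}$ reduces this to the vanishing $\Tor^{\sO_{H_S}}_1(\sO_{g'Y}, \sO_{X_S}) = 0$. Finally, $t$ is a non-zero-divisor on $\sO_{X_S}$ and $\sO_{g'Y}$ is a module over $\sO_{H_S}/t\sO_{H_S} \cong \sO_H$, so a change-of-rings computation identifies $\sO_{g'Y} \otimes^{\mathbf L}_{\sO_{H_S}} \sO_{X_S} \simeq \sO_{g'Y} \otimes^{\mathbf L}_{\sO_H} \sO_X$; the right-hand side is concentrated in degree zero by \thmref{thm:Kleiman-Levine}(2), because $g' \in U(f,Y)(k)$. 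Hence the required $\Tor_1$ vanishes and (5) follows. The main obstacle is keeping this last Tor-independence bookkeeping straight; the rest is essentially quotation of \lemref{lem:Sigma3}, \lemref{lem:Pull-back-1}, and the construction of $U(f,Y)$.
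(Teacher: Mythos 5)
Your proof is correct and follows essentially the same route as the paper: (1), (2) are read off from the defining conditions of $W$, (4) uses the Cohen--Macaulayness from condition (b) together with the height-two complete intersection property of $\Sigma_w$ at $\wt f(x)$, and (5) reduces via the DVR $\sO_{\P^1_k,\epsilon}$ to the Tor-vanishing $\Tor_1^{\sO_H}(\sO_{g'Y},\sO_X)=0$ guaranteed by the choice of $U(f,Y)$. If anything you are more careful than the paper, which asserts (3) is literally one of the conditions (a)--(d) (it is not; your derivation from $\tau(w)\in (U\times U)(k)$ and the identification of the fibres over $0,\infty$ is the right argument) and which delegates the change-of-rings bookkeeping in (5) to the proof of Proposition~\ref{prop:Kleiman-Levine-P1}(3).
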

\begin{proof}
The items (1), (2) and (3) are respectively same as
the items (a), (c) and (d) before \lemref{lem:Constructible}.
If $x \in \Gamma_w \cap \P^1_{X_\sing}$, then it must be a closed point
of $\P^1_X$ by (2). Let $J \subset \sO_{\P^1_X, x}$ be the ideal of
$\Gamma_w$ in $\sO_{\P^1_X, x}$.
Now, item (3) of \lemref{lem:Pull-back-2} shows that
the ideal $I$ of $\Sigma_w$ in the local ring $\sO_{\P^1_H, \wt{f}(x)}$
is a complete intersection of height two. In particular, we
can write $I = (a_1, a_2)$. We have the morphism of local rings
$\sO_{\P^1_H, \wt{f}(x)} \to \sO_{\P^1_X, x}$ and $J = I\sO_{\P^1_X, x}$.
This implies that $J = (a_1, a_2)$. On the other hand,
$\sO_{\P^1_X, x}$ is Cohen-Macaulay by \lemref{lem:Pull-back-1} (1).
Since ${\rm ht} (J) = 2$, it must be a complete intersection 
(e.g., see \cite[Theorem~17.4]{Matsumura}). This proves (4).

For (5), we first note that the projection map $\Sigma_w \to \P^1_k$
is flat over an open neighborhood of $\{0, \infty\}$ by
\lemref{lem:Sigma3}. Hence, if we repeat the proof of
\propref{prop:Kleiman-Levine-P1} (3), the only  thing we need to
know is that 
$\Tor^{\sO_{H_\epsilon}}_1(\sO_{(\Sigma_w)_\epsilon}, \sO_{X_\epsilon}) = 0$
for $\epsilon = 0, \infty$.
But for $w=(v,g)$ with $v\in V_Y(k)$ and $g\in G(k)$ as above, we have
\begin{equation}\label{eqn:Ratl-equiv}
\Gamma_w\times_{\P^1}\{0\} = 
f^{-1}(g\Phi_m(v, 0) \cdot Y) = f^{-1}(g_1 Y) \ \ \mbox{and} 
\end{equation}
\[
\Gamma_w\times_{\P^1}\{\infty\} = f^{-1}(g \Phi_m(v,\infty) \cdot Y) = 
f^{-1}(g_2 Y),
\]
where $\tau(v,g) = (g_1, g_2) \in U \times U$ 
(see ~\eqref{eqn:def-pi-Sigma-0}). 
Hence, the desired Tor-vanishing follows from our choice of $U$
and \lemref{lem:Tor-ind}.
\end{proof}

We shall need the following elementary lemma to show that $W$ contains a 
dense open subset of $V_Y \times G$.

\begin{lem}\label{lem:Elem*}
Let $f \colon X_1 \to X_2$ be a morphism in $\Sch_k$ such that $X_1(k)$
is Zariski dense in $X_1$. Let $Z \subset X_1$ be a closed subset
which has the property that $f^{-1}(x) \cap Z$ is dense in $f^{-1}(x)$ 
for every $x \in X_2(k)$.
Then $Z = X_1$.
\end{lem}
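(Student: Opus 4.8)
**Proof proposal for Lemma~\ref{lem:Elem*}.**

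The plan is to exploit the density of $X_1(k)$ together with the fact that a closed subset containing a dense set must be everything. First I would note that the statement is purely topological once we reduce to $k$-points, so the key is to show that every $k$-point of $X_1$ lies in $Z$. Take an arbitrary $x_1 \in X_1(k)$ and set $x_2 = f(x_1) \in X_2(k)$. Since $x_1$ is a $k$-rational point of the fibre $f^{-1}(x_2)$, it is in particular a point of $f^{-1}(x_2)$, and the hypothesis says that $Z \cap f^{-1}(x_2)$ is dense in $f^{-1}(x_2)$. Because $Z$ is closed in $X_1$, the intersection $Z \cap f^{-1}(x_2)$ is closed in $f^{-1}(x_2)$; a closed dense subset of a (nonempty) space is the whole space, so $Z \cap f^{-1}(x_2) = f^{-1}(x_2)$, and hence $x_1 \in Z$. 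This shows $X_1(k) \subseteq Z$.

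Now I would conclude by the density hypothesis: $Z$ is a closed subset of $X_1$ containing the Zariski-dense set $X_1(k)$, so $Z \supseteq \overline{X_1(k)} = X_1$, i.e.\ $Z = X_1$. The only minor subtlety to check is the case where $f^{-1}(x_2)$ is empty for some $x_2$, but this cannot occur here: the fibre contains the $k$-point $x_1$ by construction, so it is nonempty, and the ``closed dense implies everything'' step goes through. A second point worth a sentence is that ``dense'' in the hypothesis should be read as density in the fibre $f^{-1}(x_2)$ with its subspace (or reduced scheme) topology; this does not affect the argument since we only use the topological closure operation.

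I do not expect any real obstacle here: the lemma is essentially a formal consequence of the definition of ``closed'' and of Zariski density, and the proof is two or three lines. The one thing to be careful about in writing it up cleanly is to phrase the fibrewise argument so that it is visibly independent of any scheme structure (working with the underlying topological spaces, or equivalently with reduced subschemes), since the later applications of this lemma — where $f$ is one of the projection maps $\pi$ or $\theta'$ of the preceding sections and $Z$ is the closure of the constructible set $W$ from Lemma~\ref{lem:Constructible} — will invoke it purely at the level of point-sets.
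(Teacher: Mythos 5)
Your proof is correct and is essentially the contrapositive formulation of the paper's own argument: both rest on the observation that a closed dense subset of a fibre is the whole fibre, combined with the Zariski density of $X_1(k)$ (the paper picks a $k$-point of $X_1\setminus Z$ and derives a contradiction, whereas you show directly that $X_1(k)\subseteq Z$ and then close up). No gap; the same two ingredients are used in the same way.
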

\begin{proof}
Suppose that $Z \neq X_1$. Our assumption then implies that 
$(X_1 \setminus Z)(k) \neq \emptyset$. Choose a point in
$x' \in (X_1 \setminus Z)(k)$ and let $x = f(x')$. 
Then $x \in X_2(k)$ and
$f^{-1}(x) \cap Z$ is a proper closed subset of $f^{-1}(x)$.
Hence, it can not be dense in $f^{-1}(x)$, a contradiction.
\end{proof}

\begin{lem}\label{lem:Pull-back-3}
$W$ contains a dense open subset of $V_Y \times G$.
\end{lem}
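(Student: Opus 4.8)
The plan is to combine the constructibility of $W$ (Lemma~\ref{lem:Constructible}) with a fibrewise analysis over the $k$-rational points of the first factor $V_Y$, promoting fibrewise density to global density by means of Lemma~\ref{lem:Elem*}. The starting observation is that a constructible subset of an irreducible Noetherian scheme contains a dense open subset if and only if it is dense: writing such a set as a finite union $\bigcup_i (U_i \cap C_i)$ with $U_i$ open and $C_i$ closed irreducible, density forces some $C_i$ to be the whole space, and then $U_i \cap C_i = U_i$ is the required dense open. So it suffices to show that $W$ is dense in $V_Y \times G$. Since $B \times G$ — and hence its dense open subscheme $V_Y \times G$ — is a rational $k$-variety and $k$ is infinite, for each $v \in V_Y(k)$ the fibre $\{v\} \times G$ (a closed subscheme of $V_Y \times G$, as $v$ is a closed point) is rational, so its $k$-points are Zariski dense in it. Applying Lemma~\ref{lem:Elem*} to the projection $p \colon V_Y \times G \to V_Y$ and the closed set $Z := \overline{W}$, it therefore suffices to prove that for every $v \in V_Y(k)$ the set $W \cap (\{v\} \times G)$ is Zariski dense in $\{v\} \times G$; indeed this gives $(\{v\} \times G) \cap \overline{W} = \{v\} \times G$ for all $v \in V_Y(k)$, hence $\overline{W} = V_Y \times G$.

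So fix $v \in V_Y(k)$. By the $G$-equivariance of $\pi \colon \Sigma_Y \to V_Y \times G$ (Lemma~\ref{lem:extralemsigma}) one has $\Sigma_{(v,g)} = g \cdot \Sigma_{(v,e)}$ for all $g \in G$, where $G$ acts on $\P^1_H = H \times \P^1_k$ through the first factor by automorphisms; hence $\Gamma_{(v,g)} = \wt{f}^{-1}(g \cdot \Sigma_{(v,e)})$, and since $\wt{f}^{-1}$ commutes with passing to the reduction on underlying spaces, the topological conditions (a)--(d) defining $W$ at $w = (v,g)$ depend only on the reduced closed subscheme $W_v := (\Sigma_{(v,e)})_{\rm red} \subset \P^1_H$. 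By Lemma~\ref{lem:Sigma3}(1),(2) the scheme $\Sigma_{(v,e)}$ has pure codimension two in $\P^1_H$ (so $\dim W_v = \dim H - 1$) and its projection to $\P^1_k$ is flat over a neighbourhood of $\{0,\infty\}$; both properties pass to $W_v$, so $W_v$ meets the hypotheses of Proposition~\ref{prop:Kleiman-Levine-P1}. Applying that proposition to $\wt{f}$ and $W_v$ — and applying Theorem~\ref{thm:Kleiman-Levine} (in its $\P^1_H$-form) additionally to the lower-dimensional data $\P^1_{X_{\rm nCM}} \to \P^1_H$ and to the non-smooth locus of $\Sigma_{(v,e)}$ against $f(X_{\rm sing}) \times \P^1_k$ — one produces, after further intersecting with the dense open set $\{g : (v,g) \in \wt{U} = \tau^{-1}(U \times U)\}$, a dense open $U_v \subset G$ such that every $g \in U_v(k)$ makes $w = (v,g)$ satisfy: $\wt{f}^{-1}(gW_v)$ is empty or of pure dimension $\dim X + \dim W_v - \dim H = 1$ (condition (a)); it meets $\P^1_{X_{\rm sing}}$ properly, hence in a finite set (condition (d)); it misses $\P^1_{X_{\rm nCM}}$, which has dimension $\le 1$ (condition (b)); and the non-smooth locus of $g \cdot \Sigma_{(v,e)}$ avoids $f(X_{\rm sing}) \times \P^1_k$ (condition (c)). Thus $\{v\} \times U_v(k) \subseteq W$, and as $U_v(k)$ is dense in $G$, the set $W \cap (\{v\} \times G)$ is dense in $\{v\} \times G$, as required.

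The heart of the argument is this second step, and within it the most delicate point is condition (c): one must control the non-smooth locus of the generic translate $\Sigma_{(v,g)}$ relative to the fixed (and possibly positive-dimensional) set $f(X_{\rm sing}) \times \P^1_k$. The inputs here are the explicit description of the fibres of $\pi$ (through the isomorphism $\Gamma \cong G \times Y$ of Lemma~\ref{lem:Gamma} and the computation in the proof of Lemma~\ref{lem:Sigma-rational}, which identifies $\Sigma^o_{(v,e)}$ with a product) together with the full strength of the Kleiman--Levine transversality theorem over an infinite field (Theorem~\ref{thm:Kleiman-Levine}), applied simultaneously to $X_{\rm sing}$, to $X_{\rm nCM}$, and to the relevant singular loci. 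Granting this, the two steps combine to show that $W$ is dense, and being constructible it then contains a dense open subset, which is the assertion of the lemma.
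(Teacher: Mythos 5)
Your proof is correct and follows essentially the same route as the paper's: both arguments reduce, via Lemma~\ref{lem:Elem*} and the constructibility established in Lemma~\ref{lem:Constructible}, to showing that $W$ meets each fibre $\{v\}\times G$ over a $k$-rational point of $V_Y$ densely, and both obtain this by applying Proposition~\ref{prop:Kleiman-Levine-P1} (together with Theorem~\ref{thm:Kleiman-Levine} on the auxiliary strata) to a single fibre $\Sigma_{(v,g_0)}$ and then translating by the $G$-action of Lemma~\ref{lem:extralemsigma}. The remaining differences are cosmetic: the paper argues by contradiction and takes an arbitrary base point $g_0$, right-translating the resulting dense open subset of $G$ at the end, whereas you argue directly from the identity element; your added remarks on reducedness of $\Sigma_{(v,e)}$ and on condition (c) go slightly beyond what the paper's own proof spells out.
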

\begin{proof}
Since $V_Y \times G$ is irreducible, it suffices to show that
$W$ contains the generic point $\eta$ of $V_Y \times G$. 
Suppose to the contrary that $\eta \notin W$. As $W$ is
constructible and $V_Y \times G$ irreducible, this means that
$\eta \notin \ov{W}$, where the latter is the closure of
$W$ in $V_Y \times G$.  
Since $(V_Y \times G)(k)$ is Zariski dense in $V_Y \times G$,
it follows from \lemref{lem:Elem*} that there exists
$v_0 \in V_Y(k)$ such that $p^{-1}_1(v_0) \cap \ov{W}$ is
not dense in $p^{-1}_1(v_0)$.

Since $p^{-1}_1(v_0) \cong G$ has a Zariski dense set of
$k$-rational points, we can choose
$w_0 = (v_0, g_0) \in (V_Y \times G)(k)$. Let
$\Sigma_{w_0} = \pi^{-1}(w_0) \subset \P^1_H$. By applying
\propref{prop:Kleiman-Levine-P1} to $\Sigma_{w_0}$,
we get a dense open subscheme $U_{w_0} \subset G$ such that
for every $g \in U_{w_0}(k)$, the pull-back $F_g := \wt{f}^{-1}(g\Sigma_{w_0})$
satisfies the following.
\begin{enumerate}
\item
$F_{g}$ has pure dimension $1$. 
\item $F_{g}\cap\P^1_{ X_{\ncm}} = \emptyset$.
\item
$F_{g}\cap\P^1_{ X_{\sing}}$ is finite and 
$F_g \cap (X_\sing \times \{0, \infty\}) = \emptyset$. 
\item
The inclusion $F_{g} \subset \P^1_X$ is a local complete intersection 
at each point of $\P^1_{X_\sing}$.
\item
The projection $F_{g}\to \P^1_k$ is flat over a neighborhood of 
$\{0, \infty\}$. 
\end{enumerate}

Since $G$ acts on $\Sigma \subset B \times \P^1_k \times G \times H$ by Lemma \ref{lem:extralemsigma}
where its action is trivial on the first two factors and diagonal on the 
product of the other two factors (see \S~\ref{sec:Sigma}), it follows that
every $g \in G(k)$ acts as an automorphism of 
$B \times \P^1_k \times G \times H$ which takes $g\Sigma_{w_0}$ onto
$\Sigma_{(v_0, gg_0)}$.
Hence, we see that for every $g \in U_{w_0}(k)$, the pull-back
$\wt{f}^{-1}(\Sigma_{(v_0, gg_0)})$ satisfies (1) $\sim$ (5) above.

We let $U'_{w_0} = (U_{w_0})g_0 \subset G$.
Then the right multiplication by $g_0$ defines an automorphism
$G \xrightarrow{\cong} G$ which takes $U_{w_0}$ onto $U'_{w_0}$.
It follows that $U'_{w_0}$ is dense open in $G$ and
$\wt{f}^{-1}(\Sigma_{(v_0, g)})$ satisfies (1) $\sim$ (5) above
for every $g \in U'_{w_0}(k)$. 
In particular, $\{v_0\} \times U'_{w_0} \subset W$.
But $\{v_0\} \times U'_{w_0}$ is clearly dense in $p^{-1}_1(v_0)$. 
This implies that $p^{-1}_1(v_0) \cap W$ is dense in $p^{-1}_1(v_0)$.
This leads to a contradiction.
We have thus proven the lemma.
\end{proof}

\begin{lem}
\label{lem:pullback-well-defined} 
There exists a dense open subscheme $V(f, Y) \subset U(f,Y)$  of $G$, 
depending only on $f$ and $Y$, such that for every 
$g_1, g_2\in V(f, Y)(k)$, the classes 
$[f^{-1}(g_1 Y)]$ and $[f^{-1}(g_2 Y)]$ 
define the same element of $\CH^L_0(X)$. In particular, the assignment 
\eqref{eq:def-pullback-1} gives a  group homomorphism 
\[
f^*\colon \sZ^2(H) \to \CH^L_0(X).
\]
\end{lem}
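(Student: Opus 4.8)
The plan is to use the family of rational equivalences constructed in the previous section to interpolate between two generic translates $f^{-1}(g_1 Y)$ and $f^{-1}(g_2 Y)$. Concretely, by \lemref{lem:Pull-back-3}, the set $W \subset V_Y \times G$ is constructible and contains a dense open subset; so pick a dense open $W' \subset W$. Given $g_1, g_2 \in U(f,Y)(k)$, I want to produce a $k$-point $w = (v, g) \in W'(k)$ whose image under $\tau$ (see \eqref{eqn:def-pi-Sigma}) is $(g_1, g_2)$. Since $\tau \colon V_Y \times G \to G \times G$ is a smooth surjective morphism, the fiber $\tau^{-1}(g_1, g_2)$ is a nonempty smooth $k$-scheme; the obstacle is that $W'$ is only dense open in $V_Y \times G$, not necessarily meeting every such fiber. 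This is exactly the reason for the shrinking argument: I would intersect over all of $(g_1, g_2)$, i.e.\ define $V(f,Y) \subset U(f,Y)$ to be the set of $g \in G$ such that for \emph{every} $g' \in U(f,Y)(k)$ the fiber $\tau^{-1}(g, g')$ (and $\tau^{-1}(g',g)$) meets $W'$ in a $k$-point. One shows this set is still dense open by a standard fibration/constructibility argument: the ``bad'' locus where some fiber of $\tau$ misses $W'$ is, by Chevalley, a constructible subset of $G \times G$ not containing the generic point (since $W'$ is dense and $\tau$ is smooth hence open), and then \lemref{lem:Elem*} shows its projections to the factors are proper closed — so their complement contains a dense open $V(f,Y)$.

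Once such a $w = (v,g) \in W'(k)$ with $\tau(w) = (g_1, g_2)$ is chosen, the curve $\Gamma_w = \wt{f}^{-1}(\Sigma_w) \subset \P^1_X$ does the job. By \lemref{lem:Pull-back-2}, $\Gamma_w$ is purely one-dimensional, meets $\P^1_{X_\sing}$ in a finite set, misses $X_\sing \times \{0, \infty\}$, is a local complete intersection in $\P^1_X$ along $\P^1_{X_\sing}$, and projects flatly to $\P^1_k$ near $\{0, \infty\}$ — precisely conditions (1)–(5) of \defref{defn:LW-defn} for a Cartier curve relative to $X_\sing$ (condition (5) being the lci condition, which after \lemref{lem:Pull-back-1}(3) follows since the ideal is a complete intersection of height two). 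Then by \eqref{eqn:Ratl-equiv} and \eqref{eqn:def-pi-Sigma-0}, the two fibers are $\Gamma_w \times_{\P^1} \{0\} = f^{-1}(g_1 Y)$ and $\Gamma_w \times_{\P^1} \{\infty\} = f^{-1}(g_2 Y)$, and applying $(p_1)_*$ of \defref{defn:LW-defn}(4) (here $p_1$ is finite on the relevant components, or one passes to the finite part using the argument of \lemref{lem:Reduced-finite}) gives that $[f^{-1}(g_1 Y)] - [f^{-1}(g_2 Y)] \in \sR^L_0(X, X_\sing)$. Hence $f^*[Y] := [f^{-1}(gY)]$ is well defined in $\CH^L_0(X)$ independently of the choice of $g \in V(f,Y)(k)$.

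To finish the construction of the group homomorphism $f^* \colon \sZ^2(H) \to \CH^L_0(X)$, extend $f^*$ linearly over all integral $Y \subset H$ of codimension two. The only point to check is that this is well defined on all of $\sZ^2(H)$: given finitely many $Y_1, \ldots, Y_r$, one intersects the dense open subsets $V(f, Y_j) \subset G$ (a finite intersection of dense opens in the irreducible $G$, hence dense open) and picks a single $g$ lying in all of them, so that $f^*(\sum_j n_j [Y_j]) = \sum_j n_j [f^{-1}(g Y_j)]$ is represented simultaneously; linearity of $[f^{-1}(g(\cdot))]$ on cycles then gives additivity.

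The main obstacle is the first step: upgrading ``$W$ contains a dense open subset of $V_Y \times G$'' (\lemref{lem:Pull-back-3}) to the statement that we can find a $k$-point of $W$ in a \emph{prescribed} fiber of $\tau$ over a dense open set of $(g_1, g_2) \in G \times G$. This requires combining the constructibility of $W$, the smoothness (hence openness) of $\tau$, the density of $k$-rational points in rational varieties over the infinite field $k$, and \lemref{lem:Elem*} — essentially the same package of tools used in the proof of \lemref{lem:Pull-back-3}, but now applied fiberwise over the target $G \times G$ of $\tau$ rather than over the base $V_Y \times G$. Everything downstream (verifying the Cartier curve conditions, computing the two fibers) is already packaged in Lemmas~\ref{lem:Pull-back-1} and \ref{lem:Pull-back-2}.
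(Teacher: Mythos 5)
Your overall strategy --- using the family $\Sigma_Y$ and the curves $\Gamma_w$ from Lemmas~\ref{lem:Pull-back-1}--\ref{lem:Pull-back-3} to produce a Cartier curve interpolating between two translates --- is the right one, and everything downstream (the verification that $\Gamma_w$ is a Cartier curve relative to $X_\sing$, the identification of its fibres over $0$ and $\infty$ via ~\eqref{eqn:Ratl-equiv}) is correct. The gap is in the first step, where you try to hit the \emph{prescribed} fibre $\tau^{-1}(g_1,g_2)$ for an arbitrary pair $g_1,g_2$. You define $V(f,Y)$ as the set of $g$ such that for \emph{every} $g'\in U(f,Y)(k)$ the fibres $\tau^{-1}(g,g')$ and $\tau^{-1}(g',g)$ meet $W'$, and claim this is dense open because ``the projections of the bad locus are proper closed.'' That is false: the bad locus $B=(G\times G)\setminus\tau(W')$ is a proper closed subset, but its projections to the two factors can be dense, indeed all of $G$ (e.g.\ if $B$ is an irreducible divisor dominating both factors, as a translated diagonal would be). \lemref{lem:Elem*} only produces \emph{one} rational point whose fibre avoids a given closed set; it says nothing about a dense open set of such points. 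In general there is no dense open $V$ with $V\times V\subset\tau(W')$ (take $B$ to be the diagonal), so the set you define could be empty, and the step fails.

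The fix, which is what the paper does, is to avoid connecting $g_1$ to $g_2$ directly and instead use transitivity of rational equivalence through a fixed reference translate. Since $\tau$ is smooth, $\tau(W')$ is dense open in $G\times G$; its image under the first projection is dense open in $G$ and hence contains a $k$-point $g_0$. Then $p_1^{-1}(g_0)\cap\tau(W')$ is dense open in $p_1^{-1}(g_0)\cong G$, and one takes $V(f,Y)$ to be its image under $p_2$. For every $g\in V(f,Y)(k)$ the pair $(g_0,g)$ lies in $\tau(W)$, so one finds a $k$-point $w\in W$ over $(g_0,g)$ whose curve $\Gamma_w$ gives $[f^{-1}(g_0Y)]\sim[f^{-1}(gY)]$. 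Applying this to both $g_1$ and $g_2$ and composing the two equivalences yields the statement --- only the single fibre over $(g_0,g)$ ever needs to meet $W$, which is exactly what density of $\tau(W')$ provides. (Your closing remark about intersecting finitely many $V(f,Y_j)$ is also unnecessary: $\sZ^2(H)$ is free on the integral $Y$'s, so once $f^*[Y]$ is well defined for each generator, extending linearly automatically produces a homomorphism.)
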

\begin{proof}
Let $W' \subset W$ be a subset which is dense open in $V_Y \times G$.
Such a subset exists by \lemref{lem:Pull-back-3}.
Since $\tau$ is smooth (this is shown just below ~\eqref{eqn:def-pi-Sigma}),
its image $\tau(W')$ is dense open in $G \times G$. If we take the
projection of $\tau(W')$ to the first factor of $G \times G$, it is 
also dense open in $G$. Hence, it contains a $k$-rational point $g_0$.
Then $p^{-1}_1(g_0)\cap \tau(W')$ is dense open in $p^{-1}_1(g_0)$.
Since the composite map $p^{-1}_1(g_0)  \inj G \times G 
\xrightarrow{p_2} G$ is an isomorphism, it takes 
$p^{-1}_1(g_0) \cap \tau(W')$ to a dense open subscheme 
of $G$. Let $V(f,Y)$ be such a dense open subscheme.
Note that $V(f,Y) \subset U(f,Y)$ since $W\subset \tau^{-1}(U\times U)$.

For every $g \in V(f,Y)(k)$, we have that $(g_0, g) \in \tau(W)$.
Let $w \in W$ be such that $\tau(w) = (g_0, g)$. It is
clear from ~\eqref{eqn:def-pi-Sigma} that $w \in (V_Y \times G)(k)$.
It follows from \lemref{lem:Pull-back-2} and Definition~\ref{defn:LW-defn}
that $\Gamma_w$ is a Cartier curve on $\P^1_X$. Furthermore, it follows
from ~\eqref{eqn:Ratl-equiv}
that $\Gamma_w$ defines a rational equivalence between
$[f^{-1}(g_0Y)]$ and $[f^{-1}(gY)]$.
It follows that for any pair of points $g_1, g_2 \in V(f,Y)(k)$,
the cycles $[f^{-1}(g_1Y)]$ and $[f^{-1}(g_2Y)]$ are both 
rationally equivalent to $[f^{-1}(g_0Y)]$. Hence, they
are rationally equivalent each other.
This finishes the proof.
\end{proof}

\subsection{The pull-back map on the Chow groups}
\label{sec:Pull-back-Chow}
We let $G, H, X$ and $f \colon X \to H$ be as we described in the beginning of 
\S~\ref{sec:Pull-back}. We have shown in 
\S~\ref{sec:PB-1} that $f$ induces a pull-back map
$f^* \colon \sZ^2(H) \to \CH^L_0(X)$. We shall now show that this map
factors through the rational equivalence in the classical Chow group of
$H$.

\begin{prop}\label{prop:pullback-resp-rationaleq} 
The morphism $f^*\colon \sZ^2(H)\to \CH^L_0(X)$ descends to a  homomorphism 

\[
f^*\colon \CH^2(H)\to \CH^L_0(X)
\]
satisfying the following property: 
for every $Y\subset H$ integral subvariety of codimension $2$, 
the equality $f^{*}[Y] = [f^{-1}(gY)]$ holds in $\CH^L_0(X)$ for every 
$k$-point $g\in V(Y,f)$, where $V(Y,g)$ is the open dense subset of $G$ of 
\lemref{lem:pullback-well-defined}.
\end{prop}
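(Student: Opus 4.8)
The plan is to show that $f^*\colon \sZ^2(H)\to \CH^L_0(X)$ kills the subgroup of cycles rationally equivalent to zero. A generator of this subgroup is $\divf(h)$ for an integral closed subvariety $Z\subset H$ of codimension one and a rational function $h\in k(Z)^\times$; equivalently, writing $\Gamma_h\subset Z\times\P^1_k$ for the graph of $h\colon Z\dashrightarrow \P^1_k$ (the closure of the graph over the generic point of $Z$), we have $\divf(h) = (p_1)_*(\Gamma_h\cdot(H\times\{0\})) - (p_1)_*(\Gamma_h\cdot(H\times\{\infty\}))$, an equality of codimension-two cycles on $H$. So it suffices to prove that for such a $W := \Gamma_h\subset H\times\P^1_k$ — an integral closed subscheme of codimension one which is flat over a neighborhood of $\{0,\infty\}$ and whose fibers over $0$ and $\infty$ have codimension two in $H$ — the cycle $f^*(W_0) - f^*(W_\infty)$ vanishes in $\CH^L_0(X)$, where $W_0 = W\cap(H\times\{0\})$ and $W_\infty = W\cap(H\times\{\infty\})$ as cycles on $H$.

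First I would apply Proposition~\ref{prop:Kleiman-Levine-P1} to the morphism $f\colon X\to H$ and the closed subscheme $W\subset \P^1_H$: this produces a dense open $U = U(\wt f, W)\subset G$ such that for every $g\in U(k)$ the pullback $\wt f^{-1}(gW)\subset \P^1_X$ is purely one-dimensional, meets $\P^1_{X_\sing}$ and $X_\sing\times\{0,\infty\}$ properly, is flat over a neighborhood of $\{0,\infty\}$ in $\P^1_k$, and is an lci in $\P^1_X$ at each generic point of its intersection with $\P^1_{X_\sing}$. Combined with the Cohen–Macaulayness of $X$ in codimension one (and shrinking $U$ so that $\wt f^{-1}(gW)$ avoids the finite set $X_\nsm$-type locus, exactly as in Lemmas~\ref{lem:Pull-back-1} and \ref{lem:Pull-back-2}), this shows $\wt f^{-1}(gW)$ is a genuine Cartier curve on $\P^1_X$ relative to $X_\sing$. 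Next, by compatibility of proper intersection product with flat pullback and the Tor-independence of Lemma~\ref{lem:Tor-ind} (which controls the fibers over $0$ and $\infty$), the two ends of this Cartier curve are $\wt f^{-1}(gW)\cdot(X\times\{0\}) = f^{-1}(g W_0)$ and $\wt f^{-1}(gW)\cdot(X\times\{\infty\}) = f^{-1}(g W_\infty)$ as cycles on $X$ — here I would use that $W_0$ and $W_\infty$, viewed as cycles, are the specializations of the flat family $W$, and that $f^{-1}$ of a translate commutes with taking the cycle-theoretic intersection thanks to Tor-independence. Therefore $[f^{-1}(gW_0)] - [f^{-1}(gW_\infty)] \in \sR^L_0(X, X_\sing)$, i.e. this difference is zero in $\CH^L_0(X)$.

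Finally I would reconcile this with the definition of $f^*$ from \lemref{lem:pullback-well-defined}: for the integral components $Y$ appearing in the cycles $W_0$ and $W_\infty$ (and their translates), the classes $f^*[Y] = [f^{-1}(g'Y)]$ are independent of the chosen $g'\in V(f,Y)(k)$, so after intersecting the finitely many open sets $U(\wt f, W)$, the various $V(f,Y)$, and a dense open ensuring the generic translate $g\in G(k)$ simultaneously computes $f^*[Y]$ for all the relevant $Y$ — such a common dense open exists since $G$ is unirational over the infinite field $k$, hence has Zariski-dense rational points — we conclude $f^*(W_0) - f^*(W_\infty) = [f^{-1}(gW_0)] - [f^{-1}(gW_\infty)] = 0$ in $\CH^L_0(X)$. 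This proves $f^*$ descends to $\CH^2(H)$, and the stated compatibility $f^*[Y] = [f^{-1}(gY)]$ for $g\in V(f,Y)(k)$ is immediate from the construction. The main obstacle I anticipate is the bookkeeping in the previous paragraph: making sure the single generic translate $g$ can be chosen to make $\wt f^{-1}(gW)$ a Cartier curve \emph{and} to compute $f^*$ correctly on every irreducible component of both $W_0$ and $W_\infty$ at once; this is exactly the role played by intersecting finitely many dense opens and invoking density of $G(k)$, but it has to be arranged carefully so that the rational equivalence witnessed by $\wt f^{-1}(gW)$ genuinely connects the two cycles $f^*(W_0)$ and $f^*(W_\infty)$ as \emph{defined}, not merely cycles in the same rational equivalence class by some other translate.
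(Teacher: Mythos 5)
Your proposal is correct and follows essentially the same route as the paper: present rational equivalence on $H$ via codimension-two subvarieties $W\subset \P^1_H$ flat near $\{0,\infty\}$, apply Proposition~\ref{prop:Kleiman-Levine-P1} to make $\wt{f}^{-1}(gW)$ a Cartier curve, and choose $g$ in the intersection of $U(\wt{f},W)$ with $V(f,W_0)$ and $V(f,W_\infty)$ so that its two ends compute $f^*[W_0]$ and $f^*[W_\infty]$ as defined. (Only a harmless slip: the graph $W=\Gamma_h$ has codimension two, not one, in $H\times\P^1_k$, consistent with your correct statement that its fibers over $0,\infty$ have codimension two in $H$.)
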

\begin{proof}
We only need to show that $f^*$ respect the rational equivalence, since it is 
already clear by construction that on generators the pullback can be 
computed by first translating $Y$ by a rational point of $V(Y,f)$ 
followed by the naive pullback. 

It is well-known that the rational equivalence in $\CH^2(H)$ can be presented 
in terms of subvarieties of $H$ of codimension $1$ with rational functions on 
them, or in terms of subvarieties of codimension $2$ of $\P^1_H$. 
The latter presentation is easier to handle in our context. 
Following Levine \cite{Levine-2},  let $W\subset \P^1_H$ be an integral 
subscheme of codimension $2$, flat over $\P^1_k$, and let 
$p'_1 \colon \P^1_H \to H$ be the projection.

Let $W_0 = p'_1(W\cdot (H\times \{0\}))$ and $W_\infty = 
p'_1(W\cdot (H\times \{\infty\}))$. Note that the intersection product is 
well defined (this is simply the pull-back to $W$ of the Cartier divisors 
$0$ and $\infty$ of $\P^1_k$, which is well defined since the map is flat). 
We need to show that the difference $f^*[W_0] - f^*[W_\infty]$ is an element of 
$\sR^L_0(X, X_{\rm sing})$.
Let $V_0 = V(f, W_0)$ and let $V_\infty = V(f, W_{\infty})$. If we take 
$g\in (V_0\cap V_\infty)(k)$, we have  that $f^*[W_0] = [f^{-1} {(g W_0)}]$ and 
$f^{*}[W_\infty] = [f^{-1}(g W_\infty)]$ (for the same $k$-point $g$ of $G$).

By Proposition~\ref{prop:Kleiman-Levine-P1}, there exists an open dense subset 
$U=U(f\times \id_{\P^1}, W)$ of $G$ such that for every $g\in U(k)$, 
the inverse image $(f\times \id_{\P^1})^{-1}( gW)$ defines a purely 
$1$-dimensional closed subscheme $\Gamma$ of $\P^1_X$ satisfying the following 
properties.

\begin{enumerate}
\item The inclusion $\Gamma\subset \P^1_X$ is a local complete intersection 
morphism at every point of $\Gamma \cap \P^1_{X_\sing}$. 
\item $\Gamma \cap \P^1_{X_\sing}$ is a finite set and 
$\Gamma \cap (X_{\rm sing} \times \{ 0, \infty\}) =\emptyset$.
\item The induced map $\Gamma\to \P^1_k$ is flat over an open neighborhood of 
$\{0, \infty\}$.  
\end{enumerate}

In particular, 
$(p_1)_{*} (\Gamma \cdot (X\times \{ 0\})  ) - (p_1)_{ *} 
(\Gamma \cdot (X\times\{\infty \})  ) \in \sR^L_0(X, X_{\rm sing})$.  
If we choose $g\in (U\cap V_0\cap V_\infty)(k)$, we conclude that
\[
\begin{array}{lll}
f^*[W_0] = [f^{-1} (g W_0)] & = & (p_1)_{*} (\Gamma \cdot (X\times \{ 0\})) 
\sim (p_1)_{*} (\Gamma \cdot (X\times \{  \infty \})  ) \\
& = &    
[f^{-1} (g W_\infty )] = f^*[W_\infty],
\end{array}
\] 
as required.
\end{proof}

\subsection{Functoriality of $f^*$ in $G$ and $H$}\label{sec:Functorial}
We now show that the pull-back on the Chow groups behaves well with 
respect to the change in the group $G$ and the homogeneous space $H$. 
Let $(G, H)$ and $(G', H')$ be two pairs of groups and their
homogeneous spaces of the types described in the beginning of 
\S~\ref{sec:Pull-back}.
A morphism $\Psi \colon (G,H) \to (G', H')$ is a pair of maps
$\rho \colon G \to G'$ and $h \colon H \to H'$ such that $\rho$ is
a group homomorphism and $h$ is $G$-equivariant, where the latter acts
on $H'$ via $\rho$.

Let $\Psi \colon (G,H) \to (G'H')$ be as above.
Let $X$ be an equidimensional reduced quasi-projective surface
and let there be a commutative diagram
\begin{equation}\label{eqn:Change-of-groups}
\xymatrix@C.8pc{
X \ar[r]^-{f} \ar[dr]_-{f'} & H \ar[d]^-{h} \\
& H'.}
\end{equation}

We now prove the following functoriality property with respect to the change
of homogeneous spaces.

\begin{lem}\label{lem:functoriality-H}
There is a commutative diagram
\begin{equation}\label{eqn:functorialiy-H-0}
\xymatrix@C.8pc{
\CH^2(H') \ar[r]^-{h^*} \ar[dr]_-{f'^*} & \CH^2(H) \ar[d]^-{f^*} \\
& \CH^L_0(X).}
\end{equation}
\end{lem}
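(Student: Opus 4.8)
The plan is to reduce the statement to the concrete description of $f^*$ on generators provided by Proposition~\ref{prop:pullback-resp-rationaleq}, namely that for an integral $Y\subset H$ of codimension $2$ one has $f^*[Y]=[f^{-1}(gY)]$ for all $g$ in a dense open $V(f,Y)\subset G$. Since $h$ is $G$-equivariant (via $\rho$) and proper, the proper push-forward $h_*\colon \CH^2(H)\to\CH^2(H')$ and the flat/refined pull-back $h^*\colon\CH^2(H')\to\CH^2(H)$ are available; here the relevant direction is $h^*$, and the key input will be that $h^*$ is computed on a generic cycle, and that generic translates are compatible. So first I would fix an integral closed subscheme $Y'\subset H'$ of codimension $2$ and analyze $h^*[Y']$: after translating $Y'$ by a generic $g'\in G'(k)$ one may assume $h^{-1}(g'Y')$ has the expected codimension $2$ in $H$ and meets $H_\sing$ (if any) properly, so that $h^*[Y']=[h^{-1}(Y')]$ as a cycle, which one then writes as a sum $\sum n_j[Y_j]$ of integral codimension-$2$ subvarieties of $H$.

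Next I would compute $f^*(h^*[Y'])=\sum_j n_j f^*[Y_j]$ using Proposition~\ref{prop:pullback-resp-rationaleq}: choosing $g\in G(k)$ in the intersection of the finitely many dense opens $V(f,Y_j)$, this equals $\sum_j n_j [f^{-1}(gY_j)]=[f^{-1}(g\,h^{-1}(Y'))]$ in $\CH^L_0(X)$. On the other side, $f'=h\circ f$, so $f'^{-1}(g'Y')=f^{-1}(h^{-1}(g'Y'))$; the point is that translating $Y'$ inside $H'$ by $\rho(g)$ and then pulling back along $h$ agrees, by $G$-equivariance of $h$, with first forming $h^{-1}(Y')$ and then translating in $H$ by $g$. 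Thus both $f'^*[Y']$ and $f^*(h^*[Y'])$ are represented by $[f^{-1}(g\,h^{-1}(Y'))]$ for a common generic $g$, and one must check that a single $g$ can be chosen generic enough to simultaneously compute $f'^*[Y']$ (i.e.\ lie in $V(f',Y')$, or rather in $V(f',g'Y')$ after the first translation) and each $f^*[Y_j]$. Since all these loci are dense open in $G$ (using that $\rho\colon G\to G'$ has dense image on $k$-points when $G,G'$ are $k$-rational, or more simply working directly with the $\widetilde G=G\times\mathrm{PGL}$-type parameter constructions of Section~\ref{sec:Rat-eq} applied to both $H$ and $H'$), their intersection is nonempty and we are done on generators; well-definedness on $\CH^2(H')$ then follows because both composites factor through rational equivalence by Proposition~\ref{prop:pullback-resp-rationaleq}.

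The main obstacle I anticipate is the bookkeeping of the \emph{compatibility of generic translates} across the two homogeneous spaces: one needs that a translate $gY_j$ generic for $f$ in $H$ maps to (or arises from) a translate of $Y'$ generic for $f'$ in $H'$, and that transversality for $f'$ against $g'Y'$ can be arranged with $g'$ coming from $\rho(g)$ for $g$ generic in $G$. Concretely this is where $G$-equivariance of $h$ is used crucially: $h^{-1}(\rho(g)\cdot Z')=g\cdot h^{-1}(Z')$ for $Z'\subset H'$, so $f^{-1}(g\,h^{-1}(Y'))=f^{-1}(h^{-1}(\rho(g)Y'))=f'^{-1}(\rho(g)Y')$, and it remains only to observe that the set of $g\in G(k)$ for which $\rho(g)$ lies in the dense open $V(f',Y')\subset G'$ is itself dense open in $G$ (nonempty because $G\to G'$ is dominant and $G(k)$ is Zariski dense since $G$ is $k$-rational). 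Once this is in place the diagram~\eqref{eqn:functorialiy-H-0} commutes by direct comparison of representatives, and no further argument is needed. A cleaner alternative, which I would adopt if the translate-matching becomes awkward, is to run the parameter-space construction of Sections~\ref{sec:Transversality}--\ref{sec:Rat-eq} once for the pair $(G,H)$ and once for $(G',H')$, and to note that the morphism $\Psi$ induces a compatible morphism of the corresponding $\Sigma$'s, so that a single sufficiently generic parameter in the $G$-side space witnesses both pull-backs simultaneously; then commutativity is immediate from the construction.
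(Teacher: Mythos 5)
Your overall strategy is the same as the paper's: evaluate both composites on an integral $Y'\subset H'$ of codimension two, use the identity $h^{-1}(\rho(g)\cdot Z')=g\cdot h^{-1}(Z')$ coming from $G$-equivariance of $h$, and pick a single $g\in G(k)$ in an intersection of dense open subsets of $G$ so that one representative $[f^{-1}(g\,h^{-1}(Y'_0))]=[f'^{-1}(\rho(g)Y'_0)]$ computes both sides. The one genuine gap is your justification of the key step, namely that some $g\in G(k)$ has $\rho(g)$ (times the initial translate) landing in the dense open $V(f',Y')\subset G'$. You assert this because ``$\rho\colon G\to G'$ is dominant,'' but nothing in the definition of a morphism of pairs $(G,H)\to(G',H')$ makes $\rho$ dominant, and in the actual applications it is emphatically not: in Lemma~\ref{lem:functorialityChern}~(1) the map $\rho$ is a rational representation $G\to\GL_k(V)$, whose image is typically a very small subgroup of $\GL_k(V)$. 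If $\rho(G)$ misses $V(f',Y')$, your choice of $g$ simply does not exist, and the argument breaks at its central point.

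The paper closes this gap with a small but essential trick: first choose $g_0\in U'(k)$ generic for \emph{both} $h$ and $f'$ against $Y'$, replace $Y'$ by $Y'_0:=g_0Y'$, and observe that the resulting dense open $U'_0=U'g_0^{-1}\subset G'$ of admissible translates for $Y'_0$ \emph{contains the identity} of $G'$. Hence $\rho^{-1}(U'_0)$ contains $e_G$, so it is a nonempty (hence dense) open subset of $G$ regardless of how degenerate $\rho$ is, and one can intersect it with the dense open needed to compute $f^*$ of the cycle $h^{-1}(Y'_0)$. You do perform an initial generic translation of $Y'$, so the ingredient is within reach, but you never extract from it the fact that the relevant open set of $G'$ contains the identity; instead you substitute the false dominance claim. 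With that correction your argument coincides with the paper's proof. (Your sketched alternative via a compatible morphism of the parameter spaces $\Sigma$ is not developed enough to assess and is not what the paper does.)
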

\begin{proof}
Let $Y$ be an integral subscheme of $H'$ of codimension $2$. 
Using \lemref{lem:pullback-well-defined} (applied simultaneously to
$h$ and $f'$), we let $U' \subset G'$ be a dense open such that 
$\alpha^*([Y]) = [\alpha^{-1}(gY)]$ and $[\alpha^{-1}(gY)] \sim
[\alpha^{-1}(g'Y)]$ for $g, g' \in U'(k)$ and $\alpha \in \{h, f'\}$.
We fix $g_0 \in U'(k)$ and let $Y_0 = g_0Y$ and $U'_0 = (U')g^{-1}_0$.   
Then $U'_0$ is a dense open subset such that 
$\alpha^*([Y_0]) = [\alpha^{-1}(gY_0)]$ and $[\alpha^{-1}(gY_0)] \sim 
[\alpha^{-1}(g'Y_0)]$ for $g, g' \in U'_0(k)$ and $\alpha \in \{h, f'\}$.

Using \lemref{lem:pullback-well-defined} (applied to $f$), we now let $U_1 \subset G$ be a 
dense open subset such that
\begin{equation}\label{eqn:Change-of-groups-0}
f^*(h^*([Y])) = f^*([h^{-1}(Y_0)]) = [f^{-1}(gh^{-1}(Y_0))] \ \ \mbox{and}
\end{equation}
\[
[f^{-1}(gh^{-1}(Y_0))] \sim [f^{-1}(g'h^{-1}(Y_0))] \ \ \mbox{for \ any} \ \
g, g' \in U_1(k).
\]
Since $U'_0$ contains the identity element of
$G'$ by its construction, it follows that $U'_0 \cap \rho(G) \neq
\emptyset$. In particular, $\rho^{-1}(U'_0)$ is dense open in $G$.
Hence, $U_1 \cap \rho^{-1}(U'_0)$ is dense open in $G$. We can therefore
replace $U_1$ by $U := U_1 \cap \rho^{-1}(U'_0)$ and 
~\eqref{eqn:Change-of-groups-0} continues to hold for $g, g' \in U(k)$.

We now choose any $g \in U(k)$ so that $g':= \rho(g) \in U'_0$.
Then we get
\[
\begin{array}{lll}
f^* h^* [Y]   =  f^*([h^{-1}(Y_0)])  & = & 
[f^{-1} (gh^{-1}(Y_0))] \\
& {=}^1 & [f^{-1}(h^{-1}(g'Y_0))] \\
& = & [f'^{-1}(g'Y_0)] = [f^{-1}(g'g_0Y)] \\
& {=}^{2} & f'^*([Y]),
\end{array}
\]
where ${=}^1$ follows from the $G$-equivariance of $h$ and ${=}^2$ follows
from the choice $U'$ because $g'g_0 \in U'(k)$.
This proves the lemma.
\end{proof}

We shall also need the following obvious functoriality property of
the pull-back map on the Chow groups.

\begin{lem}\label{lem:Pullback-RS}
Let $\pi \colon \wt{X} \to X$ be a resolution of singularities of $X$ and 
let $\wt{f} \colon \wt{X} \to X \to H$ be the composite map.
Then $\wt{f}^* = \pi^* \circ f^*$ as maps between the Chow groups
$\CH^2(H)$ and $\CH^2(\wt{X})$ of smooth schemes.
\end{lem}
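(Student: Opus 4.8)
The statement is a compatibility of the pull-back map constructed in \S\ref{sec:Pull-back} with resolution of singularities. The key observation is that for a \emph{smooth} variety, the pull-back $f^*\colon \CH^2(H)\to\CH^2(\wt{X})$ is the classical one, computed on generators exactly as in \eqref{eq:def-pullback-1}: for an integral $Y\subset H$ of codimension $2$ and a generic $g\in G(k)$, one has $f^*[Y]=[f^{-1}(gY)]$ (this is Kleiman's transversality over an infinite field, i.e. the smooth case of \thmref{thm:Kleiman-Levine}, combined with the usual intersection theory on smooth varieties). So the proof is essentially a matter of checking that the same generic translate $g$ works simultaneously for $f$ and $\wt f$, and that the naive pull-back commutes with $\pi^*$.

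\textbf{Steps.} First I would fix an integral closed subscheme $Y\subset H$ of codimension $2$. Applying \lemref{lem:pullback-well-defined} to $f\colon X\to H$ gives a dense open $V(f,Y)\subset G$; applying \thmref{thm:Kleiman-Levine} (or its smooth-case predecessor, Kleiman's theorem) to $\wt f\colon \wt X\to H$ gives a dense open $V(\wt f,Y)\subset G$ such that $\wt f^{\,*}[Y]=[\wt f^{\,-1}(gY)]$ for $g$ in it. Intersect these to get a nonempty dense open; pick $g$ a $k$-rational point in it (possible since $G$ is unirational and $k$ infinite). Second, since $\pi\colon\wt X\to X$ is an isomorphism over $X_\reg$, and since for generic $g$ the scheme $f^{-1}(gY)$ is disjoint from $X_\sing$ (this is built into the choice of $U(f,Y)$ in \S\ref{sec:PB-1}), the cycle $f^{-1}(gY)\subset X$ is supported in $X_\reg$, where $\pi$ is an isomorphism. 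Hence $\pi^*[f^{-1}(gY)]=[\pi^{-1}(f^{-1}(gY))]=[\wt f^{\,-1}(gY)]$ on the nose as cycles, using $\wt f=f\circ\pi$. Third, after possibly shrinking the open set further one also needs that the pull-back $\wt f^{\,-1}(gY)$ computed via intersection theory on the smooth $\wt X$ agrees with the naive scheme-theoretic fiber; this is exactly the Tor-independence granted by \thmref{thm:Kleiman-Levine}(2) applied to $\wt f$, so the generic translate has $\wt f^{\,-1}(gY)$ Cohen--Macaulay of the right dimension and its fundamental class equals the refined Gysin pull-back. Combining, $\pi^*(f^*[Y])=\pi^*[f^{-1}(gY)]=[\wt f^{\,-1}(gY)]=\wt f^{\,*}[Y]$ for all generators, and both maps are homomorphisms, so they agree on $\CH^2(H)$.

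\textbf{Main obstacle.} The only subtlety is bookkeeping of the open sets: one must ensure a \emph{single} $k$-rational $g\in G$ simultaneously realizes $f^*[Y]$, $\wt f^{\,*}[Y]$, avoids $X_\sing$, and gives the correct (Tor-independent) scheme-theoretic fibers for both $f$ and $\wt f$. Since all four conditions cut out dense open subsets of the irreducible $G$ and $G(k)$ is Zariski dense, their intersection is nonempty, so this is routine rather than hard. No new moving-lemma input is needed beyond what is already established in \S\ref{sec:Transversality}–\S\ref{sec:Pull-back}; the compatibility with $\pi^*$ reduces entirely to the fact that generic translates meet only the regular locus, over which $\pi$ is an isomorphism.
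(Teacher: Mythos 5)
Your proof is correct and follows essentially the same route as the paper: choose a generic $k$-rational translate $gY$ whose preimage avoids $X_\sing$, use that $\pi$ is an isomorphism over $X_\reg$ to identify $\pi^*[f^{-1}(gY)]$ with $[\wt{f}^{-1}(gY)]$, and identify the latter with $\wt{f}^*[Y]$ on the smooth $\wt{X}$. The only cosmetic difference is that the paper passes through $[gY]=[Y]$ in $\CH^2(H)$ (rationality of $G$) where you invoke Kleiman's transversality for $\wt{f}$ directly; these are equivalent.
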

\begin{proof}
Let $Y \subset H$ be an integral closed subscheme of codimension two.
Let $V(f, Y) \subset G$ be as in \lemref{lem:pullback-well-defined}
so that $f^*([Y]) = [f^{-1}(gY)]$ for every $g \in V(f, Y)(k)$.
Since $f^{-1}(gY)$ is a 0-cycle supported on $X_\reg$, it 
follows from the definition of $\pi^*$ in \propref{prop:Desingularization}
that $\pi^*([f^{-1}(gY)]) = [f^{-1}(gY)] = [\wt{f}^{-1}(gY)] = \wt{f}^*([gY])$.
Since $G$ is rationally connected, we know on the other hand
that $[Y] \sim [gY]$ in $\sZ^2(H)$. Hence, we get $\wt{f}^*([gY]) = 
\wt{f}^*([Y])$ in $\CH^2(\wt{X})$. We have thus shown that
$\pi^*(f^*([Y])) = \wt{f}^*([Y])$. 
\end{proof}

\subsection{$\P^1$-homotopy between two maps}\label{sec:PHI}
We shall now prove a suitable $\P^1$-invariance of maps between
the Chow groups. Let $H = G/P$ be a projective homogeneous space as 
before. For any $t \in \P^1_k(k)$, let $\iota_t \colon \{t\} \inj
\P^1_k$ be the inclusion.

\begin{lem}\label{lem:P-inv}
Let $F \colon X \times \P^1_k \to H$ be a morphism. Let $f_t =
F \circ \iota_t \colon X \to H$.
Then $f^*_0 = f^*_\infty \colon \CH^2(H) \to \CH^L_0(X)$.
\end{lem}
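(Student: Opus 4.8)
The plan is to realize the two maps $f_0^*$ and $f_1^* (= f_\infty^*)$ as pull-backs along a single morphism to a larger homogeneous space, so that they become literally equal on the Chow group of that space, and then invoke the functoriality of \lemref{lem:functoriality-H} together with the $\P^1$-invariance of the Chow group of a smooth variety. Concretely, I would consider the product group $G \times G$ acting on $H \times H$ coordinate-wise, so $H \times H = (G \times G)/(P \times P)$ is again a projective homogeneous space of the type allowed in \S~\ref{sec:Pull-back}. The morphism $F \colon X \times \P^1_k \to H$ together with the structure map $X \times \P^1_k \to \P^1_k$, and the two inclusions $\iota_0, \iota_\infty$, give rise to maps $X \to H$ which I want to compare.

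First I would set up the following: define $\Phi \colon X \times \P^1_k \to H \times H$ by $\Phi = (F, F)$ composed with nothing — actually the useful gadget is $\widetilde{F} := (F, p_2) \colon X \times \P^1_k \to H \times \P^1_k$, and then note $H \times \P^1_k$ is a homogeneous space for $G \times \PGL_{2,k}$ as in \S~\ref{sec:KLTT}. Pulling back a codimension-two cycle $Y \subset H$ via the two projections and using \lemref{lem:functoriality-H} applied to the projection $h \colon H \times \P^1_k \to H$ and to the morphisms $\widetilde{F}$, $f_0$, $f_\infty$, I can express $f_0^*[Y]$ and $f_\infty^*[Y]$ as $\widetilde{F}^*$ of the cycles $Y \times \{0\}$ and $Y \times \{\infty\}$ respectively (after first checking, via the composite $X \xrightarrow{\iota_t} X \times \P^1_k \xrightarrow{\widetilde F} H \times \P^1_k$, that restricting $\widetilde F$ to the fiber over $t$ reproduces $f_t$ followed by the inclusion $H = H \times\{t\}$). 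By \propref{prop:pullback-resp-rationaleq}, $\widetilde{F}^*$ factors through $\CH^2(H \times \P^1_k)$, so it suffices to show that $[Y \times \{0\}]$ and $[Y \times \{\infty\}]$ are rationally equivalent in $\CH^2(H \times \P^1_k)$. But this is the classical $\P^1$-homotopy invariance of Chow groups of smooth varieties: $Y \times \P^1_k$ is a codimension-two cycle on $H \times \P^1_k$ flat over $\P^1_k$, and its fibers over $0$ and $\infty$ are exactly $Y \times \{0\}$ and $Y \times \{\infty\}$, so they are rationally equivalent by the standard argument (cf. the presentation of rational equivalence used in the proof of \propref{prop:pullback-resp-rationaleq}).

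The remaining point is to be careful that \lemref{lem:functoriality-H} is applicable here: it requires a morphism of pairs $\Psi = (\rho, h) \colon (G_1, H_1) \to (G_2, H_2)$ with $\rho$ a group homomorphism and $h$ being $G_1$-equivariant. For the projection $h \colon H \times \P^1_k \to H$, I take $\rho \colon G \times \PGL_{2,k} \to G$ the first projection, which is visibly a homomorphism, and $h$ is equivariant for it; for the map $f_t$ I use the inclusion of $H = H \times \{t\}$ into $H \times \P^1_k$, which is equivariant for the inclusion $G \hookrightarrow G \times \PGL_{2,k}$ via a fixed point $t \in \P^1_k(k)$ — this is where $t \in \{0, \infty\}$ being $k$-rational matters. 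Assembling these, $f_t^* = (\text{incl})^* \circ (h^{-1})$-type composites equal $\widetilde F^*$ applied to $Y \times \{t\}$.

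The main obstacle I anticipate is not the $\P^1$-invariance step (which is classical on smooth $H \times \P^1_k$) but rather the bookkeeping in matching up the open dense subsets $V(f, Y) \subset G$ produced by \lemref{lem:pullback-well-defined} for the three different maps $\widetilde F$, $f_0$, $f_\infty$ living over the three different groups $G \times \PGL_{2,k}$, $G$, $G$: one must intersect the relevant opens, pull them back along $\rho$, and check these intersections are still nonempty (using that the relevant opens contain the identity, as in the proof of \lemref{lem:functoriality-H}), so that a common $k$-point $g$ can be chosen computing all the pull-backs compatibly. This is exactly the type of argument already carried out in \lemref{lem:functoriality-H}, so I would model the bookkeeping on that proof; once it is in place, the identity $f_0^*[Y] = \widetilde F^*[Y \times \{0\}] = \widetilde F^*[Y \times \{\infty\}] = f_\infty^*[Y]$ in $\CH^L_0(X)$ follows, and since codimension-two cycles $[Y]$ generate $\CH^2(H)$, we conclude $f_0^* = f_\infty^*$.
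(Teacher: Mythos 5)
You have the right ingredients in hand: the auxiliary morphism $\wt{F} = (F, p_2) \colon X \times \P^1_k \to \P^1_H$ and the homogeneous space structure on $\P^1_H$ for $G \times \PGL_{2,k}$ are exactly what the paper uses. But the pivotal step of your argument does not typecheck. You want to write $f_t^*[Y] = \wt{F}^*[Y \times \{t\}]$ and then invoke \propref{prop:pullback-resp-rationaleq} to say that $\wt{F}^*$ factors through $\CH^2(\P^1_H)$, reducing to the rational equivalence $[Y \times \{0\}] \sim [Y \times \{\infty\}]$ there. First, $Y \times \{t\}$ has codimension \emph{three} in $\P^1_H$, so these classes do not live in $\CH^2(\P^1_H)$. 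Second, and more seriously, the pull-back machinery of \S~\ref{sec:Pull-back} and \propref{prop:pullback-resp-rationaleq} is constructed only for morphisms from a reduced quasi-projective \emph{surface} to a homogeneous space; $X \times \P^1_k$ is a threefold, so there is no map $\wt{F}^* \colon \CH^2(\P^1_H) \to \CH^L_0(\,\cdot\,)$ available (a codimension-two pull-back to $X\times\P^1_k$ would be a $1$-cycle, not a $0$-cycle), and even a hypothetical pull-back on codimension-three cycles would land in $0$-cycles on $X \times \P^1_k$, from which you would still need a proper push-forward to $\CH^L_0(X)$ — precisely the kind of functoriality the paper warns is not available for the Levine--Weibel group. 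So the classical $\P^1$-invariance of $\CH^*(\P^1_H)$ cannot be transported to $\CH^L_0(X)$ along any map constructed in the paper, and the reduction you propose is circular: unwinding \lemref{lem:functoriality-H} correctly only re-expresses $f_0^*[Y]$ and $f_\infty^*[Y]$ as $g_0^*(h^*[Y])$ and $g_\infty^*(h^*[Y])$ for the two \emph{different} maps $g_t = \wt F\circ\iota_t \colon X \to \P^1_H$, which is the original problem in disguise.

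The paper's proof implements the homotopy directly inside the definition of $\sR^L_0(X, X_\sing)$: it pulls back the \emph{codimension-two} cycle $gY \times \P^1_k$ (constant in the $\P^1$-direction) along $\wt F$, obtaining a one-dimensional subscheme $\Gamma = \wt{F}^{-1}(gY\times\P^1_k)$ of $\P^1_X$ whose fibers over $0$ and $\infty$ are $f_0^{-1}(gY)$ and $f_\infty^{-1}(gY)$. Applying \thmref{thm:Kleiman-Levine} to $(G\times\PGL_{2,k},\, \P^1_H)$ and choosing $g$ in the intersection of $\wt{U}$ with the opens furnished by \lemref{lem:pullback-well-defined} for $f_0$ and $f_\infty$, one checks that $\Gamma$ satisfies the conditions of Definition~\ref{defn:LW-defn} (pure dimension one, finite intersection with $\P^1_{X_\sing}$, local complete intersection there, flat over a neighborhood of $\{0,\infty\}$ with fibers in $X_\reg$), so $\Gamma$ is a Cartier curve exhibiting $f_0^*[Y] - f_\infty^*[Y] \in \sR^L_0(X,X_\sing)$. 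Your anticipated bookkeeping of dense opens is indeed needed, but the missing idea is that the rational equivalence must be manufactured as a Cartier curve on $\P^1_X$ rather than imported from the smooth ambient space.
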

\begin{proof}
Let $Y \subset H$ be an integral cycle of codimension two.
Let $U \subset G$ be the intersection of the dense open subsets 
obtained by applying \lemref{lem:pullback-well-defined} to
$f_0$ and $f_\infty$. Let $\wt{F} \colon X \times \P^1_k \to H \times \P^1_k$
be the morphism $\wt{F} = (F, p_2)$, where
$p_2 \colon X \times \P^1_k \to \P^1_k$ is the projection.

By applying \thmref{thm:Kleiman-Levine} to 
$(\wt{G} := G \times \PGL_{2,k}, H \times \P^1_k)$, we 
get an open dense subset $\wt{U} \subset \wt{G}$ such that for
$\wt{g} = (g, \sigma) \in \wt{U}(k)$, the subscheme 
$\wt{F}^{-1}(gY \times \P^1_k) = \wt{F}^{-1}(\wt{g} \cdot \P^1_Y)$
satisfies (1) $\sim$ (3) of \thmref{thm:Kleiman-Levine}.
Since $(U \times \PGL_{2,k}) \cap \wt{U}$ is dense open in $\wt{G}$,
we can find a $k$-rational point $\wt{g} = (g, \sigma)$ in this 
intersection. 

We let $\Gamma = \wt{F}^{-1}(gY \times \P^1_k)$.
Then for any point $\wt{g}$ as above, we see from the choice of $U$ that
the fibers of the map $\Gamma \to \P^1_k$
over $\{0, \infty\}$ are supported on $X_\reg$
and $\Tor^{\sO_H}_i(\sO_{gY}, \sO_X) = 0$ for $i \ge 1$ with respect to
the maps $f_0$ and $f_\infty$. In particular, the argument in the proof of 
\propref{prop:Kleiman-Levine-P1} (3) shows that 
this map is flat over an open neighborhood of $\{0, \infty\}$.
Hence, $\Gamma$ determines a
Cartier curve on $\P^1_X$ such that $\Gamma_0 = [f^{-1}_0(gY)] = f^*_0([Y])$
and $\Gamma_\infty = [f^{-1}_\infty(gY)] = f^*_\infty([Y])$.
This finishes the proof.
\end{proof}

\section{The Chern classes for singular surfaces}\label{sec:Ring}
We fix an infinite field $k$ and a connected equidimensional reduced 
quasi-projective surface $X$ over $k$. In this section, we shall
review the ring structure on $\CH^L_0(X)$ due to Levine \cite{Levine-2}
and define Chern classes of vector bundles in this ring.

\subsection{Intersection product of Cartier divisors}\label{sec:product}
For a closed subscheme $D \subset X$, we let $|D|$ denote the support of
$D$. We let $\sK_X$ denote the Zariski sheaf of total quotient rings on $X$.
We let $\sZ^1(X, X_\sing)$ denote the subgroup of Cartier divisors
$D \in H^0_\zar(X, {\sK^{\times}_X}/{\sO^{\times}_X)}$ such that
$|D| \cap X_\sing$ is finite.
We let $\sR^1(X, X_\sing)$ be the subgroup of $\sZ^1(X, X_\sing)$ generated by
principal Cartier divisors $\divf(f)$, where $f \in H^0_\zar(X, \sK^{\times}_X)$
is invertible at all generic points of $X_\sing$.
We let $\CH^1(X) = {\sZ^1(X, X_\sing)}/{\sR^1(X, X_\sing)}$.
It is easy to check that
there is a canonical injective map $\CH^1(X) \hookrightarrow \Pic(X)$
which takes a Cartier divisor $D$ to the associated line bundle $\sO_X(D)$.
We let $\CH^0(X)$ denote the free abelian group on the cycle
$[X]$ so that there is a canonical isomorphism
$\Z \xrightarrow{\cong} \CH^0(X)$.
We let $\CH^*(X) = \CH^0(X) \oplus \CH^1(X) \oplus \CH^L_0(X)$.
Then a ring structure on $\CH^*(X)$ is completely determined by
defining the intersection product
\begin{equation}\label{eqn:product-divisors}
\CH^1(X) \otimes \CH^1(X) \to \CH^L_0(X).
\end{equation}

This construction is identical to the case of
smooth surfaces. We recall it here. 
Let $D, E$ be two effective Cartier divisors on $X$ with no common 
components. Let $x\in X_\reg$ be a closed point. Recall that 
the intersection multiplicity of $D$ and $E$ at $x$ is defined as
\[ i_x(D,E) = \ell(\cO_{X,x} /\cO_X(-D-E)_x).
\]
The number $i_x(D,E)$ is non-zero only  if $x\in \Supp(D)\cap \Supp(E)$, 
and satisfies the standard properties of symmetry and additivity. 
In particular, if $|D| \cap |E| \subset X_{\reg}$, the product 
$(D\cdot E) = \sum_{x\in |D| \cap |E|} i_x(D,E)[x]$
is a well-defined element of the Chow group $\CH^L_0(X)$. 

Note that since $X$ is quasi-projective, the map $\CH^1(X) \hookrightarrow \Pic(X)$ is in fact an isomorphism. In fact, we can write up to linear 
equivalence $D = D_1-D_2$ as difference of effective and very ample Cartier 
divisors without common components (e.g., see \cite[II Ex.7.5]{Hartshorne}).
Since $k$ is infinite, we can use the classical Bertini theorem
(e.g., see \cite[6.3 b)]{Jou}) to ensure 
that $|D_1| \cap |D_2| \cap X_\sing = \emptyset$ and
$|D_i| \cap X_\sing$ finite for $i = 1,2$.
We do the same for $E$, which we write (up to linear equivalence) as 
$E_1-E_2$ with the additional property that $D_i\cap E_j$ is finite and 
disjoint from $X_{\sing}$.
At this point, we can define 
\[(D\cdot E) = (D_1-D_2)\cdot (E_1-E_2) = D_1\cdot E_1 -D_1\cdot 
E_2 -D_2\cdot E_1 + D_2\cdot E_2.
\]
It is straightforward  to check that the product is 
well-defined (i.e., it does not depend on the presentation of 
$D$ and $E$). We leave out the details.

\subsection{Compatibility of product with pull-backs}
\label{sec:Prod-pullback}
Let $G$ and $H$ be as in \S~\ref{sec:Pull-back}. We let
$\CH^*(H) = \oplus_{i \ge 0} \CH^i(H)$ denote the classical Chow ring of $H$.
We assume that $\dim(H) \ge 2$.
Let $f \colon 
X \to H$ be a morphism. We have a well-defined pull-back map
$f^* \colon \Pic(H) \to \Pic(X)$. Equivalently, a pull-back map
$f^* \colon \CH^1(H) \to \CH^1(X)$.
To give a more explicit description of this, note that
for any  integral divisor $D \subset H$, we can apply
\thmref{thm:Kleiman-Levine} (to $X$ and $X_\sing$) 
to find a dense open $U \subset G$ such that
for every $g \in U(k)$, the scheme-theoretic pull-back $f^*(gD)$
satisfies conditions (1) $\sim$ (3) of \thmref{thm:Kleiman-Levine}.
But this precisely means that $f^*(gD)$ is an effective Cartier divisor
on $X$ whose no irreducible component is contained in $X_\sing$.
It is then clear from the definition of $\CH^1(X)$ in the beginning of
\S~\ref{sec:product} that $f^*([D]) = [f^*(gD)] \in \CH^1(X)$.

We have also defined the pull-back $f^* \colon \CH^2(H) \to \CH^L_0(X)$.
Since $f^* \colon \CH^0(H) \to \CH^0(X)$ is identity as $X$ is connected,
we have a well-defined codimension preserving (we define $f^*$ to be
zero on $\CH^{\ge 3}(H)$) group homomorphism $f^* \colon 
\CH^*(H) \to \CH^*(X)$. 

\begin{prop}\label{prop:compatibility-product-pullback}
The map $f^* \colon \CH^*(H) \to \CH^*(X)$ is a ring homomorphism.
If $\Psi = (\rho, h) \colon$ \\
$(G,H) \to (G', H')$ is a morphism as
in \S~\ref{sec:Functorial}, then $(h \circ f)^* = f^* \circ h^*$
as ring homomorphisms.
\end{prop}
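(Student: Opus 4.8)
The plan is to check that $f^*$ is compatible with each graded piece of the multiplication on $\CH^*(H)$, reducing everything to the product $\CH^1\otimes\CH^1\to\CH^2$ which is the only nontrivial component. Since $f^*$ is the identity on $\CH^0$ and is declared zero on $\CH^{\ge 3}(H)$, the only products we must analyze are $\CH^1(H)\otimes\CH^1(H)\to\CH^2(H)$ (landing in $\CH^L_0(X)$) and $\CH^1(H)\otimes\CH^0(H)\to\CH^1(H)$ (which is trivially compatible). So I would fix two integral divisors $D,E\subset H$ meeting properly, and compare $f^*([D]\cdot[E])$ with $f^*([D])\cdot f^*([E])$ in $\CH^L_0(X)$.

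The key mechanism is generic translation combined with the transversality theorem (\thmref{thm:Kleiman-Levine}) and its $\P^1$-version (\propref{prop:Kleiman-Levine-P1}). First I would apply \lemref{lem:pullback-well-defined} and \thmref{thm:Kleiman-Levine} simultaneously to $D$, $E$, $D\cap E$ (viewed inside $H$ after moving $E$ by a generic $g\in G(k)$ so that $D$ and $gE$ intersect properly with $D\cap gE$ of codimension two), obtaining a dense open $U\subset G$ so that for $g\in U(k)$: the pullbacks $f^*(gE)$ and $f^*(D)$ are Cartier divisors on $X$ with no component in $X_\sing$, they meet properly with intersection lying in $X_\reg$, and $f^{-1}(D\cap gE)=f^{-1}(D)\cdot f^{-1}(gE)$ as cycles (using Tor-independence from \lemref{lem:Tor-ind}, which forces the scheme-theoretic intersection to compute the cycle-theoretic product). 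Since $G$ is rationally connected, $[D\cdot E]=[D\cdot gE]$ in $\CH^2(H)$, so $f^*([D]\cdot[E])=[f^{-1}(D\cap gE)]$. On the other side, by the explicit description of $f^*$ on $\CH^1$ recalled just before the proposition, $f^*([D])=[f^*(D)]$ and $f^*([E])=[f^*(gE)]$, and their product in $\CH^L_0(X)$ is computed (after a further generic move if needed, using the classical Bertini argument from \S\ref{sec:product}) by the intersection multiplicities $\sum_{x\in|f^*D|\cap|f^*gE|} i_x(f^*D,f^*gE)[x]$. The Tor-independence statement identifies these multiplicities with the lengths defining $[f^{-1}(D\cap gE)]$, giving $f^*([D])\cdot f^*([E])=[f^{-1}(D\cap gE)]$.

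The main obstacle is the bookkeeping needed to ensure that a \emph{single} choice of $g\in G(k)$ (or a single pair in $G\times G$) can be made to work simultaneously for: defining $f^*([D]\cdot[E])$ via \propref{prop:pullback-resp-rationaleq}, defining $f^*([D])$ and $f^*([E])$ as honest Cartier divisors on $X$, making those two divisors intersect properly away from $X_\sing$, and having the Tor-vanishing that equates the scheme-theoretic pullback of $D\cap gE$ with the cycle product. Each requirement cuts out a dense open in $G$ (or in a product of copies of $G$, parametrizing independent translations), and one then intersects finitely many dense opens and invokes that $k$-rational points are Zariski dense (as $G$ is rational over the infinite field $k$) to extract a common $g$. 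A secondary technical point is that the product $\CH^1(X)\otimes\CH^1(X)\to\CH^L_0(X)$ in \S\ref{sec:product} is itself defined via generic Bertini moves, so one must reconcile two layers of genericity; this is routine once one observes that all the relevant loci are open.

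For the second assertion, I would use \lemref{lem:functoriality-H}, which already gives $(h\circ f)^*=f^*\circ h^*$ as maps on the codimension-two parts $\CH^2(H')\to\CH^L_0(X)$. On $\CH^1$ the equality $(h\circ f)^*=f^*\circ h^*$ is the standard functoriality of pullback on Picard groups, and on $\CH^0$ it is the identity. Since we have just shown each of $f^*$, $h^*$, $(h\circ f)^*$ is a ring homomorphism, and they agree on each graded generator $\CH^0\oplus\CH^1$ — which, together with multiplicativity, generates enough of the target to pin down the map on $\CH^2$ as well — the two composites agree as ring homomorphisms; the compatibility on $\CH^2$ is then either forced by multiplicativity or read off directly from \lemref{lem:functoriality-H}. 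I expect this last part to be essentially formal given \lemref{lem:functoriality-H} and the multiplicativity established in the first part.
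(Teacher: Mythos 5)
Your overall strategy --- reduce to the component $\CH^1(H)\otimes\CH^1(H)\to\CH^2(H)$, pull back generic translates, and use transversality to identify scheme-theoretic pullbacks with cycle products --- is the same as the paper's, and your treatment of the second assertion via \lemref{lem:functoriality-H} matches the paper's one-line argument. But there is a gap in the main step, and it sits exactly where you defer to ``routine bookkeeping''.

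The problem is the identification $f^*([D]\cdot[gE])=[f^{-1}(D\cap gE)]$ for arbitrary integral divisors $D,E$. The map $f^*$ on $\CH^2(H)$ is only defined on \emph{integral} codimension-two cycles, via \lemref{lem:pullback-well-defined}: one writes $[D\cdot gE]=\sum_i n_i[Y_i]$ over the components $Y_i$ of $D\cap gE$ and sets $f^*[Y_i]=[f^{-1}(g_iY_i)]$ for $g_i$ in a dense open $V(f,Y_i)\subset G$. There is no guarantee that the identity of $G$ lies in $V(f,Y_i)$, so $f^*[Y_i]$ need not equal $[f^{-1}(Y_i)]$ as you implicitly use; and you cannot fix this by shrinking the open from which $g$ is drawn, because the components $Y_i$ (hence the opens $V(f,Y_i)$) depend on $g$ itself --- the dense opens you propose to intersect are not all defined before the choice you are trying to make. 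On top of this, even granting Tor-independence, equating the fundamental class of the scheme $f^{-1}(D\cap gE)$ with $\sum_i n_i[f^{-1}(Y_i)]$ when $D\cap gE$ is non-reduced or reducible is a nontrivial multiplicity statement, not something that follows formally from Tor-vanishing.

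The paper dissolves all of this with a reduction you are missing: since $\CH^1(H)\cong\Pic(H)$ and every line bundle on $H$ is a difference of very ample ones, $\CH^1(H)$ is generated by smooth very ample divisors, and by the Bertini theorem of Jouanolou one may choose representatives $D_1,D_2$ whose scheme-theoretic intersection $Y=D_1\cap D_2$ is a single \emph{smooth, geometrically integral} codimension-two subvariety. Then $[D_1]\cdot[D_2]=[Y]$ with multiplicity one, $Y$ is fixed in advance of any translation, and a single $g\in (V(f,Y)\cap U)(k)$ gives $f^*([D_1]\cdot[D_2])=[f^{-1}(gY)]=[f^{-1}(gD_1)]\cdot[f^{-1}(gD_2)]=f^*[D_1]\cdot f^*[D_2]$, the middle equality coming directly from the local definition of the product in \S~\ref{sec:product} (no Tor computation is needed there). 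You should incorporate this reduction; without it the circularity in the choice of $g$ and the multiplicity bookkeeping are genuine obstructions, not routine.
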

\begin{proof}
We only need to show this for the product of cycles lying in
$\CH^1(H)$. For this, we note that $\CH^1(H) \cong \Pic(H)$
and we know that every line bundle on $H$ is a difference of
two very ample line bundles. Furthermore, if
$D$ is a Cartier divisor on $H$ such that $\sO_H(D)$ is very ample,
then the classical Bertini smoothness theorem over infinite fields
tells us that $D \sim D'$, where $D'$ is a smooth effective Cartier
divisor on $H$. It follows therefore that $\CH^1(H)$ is generated
by smooth very ample divisors. 

Now, given two smooth very ample divisors $D_1$ and $D_2$ on $H$,
we can use the Bertini theorem of Jouanolou (see also
\cite[Theorem~1]{KA}) to find elements
$D'_1$ and $D'_2$ in the linear systems $|H^0(H, \sO_H(D_1))|$
and $|H^0(H, \sO_H(D_2))|$, respectively such that $D'_1$ and $D'_2$
are geometrically integral smooth schemes which 
intersect transversely in a smooth and 
(geometrically) integral codimension two subscheme $Y \subset H$. 
We remind here that $H$ is smooth and geometrically integral. 
We therefore need to show that if $D_1$ and $D_2$ are two smooth
connected very ample effective Cartier divisors on $H$ whose scheme-theoretic
intersection $Y$ is smooth and integral codimension two cycle on
$H$, then $[f^*(D_1)] \cdot [f^*(D_2)] = [f^*(Y)]$ in $\CH^L_0(X)$.

We now apply Theorem~\ref{thm:Kleiman-Levine} again. 
We can then find an open dense subset $U_i$ of $G$, for $i=1,2$ such that, 
for every $g\in U_i(k)$, the scheme-theoretic pull-back $f^{-1}(gD_i)$ is 
an effective Cartier divisor on $X$ having finite intersection with $X_\sing$. 
By replacing $U_1$ and $U_2$ by their intersection,
we can assume that $U_1=U_2 = U$ and that, the same $g$ works for both 
$D_1$ and $D_2$.

Let $V(f,Y)$ be the open subset of 
\lemref{lem:pullback-well-defined}. 
Then for every $g\in V(f,Y)(k)$, we have $f^*(D_1\cdot D_2) = f^*[Y]
= [f^{-1}(gY)]$, with $f^{-1}(gY)\cap X_{\rm sing} =\emptyset$. 
Choosing $g\in (V(f, Y)\cap U)(k)$, we obtain 
\[  
\begin{array}{lll}
f^*([D_1]\cdot [D_2])=[f^{-1}(gY)] & = & [f^{-1}([gD_1] \cdot [gD_2])] \\
& = & [f^{-1}(g D_1)]\cdot [f^{-1}(g D_2)] \\
& = & f^*[D_1]\cdot f^*[D_2],
\end{array}
\]
as required (the second equality follows directly from the local definition 
of the intersection product in \S~\ref{sec:product}).
The second assertion of the proposition follows directly from
\lemref{lem:functoriality-H}.
\end{proof}

\subsection{The Chern classes}\label{sec:C-class}
Let $X$ be as in the beginning of \S~\ref{sec:Ring}.
We now review the construction of Chern classes in $\CH^*(X)$ of vector bundles
on $X$. This construction depends solely on the results of the previous 
sections of this manuscript and does not use
any further information about the nature of the field $k$.
Hence, all the proofs in the construction of the Chern classes given
in \cite{Levine-2} and \cite{BSri} verbatim remain valid and we
have nothing extra to add.
We shall therefore only recall it very briefly and refer to \cite[\S~5]{BSri}
for details.

For any vector bundle $\sE$ of rank $n$ on $X$, we let $c_0(\sE) = 1$
and $c_1(\sE) = c_1(\bigwedge^n(\sE)) = [\bigwedge^n(\sE)] \in \Pic(X) = 
\CH^1(X)$. It is easy to check that
$c_1(\sE) = c_1(\sE') + c_1(\sE'')$ if 
\[
0 \to \sE' \to \sE \to \sE'' \to 0
\]
is an exact sequence of vector bundles.
These definitions are identical to the classical case.

We now recall the construction of $c_2(\sE)$. 
We first recall that for integers $0 \le n \le r$,
the Grassmanian variety $\Gr_k(n, r)$ is a representable functor
on $\Sch_k$ which to any $Y \in \Sch_k$ associates the set of
quotients $\sO^r_Y \surj \sF$, where $\sF$ is locally free of
rank $n$ on $Y$. Note that this construction holds over any base scheme $S$.
It is classically known that 
$\GL_{r,k}$ acts transitively on $\Gr_k(n, r)$ and the latter
is projective. Hence, we have that $\Gr_k(n, r) \cong {\GL_{r,k}}/P$ with
$P$ parabolic.

Let us now assume that $\sE$ is globally generated by the sections
$s_1, \ldots , s_r \in H^0(X, \sE)$. Then it follows from the 
above definition of $\Gr_k(n, r)$ that there is a unique
$k$-morphism $f \colon X \to \Gr_k(n, r)$ such that $\sE \cong
f^*(\sQ_{n,r})$, where $\sQ_{n,r}$ is the universal quotient vector
bundle on $\Gr_k(n, r)$.
We let $c_2(\sE):= f^*(c_2(\sQ_{n,r}))$. Note that this makes sense
since we have a well-defined theory of Chern classes of vector bundles
on smooth schemes. The fact that this definition does not depend on the
choice of the chosen sections follows from the following.

\begin{lem}\label{lem:indepchoice_sections}
Let $\{t_1, \ldots, t_r\}$ and $\{g_1, \ldots, g_s\}$ 
be two sets of global sections 
generating $\mathcal{E}$. Let $f_1\colon X\to \Gr_k(n, r)$ and 
$f_2\colon X\to \Gr_k(n, s)$ be the classifying morphisms. 
Then $f_1^*(c_2(\mathcal{Q}_{n,r})) = f_2^*(c_2(\mathcal{Q}_{n,s}))$ in 
$\CH^L_0(X)$. 
\end{lem}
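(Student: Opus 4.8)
The key idea is to reduce the independence of $c_2(\sE)$ on the chosen generating sections to the functoriality of the pull-back map established in \lemref{lem:functoriality-H} and \propref{prop:compatibility-product-pullback}, together with the $\P^1$-homotopy invariance of \lemref{lem:P-inv}. First I would handle the elementary reduction: given two generating sets $\{t_i\}_{i=1}^r$ and $\{g_j\}_{j=1}^s$, their concatenation $\{t_1,\ldots,t_r,g_1,\ldots,g_s\}$ is again a generating set, with classifying morphism $f_3\colon X\to \Gr_k(n,r+s)$. There are natural closed immersions of Grassmannians $\Gr_k(n,r)\inj \Gr_k(n,r+s)$ (coming from the inclusion $\sO^r\inj \sO^{r+s}$ onto the first $r$ coordinates, which is $\GL$-equivariant for the block-diagonal embedding $\GL_{r,k}\inj\GL_{r+s,k}$), and this immersion pulls back $\sQ_{n,r+s}$ to $\sQ_{n,r}$, hence $c_2(\sQ_{n,r+s})$ to $c_2(\sQ_{n,r})$ in the smooth Chow ring. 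Applying \propref{prop:compatibility-product-pullback} (the functoriality of $f^*$ under morphisms $(\rho,h)$ of pairs $(G,H)$) to this data gives $f_1^*(c_2(\sQ_{n,r})) = f_3^*(c_2(\sQ_{n,r+s}))$, and symmetrically $f_2^*(c_2(\sQ_{n,s})) = f_3'^*(c_2(\sQ_{n,r+s}))$ where $f_3'$ uses the other block embedding. So it suffices to compare the two classifying maps $f_3, f_3'\colon X\to \Gr_k(n,r+s)$ into the \emph{same} Grassmannian, both corresponding to the same bundle $\sE$ but via the section orders $(t,g)$ versus $(g,t)$.

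Next I would exhibit a $\P^1$-family interpolating $f_3$ and $f_3'$. The two maps differ by the action of a permutation matrix $\sigma\in\GL_{r+s,k}(k)$ (the block swap), i.e.\ $f_3' = \lambda_\sigma\circ f_3$ where $\lambda_\sigma$ is the automorphism of $\Gr_k(n,r+s)$ induced by $\sigma\in\GL_{r+s,k}$. Since $\GL_{r+s,k}$ is connected and, more to the point, since $\lambda_\sigma$ is realized by an element of $G(k)=\GL_{r+s,k}(k)$, I want to connect $\sigma$ to the identity by a $\P^1$-chain inside $\GL_{r+s,k}(k)$ and invoke \lemref{lem:P-inv}. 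Concretely: by the $\A^1$-path-connectivity results of \secref{sec:Hom*} (\lemref{lem:homotopy-2}), and after possibly passing through $\GL^{\times}$, there is for any two $k$-points of $G$ a morphism $\Phi\colon \P^1_k\to \ov{\square}^m_k$ restricting to a map landing in $G^{\times}$ near $\{0,\infty\}$ with prescribed values there; composing with the action on $\Gr$ and with $f_3$ produces a morphism $F\colon X\times\P^1_k\to\Gr_k(n,r+s)$ with $F\circ\iota_0 = f_3$ and $F\circ\iota_\infty = f_3'$ (up to replacing $\sigma$ by a product of such elementary matrices, chaining \lemref{lem:P-inv} finitely many times). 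Then \lemref{lem:P-inv} yields $f_3^* = f_3'^*$ on $\CH^2(\Gr_k(n,r+s))$, and in particular the two classes $f_3^*(c_2(\sQ_{n,r+s}))$ and $f_3'^*(c_2(\sQ_{n,r+s}))$ agree in $\CH^L_0(X)$.

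Combining the two steps: $f_1^*(c_2(\sQ_{n,r})) = f_3^*(c_2(\sQ_{n,r+s})) = f_3'^*(c_2(\sQ_{n,r+s})) = f_2^*(c_2(\sQ_{n,s}))$, which is the desired equality. I would also need to note at the outset that both $\Gr_k(n,r)$ and $\Gr_k(n,r+s)$ fit the format of \secref{sec:Pull-back} (connected reductive $G=\GL_{r,k}$ resp.\ $\GL_{r+s,k}$ acting transitively on a projective homogeneous space), so that all of the pull-back machinery — well-definedness via \lemref{lem:pullback-well-defined}, descent to rational equivalence via \propref{prop:pullback-resp-rationaleq}, functoriality via \lemref{lem:functoriality-H} — is available.

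\textbf{Main obstacle.} The delicate point is the $\P^1$-homotopy step: one must arrange an \emph{honest morphism} $X\times\P^1_k\to\Gr_k(n,r+s)$ (not merely a rational map, and with the correct behaviour over $0$ and $\infty$) whose fibers realize the two section-orderings, and this requires care in how the permutation $\sigma$ is decomposed and in verifying that the composite with $\Phi_m$ from \lemref{lem:homotopy-2} stays in the locus where everything is defined. The point is that $\sigma$ can be taken a product of elementary/permutation matrices each of which is joined to the identity by the explicit linear $\P^1$-paths of \secref{sec:Hom*} landing in $G^{\times}$ near the endpoints; a finite induction on the number of factors, applying \lemref{lem:P-inv} at each stage, then closes the argument. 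The remaining verifications (that the Grassmannian embeddings are $\GL$-equivariant for the block inclusions and pull back the universal quotient bundles and hence their Chern classes compatibly) are routine and can be cited from \cite[\S~5]{BSri}.
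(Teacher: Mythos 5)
Your overall strategy (pass to a common Grassmannian $\Gr_k(n,r+s)$ and combine equivariant functoriality with $\P^1$-homotopy invariance) is the right one and is essentially what the paper does via \cite[Lemma~10]{BSri}, but both of your main steps have genuine gaps as written. The first gap is in your claim that $f_1^*(c_2(\sQ_{n,r})) = f_3^*(c_2(\sQ_{n,r+s}))$ follows from \propref{prop:compatibility-product-pullback} applied to the block embedding $\iota \colon \Gr_k(n,r) \inj \Gr_k(n,r+s)$. That proposition requires the triangle $h \circ f = f'$ to commute, and here it does not: $\iota \circ f_1$ classifies the quotient $\sO_X^{r+s} \surj \sE$ given by the sections $\{t_1,\ldots,t_r,0,\ldots,0\}$ (last $s$ coordinates killed), whereas $f_3$ classifies the quotient given by $\{t_1,\ldots,t_r,g_1,\ldots,g_s\}$; these surjections have different kernels, so $\iota\circ f_1 \neq f_3$ and functoriality alone gives you nothing. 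The second gap is in your homotopy step: \lemref{lem:P-inv} requires an honest morphism $F \colon X\times\P^1_k \to H$ defined on \emph{all} of $X\times\P^1_k$, but the path $t\mapsto \Phi_m(g_1,g_2,t)$ of \lemref{lem:homotopy-2} leaves $G$ (it hits $\un{\infty}$ at $t=1$), so composing it with the $G$-action and $f_3$ only yields a morphism on $X\times \Phi_m^{-1}(G)$-type open subsets of $X\times\P^1_k$; \lemref{lem:P-inv} does not apply to that. (Incidentally, the reordering step you are trying to prove this way is essentially free from the construction of $f^*$: since $f_3' = \lambda_\sigma\circ f_3$ for $\sigma\in G(k)$ and $f^*[Y]$ is computed from a generic translate $[f^{-1}(gY)]$, one has $(\lambda_\sigma\circ f_3)^{-1}(gY) = f_3^{-1}(\sigma^{-1}gY)$ and the two pull-backs agree by \lemref{lem:pullback-well-defined}.)

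The standard repair, which is the intended proof, replaces both steps by a single linear interpolation of sections. Consider the surjection $\sO_{X\times\P^1}^{r+s} \surj p_1^*\sE\otimes p_2^*\sO_{\P^1}(1)$ sending $e_i\mapsto t_i\otimes a$ for $i\le r$ and $e_{r+j}\mapsto g_j\otimes b$, where $a,b$ are the standard sections of $\sO_{\P^1}(1)$ vanishing at $\infty$ and $0$ respectively. At every point of $\P^1$ at least one of $a,b$ is nonzero, so the corresponding subfamily of sections already generates; hence this is surjective everywhere and defines a genuine morphism $F\colon X\times\P^1_k\to\Gr_k(n,r+s)$ with $F\circ\iota_0 = \iota\circ f_1$ and $F\circ\iota_\infty = \iota'\circ f_2$, where $\iota,\iota'$ are the two $\GL$-equivariant block immersions. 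Now \lemref{lem:P-inv} gives $(\iota\circ f_1)^* = (\iota'\circ f_2)^*$ on $\CH^2(\Gr_k(n,r+s))$, and \lemref{lem:functoriality-H} together with $\iota^*\sQ_{n,r+s}=\sQ_{n,r}$, $\iota'^*\sQ_{n,r+s}=\sQ_{n,s}$ converts this into $f_1^*(c_2(\sQ_{n,r}))=f_2^*(c_2(\sQ_{n,s}))$. No permutation matrices or $\A^1$-path-connectivity of $\GL$ are needed.
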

\begin{proof}
This is straightforward using
Lemmas~\ref{lem:functoriality-H} and ~\ref{lem:P-inv}
(e.g., see \cite[Lemma~10]{BSri}).
\end{proof}

\begin{lem}\label{lem:Not-globally-gen}
Suppose that $\sE$ is globally generated by  $\{t_1, \ldots, t_r\}$ and
let $f \colon X \to \Gr_k(n, r)$ be the associated classifying morphism.
Let $\sL$ be a line bundle on $X$ globally generated by
$\{g_1, \ldots, g_s\}$. Let $h \colon X\to  \Gr_k(n, rs)$
be the classifying morphism for the vector bundle $\sE \otimes_{\sO_X} \sL$
and sections $\{t_i \otimes g_j| 1 \le i \le s, 1 \le j \le s\}$.
Then
\begin{equation}\label{eqn:Not-globally-gen-0}
h^*(c_2(\sQ_{n, rs})) = f^*(c_2(\mathcal{Q}_{n,r})) + 
(n-1) c_1(\mathcal{E})\cdot c_1(\mathcal{L})   + \binom{n}{2} 
c_1(\mathcal{L})^2.
\end{equation}
\end{lem}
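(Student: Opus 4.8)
The plan is to reduce the statement to the analogous identity on a smooth variety, where the classical splitting-principle computation of Chern classes of a tensor product $\mathcal{E}\otimes\mathcal{L}$ with $\mathcal{L}$ a line bundle is available, and then transport it back to $X$ via the pull-back ring homomorphism $f^*$ established in \propref{prop:compatibility-product-pullback}. First I would set up the relevant classifying maps carefully. The sections $\{t_i\}$ give $f\colon X\to \Gr:=\Gr_k(n,r)$ with $\mathcal{E}\cong f^*\sQ_{n,r}$; the sections $\{g_j\}$ of $\mathcal{L}$ give a map $\ell\colon X\to \P^{s-1}_k=\Gr_k(1,s)$ with $\mathcal{L}\cong \ell^*\sO(1)$; and the sections $\{t_i\otimes g_j\}$ of $\mathcal{E}\otimes\mathcal{L}$ give $h\colon X\to \Gr_k(n,rs)$ with $\mathcal{E}\otimes\mathcal{L}\cong h^*\sQ_{n,rs}$. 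The key point is that there is a single morphism of groups-and-homogeneous-spaces $\Psi=(\rho,\mathbf{h})$ from $(G\times G',\ \Gr\times\P^{s-1})$ to $(\GL_{rs,k},\ \Gr_k(n,rs))$ — coming from the Segre-type tensor construction $\sQ_{n,r}\boxtimes\sO(1)$ on $\Gr\times\P^{s-1}$, which is the pull-back of $\sQ_{n,rs}$ under a canonical $\GL_{rs}$-equivariant embedding — such that $h = \mathbf{h}\circ(f,\ell)$. Here I use that the product $\Gr\times\P^{s-1}$ is itself a projective homogeneous space for $G\times G'$ of the type allowed in \S~\ref{sec:Pull-back}.

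Next I would invoke the functoriality statement in \propref{prop:compatibility-product-pullback} (and \lemref{lem:functoriality-H}) to get $h^* = (f,\ell)^*\circ \mathbf{h}^*$ as ring homomorphisms $\CH^*(\Gr_k(n,rs))\to\CH^*(X)$. Then the computation is pushed entirely onto smooth varieties: on $\Gr\times\P^{s-1}$ one has, by the standard splitting-principle identity for $c_2$ of a tensor product of a rank-$n$ bundle with a line bundle,
\[
\mathbf{h}^*(c_2(\sQ_{n,rs})) \;=\; p_1^* c_2(\sQ_{n,r}) \;+\; (n-1)\, p_1^* c_1(\sQ_{n,r})\cdot p_2^* c_1(\sO(1)) \;+\; \binom{n}{2}\, \big(p_2^* c_1(\sO(1))\big)^2,
\]
where $p_1,p_2$ are the projections. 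Applying $(f,\ell)^*$ and using that it is a ring homomorphism together with $(f,\ell)^*p_1^* = f^*$, $(f,\ell)^*p_2^* = \ell^*$, $f^*c_1(\sQ_{n,r}) = c_1(\mathcal{E})$, $f^*c_2(\sQ_{n,r}) = f^*(c_2(\mathcal{Q}_{n,r}))$, and $\ell^* c_1(\sO(1)) = c_1(\mathcal{L})$ (this last by the explicit description of $f^*$ on $\CH^1$ via generic translates, or simply because $\ell^*\sO(1)\cong\mathcal{L}$), yields exactly \eqref{eqn:Not-globally-gen-0}.

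I would also record the one compatibility that is slightly delicate, namely that the definition of $c_2$ used here (pull-back from a Grassmannian under a classifying map given by a chosen generating set of sections) is compatible with $f^*$ of products and with $c_1$ of line bundles in the way just used; but all of this is contained in \propref{prop:compatibility-product-pullback} and \lemref{lem:indepchoice_sections} together with the fact (\lemref{lem:P-inv}, \lemref{lem:functoriality-H}) that different choices of generating sections give the same class. The main obstacle I anticipate is the bookkeeping in identifying $\mathbf{h}$ explicitly: one must check that the tensor-product construction on $\Gr\times\P^{s-1}$ really is the pull-back of the universal quotient $\sQ_{n,rs}$ along a $\GL_{rs}$-equivariant morphism of homogeneous spaces (so that $\mathbf h$ is a morphism of the required type and $h=\mathbf h\circ(f,\ell)$ holds on the nose), and that $\Gr\times\P^{s-1}$ is of the form $\widetilde G/\widetilde P$ with $\widetilde G$ a product of general linear groups. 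Once that identification is in place, the rest is the classical $c_2$-of-a-twist formula on smooth varieties transported through a ring homomorphism, which involves no new input about the field $k$.
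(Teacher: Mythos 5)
Your proposal is correct and is essentially the paper's own argument: the paper's proof is a one‑line citation of \lemref{lem:functoriality-H} together with the classical theory of Chern classes on smooth schemes (deferring details to [BSri, Lemma~11]), and what you have written out — factoring $h$ through the Segre‑type morphism $\mathbf h\colon \Gr_k(n,r)\times\P^{s-1}_k\to\Gr_k(n,rs)$ of homogeneous spaces, computing $c_2(\sQ_{n,r}\boxtimes\sO(1))$ there by the splitting principle, and transporting the identity back via the ring homomorphism $(f,\ell)^*$ of \propref{prop:compatibility-product-pullback} — is exactly that argument made explicit. The details you flag as delicate (that $\Gr_k(n,r)\times\P^{s-1}_k$ is $\wt G/\wt P$ for $\wt G=\GL_{r,k}\times\GL_{s,k}$, that the Kronecker representation makes $\mathbf h$ equivariant, and that $h=\mathbf h\circ(f,\ell)$ by the universal property) all check out.
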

\begin{proof}
This is straightforward using \lemref{lem:functoriality-H} 
and the theory of Chern classes on smooth schemes
(e.g., see \cite[Lemma~11]{BSri}).
\end{proof}

Let $\mathcal{E}$ be a rank $n$ vector bundle on $X$.
Let $\sL$ be a globally generated line bundle on $X$ such that
$\mathcal{E} \otimes_{\sO_X} \mathcal{L}$ is generated by  global sections 
$t_1, \ldots, t_r$. Let $f\colon X\to \Gr_k(n, r)$ be the classifying 
morphism given by the $t_i$'s. In view of 
Lemmas~\ref{lem:indepchoice_sections} and ~\ref{lem:Not-globally-gen},
the following definition is meaningful.

\begin{defn}\label{def:C-2}
We let 
\[
c_2(\sE) = f^*(c_2(\sQ_{n,r})) - (n-1) c_1(\mathcal{E})\cdot c_1(\mathcal{L})
- \binom{n}{2} c_1(\mathcal{L})^2.
\]

We define the total Chern class of $\sE$ to be
\[
c(\sE) = 1 + c_1(\sE) + c_2(\sE).
\]
\end{defn}

Note that the above definitions coincide with the classical definitions
of Chern classes if $X$ happened to be non-singular.

\begin{lem}\label{lem:functorialityChern}
The following hold for vector bundles on $X$ and their Chern
classes.
\begin{enumerate}
\item
Suppose $H = G/P$ is as in \S~\ref{sec:Hom*} and $f \colon X \to H$
is a morphism. If $\sE$ is a $G$-equivariant vector bundle on $H$, then
$f^*(c(\sE)) = c(f^*(\sE))$.
\item
If $\sE$ is a vector bundle on $X$, then there exists
$H = G/P$ as in \S~\ref{sec:Hom*} and a $G$-equivariant vector bundle
$\sE'$ on $H$ such that $\sE \cong f^*(\sE')$.
\item
If 
\[
0 \to \sE' \to \sE \to \sE'' \to 0
\]
is an exact sequence of vector bundles on $X$, then
$c(\sE) = c(\sE') \cdot c(\sE'')$.
\item
If $\pi \colon \wt{X} \to X$ is a resolution of singularities of $X$,
then $c(\pi^*(\sE)) = \pi^*(c(\sE))$.
\end{enumerate}
\end{lem}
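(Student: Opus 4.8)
\textbf{Proof proposal for \lemref{lem:functorialityChern}.}

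The plan is to deduce all four statements from the two basic inputs already at our disposal: the ring homomorphism property of $f^*$ (\propref{prop:compatibility-product-pullback}) together with its functoriality in $(G,H)$ via \lemref{lem:functoriality-H}, and the $\P^1$-homotopy invariance of \lemref{lem:P-inv}. I would take them in the order (1), (2), (3), (4), since the later ones lean on the earlier. For (1): if $\sE$ is a $G$-equivariant vector bundle on $H = G/P$, then $\sE$ is a quotient of a $G$-equivariant trivial bundle $\sO_H^{\oplus r}$ (globally generated $G$-equivariantly), so it is classified by a $G$-equivariant morphism $h \colon H \to \Gr_k(n,r) = \GL_{r,k}/P'$, i.e.\ a morphism of pairs $(G,H) \to (\GL_{r,k}, \Gr_k(n,r))$ in the sense of \S~\ref{sec:Functorial}. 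Since $c_1$ and $c_2$ of $\sE$ on the smooth scheme $H$ are the $h^*$-images of the classical Chern classes of $\sQ_{n,r}$, and $c(f^*\sE)$ is computed (by Definitions~\ref{def:C-2} and the paragraph preceding it, after possibly twisting by a globally generated line bundle to make $f^*\sE$ generated by global sections — the correction terms being manifestly compatible with $f^*$ as a ring map) via the composite classifying morphism $h \circ f$, the equality $f^*(c(\sE)) = c(f^*\sE)$ reduces to $(h\circ f)^* = f^* \circ h^*$ on Chow rings, which is exactly the second assertion of \propref{prop:compatibility-product-pullback} combined with \lemref{lem:functoriality-H}.

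For (2): given an arbitrary vector bundle $\sE$ of rank $n$ on the quasi-projective surface $X$, choose a globally generated (even very ample) line bundle $\sL$ so that $\sE \otimes \sL$ is generated by $r$ global sections; this yields a classifying morphism $f \colon X \to \Gr_k(n,r) =: H = G/P$ with $G = \GL_{r,k}$ and $(\sE \otimes \sL) \cong f^*(\sQ_{n,r})$, where $\sQ_{n,r}$ is $G$-equivariant. Then $\sE \cong f^*(\sQ_{n,r}) \otimes \sL^{-1}$; absorbing the twist is the only mild nuisance, and one handles it exactly as in Definition~\ref{def:C-2} — alternatively one enlarges $H$ to a product of a Grassmannian with a projective space carrying the universal line bundle, so that $\sE$ itself becomes the pullback of a genuinely $G'$-equivariant bundle $\sE'$ on $H' = G'/P'$. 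For (3): by (2) choose a single $H = G/P$ and a morphism $f \colon X \to H$ pulling back an exact sequence of $G$-equivariant bundles on $H$ realizing $0 \to \sE' \to \sE \to \sE'' \to 0$ (after simultaneously twisting all three terms by a common very ample $\sL$, which does not affect the identity being proven since both sides transform the same way under twist by the Whitney-type formula on the smooth $H$). The Whitney sum formula $c(\sE) = c(\sE')\cdot c(\sE'')$ holds classically on the smooth scheme $H$, so applying the ring homomorphism $f^*$ and invoking (1) term by term gives the identity in $\CH^*(X)$.

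For (4): if $\pi \colon \wt{X} \to X$ is a resolution of singularities, then $\pi^* \colon \CH^*(X) \to \CH^*(\wt{X})$ is a ring map (on $\CH^1$ it is the usual pullback of Cartier divisors; on $\CH^L_0 \to \CH^F_0$ it is the map of \propref{prop:Desingularization} and compatibility with the intersection product of divisors is checked directly, or inherited from the pullback-from-homogeneous-spaces picture). Writing $\sE \otimes \sL \cong f^*(\sQ_{n,r})$ for a classifying morphism $f \colon X \to H$ as above, the composite $\wt f = f \circ \pi \colon \wt X \to H$ is the classifying morphism for $\pi^*(\sE \otimes \sL)$, and \lemref{lem:Pullback-RS} gives $\wt f^* = \pi^* \circ f^*$ on Chow rings; feeding this through Definition~\ref{def:C-2} (together with $\pi^*$ commuting with $c_1$ and with the intersection product, already noted) yields $c(\pi^*\sE) = \pi^*(c(\sE))$. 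The main obstacle in all of this is purely bookkeeping: keeping track of the twisting line bundle $\sL$ and verifying that the correction terms in Definition~\ref{def:C-2} are compatible with $f^*$ (for (1),(3)) and with $\pi^*$ (for (4)) — this is routine given that $f^*$ and $\pi^*$ are ring homomorphisms and commute with $c_1$, so I would state it and refer to \cite[\S~5]{BSri} for the bundle-twisting identities, which carry over verbatim as the authors already observe.
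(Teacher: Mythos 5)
Your proposal follows essentially the same route as the paper's proof: reduce everything to classifying morphisms into Grassmannians (and products with projective spaces to absorb the twist by $\sL$), then invoke \lemref{lem:functoriality-H}, \propref{prop:compatibility-product-pullback}, \lemref{lem:Pullback-RS} and the classical theory of Chern classes on the smooth homogeneous space. Items (1), (2) and (4) are argued exactly as in the paper.

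Two places are under-specified. First, in (3) you write that ``by (2)'' one can choose a single $H=G/P$ and a morphism $f\colon X\to H$ pulling back an exact sequence of $G$-equivariant bundles realizing $0\to\sE'\to\sE\to\sE''\to 0$. Item (2) only realizes a single bundle as a pullback; it does not produce the maps in the sequence. The actual content of the paper's proof of (3) is the introduction of the partial flag variety ${\rm Fl}_k(n,n'',r)$ of chains of quotients $k^r\surj V\surj W$, which is a projective homogeneous space for $\GL_{r,k}$ carrying the universal exact sequence $0\to\sS\to p_1^*(\sQ_{n,r})\to p_2^*(\sQ_{n'',r})\to 0$ of equivariant bundles; the surjection $\sE\otimes\sL\surj\sE''\otimes\sL$ (after a common twist) is what defines the classifying map to this flag variety. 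Without naming this space your step is an assertion, not a proof. Second, in (1) the claim that a $G$-equivariant $\sE$ on $H$ is a quotient of a trivial equivariant bundle requires an \emph{equivariant} globally generating twist; the paper gets this from Thomason's result that $H$ admits a $G$-equivariant very ample line bundle, which then makes $V=H^0(H,\sE\otimes\sL)$ a rational $G$-representation and the classifying map $h\colon H\to\Gr_k(n,V)$ genuinely $G$-equivariant. Your parenthetical twisting remark does not address equivariance of the twist, and without it \lemref{lem:functoriality-H} does not apply to $h$.
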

\begin{proof}
We first prove (1).
It suffices to prove the lemma for $c_2(\sE)$ as it is obvious for
other Chern classes by their definitions.
We now note that since $G$ and $H$ are smooth and connected, 
it follows from \cite[Lemma~2.10]{Thomason} that $H$ admits
$G$-equivariant very ample line bundles. Hence, we can find a
very ample $G$-equivariant line bundle $\sL$ generated by its global sections
such that $\sE' := \sE \otimes_{\sO_H} \sL$ 
is also generated by its global sections.

Let $\{s_1, \ldots , s_r\}$ be a $k$-basis of 
$V:= H^0(H, \sE')$. Then $V$ becomes a rational
representation of $G$, giving a group homomorphism
$\rho \colon G \to \GL_k(V)$. Furthermore, the classifying morphism
$h \colon H \to \Gr_k(n,V)$ given by the above basis of $V$
is $G$-equivariant. It follows that $h \circ f : X \to
\Gr_k(n,V)$ is the classifying morphism for the vector bundle
$f^*(\sE')$ with sections $\{f^*(s_1), \ldots , f^*(s_r)\}$.
The item (1) of the lemma is clear for $h \circ f$ by the construction of
the Chern classes on $X$. Since
$\sE' = h^*(\sQ_{n, r})$, it follows from the theory
of Chern classes on smooth schemes that
$c_2(\sE') = h^*(c_2(\sQ_{n, r}))$.
We now conclude the proof of (1) by using 
Lemmas~\ref{lem:functoriality-H},
~\ref{lem:indepchoice_sections} and ~\ref{lem:Not-globally-gen}. 

For (2), we choose a globally generated line bundle $\sL$
such that $\sE \otimes_{\sO_X} \sL$ is also globally generated.
This gives rise to the classifying morphisms
$f_1 \colon X \to \Gr_k(n, r)$ and $f_2 \colon X \to \P^s_k$ for
some $r, s \ge 1$, where $n = {\rm rank}(\sE)$. We thus get a map
$f \colon X \to \Gr_k(n, r) \times \P^s_k$, and it is clear from
various universal properties that $\sE \cong f^*(\sQ_{n,r} \boxtimes 
\sO_{\P^s_k}(-1))$.

The item (3) is obtained exactly as (2).
Let ${\rm Fl}_k(n, n'', r)$ be the (partial) flag variety of 
quotients $k^r \surj V \surj W$, where $V$ and $W$ are $k$-vector 
spaces of ranks $n = {\rm rank}(\sE)$ and $n'' =  {\rm rank}(\sE'')$,
respectively. Then ${\rm Fl}_k(n, n'', r)$ is the projective homogeneous
space for $\GL_{r, k}$. This is a closed subvariety of
$\Gr_k(n,r) \times \Gr_k(n'',r)$ and the projections to these
Grassmanian varieties define a universal exact sequence of
$\GL_{r, k}$-equivariant vector bundles
\[
0 \to \sS \to p^*_1(\sQ_{n,r}) \to p^*_2(\sQ_{n'',r}) \to 0.
\]

We now choose a globally generated line bundle $\sL$
such that $\sE \otimes_{\sO_X} \sL$ is also globally generated.
Then $\sE'' \otimes \sL$ is globally generated too. As in (2), 
this gives rise to the classifying morphism 
$f \colon {\rm Fl}_k(n, n'', r) \times \P^s_k$
such that 
\[
0 \to \sS \boxtimes 
\sO_{\P^s_k}(-1) \to p^*_1(\sQ_{n,r}) \boxtimes 
\sO_{\P^s_k}(-1) \to p^*_2(\sQ_{n'',r}) \boxtimes 
\sO_{\P^s_k}(-1) \to 0
\]
pulls back to the exact sequence of (3) on $X$ via $f$.
We  are now done by (1) and \propref{prop:compatibility-product-pullback}
because (3) is well-known for exact sequences of vector bundles on
smooth schemes. 

The item (4) is again clear for $c_0$ and $c_1$. For $c_2$, we let 
$f \colon X \to \Gr_k(n,r)$ be the morphism obtained just above
Definition~\ref{def:C-2}. We let $\wt{f} \colon \wt{X} \xrightarrow{\pi}
X \to \Gr_k(n,r)$ be the composite map. 
We then get
\[
\begin{array}{lll}
c_2 \circ \pi^*(\sE) = c_2 \circ \pi^* \circ f^*(\sQ_{n,r}) 
& = & c_2 \circ \wt{f}^*(\sQ_{n,r}) \ \ {=}^1 \ \ \wt{f}^* \circ c_2(\sQ_{n,r}) \\
& {=}^2 & \pi^* \circ f^* \circ c_2(\sQ_{n,r}) \ \ {=}^3 \ \ \pi^* \circ c_2 \circ
f^*(\sQ_{n,r}) \\
& = & \pi^* \circ c_2(\sE).
\end{array}
\]
In the above, the equality ${=}^1$ follows from the known functoriality
of Chern classes on smooth schemes, ${=}^2$ follows from
\lemref{lem:Pullback-RS} and ${=}^3$ follows from the item (1) of the
lemma. This finishes the proof of (4).
\end{proof}

\subsection{The Chern classes on $K_0(X)$}\label{sec:CK-0}
Let $X$ be as in the beginning of \S~\ref{sec:Ring}. We let
$\CH^*(X)^{\times}$ denote the multiplicative group of units in $\CH^*(X)$
whose codimension zero part is equal to 1 (see \cite[\S~15.3]{Fulton}).
An immediate consequence of \lemref{lem:functorialityChern} (3) is that the
total Chern class defines a group homomorphism
\begin{equation}\label{eqn:TotalChern-K-0}
c_X \colon K_0(X) \to \CH^*(X)^{\times},
\end{equation}
where the left hand side uses the additive group structure of the ring
$K_0(X)$.

Let $\wt{K}_0(X)$ be the kernel of the the rank map
${\rm rk} \colon K_0(X) \surj \Z$. There is 
a canonical map $\det \colon \wt{K}_0(X) \to \Pic(X)$, induced by
taking a vector bundle to its determinant. This map is split
by the natural map $\Pic(X) \to \wt{K}_0(X)$ that sends a line bundle
$\sL$ to $[\sL] - [\sO_X]$. Let $SK_0(X)$ denote the kernel of $\det$.
It follows that there is a natural decomposition
\begin{equation}\label{eqn:TotalChern-K-1}
K_0(X) = \Z \oplus \Pic(X) \oplus SK_0(X).
\end{equation}

We let $x \in X_\reg$ be a closed point and let $U = X \setminus \{x\}$.
Let $j \colon U \inj X$ be the inclusion and let $j^* \colon
K_0(X) \to K_0(U)$ be the induced map.
It is then clear that the rank map of $K_0(X)$ factors through
$K_0(U)$. Furthermore, as $x$ is a regular closed point of $X$ and
$\dim(X) \ge 2$,
one knows that the map $\Pic(X) \to \Pic(U)$ is an isomorphism.
Since $j^*([\sO_{\{x\}}]) = 0$, it follows that the first and the
second components of $[\sO_{\{x\}}] \in K_0(X)$ under the decomposition
~\eqref{eqn:TotalChern-K-1} are zero. We conclude that
the cycle class map of ~\eqref{eqn:CCM-Levine-lci-0} canonically
factors through
\begin{equation}\label{eqn:TotalChern-K-2}
cyc^L_X \colon \CH^L_0(X) \to SK_0(X).
\end{equation}

\vskip .3cm

We shall denote the image of the cycle class map $cyc^L_X$ by $F^2K_0(X)$.
It also follows from the above discussion and the definitions 
of Chern classes that $c_i(cyc^L_X([x])) = 0$
for $i = 0,1$. In particular, we get $c_X \circ cyc^L_X([x]) = 
c_2 \circ cyc^L_X([x])$.
This also implies that $c_2 \colon F^2K_0(X) \to \CH^L_0(X)$ is
a group homomorphism.

\begin{lem}\label{lem:Cycle-class-injective}
The composite map $\CH^L_0(X) \xrightarrow{cyc^L_X} F^2K_0(X) 
\xrightarrow{- c_2} \CH^L_0(X)$ is identity.
\end{lem}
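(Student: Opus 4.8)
The statement asserts that $-c_2 \circ cyc^L_X = \id$ on $\CH^L_0(X)$. The plan is to reduce to a single generator $[x]$, where $x \in X_{\reg}$ is a closed point, and compute $c_2$ of the class $cyc^L_X([x]) \in F^2K_0(X)$ directly. Since $cyc^L_X([x])$ is the image in $K_0(X)$ of $[\sO_{\{x\}}] = [\cO_x]$, the task is to evaluate $c_2$ of the class of a structure sheaf of a regular closed point and show it equals $-[x]$.

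\textbf{Key steps.} First I would resolve $\cO_x$ by vector bundles on a neighborhood and use excision/localization to reduce to a local calculation. More precisely, since $x$ is a regular point of $X$ lying on a smooth locus, one can find (after passing to a suitable affine neighborhood and using that $x$ has a regular local ring of dimension two) a Koszul-type resolution exhibiting $[\cO_x]$ as an alternating sum of classes of free modules twisted by explicit line bundles, or more conceptually, realize $\cO_x$ as the class cut out by two divisors meeting transversally at $x$. The cleanest route is: choose effective Cartier divisors $D_1, D_2$ on $X$ (using the very-ampleness and Bertini arguments of \S\ref{sec:product}, valid since $k$ is infinite) whose supports meet $X_\sing$ in finite sets and whose scheme-theoretic intersection near $x$ is exactly the reduced point $x$, with $D_1, D_2$ meeting transversally there. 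Then in $K_0(X)$ one has the Koszul relation $[\cO_x] = [\cO_X] - [\cO_X(-D_1)] - [\cO_X(-D_2)] + [\cO_X(-D_1-D_2)]$ modulo contributions supported away from a neighborhood of $x$; more carefully, $[\cO_x]$ differs from this expression by a class whose rank and determinant vanish and whose $c_2$ can be controlled. Second, I would apply the total Chern class $c_X$, which is multiplicative by \lemref{lem:functorialityChern}(3) and agrees with the intersection-product formalism of \S\ref{sec:product}. Using $c(\cO_X(-D_i)) = 1 - [D_i]$ in $\CH^*(X)$ and the multiplicativity, a direct expansion in the graded ring $\CH^*(X) = \CH^0 \oplus \CH^1 \oplus \CH^L_0$ gives $c(\,[\cO_x]\,) = 1 + 0 + c_2$, and collecting the codimension-two part yields $c_2(cyc^L_X([x])) = -[D_1]\cdot[D_2] = -[x]$, using that $|D_1| \cap |D_2|$ near $x$ is the transverse reduced point and the local intersection multiplicity $i_x(D_1,D_2) = 1$. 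Third, I would verify that the ``error'' cycles supported away from $x$ genuinely vanish in $\CH^L_0(X)$: this follows because $c_0, c_1$ of $cyc^L_X([x])$ vanish (shown in the text just above the lemma, via $j^* \colon K_0(X) \to K_0(X \setminus \{x\})$ killing $[\cO_x]$ while $\Pic(X) \xrightarrow{\sim} \Pic(X\setminus\{x\})$), so the only surviving term of $c_X(cyc^L_X([x]))$ is its degree-two part, and the Bertini choices ensure the cross terms are honestly zero-cycles supported on $X_\reg$.

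\textbf{Main obstacle.} The delicate point is the bookkeeping in reducing $[\cO_x]$ to the Koszul expression \emph{globally} on the possibly singular surface $X$: one must ensure the divisors $D_1, D_2$ can be chosen effective with $|D_i| \cap X_\sing$ finite and with the intersection being the single reduced point $x$ and nothing else on $X_\reg$ after subtracting off controlled pieces, and that the resulting identity in $K_0(X)$ is exact rather than merely up to lower-rank terms. This is handled by working with $D_i$ very ample so that Bertini (over the infinite field $k$) gives transversality at $x$ and the freedom to move the residual intersection points off $X_\sing$; the residual points then contribute a cycle supported in $X_\reg$ that is accounted for separately, or one arranges the linear systems so that $D_1 \cap D_2$ is \emph{exactly} supported at $x$ plus points we handle by the additivity of $c_2$ on $F^2K_0(X)$. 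Once the global-to-local reduction is pinned down, the remaining computation is the standard multiplicativity expansion of Chern classes and is purely formal, relying only on \propref{prop:compatibility-product-pullback}, \lemref{lem:functorialityChern}, and the definition of the intersection product in \S\ref{sec:product}.
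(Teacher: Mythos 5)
Your route is genuinely different from the paper's, and it has a gap at exactly the point you flag as the ``main obstacle.'' The Koszul computation itself is fine as far as it goes: for two effective Cartier divisors $D_1, D_2$ meeting properly inside $X_\reg$, the relation $[\cO_{D_1\cap D_2}] = [\cO_X]-[\cO_X(-D_1)]-[\cO_X(-D_2)]+[\cO_X(-D_1-D_2)]$ in $K_0(X)$ together with multiplicativity of the total Chern class gives $c_2(cyc^L_X(D_1\cdot D_2)) = -D_1\cdot D_2$. But on a projective surface two very ample divisors through $x$ always meet in a residual cycle $\beta = D_1\cdot D_2 - [x]$ of positive degree, and no Bertini argument makes $\beta$ empty. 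Since $c_2$ is additive on $F^2K_0(X)$, your identity reads $c_2(cyc^L_X([x])) + c_2(cyc^L_X(\beta)) = -[x]-\beta$; to extract $c_2(cyc^L_X([x])) = -[x]$ you need $c_2(cyc^L_X(\beta)) = -\beta$, which is precisely the statement being proved, applied to the points of $\beta$. The vanishing of $c_0$ and $c_1$ does not help, because $\beta$ contributes in codimension two. Nor do complete-intersection cycles generate $\sZ_0(X,X_\sing)$ (already for a smooth cubic $D_1$ in $\P^2_k$ the map $\Pic(\P^2_k)\to\Pic(D_1)$ is far from surjective, so $\beta$ cannot be cut out alone by a second global divisor), so there is no evident induction that closes the circle. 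The standard fixes on a smooth surface --- deformation to the normal cone, the moving lemma, or Riemann--Roch without denominators --- are exactly the $\A^1$-invariant tools the paper emphasizes are unavailable for $\CH^L_0$ of a singular surface.

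The paper sidesteps the local computation entirely. It first applies the localization sequence of \propref{prop:Localization} to $U = X\setminus\{x\}$: since $j^*\circ c_X = c_U\circ j^*$ and $j^*(cyc^L_X([x]))=0$, one gets $j^*(c_2(cyc^L_X([x])))=0$ in $\CH^L_0(U)$, and exactness of $\CH^F_0(\{x\})\to\CH^L_0(X)\to\CH^L_0(U)\to 0$ forces $c_2(cyc^L_X([x])) = m[x]$ for some $m\in\Z$. (This is the step your use of $j^*$ on $K_0(X)$ alone does not capture: you use $j^*$ only to kill $c_0$ and $c_1$, not to constrain the codimension-two part.) It then determines $m=-1$ by transporting the computation to a resolution of singularities $\pi\colon\wt{X}\to X$ via \propref{prop:Desingularization} and \lemref{lem:functorialityChern}(4), where $c_2([\cO_{\{x\}}])=-[x]$ is classical. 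If you want to retain a direct computation, it can only serve to evaluate the integer $m$ after the localization step; at that point the resolution-of-singularities reduction is both shorter and already available.
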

\begin{proof}
This is proven in \cite{Levine-2} and \cite[Proposition~2]{BSri}.
We reproduce the latter proof for sake of completeness.
It is enough to check that $c_2 \circ cyc^L_X([x]) + [x] = 0$ if
$x \in X_\reg$ is a closed point.
Fix such a point and let $j \colon U = X \setminus \{x\} \inj X$ be as
above. Let $j^* \colon \CH^L_0(X) \to \CH^L_0(U)$ be the
restriction map of \propref{prop:Localization}. Then it is clear from 
the construction of the Chern classes that $j^* \circ c_X = c_U \circ j^*$.
It follows therefore from the above argument that $j^*\circ c_2 \circ
cyc^L_X([x]) = 0$. We conclude from \propref{prop:Localization} that
$c_2 \circ cyc^L_X([x]) = m[x]$ for some $m \in \Z$.
It remains to show that $m = -1$.

We let $\pi \colon \wt{X} \to X$ denote a resolution of singularities
of $X$. This exists over all base fields since $\dim(X) = 2$.
Since $x \in X_\reg \subset \wt{X}$, we see that
$\pi^*([x]) = [x]$ under the pull-back map $\pi^* \colon
\CH^L_0(X) \to \CH^2(\wt{X})$.
Lemma~\ref{lem:functorialityChern} (4) easily implies that
$\pi^* \circ c_2 \circ cyc^L_X([x]) = c_2 \circ \pi^* \circ  cyc^L_X([x])$.
This yields
\[
\begin{array}{lll}
c_2 \circ  cyc_{\wt{X}}([x]) & = & c_2 \circ  cyc_{\wt{X}} \circ \pi^*([x]) 
\ {=}^{\dagger} \  c_2 \circ \pi^* \circ  cyc^L_X([x]) \\
& = &  \pi^* \circ c_2 \circ cyc^L_X([x]) = \pi^*(m[x]) \\
& = & m \pi^*([x]) = m[x],
\end{array}
\]
where ${=}^{\dagger}$ follows from \propref{prop:Desingularization}.
We thus get $c_2 \circ  cyc_{\wt{X}}([x]) = m[x]$ on $\wt{X}$.
Since the lemma is well known for non-singular surfaces, this
forces $m = -1$.
\end{proof}

The above lemma leads us to the following final result of this
section which generalizes Levine's result \cite{Levine-1} to
all infinite fields.

\begin{thm}\label{thm:Cycl-iso}
Let $X$ be an equidimensional reduced quasi-projective surface over an
infinite field. Then the cycle class map
\[
cyc^L_X \colon \CH^L_0(X) \to F^2K_0(X)
\]
is an isomorphism.
\end{thm}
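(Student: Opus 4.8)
The plan is to show that the cycle class map $cyc^L_X$ is both surjective and injective onto $F^2K_0(X)$. Surjectivity is essentially built into the definition: $F^2K_0(X)$ is defined as the image of $cyc^L_X$, so there is nothing to prove there. Hence the entire content is the injectivity of $cyc^L_X$, and this is exactly where the machinery developed in the preceding sections pays off. The idea, due to Levine, is to produce a one-sided inverse using the second Chern class. By Lemma~\ref{lem:Cycle-class-injective}, the composite
\[
\CH^L_0(X) \xrightarrow{cyc^L_X} F^2K_0(X) \xrightarrow{-c_2} \CH^L_0(X)
\]
is the identity. This immediately forces $cyc^L_X$ to be injective: if $cyc^L_X(\alpha) = 0$, then $\alpha = -c_2(cyc^L_X(\alpha)) = -c_2(0) = 0$. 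Combined with the tautological surjectivity onto $F^2K_0(X)$, we conclude that $cyc^L_X$ is an isomorphism, with inverse given by $-c_2$ restricted to $F^2K_0(X)$.

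The real work, therefore, is entirely upstream — in establishing Lemma~\ref{lem:Cycle-class-injective} — and the proof of the theorem is just the formal extraction of the injectivity statement from that identity. If one were to reconstruct the argument for the theorem from scratch without invoking Lemma~\ref{lem:Cycle-class-injective} directly, the steps would be: (1) construct the pull-back maps $f^* \colon \CH^2(H) \to \CH^L_0(X)$ from homogeneous spaces and verify they respect rational equivalence (Sections~\ref{sec:Pull-back}); (2) equip $\CH^*(X)$ with a ring structure via intersection of Cartier divisors and check compatibility with pull-back (Section~\ref{sec:Ring}); (3) define the Chern classes $c_i(\sE) \in \CH^*(X)$ for vector bundles on $X$ by pulling back the universal classes on Grassmannians, using the $\P^1$-homotopy invariance of Lemma~\ref{lem:P-inv} and the functoriality of Lemma~\ref{lem:functoriality-H} to check well-definedness; (4) verify Whitney sum formula so that $c_X$ descends to $K_0(X)$; and (5) compute $c_2 \circ cyc^L_X([x]) = -[x]$ for a regular closed point $x$ by reducing via a resolution of singularities $\pi \colon \wt{X} \to X$ to the classical identity on the smooth surface $\wt{X}$, using Lemma~\ref{lem:functorialityChern}(4) and Proposition~\ref{prop:Desingularization}.

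The main obstacle in this whole program — and the reason the paper devotes four sections to it — is not the final formal deduction but rather ensuring that Levine's construction of the pull-back maps and the Chern class theory, originally carried out over algebraically closed fields, survives over an arbitrary infinite field. The crux is the Kleiman--Levine transversality theorem (Theorem~\ref{thm:Kleiman-Levine}) and its $\P^1$-version (Proposition~\ref{prop:Kleiman-Levine-P1}): one must produce, for a morphism $f \colon X \to H$ and a cycle $Y \subset H$, a dense open $U \subset G$ of \emph{$k$-rational} translates $g$ with $f^{-1}(gY)$ of the right dimension, Tor-independent, and lci along $X_\sing$. Over a non-closed field one cannot simply quote the classical Kleiman theorem; the argument instead leverages the uni-rationality of reductive groups (so that $U(k)$ is Zariski dense), a careful construction of the parameter space $\Sigma$ of rational equivalences together with its integrality (Lemmas~\ref{lem:Sigma1}, \ref{lem:Sigma-rational}, \ref{lem:extralemsigma}), and constructibility arguments (Lemma~\ref{lem:Constructible}) to locate a good generic translate. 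Once these geometric inputs are in place over infinite $k$, the Chern class formalism and the key computation $c_2 \circ cyc^L_X = -\id$ go through essentially as in \cite{Levine-2} and \cite{BSri}, and the theorem follows at once from Lemma~\ref{lem:Cycle-class-injective}.
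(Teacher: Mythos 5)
Your proposal is correct and matches the paper's proof exactly: surjectivity of $cyc^L_X$ onto $F^2K_0(X)$ is tautological from the definition of $F^2K_0(X)$ as the image, and injectivity follows immediately from Lemma~\ref{lem:Cycle-class-injective}, which exhibits $-c_2$ as a left inverse. The additional discussion of the upstream machinery is accurate but not needed for the deduction itself.
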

\begin{proof}
The map $cyc^L_X$ is surjective by definition and is injective by
\lemref{lem:Cycle-class-injective}. 
\end{proof}

Combining \thmref{thm:Cycl-iso}, \corref{cor:CCM-Levine-lci} and
Lemmas~\ref{lem:K-iso}, ~\ref{lem:Bloch-cycle-maps} and
~\ref{lem:Bloch-surjection}, we obtain the following.

\begin{cor}\label{cor:BQ-for-LW}
Let $X$ be an equidimensional reduced quasi-projective surface over an
infinite field. Then the cycle class map  induces the isomorphisms
\[
\CH^L_0(X) \xrightarrow{\cong}H^2_{\zar}(X, \sK^M_{2,X})
\xrightarrow{\cong} H^2_{\zar}(X, \sK_{2,X}).
\]
\end{cor}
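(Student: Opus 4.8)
The plan is to obtain both isomorphisms by a purely formal diagram chase, feeding in the results already assembled in the excerpt; throughout I take $Y = X_\sing$. First I would recall from \thmref{thm:CCM-main} (which packages \lemref{lem:Bloch-cycle-maps} and \lemref{lem:Bloch-surjection}) the chain of maps
\[
\CH_0(X) \xrightarrow{\rho_X} H^2_\zar(X,\sK^M_{2,X}) \stackrel{\lambda_X}{\inj} H^2_\nis(X,\sK^M_{2,X}) \stackrel{\gamma_X}{\inj} K_0(X),
\]
whose composite is the cycle class map $cyc_X$ of \eqref{eqn:CCM-lci}, with $\rho_X$ surjective and $\lambda_X,\gamma_X$ injective by \eqref{eqn:Coh-iso-1}. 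Next, by \corref{cor:CCM-Levine-lci} the canonical surjection $\phi_{(X,X_\sing)}\colon\CH^L_0(X)\surj\CH_0(X)$ of \lemref{lem:Levine-lci} satisfies $cyc^L_X = cyc_X\circ\phi_{(X,X_\sing)}$, so that $cyc^L_X$ factors as
\[
\CH^L_0(X) \xrightarrow{\phi_{(X,X_\sing)}} \CH_0(X) \xrightarrow{\rho_X} H^2_\zar(X,\sK^M_{2,X}) \xrightarrow{\gamma_X\circ\lambda_X} K_0(X).
\]

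Then I would invoke the main input, \thmref{thm:Cycl-iso}: the map $cyc^L_X\colon\CH^L_0(X)\to F^2K_0(X)$ is an isomorphism, hence in particular injective. Reading this injectivity off the displayed factorization forces $\phi_{(X,X_\sing)}$ to be injective; being surjective, it is an isomorphism. Consequently $cyc_X = cyc^L_X\circ\phi_{(X,X_\sing)}^{-1}$ is injective, and since $cyc_X = (\gamma_X\circ\lambda_X)\circ\rho_X$ with $\gamma_X,\lambda_X$ injective, the map $\rho_X$ is injective as well; combined with its surjectivity, $\rho_X$ is an isomorphism. Therefore $\rho_X\circ\phi_{(X,X_\sing)}\colon\CH^L_0(X)\xrightarrow{\cong}H^2_\zar(X,\sK^M_{2,X})$, which is the first asserted isomorphism. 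For the second, I would simply cite \lemref{lem:K-iso}(2), which says that the Milnor-to-Quillen comparison induces an isomorphism $H^2_\zar(X,\sK^M_{2,X})\xrightarrow{\cong}H^2_\zar(X,\sK_{2,X})$; tracing through the constructions shows the resulting composite is the Quillen cycle class map.

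Since every ingredient has already been proved in the excerpt --- the genuinely hard work lives in \thmref{thm:Cycl-iso} and, upstream of it, in the adaptation of Levine's program to infinite fields --- there is no real obstacle in this corollary itself. The only subtlety worth spelling out is the bookkeeping ensuring that the \emph{intermediate} arrows, not merely the outer composite $cyc^L_X$, are isomorphisms: this is exactly why one needs the injectivity of $\lambda_X$ and $\gamma_X$ from \eqref{eqn:Coh-iso-1} alongside the surjectivity of $\rho_X$ and $\phi_{(X,X_\sing)}$, so that a map occurring injectively in a composite and independently known to be surjective must be bijective.
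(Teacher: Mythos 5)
Your proposal is correct and follows essentially the same route as the paper, which simply combines \thmref{thm:Cycl-iso} (injectivity of $cyc^L_X$), \corref{cor:CCM-Levine-lci}, Lemmas~\ref{lem:Bloch-cycle-maps} and~\ref{lem:Bloch-surjection} (factorization through injective maps into $K_0(X)$, plus surjectivity of $\rho_{(X,Y)}$), and \lemref{lem:K-iso} for the Milnor-to-Quillen comparison. The only cosmetic difference is that you route the injectivity argument through $\phi_{(X,X_\sing)}$ and $\CH_0(X)$, thereby also recording that $\phi_{(X,X_\sing)}$ is an isomorphism, whereas one can read the injectivity of $\CH^L_0(X)\to H^2_\zar(X,\sK^M_{2,X})$ off the factorization of $cyc^L_X$ directly.
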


\section{The main results}
\label{sec:BFormula}
In this section, we shall prove the main results of this paper.
We shall first prove the Bloch-Kato formula for the
lci Chow group of singular surfaces and then use it
prove Theorems~\ref{thm:Main-1} and ~\ref{thm:Main-2}.

\subsection{The Bloch-Kato formula for singular surface}
\label{sec:BFSS}
Let $k$ be any field.
Let $X$ be an equidimensional reduced
quasi-projective surface over $k$.
In \S~\ref{sec:B-map}, we constructed the maps
\begin{equation}\label{eqn:BQ-cyc}
\CH_0(X) \xrightarrow{\rho_X} H^2_{\zar}(X, \sK^M_{2,X})
\xrightarrow{\lambda_X} H^2_{\nis}(X, \sK^M_{2,X}) 
\xrightarrow{\gamma_X} K_0(X).
\end{equation}
The following result extends \thmref{thm:Cycl-iso} and \corref{cor:BQ-for-LW}
to all fields if we use the lci Chow group.

\begin{thm}\label{thm:BQ-for-lci}
The cycle class map
\[
cyc_X \colon \CH_0(X) \to F^2K_0(X)
\]
is an isomorphism.
In particular, the map
\[
\rho_X \colon \CH_0(X) \to H^2_{\zar}(X, \sK^M_{2,X})
\]
is an isomorphism.
If $X_\reg$ is smooth over $k$ (e.g., if $k$ is perfect),
then $\lambda_X \circ \rho_X$ is also an isomorphism.
\end{thm}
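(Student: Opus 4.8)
The goal is to prove \thmref{thm:BQ-for-lci}: for an equidimensional reduced quasi-projective surface $X$ over an arbitrary field $k$, the cycle class map $cyc_X \colon \CH_0(X) \to F^2K_0(X)$ is an isomorphism, and consequently $\rho_X$ is an isomorphism (and $\lambda_X \circ \rho_X$ too when $X_\reg$ is smooth over $k$). The strategy is a standard \emph{pro-$\ell$ extension} reduction to the infinite-field case, which has already been settled: over an infinite field, $\CH^L_0(X) \xrightarrow{\cong} F^2K_0(X)$ by \thmref{thm:Cycl-iso}, and this transfers to the lci Chow group via the surjection $\phi_{(X,X_\sing)} \colon \CH^L_0(X) \surj \CH_0(X)$ of \lemref{lem:Levine-lci} together with the factorization $cyc^L_{(X,X_\sing)} = cyc_X \circ \phi_{(X,X_\sing)}$ from \corref{cor:CCM-Levine-lci}; indeed, in that diagram $cyc^L_X$ is an isomorphism onto $F^2K_0(X)$ and $cyc_X \circ \phi$ equals it, so $cyc_X$ is forced to be both injective (since $\phi$ is surjective and the composite is injective) and surjective (by definition $cyc_X$ always surjects onto $F^2K_0(X)$) — hence $\phi$ is an isomorphism and so is $cyc_X$, \emph{over any infinite field}.

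\textbf{The finite-field case.} Suppose now $k$ is finite. Surjectivity of $cyc_X \colon \CH_0(X) \to F^2K_0(X)$ holds by the definition of $F^2K_0(X)$ as the image of the cycle class map (and $cyc_X$ factors through $\CH_0(X)$ by \lemref{lem:Bloch-cycle-maps}, since $X_\sing$ is nowhere dense). So the content is injectivity. Fix a prime $\ell$ and consider, for each prime $\ell' \neq \ell$, the unique extension $k \subset k_{\ell'}$ which is the pro-$\ell'$ part of $\ov{k}/k$; more precisely, write $k'_\ell = \bigcup k_i$ where $k = k_0 \subset k_1 \subset \cdots$ is the tower of degree-$\ell'$ extensions obtained from $\Gal(\ov{k}/k) \cong \wh{\Z}$. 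Each such $k \subset k_i$ is finite separable, and $k'_\ell$ is infinite. By \propref{prop:PF-PB}, there are pull-back maps ${\rm pr}^*_{k_i/k} \colon \CH_0(X) \to \CH_0(X_{k_i})$ and push-forward maps $({\rm pr}_{k_i/k})_* \colon \CH_0(X_{k_i}) \to \CH_0(X)$ with $({\rm pr}_{k_i/k})_* \circ {\rm pr}^*_{k_i/k} = [k_i:k] = (\ell')^i$, and in the colimit $\varinjlim_i \CH_0(X_{k_i}) \xrightarrow{\cong} \CH_0(X_{k'_\ell})$. By \lemref{lem:Cycle-K-0} the cycle class maps are compatible with base change, so there is a commutative square relating $cyc_X$ over $k$ to $cyc_{X_{k'_\ell}}$ over $k'_\ell$; since $k'_\ell$ is infinite, the latter is injective by the infinite-field case established above. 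Now suppose $\alpha \in \CH_0(X)$ with $cyc_X(\alpha) = 0$. Then ${\rm pr}^*_{k'_\ell/k}(\alpha)$ maps to $0$ under $cyc_{X_{k'_\ell}}$, hence ${\rm pr}^*_{k'_\ell/k}(\alpha) = 0$; by the colimit statement this means ${\rm pr}^*_{k_i/k}(\alpha) = 0$ for some $i$, and then applying $({\rm pr}_{k_i/k})_*$ gives $(\ell')^i \alpha = 0$. Since this holds for every prime $\ell' \neq \ell$ (with its own exponent), $\alpha$ is killed by integers with no common prime factor; a short argument — note $\alpha$ is annihilated by $(\ell_1')^{i_1}$ and $(\ell_2')^{i_2}$ for two distinct primes $\ell_1', \ell_2' \neq \ell$, which are coprime, hence by Bézout $\alpha = 0$ — finishes injectivity. (Only two auxiliary primes distinct from each other are needed; $\ell$ itself plays no essential role, so one may simply pick any two distinct primes and run the argument, but phrasing it as "for all $\ell'$" makes the coprimality transparent.)

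\textbf{The consequences.} Once $cyc_X$ is an isomorphism, the statement about $\rho_X$ follows from \thmref{thm:CCM-main}: there the composite $\gamma_X \circ \lambda_X \circ \rho_X = cyc_{(X,X_\sing)} = cyc_X$, with $\rho_X$ surjective and $\lambda_X, \gamma_X$ injective; since the composite is now an isomorphism, each factor must be an isomorphism, in particular $\rho_X \colon \CH_0(X) \xrightarrow{\cong} H^2_\zar(X, \sK^M_{2,X})$ (and $F^2K_0(X) = \im(\gamma_X \circ \lambda_X)$, consistent with the definition). When $X_\reg$ is smooth over $k$, \thmref{thm:CCM-main} also gives that $\lambda_X$ is an isomorphism, so $\lambda_X \circ \rho_X$ is an isomorphism.

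\textbf{Anticipated obstacle.} There is no deep new idea required here — the heavy lifting is \thmref{thm:Cycl-iso}, which is already in hand. The only point demanding care is the bookkeeping in the pro-$\ell$ (really, pro-$\ell'$ for two auxiliary primes) descent: one must check that $X_{k_i}$ remains equidimensional, reduced, and quasi-projective so that all the cited results apply (this is automatic for separable algebraic base change, since $X_\sing$ base-changes correctly and geometric reducedness is not needed — only reducedness, which is preserved under separable extensions), and that the compatibilities of $cyc$, $\phi$, pull-back and push-forward all fit into a single commutative diagram. That is routine diagram-chasing. The mild subtlety worth a sentence is that one does \emph{not} pass all the way to $\ov{k}$ (where reducedness could fail) but only to the infinite intermediate fields $k'_{\ell'}$, which are still separable algebraic over $k$, so \propref{prop:PF-PB} and \lemref{lem:Cycle-K-0} apply verbatim.
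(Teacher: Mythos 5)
Your proposal is correct and follows essentially the same route as the paper: surjectivity and the consequences for $\rho_X$ come from \thmref{thm:CCM-main}, injectivity over infinite fields from \corref{cor:CCM-Levine-lci} together with \thmref{thm:Cycl-iso}, and the finite-field case is handled by the pro-$\ell$ extension trick via \propref{prop:PF-PB} and \lemref{lem:Cycle-K-0} — exactly the three ingredients the paper cites, with your write-up merely making the coprimality bookkeeping explicit. (One cosmetic caveat: "each factor must be an isomorphism" overstates what the factorization gives — $\lambda_X$ and $\gamma_X$ are only injective in general — but the conclusion you actually draw, that $\rho_X$ is bijective, is correctly justified.)
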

\begin{proof}
In view of \thmref{thm:CCM-main}, we only need to show that
$cyc_X$ is injective to prove the theorem. 
If $k$ is finite, we can use \propref{prop:PF-PB},
\lemref{lem:Cycle-K-0} and the standard pro-$\ell$ extension
trick to reduce our problem to the case of infinite fields.
In the latter case, the desired injectivity follows 
directly from \corref{cor:CCM-Levine-lci} 
and \thmref{thm:Cycl-iso}. 
\end{proof}

\subsection{The Bloch-Kato formula for 0-cycles with modulus}
\label{sec:BFSS-mod}
We again assume $k$ to be an arbitrary field.
Let $X$ be a smooth quasi-projective surface over $k$
and let $D \subset X$ be an effective Cartier divisor.
Let $cyc_{X|D}$ and $\rho_{X|D}$ be the cycle class and the Bloch-Kato 
maps constructed in \S~\ref{sec:CCMaps} and \S~\ref{sec:BQM-mod}.
We let $F^2K_0(X,D)$ denote the image of the map $cyc_{X|D}$.
Consider the following diagram (see \S~\ref{sec:B-map} and
\S~\ref{sec:BQM-mod}).

\begin{equation}\label{eqn:BQ-for-mod-diag}
\xymatrix@C.8pc{
\CH_0(X|D) \ar[r]^-{\rho_{X|D}} & H^2_{\zar}(X, \sK^M_{2,(X,D)}) 
\ar[r]^-{\lambda_{(X,D)}} \ar[d] & H^2_{\nis}(X, \sK^M_{2,(X,D)}) 
\ar[dr]^-{\gamma_{(X,d)}} \ar[d] & \\
& H^2_{\zar}(X, \sK_{2,(X,D)}) 
\ar[r]^-{\lambda_{(X,D)}}  & H^2_{\nis}(X, \sK_{2,(X,D)}) 
\ar[r]^-{\gamma_{(X,d)}} & F^2K_0(X,D),}
\end{equation}
where the vertical arrows are the canonical maps from
the cohomologies of Milnor to Quillen $K$-theory sheaves.
The following result proves \thmref{thm:Main-1}.

\begin{thm}\label{thm:BQ-for-mod}
All of the maps in ~\eqref{eqn:BQ-for-mod-diag} are
isomorphisms.
\end{thm}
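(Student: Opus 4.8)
\textbf{Proof strategy for \thmref{thm:BQ-for-mod}.} The plan is to reduce everything to the already-established results for the double $S_X$ and for smooth surfaces, using the split exact sequence of \thmref{thm:BS-main} and the compatible decomposition of cohomology groups built in the proof of \thmref{thm:CCM-main-mod}. First I would recall from \thmref{thm:CCM-main-mod} that $\lambda_{(X,D)}$ is already an isomorphism (this was shown there via \lemref{lem:K-iso} and \cite[Proposition~9.8]{Kato-Saito}), and that the two vertical maps from Milnor to Quillen $K$-theory sheaf cohomology are isomorphisms by \lemref{lem:K-iso}. So the content is entirely in showing that $\rho_{X|D}$ is an isomorphism, equivalently (since $\gamma_{(X,D)}\circ\lambda_{(X,D)}\circ\rho_{X|D}=cyc_{X|D}$ by \thmref{thm:CCM-main-mod}) that $cyc_{X|D}\colon \CH_0(X|D)\to F^2K_0(X,D)$ is injective; surjectivity onto $F^2K_0(X,D)$ is automatic by definition, and $\rho_{X|D}$ is surjective by \thmref{thm:CCM-main-mod}.

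The key step is to exploit the commutative diagram \eqref{eqn:CCM-main-mod-3}, whose rows are split short exact: the top row $0\to\sZ_0(S_X,X_-)\to\sZ_0(S_X,D)\to\sZ_0(X\setminus D)\to 0$, the middle row the analogous sequence of $H^2_\nis$-groups with Milnor $K$-sheaves, and the bottom row $0\to K_0(S_X,X_-)\to K_0(S_X)\to K_0(X)\to 0$. By \thmref{thm:BS-main} the maps $p_{+*}$ and $\iota^*_-$ descend to the split exact sequence $0\to\CH_0(X|D)\to\CH_0(S_X)\to\CH_0(X)\to 0$. I would then pass to the corresponding diagram of $F^2$-subgroups: one gets a map of split exact sequences
\begin{equation*}
\xymatrix@C.8pc{
0 \ar[r] & \CH_0(X|D) \ar[r]^-{p_{+*}} \ar[d]_-{cyc_{X|D}} & \CH_0(S_X) \ar[r]^-{\iota^*_-} \ar[d]_-{cyc_{S_X}} & \CH_0(X) \ar[r] \ar[d]^-{cyc_X} & 0 \\
0 \ar[r] & F^2K_0(X,D) \ar[r]^-{p_{+*}} & F^2K_0(S_X) \ar[r]^-{\iota^*_-} & F^2K_0(X) \ar[r] & 0,}
\end{equation*}
where the bottom row is exact because the cycle-theoretic splittings (pull-back along $\pi\colon S_X\to X$) are compatible with $cyc$ by the functoriality already used in \S\ref{sec:CCM*}, and the squares commute by the identity $p_{+*}\circ cyc_{X|D}=cyc_{(S_X,D)}\circ p_{+*}$ and $\iota^*_-\circ cyc_{S_X}=cyc_X\circ\iota^*_-$ that were verified in the proof of \thmref{thm:CCM-main-mod}. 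Now $cyc_{S_X}$ is an isomorphism by \thmref{thm:BQ-for-lci} (applied to the reduced quasi-projective surface $S_X$, whose singular locus is $D_{\red}$), and $cyc_X$ is an isomorphism since $X$ is smooth (classical Bloch formula, or \thmref{thm:BQ-for-lci} again). A diagram chase — injectivity of $cyc_{X|D}$ follows from injectivity of $cyc_{S_X}$, surjectivity from surjectivity of $cyc_{S_X}$ together with the splitting — gives that $cyc_{X|D}$ is an isomorphism. Tracing back through \thmref{thm:CCM-main-mod}, this forces $\rho_{X|D}$ to be an isomorphism as well.

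I expect the main obstacle to be bookkeeping rather than any new geometric input: one must check that all the maps in the two diagrams above are genuinely compatible — in particular that the splitting of the bottom $K$-theory row is induced by $\pi^*$ and restricts correctly to the $F^2$-subgroups, and that passing from the diagram \eqref{eqn:CCM-main-mod-3} of full groups to the sub-diagram of $F^2$-pieces is legitimate (this uses that $p_{+*}$ is injective on cohomology and that the splittings are compatible, both noted already in \S\ref{sec:CCM*}). Once the diagram of split exact sequences is in place, the conclusion is the five-lemma-type chase described above, and then \thmref{thm:Main-1} is exactly the statement that $\rho_{X|D}$ and $\lambda_{(X,D)}$ are isomorphisms, which we have just proved.
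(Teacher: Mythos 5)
Your overall strategy --- reduce everything to the injectivity of the cycle class map, then chase a map of split exact sequences built from \thmref{thm:BS-main} and \thmref{thm:BQ-for-lci} applied to the double $S_X$ --- is exactly the route the paper takes, and your bookkeeping of $\lambda_{(X,D)}$, $\gamma_{(X,D)}$ and the Milnor-to-Quillen comparison is correct. The gap is in the bottom row of your diagram, $0\to F^2K_0(X,D)\to F^2K_0(S_X)\to F^2K_0(X)\to 0$. The relative $K$-group that actually sits in a split exact sequence with $K_0(S_X)$ and $K_0(X)$ (the bottom row of ~\eqref{eqn:CCM-main-mod-3}) is $K_0(S_X,X_-)$, not $K_0(X,D)$, and the cycle class map satisfying the commutation identity ~\eqref{eqn:CCM-main-mod-4} is $cyc_{(S_X,X_-)}\colon \sZ_0(S_X,X_-)\to K_0(S_X,X_-)$, not $cyc_{X|D}$. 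There is no a priori map $K_0(X,D)\to K_0(S_X)$ of the kind your diagram requires. Your chase therefore proves injectivity of the induced map $\CH_0(X|D)\to K_0(S_X,X_-)$; to deduce injectivity of $cyc_{X|D}=\iota^*_+\circ cyc_{(S_X,X_-)}$ (see ~\eqref{eqn:CCM-main-mod-1}) you still need the excision map $\iota^*_+\colon K_0(S_X,X_-)\to K_0(X,D)$ to be injective. This is not formal: the paper invokes \cite[Proposition~13.2]{BK}, which shows $\iota^*_+$ is an isomorphism, and that result is only available when $k$ is infinite.

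This exposes the second omission: since the excision step requires $k$ infinite, the proof must first reduce to that case, which the paper does using \propref{prop:PF-fields-mod}, \corref{cor:Cycle-K-mod} and the standard pro-$\ell$ extension trick; your write-up never performs this reduction, and nothing else in your argument supplies the injectivity of $\iota^*_+$ over a finite field. With these two points added --- (i) reduce to infinite $k$ by the pro-$\ell$ trick, and (ii) run the chase with $K_0(S_X,X_-)$ in place of $K_0(X,D)$ and then apply the excision isomorphism of \cite[Proposition~13.2]{BK} --- your argument becomes the paper's proof.
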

\begin{proof}
In view of \lemref{lem:K-iso} and
\thmref{thm:CCM-main-mod}, the proof of the theorem 
reduces to showing that $cyc_{X|D}$ is injective.
Using \propref{prop:PF-fields-mod}, \corref{cor:Cycle-K-mod}
and the standard pro-$\ell$ extension trick,
we can reduce our problem to showing the injectivity of $cyc_{X|D}$
when $k$ is infinite.

We now assume $k$ is infinite and consider the diagram
\begin{equation}\label{eqn:BQ-for-mod-0}
\xymatrix@C.8pc{
0 \ar[r] & \CH_0(X|D) \ar[r]^-{p_{+ *}} \ar[d]_{cyc_{(S_X, X_-)}} &
\CH_0(S_X) \ar[r]^-{\iota^*_-} \ar[d]^-{cyc_{S_X}} & \CH_0(X) \ar[r]
\ar[d]^-{cyc_X} & 0 \\
0 \ar[r] & K_0(S_X, X_-) \ar[r]^-{p_{+ *}} \ar[d]_-{\iota^*_+}
& K_0(S_X) \ar[r]^-{\iota^*_-} \ar[d]_-{\iota^*_+} & 
K_0(X) \ar[d]^-{\iota^*_D} \ar[r] & 0 \\
& K_0(X,D) \ar[r]^-{p_{+ *}} & K_0(X) \ar[r]^-{\iota^*_D} &
K_0(D). &}
\end{equation}
The top row is exact by \thmref{thm:BS-main}.
The middle row is split exact and the bottom row is exact.
It follows by applying \thmref{thm:BQ-for-lci} to $S_X$ that
$cyc_{S_X}$ is injective. It follows that $cyc_{(S_X, X_-)}$ is also
injective. The bottom left vertical arrow is an isomorphism by
\cite[Proposition~13.2]{BK} because $k$ is infinite.
Since $cyc_{X|D} = \iota^*_+ \circ  cyc_{(S_X, X_-)}$ by
~\eqref{eqn:CCM-main-mod-1}, we conclude that
$cyc_{X|D}$ is injective.
\end{proof}

For a connected projective variety $X$ over a field 
and an effective Cartier divisor 
$D \subset X$, let $\CH_0(X|D)^0$ denote the kernel of the
degree map ${\rm deg} \colon \CH_0(X|D) \to \Z$.
\thmref{thm:BQ-for-mod} has the following important consequence on the
finiteness of $\CH_0(X|D)^0$. We shall generalize this to higher
dimensions in \corref{cor:Finite-Chow}.

\begin{cor}\label{cor:Finite-dim-2}
Let $X$ be a smooth and connected projective surface over a finite
field and let $D \subset X$ be an effective Cartier divisor.
Then $\CH_0(X|D)^0$ is a finite abelian group.
\end{cor}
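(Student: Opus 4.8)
The plan is to combine the Bloch--Kato formula of \thmref{thm:BQ-for-mod} with the finiteness theorems of Kato--Saito's unramified class field theory. First I would note that by \thmref{thm:BQ-for-mod} we have a canonical isomorphism $\CH_0(X|D) \xrightarrow{\cong} H^2_{\nis}(X, \sK^M_{2,(X,D)})$, and this isomorphism is compatible with the degree maps (the degree map on the right-hand side being the composite with $\gamma_{(X,D)}$ into $K_0(X,D)$ and then the rank/degree pairing, or equivalently the one induced by pushing forward to a point away from $D$). Consequently $\CH_0(X|D)^0 \cong H^2_{\nis}(X, \sK^M_{2,(X,D)})^0$, the degree-zero part of the cohomology group. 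So it suffices to prove that the latter is finite.

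Next I would invoke the relation between this cohomology group and the id\`ele-theoretic Chow group $C^{\mathrm{KS}}(U)$ appearing in the outline of \S\ref{sec:Outline}, where $U = X \setminus D_{\red}$. Taking the inverse limit over all effective Cartier divisors $D' \subset X$ supported on $D_{\red}$ with $D' \ge D$ does not help directly, since we want finiteness for the single group at level $D$; instead I would argue directly. Over a finite field, Kato and Saito \cite{Kato-Saito} proved that the reciprocity map identifies the degree-zero part of $H^2_{\nis}(X, \sK^M_{2,(X,D)})$ (equivalently, the wild part of the id\`ele class group with modulus $D$) with a subquotient of $\pi^{\ab}_1(U)$, namely the quotient of $\pi_1^{\ab}(X)^0$-type groups classifying abelian covers of $U$ whose ramification along each component of $D_{\red}$ is bounded by the multiplicity in $D$. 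Such covers form a finite set because $\pi_1^{\ab}(X)^0$ is finite (Deligne, Lang, Katz--Lang) and the extra bounded-ramification data along the finitely many branches of $D_{\red}$ contributes only finitely many more possibilities. Thus $\pi^{\ab}_1(U)^0_{(D)}$, the relevant quotient, is finite, and hence so is $\CH_0(X|D)^0$.

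The main obstacle is making the second step precise without simply quoting \thmref{thm:Main-2} in dimension two (which would be circular only if one insists on a self-contained argument, but is in fact legitimate here). The cleanest route is: apply \thmref{thm:Main-2} with $U = X \setminus D_{\red}$ and $X$ as the given smooth projective compactification; this yields $C(U)^0 \cong \pi^{\ab}_1(U)^0$, and the right-hand side is a finite group since $U$ is a smooth variety over a finite field with $\pi_1^{\ab}$ of geometric origin having finite degree-zero part. But $C(U)^0 = \varprojlim_{D'} \CH_0(X|D')^0$ surjects onto each $\CH_0(X|D')^0$ only after one knows the transition maps are surjective and that each $\CH_0(X|D)^0$ is already the corresponding finite quotient. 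So the honest approach is the direct one via \thmref{thm:BQ-for-mod} together with \cite[Theorem in \S{}I or the main finiteness statement]{Kato-Saito}: Kato--Saito prove directly that $H^2_{\nis}(X, \sK^M_{2,(X,D)})^0$ is finite for $X$ smooth projective of dimension two over a finite field (this is part of their wild reciprocity package). I would therefore structure the proof as: (i) reduce to finiteness of $H^2_{\nis}(X, \sK^M_{2,(X,D)})^0$ via \thmref{thm:BQ-for-mod}, checking compatibility of degree maps; (ii) cite the Kato--Saito finiteness theorem for this cohomology group. The only real work is step (i), and there the compatibility of degrees follows immediately from the construction of $\rho_{X|D}$ through the cycle class map in \S\ref{sec:CCMaps}, since the cycle class of a closed point has rank one in $K_0$.
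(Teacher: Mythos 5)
Your final argument --- reducing via \thmref{thm:BQ-for-mod} to the finiteness of $H^2_{\nis}(X, \sK^M_{2,(X,D)})^0$ and then citing the Kato--Saito finiteness theorem --- is exactly the proof given in the paper, which even sidesteps your degree-compatibility worry by simply \emph{defining} $H^2_{\nis}(X, \sK^M_{2,(X,D)})^0$ as the image of $\CH_0(X|D)^0$ under the (injective) cycle class map. One caution: your parenthetical claim that quoting \thmref{thm:Main-2} here would be ``legitimate'' is wrong --- this corollary is an input to the proof of \thmref{thm:Kerz-Saito-main}, so that route is genuinely circular --- but since you discard it in favor of the direct argument, the proof you settle on is correct.
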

\begin{proof}
Let $H^2_{\nis}(X, \sK^M_{2,(X,D)})^0$ be the image of
$\CH_0(X|D)^0$ under the Bloch-Kato cycle class map
$\rho^{\nis}_{X|D} = \lambda_{(X,D)} \circ \rho_{X|D} \colon 
\CH_0(X|D) \to H^2_{\nis}(X, \sK^M_{2,(X,D)})$.
The group $H^2_{\nis}(X, \sK^M_{2,(X,D)})^0$ is finite by the
Kato-Saito class field theory (see \cite[Theorem~9.1]{Kato-Saito}
or \cite[Theorem~12.8]{GK-20}).
The corollary now follows by \thmref{thm:BQ-for-mod}.
\end{proof}

\subsection{The theorem of Kerz-Saito}\label{sec:KST}
Let $k$ be a finite field of characteristic $p > 0$.
Let $U$ be a smooth and 
connected quasi-projective scheme over $k$ of dimension
$d \ge 1$. Choose a compactification $U \subset X$ with
$X$ normal and proper over $k$ such that $C = (X \setminus U)_\red$ is the 
support of an effective Cartier divisor on $X$. 
We let
\begin{equation}\label{eqn:Limit-Chow group}
C(U) = {\underset{D}\varprojlim} \ \CH_0(X|D),
\end{equation}
where the limit is taken over all effective Cartier divisors on $X$
supported on $C$. We endow each $\CH_0(X|D)$ with the discrete topology
and $C(U)$ with the inverse limit topology.
It is known (e.g., see \cite[Lemma~3.1]{KeS}) that $C(U)$ is independent of
the choice of the compactification $X$. 
We have the degree map $\deg \colon C(U) \to \Z$ which takes any
0-cycle to its degree. We let $C(U)^0 = {\rm Ker}(\deg)$.


Let $\pi^{\ab}_1(U)$ denote the abelianized {\'e}tale fundamental group
of $U$. We let $\pi^{\ab}_1(U)^0 = {\rm ker}(\pi^{\ab}_1(U) \to 
{\rm Gal}({\ov{k}}/{k}))$.
If $x \in U$ is a closed point, then the inclusion
$\iota_x \colon \Spec(k(x)) \inj U$ defines the natural map
$(\iota_x)_* \colon \pi_1(\Spec(k(x))) \to \pi^{\ab}_1(U)$.
Letting $\rho_U(x) = (\iota_x)_*(F_x)$ (with $F_x$ being the Frobenius
element of the Galois group of $k(x)$) and extending linearly,
we get a group homomorphism $\rho_U \colon \sZ_0(U) \to \pi^{\ab}_1(U)$.
It follows from \cite[Proposition~3.2]{KeS} that this map induces a 
reciprocity map $\rho_U \colon C(U) \to \pi^{\ab}_1(U)$.

If $D \subset X$ is an effective Cartier divisor supported on $C$, we let
\begin{equation}\label{eqn:Limit-Chow group-0-***}
\pi^{\ab}_1(X,D) = \Hom_{\rm cont}({\rm fil}_DH^1(U, {\Q}/{\Z}), {\Q}/{\Z}),
\end{equation}
where ${\rm fil}_DH^1(U, {\Q}/{\Z})$ is the group of continuous 
characters $\chi \colon \pi^{\ab}_1(U) \to {\Q}/{\Z}$ such that for any
integral curve $Z \subset U$, the restriction
$\chi|_Z \colon \pi^{\ab}_1(Z) \to {\Q}/{\Z}$ satisfies
the following inequality of Cartier divisors on $\ov{Z}^N$:
\[
{\underset{y \in \psi_Z^{-1}(C)}\sum} {\rm art}_y(\chi|_Z)[y] \le 
\psi^*_Z(D).
\]
Here, $\psi_Z \colon \ov{Z}^N \to X$ is the projection map
from the normalization of the closure of $Z$ in $X$ and 
${\rm art}_y(\chi|_Z)$ is the Artin conductor of the
restriction of $\chi$ to ${\rm Gal}(k(Z)_y)$, where $k(Z)_y$ 
is the completion of $k(Z)$ at $y$ (see \cite{Serre}).

It is easy to check that there is an exact sequence
\begin{equation}\label{eqn:Limit-Chow group-0-**}
{\underset{Z \subset U}\bigoplus}\big( 
{\underset{y \in |\psi^{-1}_Z(D)|}\bigoplus} G^{n_y}_{k(Z)_y}\big) \to \pi^{\ab}_1(U)
\to \pi^{\ab}_1(X,D) \to 0,
\end{equation}
where the sum on the left runs over all integral curves
$Z \subset U$. Here, $\psi_Z^*(D) = \sum_{y \in |\psi^{-1}_Z(D)|} n_y [y]$
and $G^{n_y}_{k(Z)_y}$ is the higher ramification subgroup (for the upper 
numbering) of 
${\rm Gal}(k(Z)_y)$.
The first map in ~\eqref{eqn:Limit-Chow group-0-**} is the composite
\[
G^{n_y}_{k(Z)_y} \to
{\rm Gal}(k(Z^N)) \to \pi_1(Z^N) \xrightarrow{(\psi_Z)_*} \pi^{\ab}_1(U).
\]

It follows from ~\eqref{eqn:Limit-Chow group-0-***} that 
$\pi^{\ab}_1({X}, D)$ is the unique quotient of
$\pi^{\ab}_1(U)$ which classifies all finite abelian Galois covers
of $U$ whose ramification away from $U$ is bounded by the divisor
$D$.
It follows \cite[Proposition~3.9]{EK} (see also \cite[Proposition~2.10]{KeS})
that the limit of these quotient maps induces an
isomorphism 
\begin{equation}\label{eqn:Limit-Chow group-1}
\pi^{\ab}_1(U) \xrightarrow{\cong} {\underset{D}\varprojlim} \
\pi^{\ab}_1(X,D).
\end{equation}

We let $\rho_{(X,D)}$ denote the composition
$\sZ_0(U) \xrightarrow{\rho_U} \pi^{\ab}_1(U) \surj \pi^{\ab}_1(X,D)$.
It follows from the classical ramified class field theory for curves
that this descends to a map
$\rho_{(X,D)} \colon \CH_0(X|D) \to \pi^{\ab}_1(X,D)$, compatible with
$\rho_U$. Furthermore, 
$\rho_U = {\underset{D}\varprojlim} \ \rho_{(X,D)}$.

We let $\pi^{\ab}_1(X,D)^0$ denote the
kernel of the map $\pi^{\ab}_1(X,D) \to {\rm Gal}({\ov{k}}/{k})) \cong \wh{\Z}$.
It follows from \cite[Corollary~1.2]{KeS-1}
that $\pi^{\ab}_1(X,D)^0$ is a finite abelian group.
The reciprocity map $\rho_{(X,D)}$ restricts to a continuous homomorphism
of discrete abelian groups
$\rho^0_{(X,D)} \colon \CH_0(X|D)^0 \to \pi^{\ab}_1(X,D)^0$.
Moreover, $\rho^0_U = {\underset{D}\varprojlim} \ \rho^0_{(X,D)}$.

Using ~\eqref{eqn:Limit-Chow group-1} and the finiteness of
$\pi^{\ab}_1(X,D)^0$, we get a commutative diagram
of exact sequences of 
topological abelian groups and continuous group homomorphisms
\begin{equation}\label{eqn:Limit-Chow group-0}
\xymatrix@C.8pc{
0 \ar[r] &  C(U)^0 \ar[r] \ar[d]_-{\rho^0_U} & C(U) \ar[r]^-{\rm deg}
\ar[d]^-{\rho_U} & \Z \ar@{^{(}->}[d] \\
0 \ar[r] & \pi^{\ab}_1(U)^0 \ar[r] & \pi^{\ab}_1(U) \ar[r] &
\wh{\Z},}
\end{equation}
where the groups on the bottom have their canonical pro-finite topology
and the right-most vertical arrow is the pro-finite completion morphism.
The horizontal arrows on the right are surjective if $U$ is
geometrically connected over $k$.

An easy consequence of ~\eqref{eqn:Limit-Chow group},
~\eqref{eqn:Limit-Chow group-0-**} and
~\eqref{eqn:Limit-Chow group-1} is the following
(see \cite[Corollary~3.4]{KeS}).

\begin{lem}\label{lem:Limit-Chow group-2}
The map $\rho^0_U$ is an isomorphism of topological abelian 
groups if and only if
$\rho^0_{(X,D)} \colon \CH_0(X|D)^0 \to \pi^{\ab}_1(X,D)^0$
is an isomorphism of finite abelian groups for all effective Cartier
divisors $D \subset X$ which are supported on $C$.
\end{lem}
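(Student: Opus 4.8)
\textbf{Proof proposal for Lemma~\ref{lem:Limit-Chow group-2}.}
The plan is to deduce the equivalence from a careful comparison of the two inverse systems $\{\CH_0(X|D)^0\}_D$ and $\{\pi^{\ab}_1(X,D)^0\}_D$, indexed by effective Cartier divisors supported on $C$, together with the finiteness statements recalled just above the lemma. The ``only if'' direction is immediate: if $\rho^0_U$ is an isomorphism, then composing with the projection to a fixed $\CH_0(X|D)^0$ and using the compatibility $\rho^0_U = \varprojlim_D \rho^0_{(X,D)}$, together with the fact that $\CH_0(X|D)^0$ surjects onto $\pi^{\ab}_1(X,D)^0$ via $\rho^0_{(X,D)}$ (this surjectivity follows from classical ramified CFT for curves applied fiberwise and the presentation \eqref{eqn:Limit-Chow group-0-**}), one gets that each $\rho^0_{(X,D)}$ is surjective; injectivity of $\rho^0_{(X,D)}$ then follows by a diagram chase using that the transition maps $C(U)^0 \to \CH_0(X|D)^0$ are surjective (which itself needs a small argument — see below) and that $\rho^0_U$ is injective.

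For the ``if'' direction, which is the substance, I would argue as follows. Assume $\rho^0_{(X,D)}$ is an isomorphism of finite abelian groups for every admissible $D$. Taking the inverse limit over $D$ and using $\rho^0_U = \varprojlim_D \rho^0_{(X,D)}$ gives a continuous homomorphism $\rho^0_U \colon C(U)^0 \to \varprojlim_D \pi^{\ab}_1(X,D)^0$. By \eqref{eqn:Limit-Chow group-1} (restricted to the degree-zero parts, which is legitimate because $\pi^{\ab}_1(X,D)^0$ is the kernel of the map to $\wh{\Z}$ compatibly in $D$), the target is canonically $\pi^{\ab}_1(U)^0$. Since an inverse limit of isomorphisms of (finite, hence compact Hausdorff) abelian groups is an isomorphism of topological groups — the Mittag-Leffler condition is automatic for surjective transition maps of finite groups — we conclude $\rho^0_U$ is an isomorphism of topological abelian groups. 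The one genuinely delicate point is the topology: $C(U)^0$ carries the inverse limit topology of the discrete groups $\CH_0(X|D)^0$, and we must know this coincides with the subspace topology from $C(U)$ and that the map $C(U)^0 \to \CH_0(X|D)^0$ has dense (in fact full, after noting the cokernel is a quotient of a divisible group killed in the limit) image; this is where one invokes \cite[Lemma~3.1, Corollary~3.4]{KeS} and the finiteness of the terms.

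The main obstacle I anticipate is bookkeeping around the transition maps and the degree-zero truncation: one must check that the inverse system $\{\pi^{\ab}_1(X,D)^0\}_D$ has surjective transition maps (this follows from the surjectivity of $\pi^{\ab}_1(X,D') \twoheadrightarrow \pi^{\ab}_1(X,D)$ for $D \le D'$, visible from \eqref{eqn:Limit-Chow group-0-***} as these are quotients of the fixed group $\pi^{\ab}_1(U)$, intersected with the degree-zero condition), and likewise that the maps $C(U)^0 \to \CH_0(X|D)^0$ are surjective — the latter is not completely formal because $\CH_0(X|D)$ need not have surjective transition maps from finer divisors in general, but in the degree-zero part the obstruction lies in a group which becomes zero in the limit by the finiteness of $\CH_0(X|D)^0$ (Corollary~\ref{cor:Finite-dim-2} in the surface case, and the cited Kato--Saito results in general) combined with a standard Mittag-Leffler argument. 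Once these surjectivity and finiteness facts are in place, the equivalence is a purely categorical statement about inverse limits of finite abelian groups, and no further input is needed. I would present the argument compactly, citing \cite[Corollary~3.4]{KeS} for the precise form of the compatibility and leaving the Mittag-Leffler verification as a one-line remark.
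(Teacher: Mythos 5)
Your ``if'' direction is correct, and it is the direction the paper actually uses in the proof of Theorem~\ref{thm:Kerz-Saito-main}: since $\rho^0_U = \varprojlim_D \rho^0_{(X,D)}$, since $C(U)^0 = \varprojlim_D \CH_0(X|D)^0$ (kernels commute with inverse limits) and since $\pi^{\ab}_1(U)^0 = \varprojlim_D \pi^{\ab}_1(X,D)^0$ by ~\eqref{eqn:Limit-Chow group-1}, a levelwise isomorphism of inverse systems induces a topological isomorphism on limits. (The Mittag--Leffler remark is not needed for this; functoriality of $\varprojlim$ suffices.)

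The ``only if'' direction, however, has a genuine gap. Surjectivity of $\rho^0_{(X,D)}$ is fine (it follows formally from surjectivity of $\rho^0_U$ and the fact that $\pi^{\ab}_1(U)^0 \surj \pi^{\ab}_1(X,D)^0$, or independently from Chebotarev--Lang density). But the proposed ``diagram chase'' for injectivity does not close: if $\alpha \in \ker(\rho^0_{(X,D)})$ and $\wh{\alpha} \in C(U)^0$ is a lift (such lifts exist because the transition maps $\CH_0(X|D')^0 \to \CH_0(X|D)^0$ are surjective), you only learn that $\rho^0_U(\wh{\alpha})$ dies in the quotient $\pi^{\ab}_1(X,D)^0$, i.e.\ that it lies in $W_D := \ker\bigl(\pi^{\ab}_1(U)^0 \to \pi^{\ab}_1(X,D)^0\bigr)$ --- not that it is zero. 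Injectivity of $\rho^0_U$ therefore gives nothing. Writing $V_D := \ker\bigl(C(U)^0 \to \CH_0(X|D)^0\bigr)$, one has $\ker(\rho^0_{(X,D)}) \cong W_D/\rho^0_U(V_D)$, so what is really needed is the equality $\rho^0_U(V_D) = W_D$; this can fail for an abstract topological isomorphism between two filtered profinite groups (e.g.\ the identity of $\Z_2$ with $V_D = 4\Z_2$ and $W_D = 2\Z_2$ induces a non-injective surjection $\Z/4 \to \Z/2$). The missing, non-formal ingredient is exactly what ~\eqref{eqn:Limit-Chow group-0-**} encodes: $W_D$ is (topologically) generated by the images of the higher ramification groups $G^{n_y}_{k(Z)_y}$, and local class field theory identifies these with the images under $\rho_U$ of the local unit groups defining the modulus condition, so that $\rho_U$ carries $\ker\bigl(C(U) \to \CH_0(X|D)\bigr)$ onto $W_D$. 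This is why the paper's one-line justification cites ~\eqref{eqn:Limit-Chow group-0-**} (together with ~\eqref{eqn:Limit-Chow group} and ~\eqref{eqn:Limit-Chow group-1}, and \cite[Corollary~3.4]{KeS}); your argument never uses that exact sequence. The side remark that the surjectivity of $C(U)^0 \to \CH_0(X|D)^0$ is controlled by ``a quotient of a divisible group killed in the limit'' is also off the mark: it follows simply from the surjectivity of the transition maps of the system $\{\CH_0(X|D)^0\}_D$ together with a countable cofinal chain.
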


The following result is due to Kerz and Saito \cite{KeS}
when $p \neq 2$.
We shall prove this using \thmref{thm:BQ-for-mod} and the Kato-Saito
class field theory.

\begin{thm}\label{thm:Kerz-Saito-main}
$\rho^0_U$ is an isomorphism of topological abelian groups.
\end{thm}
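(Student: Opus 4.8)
The plan is to reduce \thmref{thm:Kerz-Saito-main} to a statement about each individual finite level $(X,D)$ via \lemref{lem:Limit-Chow group-2}, and then to identify the Kerz--Saito reciprocity map $\rho^0_{(X,D)}$ with the Kato--Saito reciprocity map through the Bloch--Kato formula. First I would invoke \lemref{lem:Limit-Chow group-2} to see that it suffices to show that $\rho^0_{(X,D)}\colon \CH_0(X|D)^0 \to \pi^{\ab}_1(X,D)^0$ is an isomorphism of finite abelian groups for every effective Cartier divisor $D \subset X$ supported on $C = (X\setminus U)_\red$. Note that by \corref{cor:Finite-dim-2} (in the surface case; in general one uses the higher-dimensional finiteness statement referenced there, \cite[Theorem~12.8]{GK-20}) the source $\CH_0(X|D)^0$ is indeed a finite abelian group, and the target is finite by \cite[Corollary~1.2]{KeS-1}, so the assertion is genuinely a statement about finite groups.

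The key step is the following commuting triangle. By \thmref{thm:Main-1} (equivalently \thmref{thm:BQ-for-mod} in the surface case, extended to all dimensions by the reduction sketched in \S~\ref{sec:Outline} using Poonen--Bertini and the Lefschetz hyperplane theorem of \cite{KeS-1}), there is a canonical isomorphism $\rho_{X|D}\colon \CH_0(X|D) \xrightarrow{\cong} H^2_{\nis}(X, \sK^M_{2,(X,D)})$, and hence, after passing to the limit over $D$ and restricting to degree-zero parts, an isomorphism $\wt{\rho}_U^0\colon C(U)^0 \xrightarrow{\cong} C^{\rm KS}(U)^0$, where $C^{\rm KS}(U) = \varprojlim_D H^2_{\nis}(X, \sK^M_{2,(X,D)})$. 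On the other hand, Kato and Saito \cite{Kato-Saito} constructed a reciprocity isomorphism $\wh{\rho}^0\colon C^{\rm KS}(U)^0 \xrightarrow{\cong} \pi^{\ab}_1(U)^0$. The plan is then to verify that the composite
\[
C(U)^0 \xrightarrow{\wt{\rho}_U^0} C^{\rm KS}(U)^0 \xrightarrow{\wh{\rho}^0} \pi^{\ab}_1(U)^0
\]
coincides with the Kerz--Saito reciprocity map $\rho^0_U$; since both factors are isomorphisms, this gives the theorem. At finite level this amounts to checking that $\rho^0_{(X,D)}$ factors as $\CH_0(X|D)^0 \xrightarrow{\cong} H^2_{\nis}(X,\sK^M_{2,(X,D)})^0 \to \pi^{\ab}_1(X,D)^0$ where the second map is the Kato--Saito reciprocity at the level of the modified filtration, and to match the two filtrations.

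I expect the main obstacle to be precisely the identification of the composite map with $\rho_U$, i.e.\ the compatibility of the three reciprocity maps (Kerz--Saito, the cycle-class map $\rho_{X|D}$, and Kato--Saito). The difficulty is bookkeeping: one must check that the cycle class map $\rho_{X|D}$ sends the class of a closed point $x \in U$ (a generator of $\CH_0(X|D)$) to the corresponding local symbol in $H^2_{\nis}(X,\sK^M_{2,(X,D)})$ whose image under the Kato--Saito reciprocity is the Frobenius $(\iota_x)_*(F_x) \in \pi^{\ab}_1(U)$. Since all three maps are defined on generators by essentially the same recipe (push-forward of the canonical class at a regular closed point), this is in principle a routine unwinding of definitions, but it requires care with the wild ramification filtrations: one must confirm that the modulus filtration on $\CH_0(X|D)$, the filtration by relative Milnor $K$-sheaves, and the ramification filtration ${\rm fil}_D H^1(U,\Q/\Z)$ defining $\pi^{\ab}_1(X,D)$ all correspond under these isomorphisms. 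This last point is where the input of Kato--Saito \cite{Kato-Saito} (their Theorem~9.1 and the surrounding class field theory) is essential, together with the compatibility of the cycle class map with the descent spectral sequence established in \thmref{thm:CCM-main-mod}. Once this compatibility is in place, the finiteness statements quoted above make the passage to the limit automatic, and \lemref{lem:Limit-Chow group-2} closes the argument.
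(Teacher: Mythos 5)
Your strategy for $d \le 2$ is essentially the paper's: write $\rho_U$ as the composite $C(U) \xrightarrow{\wt{\rho}_U} \varprojlim_D H^2_{\nis}(X,\sK^M_{2,(X,D)}) \xrightarrow{\wh{\rho}_U} \pi^{\ab}_1(U)$, use \thmref{thm:BQ-for-mod} for the first factor and Kato--Saito for the second, with the compatibility of the three reciprocity maps as the bookkeeping point (the paper quotes \cite[Proposition~3.8]{Kato-Saito} for the commutativity of this triangle). The genuine gap is in your treatment of $d \ge 3$. You assert that \thmref{thm:Main-1} is ``extended to all dimensions by the reduction sketched in \S~\ref{sec:Outline} using Poonen--Bertini and the Lefschetz hyperplane theorem of \cite{KeS-1}''. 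That is not what the reduction accomplishes. The Lefschetz theorem of \cite{KeS-1} is a statement about the fundamental group: for an ample enough smooth hypersurface section $\iota \colon Y \inj X$, the map $\iota_* \colon \pi^{\ab}_1(Y,E) \to \pi^{\ab}_1(X,D)$ is an isomorphism. This lets one induct on the statement ``$\rho_{(X,D)}$ is injective'': given $\alpha$ with $\rho_{(X,D)}(\alpha)=0$, choose $Y \supset |\alpha|$ by Poonen--Bertini, lift $\alpha$ to $\alpha'$ on $Y$, deduce $\rho_{(Y,E)}(\alpha')=0$ from the Lefschetz isomorphism, and conclude by induction on dimension. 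No analogous Lefschetz statement for $H^d_{\nis}(X,\sK^M_{d,(X,D)})$ is proved here, so this argument does not transport the isomorphism $\CH_0(X|D) \cong H^d_{\nis}(X,\sK^M_{d,(X,D)})$ from surfaces to higher dimensions; the higher-dimensional Bloch formula is the content of the later work \cite{GK-20}, which takes this paper as input, and appealing to it (or to the finiteness of $\CH_0(X|D)^0$ in dimension $\ge 3$ derived from it) would be circular.

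The correct architecture is therefore to run the Bertini--Lefschetz induction on the class field theory statement itself (after Wiesend's trick to make $X$ smooth projective with SNC boundary), reducing the injectivity of $\rho_{(X,D)}$ to $d \le 2$, and only then invoke \thmref{thm:BQ-for-mod} and the Kato--Saito diagram. Two smaller points: surjectivity of $\rho^0_{(X,D)}$ is obtained at every finite level from Chebotarev--Lang density together with the finiteness of $\pi^{\ab}_1(X,D)^0$, not from asserting that $\wh{\rho}^0$ is an isomorphism; and from \cite[Theorem~9.1]{Kato-Saito} the paper extracts only the \emph{injectivity} of $\wh{\rho}_U$, via the injection of $\varprojlim_D H^d_{\nis}(X,\sK^M_{d,(X,D)})$ into $\varprojlim_{D,m} H^d_{\nis}(X,\sK^M_{d,(X,D)})\otimes \Z/m$, which itself requires the finiteness of the degree-zero parts. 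With the induction reordered to target the reciprocity map rather than the Bloch formula, your plan goes through.
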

\begin{proof}
By using Wiesend's trick (see \cite[Lemma~3.6]{KeS}), we can replace
the chosen compactification $X$ by any of its alterations in the sense of
de Jong. We can therefore assume that $X$ is smooth projective
and $C = X \setminus U$ is a simple normal crossing divisor on $X$.

Suppose now that $d \ge 3$. By \lemref{lem:Limit-Chow group-2},
it suffices to show that $\rho^0_{(X,D)}$ is an isomorphism for
all $D$. Since $\pi^{\ab}_1(X,D)^0$ is finite by
\cite[Corollary~1.2]{KeS-1}, it follows immediately from 
the Chebotarev-Lang
density theorem (e.g., see \cite[Theorem~7]{Serre}
or \cite[Theorem~5.8.16]{Tamas})
that $\rho^0_{(X,D)}$ is surjective. The heart of the
proof therefore is to show that 
$\rho_{(X,D)} \colon \CH_0(X|D) \to \pi^{\ab}_1(X,D) $ is injective.

Let $\alpha \in \CH_0(X|D)$ be
a 0-cycle such that $\rho_{(X,D)}(\alpha) = 0$.
By a generalized version of Poonen's Bertini theorem over
finite fields (see \cite[Corollary~5.4]{GhK}), 
we can find a very ample line bundle $\sL$ on
$X$ and a section $s \in H^0(X, \sL)$ such that its zero locus
$Y = Z(s) \subset X$ is smooth, $Y \times_X C$ is a simple
normal crossing divisor on $Y$ and $|\alpha| \subset Y$.
Let $\iota \colon Y \inj X$ be the inclusion
and let $E = Y \times_X D$. 
By the choice of $Y$, there exists $\alpha' \in \CH_0(Y|E)$ such that
$\alpha = \iota_*(\alpha')$.

We now have the diagram:
\begin{equation}\label{eqn:Limit-Chow group-3}
\xymatrix@C.8pc{
\CH_0(Y|E) \ar[r]^-{\rho_{(Y, E)}} \ar[d]_-{\iota_*} &
\pi^{\ab}_1(Y,E) \ar[d]^-{\iota_*} \\
\CH_0(X|D) \ar[r]^-{\rho_{(X, D)}} & \pi^{\ab}_1(X,D),}
\end{equation}
whose commutativity is immediate from
the definition of the reciprocity maps.

By \cite[Theorem~1.1]{KeS-1}, we can choose $\sL$ ample enough
(depending on $D$) so that the right vertical arrow 
in ~\eqref{eqn:Limit-Chow group-3} is an isomorphism.
It follows that $\rho_{(Y,E)}(\alpha') = 0$.
We have therefore inductively reduced the proof of the theorem
to the case when $d \le 2$.

We shall now show that $\rho^0_U$ is an isomorphism  when $d \le 2$.
Since we have already seen that the map $\rho^0_{(X,D)} \colon
\CH_0(X|D)^0 \to \pi^{\ab}_1(X,D)^0$ is surjective
for all $D$ supported on $C$ and since $\CH_0(X|D)^0$ is finite
by \corref{cor:Finite-dim-2} (note that the $d =1$ case of this finiteness
is classical), it follows that the
map $\rho_U^0 \colon C(U)^0 \to \pi^{\ab}_1(U)^0$ is surjective.
It suffices therefore to show that the map
$\rho_U \colon C(U) \to \pi^{\ab}_1(U)$ is injective.

Let $\rho^{\nis}_{X|D} = \lambda_{(X,D)} \circ \rho_{X|D}
\colon \CH_0(X|D) \to H^d_{\nis}(X, \sK^M_{d, (X,D)})$
be the cycle class map from  ~\eqref{eqn:BQ-for-mod-diag} for any $D$.
Let $H^d_{\nis}(X, \sK^M_{d, (X,D)})^0$ be the image of 
$\CH_0(X|D)^0$ under this map (see the proof of \corref{cor:Finite-dim-2}).
It is clear from the definition of $\rho^\nis_{X|D}$ (see \S~\ref{sec:BQM-mod})
that it is compatible
with the inclusions of effective Cartier divisors $D \subset D'$.
We let $\wt{\rho}_U :=  {\underset{D}\varprojlim} \ \rho^{\nis}_{X|D}$.

We now consider our key diagram
\begin{equation}\label{eqn:Limit-Chow group-4}
\xymatrix@C.8pc{
C(U) \ar[dr]_-{\rho_U} \ar[r]^-{\wt{\rho}_U} & {\underset{D}\varprojlim} \
H^d_{\nis}(X, \sK^M_{d, (X,D)}) \ar[d]^-{\wh{\rho}_U} \\
& \pi^{\ab}_1(U),}
\end{equation}
where $\wh{\rho}_U$ is the reciprocity map of Kato-Saito
\cite[\S~3]{Kato-Saito}. The map $\wt{\rho}_U$ is the inverse limit of
the cycle class maps $\rho^{\nis}_{X|D}$ from
~\eqref{eqn:BQ-for-mod-diag}, taken
over effective Cartier divisors $D \subset X$ supported on $C$.
It is immediate from the construction of
the three maps in ~\eqref{eqn:Limit-Chow group-4} and by 
\cite[Proposition 3.8]{Kato-Saito} that this diagram is commutative.

Since $H^d_{\nis}(X, \sK^M_{d, (X,D)})^0$ 
is finite for all effective Cartier divisors $D \subset X$ supported on $C$, 
it follows that the canonical map 
${\underset{D}\varprojlim} \
H^d_{\nis}(X, \sK^M_{d, (X,D)}) \to 
{\underset{D, m}\varprojlim} \ H^d_{\nis}(X, \sK^M_{d, (X,D)})
\otimes {\Z}/m$
is injective.
On the other hand, the reciprocity map
$\wh{\rho}_U$ has a factorization
\[
{\underset{D}\varprojlim} \
H^d_{\nis}(X, \sK^M_{d, (X,D)}) \inj
{\underset{D, m}\varprojlim} \ H^d_{\nis}(X, \sK^M_{d, (X,D)})
\otimes {\Z}/m \xrightarrow{\wh{\rho}_U} \pi^{\ab}_1(U).
\]
The latter arrow is an isomorphism by \cite[Theorem~9.1 (3)]{Kato-Saito}.
It follows that the vertical arrow on the right in 
~\eqref{eqn:Limit-Chow group-4} is injective.
Since the horizontal arrow on the top in ~\eqref{eqn:Limit-Chow group-4}
is an isomorphism by \thmref{thm:BQ-for-mod} 
(see \cite[Lemma~3.1]{Krishna-1} when $d =1$) and a limit argument, 
we conclude that $\rho_U$ is injective. 
This finishes the proof.
\end{proof}

\vskip .3cm

\begin{cor}\label{cor:Pro-fin-comp}
The reciprocity maps $\{\rho_{(X,D)}\}_{|D| \subset C}$ induce an isomorphism of
pro-finite topological groups
\[
\rho_U \colon {\underset{D, m}\varprojlim} \ \CH_0(X|D)
\otimes {\Z}/m \xrightarrow{\cong} \pi^{\ab}_1(U).
\]
\end{cor}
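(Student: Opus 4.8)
The plan is to deduce Corollary~\ref{cor:Pro-fin-comp} directly from Theorem~\ref{thm:Kerz-Saito-main} (together with its proof) and from \thmref{thm:BQ-for-mod}. First I would record the two structural inputs. On the one hand, the isomorphism \eqref{eqn:Limit-Chow group-1} together with the finiteness of $\pi^{\ab}_1(X,D)^0$ (from \cite[Corollary~1.2]{KeS-1}) and the degree sequence \eqref{eqn:Limit-Chow group-0} shows that $\pi^{\ab}_1(U)$ already carries the structure of a pro-finite group expressed as a limit of the finite quotients $\pi^{\ab}_1(X,D)$; in particular the natural map $\pi^{\ab}_1(U) \to {\underset{D,m}\varprojlim}\ \pi^{\ab}_1(X,D)\otimes \Z/m$ is an isomorphism, since each $\pi^{\ab}_1(X,D)^0$ is finite and the degree part is already pro-finitely complete. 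On the other hand, \thmref{thm:BQ-for-mod} gives, for each effective Cartier divisor $D$ supported on $C$, a canonical isomorphism $\rho^{\nis}_{X|D} \colon \CH_0(X|D)\xrightarrow{\cong} H^2_{\nis}(X,\sK^M_{2,(X,D)})$ when $\dim X = 2$ (and the analogous statement in higher dimensions is reduced to this case inside the proof of \thmref{thm:Kerz-Saito-main} via the Bertini/Lefschetz fibration argument); taking $\otimes \Z/m$ and then the double limit over $D$ and $m$ preserves this isomorphism.

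Next I would assemble the commutative square \eqref{eqn:Limit-Chow group-4}, but now with all three corners replaced by their $\otimes\Z/m$ limits. Concretely: the Kato--Saito reciprocity map $\wh\rho_U \colon {\underset{D,m}\varprojlim}\ H^d_{\nis}(X,\sK^M_{d,(X,D)})\otimes\Z/m \to \pi^{\ab}_1(U)$ is an isomorphism by \cite[Theorem~9.1~(3)]{Kato-Saito}; the horizontal map ${\underset{D,m}\varprojlim}\ \CH_0(X|D)\otimes\Z/m \to {\underset{D,m}\varprojlim}\ H^d_{\nis}(X,\sK^M_{d,(X,D)})\otimes\Z/m$ induced by the $\rho^{\nis}_{X|D}$ is an isomorphism by the limit argument of the previous paragraph applied to \thmref{thm:BQ-for-mod}; and the composite is the Kerz--Saito reciprocity map. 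I would verify compatibility of $\rho_U$ with the reduction maps $\CH_0(X|D) \to \CH_0(X|D)\otimes\Z/m$ — this is immediate from the fact that $\rho_{(X,D)}$ lands in the finite group $\pi^{\ab}_1(X,D)$, so it factors through every $\Z/m$-quotient compatibly — and conclude that $\rho_U \colon {\underset{D,m}\varprojlim}\ \CH_0(X|D)\otimes\Z/m \to \pi^{\ab}_1(U)$ is the composite of two isomorphisms, hence an isomorphism of pro-finite topological groups.

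The one point that requires genuine care, rather than formal bookkeeping, is the interchange of the two limits (over $D$ and over $m$) with the $\otimes\Z/m$ functor, and the identification of ${\underset{D,m}\varprojlim}\ \CH_0(X|D)\otimes\Z/m$ with the appropriate completion of $C(U) = {\underset{D}\varprojlim}\ \CH_0(X|D)$. Here I would use that each $\CH_0(X|D)^0$ is finite — in dimension two this is \corref{cor:Finite-dim-2}, and in general one invokes the higher-dimensional finiteness that the proof of \thmref{thm:Kerz-Saito-main} already establishes via the inductive Bertini reduction — so that the tower $\{\CH_0(X|D)^0\}_D$ is a tower of finite groups, $C(U)^0$ is already pro-finite, and $C(U)^0\otimes\Z/m = C(U)^0$ for all $m$ (a finite group is its own $m$-completion). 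Thus ${\underset{D,m}\varprojlim}\ \CH_0(X|D)^0\otimes\Z/m = C(U)^0$, and combined with the fact that the degree-$\Z$ part completes to $\wh\Z$ on both sides, the map $\rho_U$ of the corollary is identified with $\rho^0_U$ on the kernel of degree (an isomorphism by \thmref{thm:Kerz-Saito-main}) plus the identity $\wh\Z \to \wh\Z$ on the quotient. Therefore the main obstacle is not conceptual but lies in checking these exactness and finiteness facts carefully enough to commute the limits; once that is done, the corollary follows by the five lemma applied to the degree sequences.
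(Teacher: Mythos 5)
Your argument is correct in substance, and since the paper gives no written proof of this corollary, your final paragraph is essentially the argument the authors leave implicit: for each $D$ the reciprocity map identifies the finite group $\CH_0(X|D)^0$ with $\pi^{\ab}_1(X,D)^0$ (by \thmref{thm:Kerz-Saito-main} together with \lemref{lem:Limit-Chow group-2} and the finiteness of $\pi^{\ab}_1(X,D)^0$), the degree parts complete to $\wh{\Z}$ on both sides, and one concludes by passing to the limit over $D$, using ~\eqref{eqn:Limit-Chow group-1} and the five lemma on the degree sequences, with exactness of the limits guaranteed by Mittag--Leffler for towers of finite groups. Two points should be corrected. First, the identity $C(U)^0\otimes\Z/m = C(U)^0$ is false as stated: $C(U)^0$ is profinite but in general infinite, and $\otimes\,\Z/m$ does not commute with the inverse limit over $D$. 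What you need, and what is true, is that for each \emph{fixed} $D$ the finite group $\CH_0(X|D)^0$ satisfies $\varprojlim_m \CH_0(X|D)^0\otimes\Z/m=\CH_0(X|D)^0$; performing $\varprojlim_m$ first and $\varprojlim_D$ second then yields $\varprojlim_{D,m}\CH_0(X|D)^0\otimes\Z/m = C(U)^0$. Second, your parenthetical claim that the analogue of \thmref{thm:BQ-for-mod} in higher dimensions is ``reduced to this case inside the proof of \thmref{thm:Kerz-Saito-main}'' is inaccurate: the Bertini/Lefschetz step there reduces the \emph{injectivity of the reciprocity map} to surfaces, not Bloch's formula, and the paper does not establish $\CH_0(X|D)\cong H^d_{\nis}(X,\sK^M_{d,(X,D)})$ for $d\ge 3$. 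This does not break your proof, because the Kato--Saito diagram route is only needed (and only valid) for $d\le 2$, while your degree-sequence argument covers all dimensions using only the conclusion of \thmref{thm:Kerz-Saito-main} and the resulting finiteness of $\CH_0(X|D)^0$; but that intermediate claim should be deleted rather than relied upon.
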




The following result provides an independent and a $K$-theoretic proof of a 
finiteness result of Deligne (see \cite[Theorem~8.1]{EK}).
This was also obtained independently by Kerz-Saito \cite{KeS} in characteristic
$\neq 2$.

\begin{cor}\label{cor:Finite-Chow}
Let $X$ be a normal and connected projective variety over a finite
field. Let $D \subset X$ be an effective Cartier divisor such that
$X \setminus D$ is regular.
Then $\CH_0(X|D)^0$ is a finite abelian group.
\end{cor}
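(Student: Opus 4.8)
The plan is to reduce \corref{cor:Finite-Chow} to \corref{cor:Finite-dim-2} via the same fibration argument that powers the proof of \thmref{thm:Kerz-Saito-main}, together with the reciprocity isomorphism established there. First I would reduce to the case where $X$ is smooth projective and $D_{\red}$ is a strict normal crossing divisor. This uses Wiesend's trick exactly as in the proof of \thmref{thm:Kerz-Saito-main}: replacing $X$ by a de Jong alteration $X' \to X$ changes neither the group $C(U)$ (by \cite[Lemma~3.1]{KeS}) nor, for the purposes of finiteness, the relevant Chow groups of $0$-cycles with modulus in the inverse system; so it suffices to prove finiteness of $\CH_0(X|D)^0$ for a fixed such $D$ after this replacement. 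Strictly speaking one must be a little careful: finiteness of $\CH_0(X|D)^0$ for a \emph{single} $D$ is what we want, and the alteration trick is most naturally phrased for the limit group $C(U)^0$; but since $\CH_0(X|D)^0$ surjects onto the corresponding piece of $\pi^{\ab}_1(X,D)^0$ and the latter is finite, it is enough to control the kernel of $\rho^0_{(X,D)}$, which is where the induction below enters.

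Next, with $X$ smooth projective over the finite field $k$ and $C = X\setminus U$ a simple normal crossing divisor supporting $D$, I would induct on $d = \dim(X)$. The base cases $d=1$ (classical) and $d=2$ (\corref{cor:Finite-dim-2}) are already available. For $d \ge 3$: by a generalized Poonen Bertini theorem over finite fields (\cite[Corollary~5.4]{GhK}), choose a sufficiently ample line bundle $\sL$ and a section $s$ so that $Y = Z(s)$ is smooth of dimension $d-1$, $Y\times_X C$ is a simple normal crossing divisor on $Y$, and (running the argument over a finite set of cycles, or using a moving-lemma style argument for $0$-cycles with modulus as in the proof of \thmref{thm:Kerz-Saito-main}) every class in $\CH_0(X|D)$ supported away from the indeterminacy locus lifts through $\iota_* \colon \CH_0(Y|E) \to \CH_0(X|D)$, where $E = Y\times_X D$. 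Here I would need the fact that $\CH_0(X|D)$ is generated by cycles whose support meets $Y$; this follows because $0$-cycles can be moved within their rational equivalence class off any proper closed subset, combined with the covariant functoriality of $\CH_0(-|-)$ for the closed immersion $\iota$.

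Then I would invoke the Lefschetz-type result \cite[Theorem~1.1]{KeS-1}: for $\sL$ ample enough depending on $D$, the pushforward $\iota_* \colon \pi^{\ab}_1(Y,E)^0 \to \pi^{\ab}_1(X,D)^0$ is an isomorphism. Combined with the commutative square relating $\rho^0_{(Y,E)}$ and $\rho^0_{(X,D)}$ (commutativity is immediate from the definition of the reciprocity maps, cf.~\eqref{eqn:Limit-Chow group-3}), and the surjectivity of $\rho^0_{(X,D)}$ (Chebotarev--Lang density plus finiteness of $\pi^{\ab}_1(X,D)^0$ from \cite[Corollary~1.2]{KeS-1}), a diagram chase shows: if $\CH_0(Y|E)^0$ is finite then so is $\CH_0(X|D)^0$. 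Indeed the kernel of $\rho^0_{(X,D)}$ is a quotient of the kernel of $\rho^0_{(Y,E)}$ (up to the lifting statement), hence finite by induction, and $\pi^{\ab}_1(X,D)^0$ is finite, so the extension $\CH_0(X|D)^0$ is finite. By the inductive hypothesis applied to $(Y, E)$ (noting $\dim Y = d-1$), this closes the induction.

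The main obstacle I expect is the lifting step: showing that an arbitrary degree-zero class $\alpha \in \CH_0(X|D)^0$ can be represented by a cycle supported on a Bertini hypersurface $Y$ and that this representative comes from $\CH_0(Y|E)$. Moving the \emph{support} of a $0$-cycle is elementary, but one must ensure the \emph{modulus condition} is preserved under this move and under the identification with a cycle on $Y$ — essentially one needs $\CH_0(Y|E) \to \CH_0(X|D)$ to hit $\alpha$ on the nose, not merely up to something supported elsewhere. In the proof of \thmref{thm:Kerz-Saito-main} this is handled by choosing $Y$ through a prescribed finite support $|\alpha|$, which is exactly \cite[Corollary~5.4]{GhK}; so the resolution is to apply that Bertini theorem with the additional constraint $|\alpha| \subset Y$, after first using rational equivalence in $\CH_0(X|D)$ to arrange that $|\alpha|$ avoids the finitely many bad points. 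Everything else is bookkeeping with the finiteness inputs already cited in the excerpt.
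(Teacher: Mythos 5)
There is a genuine gap in your inductive step. The paper's own proof is a two-line deduction: since $U = X\setminus D$ is regular over a perfect field and hence smooth, \thmref{thm:Kerz-Saito-main} applies and gives that $\rho^0_U$ is an isomorphism; \lemref{lem:Limit-Chow group-2} converts this into the statement that $\rho^0_{(X,D)}\colon \CH_0(X|D)^0 \to \pi^{\ab}_1(X,D)^0$ is an isomorphism for the given $D$; and the target is finite by \cite[Corollary~1.2]{KeS-1}. You instead try to re-run the dimension induction from the proof of \thmref{thm:Kerz-Saito-main} with ``finiteness of $\CH_0(\cdot|\cdot)^0$'' as the inductive invariant, and that invariant does not propagate.

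Concretely: you assert that ``the kernel of $\rho^0_{(X,D)}$ is a quotient of the kernel of $\rho^0_{(Y,E)}$ (up to the lifting statement).'' But the hypersurface $Y$ produced by \cite[Corollary~5.4]{GhK} depends on the cycle $\alpha$: it is chosen so that $|\alpha|\subset Y$. No single $Y$ contains the support of every $0$-cycle, and surjectivity of $\iota_*\colon \CH_0(Y|E)\to\CH_0(X|D)$ for one fixed $Y$ is a Lefschetz/moving statement for Chow groups with modulus that is not available here (and is of essentially the same depth as the theorem itself). What the Bertini-plus-Lefschetz argument actually yields is element-wise: each $\alpha\in\ker\rho_{(X,D)}$ lifts to some $\alpha'\in\ker\rho_{(Y_\alpha,E_\alpha)}$ with $Y_\alpha$ depending on $\alpha$. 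This is perfectly adapted to propagating \emph{injectivity} (conclude $\alpha'=0$ by induction, hence $\alpha=0$), but it cannot propagate \emph{finiteness}: a set each of whose elements lies in the image of a finite set depending on that element need not be finite. The same defect affects your alteration step, which you flag but do not resolve: Wiesend's trick controls the limit group $C(U)$, not $\CH_0(X|D)^0$ for a single $D$ on the normal $X$. Both problems vanish if you carry ``injectivity of $\rho_{(X,D)}$'' through the induction --- i.e., prove \thmref{thm:Kerz-Saito-main} --- and only at the end combine the resulting isomorphism with the finiteness of $\pi^{\ab}_1(X,D)^0$; that is exactly what the paper does.
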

\begin{proof}
Combine \lemref{lem:Limit-Chow group-2}, \thmref{thm:Kerz-Saito-main}
and \cite[Corollary~1.2]{KeS-1}.
\end{proof}

\vskip .5cm

\noindent\emph{Acknowledgements.}
The authors would like to thank the referee for a thorough reading of
the paper and providing helpful comments which led to an improved
presentation.

\bibliographystyle{amsalpha}
\bibliography{jag-BindaKrishnaSaito}

\end{document}